\newcommand{\diag}{\mathop{\mathrm{diag}}}
\newcommand{\sgn}{\mathop{\mathrm{sgn}}} 
\newcommand{\ac}{\mathrm{ac}} 
\newcommand{\rad}{\mathrm{rad}}
\newcommand{\re}{\mathop{\mathrm{Re}}} 
\newcommand{\im}{\mathop{\mathrm{Im}}} 
\newcommand{\tr}{\mathop{\mathrm{Tr}}}
\newcommand{\supp}{\mathop{\mathrm{supp}}}
\newcommand{\intt}{\mathop{\mathrm{int}}}
\DeclareMathOperator{\slim}{s-lim}
\DeclareMathOperator{\resp}{resp}
\newcommand{\N}{\mathbb{N}} 
\newcommand{\Z}{\mathbb{Z}}
\newcommand{\R}{\mathbb{R}} 
\newcommand{\C}{\mathbb{C}} 
\newcommand{\hess}{\mathop{\mathrm{Hess}}}
\numberwithin{equation}{section}
\theoremstyle{plain}
\newtheorem{thm}{Theorem}[section]
\newtheorem{proposition}[thm]{Proposition}
\newtheorem{lemma}[thm]{Lemma} 
\newtheorem{corollary}[thm]{Corollary}
\theoremstyle{definition} 
\newtheorem{defn}[thm]{Definition} 
\newtheorem{example}[thm]{Example}
\newtheorem{assump}[thm]{Assumption}
 \newtheorem{remark}[thm]{Remark}
 \newtheorem*{remarks*}{Remarks}
\newtheorem*{remark*}{Remark}
\title{Orthonormal Strichartz estimate for dispersive equations with potentials}
\author{Akitoshi Hoshiya\thanks{Graduate School of Mathematical Sciences, The University of Tokyo, 3-8-1 Komaba, Meguro-ku, Tokyo 153-8914, Japan \\
 Email address: hoshiya@ms.u-tokyo.ac.jp}}
\begin{document}
\maketitle

\begin{abstract}
In this paper we prove the orthonormal Strichartz estimates for the higher order and fractional Schr\"odinger, wave, Klein-Gordon and Dirac equations with potentials. As in the case of the Schr\"odinger operator, the proofs are based on the smooth perturbation theory by T. Kato. However, for the Klein-Gordon and Dirac equations, we also use a method of the microlocal analysis in order to prove the estimates for wider range of admissible pairs. As applications we prove the global existence of a solution to the higher order or fractional Hartree equation with potentials which describes the dynamics of infinitely many particles. We also give a local existence result for the semi-relativistic Hartree equation with electromagnetic potentials. As another application, the refined Strichartz estimates are proved for higher order and fractional Schr\"odinger, wave and Klein-Gordon equations.
\end{abstract}

\section{Introduction}\label{23110221}
In this paper we give some continuation results of the author's previous research \cite{Ho}. We prove the orthonormal Strichartz estimates for the higher order and fractional Schr\"odinger, wave, Klein-Gordon and Dirac equations with potentials. The orthonormal Strichartz estimates were first proved by Frank-Lewin-Lieb-Seiringer \cite{FLLS} for the free Schr\"odinger equation and they were generalized by \cite{FS}, \cite{BHLNS}, \cite{BKS2} and \cite{BLN2}. They are the following inequalities: 
\begin{align}
\left \| \sum_{n=0}^ \infty{\nu_n|e^{it\Delta}f_n|^2} \right\|_{L^p_t L^q_x} \lesssim \| \nu_n\|_{\ell^\alpha}.
\label{231115}
\end{align}
Here  $\{f_n\}$ is an orthonormal system in $L^2(\mathbb{R}^d)$ and $p, q, \alpha \in [1, \infty]$ satisfy some conditions (see \cite{BHLNS}). In the case of the higher order or fractional free Schr\"odinger equation, the orthonormal Strichartz estimates were proved in \cite{BLN} and \cite{BKS2}.  Concerning the wave or Klein-Gordon equations, see \cite{FS}, \cite{BLN} and \cite{BKS1}. On the other hand, the orthonormal Strichartz estimates for PDEs with potentials are less known. As far as the author knows, there exist a few results on the special operators including the special Hermite operator, $(k, a)$-generalized Laguerre operator and Dunkl operator (see \cite{Ms} and \cite{MS}) but general potentials are treated only in \cite{Ho}. In \cite{Ho}, the Schr\"odinger operator with very short range or inverse square type potentials are treated (also including the magnetic Schr\"odinger operator). We extend the method used in \cite{Ho} and apply it to various equations. Our first result concerns the higher order and fractional Schr\"odinger equations. To state the result, we collect some notations: $O=(0, 0), A_{d/2} =(\frac{d-1}{2(d+1)}, \frac{d}{2(d+1)}), C= (1/2, 0), D=(0, 1/2), E_{d/2} =(\frac{d-2}{2d}, 1/2)$.
The first result is including the case where $V$ is a very short range potential. If $m=1$, similar results are proved in \cite{Ho}. However, if $m>1$, assumptions on the spectrum of $H$ are stronger since $H$ may have positive eigenvalues which do not appear if $m=1$.
\begin{thm} \label{1251816}
Assume $m < d/2$, $m \in \N$, $|V(x)| \lesssim \langle x \rangle^{-2m-\epsilon}$, $H= (-\Delta)^m +V$ does not have positive eigenvalues and zero is neither an eigenvalue nor a resonance of $H$. 
\begin{enumerate}
\item \label{1251556}
If $(1/r, 1/q) \in \intt (OA_{d/2} C)$ and $d/2 = d/r + 2m/q$, we have
\begin{align} \label{1251819}
\left\| \sum_{n=0}^ \infty{\nu_n|e^{-itH}P_{ac}(H)f_n|^2} \right\|_{L^{q/2}_t L^{r/2}_x} \lesssim \| \nu_n\|_{\ell^\beta}.
\end{align}
for $\beta$ satisfying $d/{2\beta} = 1/q +d/r$, $\{ f_n\}$: all orthonormal systems in $L^2$.
\item \label{1251812}
If $(1/r, 1/q) \in \intt (ODE_{d/2} A_{d/2})$, $d/2 = d/r + 2m/q$ and $d \ge3$, we have (\ref{1251819}) for $\beta < q/2$.
\end{enumerate}
\end{thm}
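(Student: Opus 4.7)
The plan is to transfer the orthonormal Strichartz estimate known for the free higher order Schrödinger propagator $e^{-it(-\Delta)^m}$ to the perturbed propagator $e^{-itH}P_{\ac}(H)$ by Kato's smooth perturbation method, adapting the strategy the author used in \cite{Ho} for the case $m=1$. The free inequalities of the shape (\ref{1251819}) corresponding to parts (\ref{1251556}) and (\ref{1251812}) are due to Bez--Lee--Nakamura \cite{BLN} and Bez--Kinoshita--Shiraki \cite{BKS2}, with the scaling-sharp exponent $\beta$ in the smaller region and any $\beta<q/2$ in the wider region along the segment $1/q=1/2$.

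I would proceed in three steps. First, by the Frank--Sabin duality principle, (\ref{1251819}) is equivalent to a bilinear Schatten estimate of the schematic form
\begin{equation*}
\bigl\|W_1\, e^{-itH}P_{\ac}(H)e^{isH}\, W_2\bigr\|_{\mathfrak{S}^{2\beta'}(L^2_{s,x}\to L^2_{t,x})}\lesssim \|W_1\|_{L^{\tilde q}_t L^{\tilde r}_x}\|W_2\|_{L^{\tilde q}_t L^{\tilde r}_x}
\end{equation*}
with $(\tilde q,\tilde r)$ the appropriate dual exponents. Second, I would factor $V=v_1 v_2$ with $|v_j(x)|\lesssim \langle x\rangle^{-m-\epsilon/2}$ and iterate the resolvent identity $R_H(z)=R_{H_0}(z)-R_{H_0}(z)V R_H(z)$; this rewrites the bilinear expression above in terms of the corresponding free object $W_1\, e^{-itH_0}e^{isH_0}\, W_2$ plus error terms containing the sandwiched resolvent $v_1 R_H(\lambda\pm i0) v_2$ together with the weighted free propagator. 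The problem thus reduces to two ingredients: the dual form of the free orthonormal Strichartz inequality (already known), and the $H_0$- and $H$-supersmoothness of multiplication by $\langle x\rangle^{-m-\epsilon/2}$, uniformly on $(0,\infty)$. Third, I would assemble these ingredients via Kato's $TT^*$-style argument exactly as in \cite{Ho} to close (\ref{1251819}).

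The main obstacle is the supersmoothness step, in particular the passage from $H_0$ to $H$. For $H_0=(-\Delta)^m$ a uniform resolvent bound of the required type follows from Agmon--Hörmander estimates and the homogeneity of the symbol. For $H$, the bound amounts to a limiting absorption principle valid on all of $(0,\infty)$ up to zero energy, which is precisely what the spectral hypotheses supply: the absence of positive eigenvalues of $H$ ensures continuous boundary values of $R_H(\lambda\pm i0)$ throughout the continuous spectrum, while the absence of a zero eigenvalue and zero resonance controls the singularity as $\lambda\to 0^+$. In contrast to the case $m=1$ treated in \cite{Ho}, embedded eigenvalues are not automatically ruled out by a Kato--Agmon--Simon argument when $m>1$, so their absence must be imposed as a separate hypothesis. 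For part (\ref{1251812}), no additional work should be needed beyond the above: the $\beta<q/2$ loss lives strictly outside the trace class ($2\beta'>2$), and Kato's composition estimates preserve the Schatten exponent, so the loss passes from the free to the perturbed propagator without modification.
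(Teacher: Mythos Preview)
Your proposal is correct and follows essentially the same route as the paper. The paper's proof is a three-line application of the abstract perturbation theorem from \cite{Ho} (restated here as Theorem~\ref{1251906}): the free orthonormal Strichartz estimates for $(-\Delta)^m$ are taken from \cite{BLN}, and the required Kato smoothness of $|V|^{1/2}\sgn V$ with respect to $H_0$ and of $|V|^{1/2}P_{\ac}(H)$ with respect to $H$ is quoted from \cite{MY1} and \cite{MY2}. What you describe in your three steps is precisely the content of that abstract theorem unpacked: the duality principle, the Duhamel/resolvent expansion, and the Schatten Christ--Kiselev lemma. One cosmetic remark: the paper (and \cite{Ho}) phrases the dual Schatten inequality in the one-sided form $\|W\,e^{-itH}P_{\ac}(H)\|_{\mathfrak{S}^{2\beta'}_{x\to(t,x)}}\lesssim\|W\|$ and uses the propagator Duhamel formula rather than the resolvent identity directly, and only $H$-smoothness (not supersmoothness) is invoked; but these are equivalent presentations and your identification of the limiting absorption principle down to zero energy as the analytic core, supplied by the spectral hypotheses, is exactly right.
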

The above theorem is a generalization of the ordinary Strichartz estimates for $d/{2m}$ admissible pairs. We may also prove the estimates for $d/2$ admissible pairs as follows.
\begin{corollary} \label{1271749}
Let $H$ be as in Theorem \ref{1251816}. We assume $m< \frac{d+2}{4}$, $\langle H_0 u, u \rangle \lesssim \langle Hu, u \rangle$, $\|H^{1/2} u\|_{\frac{2d}{d-2}} \lesssim \|H^{1/2} _0 u\|_{\frac{2d}{d-2}}$ and $H$ does not have positive eigenvalues. Then, for $(q, r)$: $d/2$ admissible pair satisfying $d/r + 2m/q <d$, we have
\begin{enumerate}
\item[(1)] For $r$ satisfying $2 \le r < \frac{2(d+1)}{d-1}$, $\{f_n\}$: all orthonormal systems in $\dot{H}^s$, $s= d(1-m)(1/2 - 1/r)$, $\beta = \frac{2r}{r+2}$,
\[ \left\| \sum_{n=0}^ \infty{\nu_n|e^{-itH} f_n|^2} \right\|_{L^{q/2} _t L^{r/2} _x} \lesssim \| \nu_n\|_{\ell^\beta}\]
holds. 
\item[(2)] For $d \ge 3$, $r$ satisfying $\frac{2(d+1)}{d-1} \le r \le \frac{2d}{d-2}$, $\{f_n\}$: all orthonormal systems in $\dot{H}^s$, $s= d(1-m)(1/2 - 1/r)$, the same estimate as (1) holds if $1 \le \beta <q/2$.
\end{enumerate}
\end{corollary}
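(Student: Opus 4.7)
The plan is to reduce Corollary~\ref{1271749} to Theorem~\ref{1251816} by shifting between the $d/(2m)$-admissible and $d/2$-admissible lines via a Sobolev embedding in space. Given a $d/2$-admissible pair $(q,r)$, I introduce $r_1$ defined by $(q, r_1)$ being $d/(2m)$-admissible, i.e.\ $d/r_1 = d/2 - 2m/q$. The condition $d/r + 2m/q < d$ is precisely what forces $r_1 \ge 2$, and the range constraints on $r$ in parts (1) and (2) then place $(1/r_1, 1/q)$ in the triangles $\intt(OA_{d/2}C)$ and $\intt(ODE_{d/2}A_{d/2})$ of Theorem~\ref{1251816} respectively. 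Note that the Sobolev scaling $|s|/d = 1/r_1 - 1/r$ is satisfied with $|s| = 2(m-1)/q = d(m-1)(1/2 - 1/r)$, matching the index $s$ in the corollary statement up to sign convention.

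First I would treat the free case $H=H_0=(-\Delta)^m$. If $\{f_n\}$ is orthonormal in $\dot H^s$, set $\phi_n = (-\Delta)^{s/2} f_n$, which is orthonormal in $L^2$; by commutation $e^{-itH_0} f_n = (-\Delta)^{-s/2} e^{-itH_0}\phi_n$. Theorem~\ref{1251816} applied to $(q, r_1)$ then yields the orthonormal density bound in $L^{q/2}_t L^{r_1/2}_x$ at Schatten index $\beta_1 = 2r_1/(r_1+2)$. To upgrade the spatial exponent $r_1 \to r$ together with $\beta_1 \to \beta = 2r/(r+2)$, I would lift the pointwise Sobolev embedding $\dot W^{|s|, r_1} \hookrightarrow L^r$ to an operator-level inequality via the Frank–Sabin duality: orthonormal Strichartz is equivalent to a Schatten bound on compositions $W\, e^{-itH} P_{ac}(H)$ with $W$ a multiplier, and inserting a factor $(-\Delta)^{-s/2}$ on the output side is absorbed through Kato–Seiler–Simon type bounds, which simultaneously adjust both the spatial exponent and the Schatten index.

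For $H = H_0 + V$, the hypotheses $\langle H_0 u, u\rangle \lesssim \langle H u, u\rangle$ and $\|H^{1/2}u\|_{L^{2d/(d-2)}} \lesssim \|H_0^{1/2}u\|_{L^{2d/(d-2)}}$, together with the reverse inequalities (obtained from Hardy–Sobolev and interpolation with the $L^2$ bound), yield norm equivalences $\|H^\alpha u\|_{L^p} \asymp \|H_0^\alpha u\|_{L^p}$ on a range of $\alpha$ and $p$ by complex interpolation. This permits replacing $(-\Delta)^{s/2}$ by an appropriate power of $H$ throughout the previous step, so that the fractional power commutes with $e^{-itH}$, and the reduction is completed by invoking Theorem~\ref{1251816} directly for $H$ at the pair $(q, r_1)$ with $\{\phi_n\}$ orthonormal in $L^2$.

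The main obstacle I expect is executing the Sobolev embedding at the Schatten/orthonormal level rather than for single functions, so that the precise exponent $\beta = 2r/(r+2)$ is actually recovered (a naive single-function Sobolev only produces $\beta_1 \ne \beta$). This is the core technical content, and it is handled by complex interpolation of the dual Schatten formulation in the Frank–Sabin framework combined with the Kato–Seiler–Simon inequality. The assumption $m < (d+2)/4$ is what keeps all the intermediate Sobolev and interpolation exponents inside the admissible ranges, and the strict inequality $\beta < q/2$ in part (2) reflects the need to stay away from the Sobolev endpoint of the Frank–Sabin machinery, exactly as in the proof of Theorem~\ref{1251816}(2).
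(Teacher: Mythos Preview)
Your approach is genuinely different from the paper's and has a real gap at the step you yourself flag as the ``main obstacle.'' First a minor point: since $(q,r)$ is $d/2$-admissible and $(q,r_1)$ is $d/(2m)$-admissible with $m\ge 2$, one has $d/r_1 = d/2 - 2m/q < d/2 - 2/q = d/r$, so $r_1 > r$; the Sobolev embedding runs as $\dot W^{|s|,r}\hookrightarrow L^{r_1}$, not the direction you wrote. More importantly, applying Theorem~\ref{1251816} at $(q,r_1)$ yields the Schatten exponent $\beta_1 = 2r_1/(r_1+2) > 2r/(r+2) = \beta$. Passing back to $L^{r/2}_x$ by any Sobolev-type argument on the output side can only keep or worsen the Schatten index; the Kato--Seiler--Simon inequality and the Frank--Sabin duality do not manufacture the sharper $\beta$ from $\beta_1$. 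Indeed, even in the free case $H=H_0$ the $d/2$-admissible orthonormal estimates with $\beta = 2r/(r+2)$ are proved in \cite{BLN} by direct oscillatory-integral/restriction arguments, not by a black-box reduction to the $d/(2m)$-admissible line. So your reduction cannot reach the stated conclusion in part~(1), and the same shortfall propagates to part~(2).

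The paper does not reduce to Theorem~\ref{1251816} at all. It applies the abstract perturbation Theorem~\ref{1271257}, which is designed to carry a derivative weight $H_0^{-\theta}$ through the Duhamel argument. Concretely, with $-\theta = \tfrac{d}{2m}(m-1)(\tfrac12-\tfrac1r)\in(0,\tfrac12]$ one splits $V=v_1^* v_2$ with $|v_1|\lesssim\langle x\rangle^{-m+d(1-m)(1/2-1/r)-\epsilon}$ and $|v_2|\lesssim\langle x\rangle^{-m-d(1-m)(1/2-1/r)-\epsilon}$, and verifies the two smoothing bounds $\|v_1 H_0^{\theta} e^{-isH_0}\|_{L^2_x\to L^2_{t,x}}\lesssim 1$ and $\|v_2 e^{-itH} H_0^{-\theta}\|_{L^2_x\to L^2_{t,x}}\lesssim 1$ by complex interpolation between $r=2$ and $r=\tfrac{2d}{d-2}$, using H\"older, the Sobolev embedding, and the endpoint Strichartz estimates for $H_0$ and $H$. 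The hypotheses $\langle H_0 u,u\rangle\lesssim\langle Hu,u\rangle$ and $\|H^{1/2}u\|_{2d/(d-2)}\lesssim\|H_0^{1/2}u\|_{2d/(d-2)}$ enter precisely to control $H^{-\theta}H_0^{\theta}$ on $L^{2d/(d-2)}$ and $H^{\theta}H_0^{-\theta}$ on $L^2$ in the $r=\tfrac{2d}{d-2}$ endpoint. The free $d/2$-admissible orthonormal Strichartz estimate from \cite{BLN} (already carrying the correct $\beta$) is then fed directly into Theorem~\ref{1271257}.
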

See also Example \ref{1272012} for a sufficient condition of $H$. We also prove the estimates for the higher order or fractional operators with Hardy type potentials (including the critical Hardy potential) in Section 2. As an application of the orthonormal Strichartz estimates, we prove the global existence of a solution to the higher order or fractional Hartree equation with potentials for infinitely many particles. Our target is
\begin{align}
\left\{
\begin{array}{l}
i\partial_t \gamma=[H+w*\rho_{\gamma},\gamma] \\
\gamma(0)=\gamma_0
\end{array}
\right.
\tag{H}\label{2311152204}
\end{align}
Here, $\gamma$ is an operator-valued function and $\rho_{\gamma}$ is its density function. For functions $f$ and $g$, $f*g$ denotes the convolution with respect to the space variable. In the following theorem, see Section 2 for the definition of the Schatten spaces $\mathfrak{S}^{\alpha}$.
\begin{thm} \label{1281958}
Assume $1< \sigma< d/2 $, $2\sigma/q =d(1/2-1/r)$, $r \in [2, \frac{2d}{d-2\sigma})$. Define $\beta^*$ by the relation $d/2\beta^* =1/q +d/r$. We also assume $\beta \in [1, \beta^*)$ if $r< \frac{d+2\sigma-1}{d-1}$ and $\beta \in [1, q/2)$ if $r> \frac{d+2\sigma-1}{d-1}$. For the potential and initial data, we assume $w \in L^{(r/2)'}$ and $\gamma_0 \in \mathfrak{S}^{\beta}$ is self-adjoint. Let $H$ be as in Theorem \ref{1251816} or Theorem \ref{1251821} and $\sigma (H) = \sigma _{\ac} (H)$ (if $H$ is as in Theorem \ref{1251816}, we assume the above condition substituting $m$ for $\sigma$). Then there exists a unique global solution $\gamma \in C(\R: \mathfrak{S}^{\beta})$ to (\ref{2311152204}) such that $\rho_{\gamma} \in L^{q/2} _{{loc}, t} L^{r/2} _x$.
\end{thm}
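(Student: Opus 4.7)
The plan is a contraction fixed-point argument for the Duhamel reformulation of \eqref{2311152204}, combined with the conservation of Schatten norms under a unitary flow to promote local existence to global existence. Writing $U(t) := e^{-itH}$, I seek a fixed point of
\[
\Phi(\gamma)(t) := U(t)\,\gamma_0\, U(t)^* - i \int_0^t U(t-s)\,\bigl[\,w*\rho_\gamma(s),\, \gamma(s)\,\bigr]\, U(t-s)^*\, ds
\]
inside (a ball of) the space
\[
X_T := \bigl\{ \gamma \in C([-T,T];\mathfrak{S}^\beta) : \rho_\gamma \in L^{q/2}_t([-T,T];L^{r/2}_x) \bigr\},
\]
with its natural norm. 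Since $\sigma(H) = \sigma_{\ac}(H)$ the projection $P_{\ac}(H)$ is the identity, so Theorem \ref{1251816} (or its fractional counterpart Theorem \ref{1251821}) applies to $U(t)$ directly and gives $\|\rho_{U(t)\gamma_0 U(t)^*}\|_{L^{q/2}_t L^{r/2}_x} \lesssim \|\gamma_0\|_{\mathfrak{S}^\beta}$; unitarity of $U(t)$ preserves the Schatten norm of the homogeneous piece.

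For the Duhamel term I need an inhomogeneous (retarded) density Strichartz bound. Dualizing Theorem \ref{1251816} via the trace pairing $\langle \rho_\eta, V\rangle = \mathrm{Tr}(V\eta)$ shows that the map $V\mapsto \int_\R U(s)^*\, V(s)\, U(s)\, ds$ (with $V(s,\cdot)$ treated as a multiplication operator) sends $L^{(q/2)'}_t L^{(r/2)'}_x$ into $\mathfrak{S}^{\beta'}$. Composing with the direct estimate in the spirit of a $TT^*$ argument, and invoking the Christ--Kiselev lemma to pass from $\int_\R$ to $\int_0^t$, yields estimates of the form
\[
\bigl\|\rho_{T(R)}\bigr\|_{L^{q/2}_t L^{r/2}_x} + \bigl\|T(R)\bigr\|_{L^\infty_t\mathfrak{S}^\beta} \;\lesssim\; \|R\|_{L^{a}_t \mathfrak{S}^{b}}, \qquad T(R)(t) := \int_0^t U(t-s)\, R(s)\, U(t-s)^*\, ds,
\]
for appropriate conjugate indices $a, b$; the strict inequalities $\beta<\beta^*$ and $\beta<q/2$ in the hypotheses are exactly what keeps us off the Christ--Kiselev endpoint.

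To close the contraction I take $R(s) = [\,w*\rho_\gamma(s),\, \gamma(s)\,]$: Young's inequality gives $\|w*\rho_\gamma\|_{L^{q/2}_t L^\infty_x}\le \|w\|_{L^{(r/2)'}}\|\rho_\gamma\|_{L^{q/2}_t L^{r/2}_x}$, operator H\"older gives $\|(w*\rho_\gamma)\gamma\|_{\mathfrak{S}^\beta}\le \|w*\rho_\gamma\|_{L^\infty_x}\|\gamma\|_{\mathfrak{S}^\beta}$, and an additional H\"older in time produces a small factor $T^\theta$. Together these make $\Phi$ a contraction on a ball of $X_T$ for $T$ small, depending only on $\|\gamma_0\|_{\mathfrak{S}^\beta}$ and $\|w\|_{L^{(r/2)'}}$; local existence and uniqueness follow. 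Global existence is then automatic from the identity $\gamma(t)=W(t,0)\gamma_0 W(t,0)^*$, where $W(t,0)$ is the unitary two-parameter propagator generated by the self-adjoint time-dependent operator $H+w*\rho_\gamma(t)$ (here $w$ is real-valued and $\rho_\gamma$ is real because $\gamma$ is self-adjoint): hence $\|\gamma(t)\|_{\mathfrak{S}^\beta}$ is conserved and the local existence time depends only on this conserved quantity, so the solution can be iterated indefinitely. The principal technical obstacle will be the rigorous derivation of the operator-valued inhomogeneous Strichartz bound on $T(R)$: one must formulate the correct bilinear extension of Theorem \ref{1251816}, match the index relations to the hypotheses on $(q,r,\beta)$, and carefully apply Christ--Kiselev -- the strict inequalities $\beta<\beta^*$ and $\beta<q/2$ are precisely what makes this step succeed.
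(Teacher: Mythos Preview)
Your overall strategy is the same as the paper's: contraction on a space tracking both $\gamma$ and $\rho_\gamma$, duality of the orthonormal Strichartz estimate to obtain an inhomogeneous bound on the Duhamel density, and global existence via the unitary propagator for $H+w*\rho_\gamma(t)$ together with Schatten-norm conservation. Two points deserve more care.

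First, your remark that the strict inequalities $\beta<\beta^*$ and $\beta<q/2$ ``are exactly what keeps us off the Christ--Kiselev endpoint'' is not quite right: those inequalities are the hypotheses under which the homogeneous orthonormal Strichartz estimate (Theorems~\ref{1251816}/\ref{1251821}) is available in the first place. The Schatten Christ--Kiselev lemma (Lemma~\ref{2311152210}) only requires the Schatten exponent $2\beta'$ to lie in $(1,\infty)$, i.e.\ $\beta>1$, which is automatic here.

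Second, and more substantively, you treat the existence of the unitary two-parameter propagator $W(t,s)$ for $H+w*\rho_\gamma(t)$ and the identity $\gamma(t)=W(t,0)\gamma_0 W(t,0)^*$ as given. In the paper this is the content of Lemma~\ref{1281959} and Corollary~\ref{1282056}: one must check that $w*\rho_\gamma\in L^{q/2}_{\mathrm{loc},t}L^\infty_x$ fits the admissibility conditions there and that the Strichartz estimates for $H$ (not just $H_0$) suffice to run the Yajima-type construction. Moreover, since $W(t,0)$ is in general \emph{not} differentiable in $t$, verifying that the fixed point $\gamma(t)$ actually equals $W(t,0)\gamma_0 W(t,0)^*$ requires a direct computation with the integral equations rather than differentiation; the paper flags this as the delicate step and refers to \cite{Ho} for the details. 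You should not skip this.
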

\begin{remark} \label{12131551}
The equation (\ref{2311152204}) is an infinitely many particle version of the corresponding $N$-particle system:
\[
\left\{
\begin{array}{l}
i\partial_t u_j=Hu_j+w*\left(\displaystyle \sum_{k=1}^{k=N} |u_k|^2\right)u_j \\
u_j(0)=\ u_{0,j}
\end{array}
\right.
\]     
for $j=1,2,\dots,N$. See \cite{LS} for the physical background on infinitely many particle systems.
\end{remark}
As another application of the orthonormal Strichartz estimates, we prove the refined Strichartz estimates for higher order operators. We assume the following conditions.
\begin{assump} \label{129026}
In addition to $m< d/2$ and $m \in \N$, 
\begin{itemize}

\item $|V(x)| \lesssim \langle x \rangle^{-\beta}$ for some $\beta >d+3$ if $d$ is odd and for some $\beta > d+4$ if $d$ is even.

\item $\|\langle x \rangle^{1+\epsilon} V\|_{H^{\epsilon}} < \infty$ for some $\epsilon >0$ if $d=4m-1$.

\item If $d>4m-1$, $\|\mathcal{F} (\langle x \rangle^{\sigma} V)\|_{\frac{d-1-\delta}{d-2m-\delta}} < \infty$ holds for some $\sigma > \frac{2d-4m}{d-1-\delta}$ and sufficiently small $\delta>0$.

\item $H=(-\Delta)^m +V$ has neither positive eigenvalues nor zero resonances.
\end{itemize}
\end{assump}
Our result is an improvement of the ordinary Strichartz estimates (see e.g. \cite{MY2}) in terms of the Besov spaces.
\begin{thm} \label{1291939}
Assume $H$ is as in Assumption \ref{129026}, $\langle Hu, u \rangle \approx \langle H_0 u, u\rangle$, $\|H^{1/2} u\|_{\frac{2d}{d-2}} \lesssim \|H^{1/2} _0 u\|_{\frac{2d}{d-2}}$. If $r, q, \beta, s$ satisfy the conditions in Corollary \ref{1271749}, we have
\[\|e^{-itH} u\|_{L^q _t L^r _x} \lesssim \|u\|_{\dot{B}^s _{2,2\beta}}.\]
\end{thm}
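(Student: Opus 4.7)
The plan is to deduce the refined Strichartz estimate from the orthonormal Strichartz estimate of Corollary~\ref{1271749} applied to a spectral Littlewood--Paley decomposition of $u$, in the spirit of the Frank--Sabin type argument used by \cite{FS} and \cite{BLN} in the free case.

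Fix a dyadic cutoff $\phi \in C_c^\infty((1/2,2))$ and set $P_k := \phi(2^{-k} H^{1/(2m)})$. Since $P_k$ is a function of $H$ it commutes with $e^{-itH}$, and the pieces $\{P_k u\}$ have pairwise disjoint spectral supports with respect to $H$, hence are pairwise orthogonal both in $L^2$ and in $\mathrm{Dom}(H^{s/2})$. Under the hypotheses $\langle Hu,u\rangle \approx \langle H_0 u,u\rangle$ and $\|H^{1/2}u\|_{L^{2d/(d-2)}}\lesssim\|H_0^{1/2}u\|_{L^{2d/(d-2)}}$, combined with Assumption~\ref{129026} on the regularity and decay of $V$, I would prove via a H\"ormander-type spectral multiplier theorem for $H$ both a Littlewood--Paley square function estimate
\[
\|f\|_{L^r_x} \approx \Bigl\|\bigl(\textstyle\sum_k |P_k f|^2\bigr)^{1/2}\Bigr\|_{L^r_x}, \qquad r\in\bigl[2,\tfrac{2d}{d-2}\bigr],
\]
and the equivalence $\bigl(\sum_k 2^{2ks\beta}\|P_k u\|_{L^2}^{2\beta}\bigr)^{1/(2\beta)} \approx \|u\|_{\dot B^s_{2,2\beta}}$ in the range of $s$ appearing in Corollary~\ref{1271749}.

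Granting these ingredients, the argument runs as follows. Applying the square function estimate to $e^{-itH}u$ at each time, taking the $L^{q/2}_t$ norm, and using $P_k e^{-itH} = e^{-itH} P_k$ gives
\[
\|e^{-itH}u\|_{L^q_t L^r_x}^2 \lesssim \Bigl\|\textstyle\sum_k |e^{-itH}P_k u|^2\Bigr\|_{L^{q/2}_t L^{r/2}_x}.
\]
Set $\lambda_k := \|H^{s/2}P_k u\|_{L^2} \approx 2^{ks}\|P_k u\|_{L^2}$ and $g_k := P_k u/\lambda_k$. Then $\{g_k\}$ forms an orthonormal system in $\mathrm{Dom}(H^{s/2})$, which by the equivalence of $H$-adapted and standard Sobolev norms may be treated as orthonormal in $\dot H^s$. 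Applying Corollary~\ref{1271749} with coefficients $\nu_k := \lambda_k^2$ yields
\[
\Bigl\|\textstyle\sum_k \lambda_k^2 |e^{-itH}g_k|^2\Bigr\|_{L^{q/2}_t L^{r/2}_x} \lesssim \Bigl(\textstyle\sum_k \lambda_k^{2\beta}\Bigr)^{1/\beta} \approx \|u\|_{\dot B^s_{2,2\beta}}^2,
\]
and combined with the preceding display this completes the proof.

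The main obstacle is the pair of analytic ingredients in the second paragraph: the $L^r_x$ square function estimate for the spectral projectors of $H$ throughout the range $r\in[2,\tfrac{2d}{d-2}]$, and the comparability of the $H$-adapted and standard homogeneous Besov norms. Both reduce to a H\"ormander-type spectral multiplier theorem for $H$, which is precisely where Assumption~\ref{129026} on $V$ is used, through heat-kernel or dispersive bounds that upgrade the trivial $L^2$ functional calculus of $H$ to $L^r$ for $r\neq 2$. Once these are secured, the Besov-refined bound follows mechanically from Corollary~\ref{1271749} and the pairwise orthogonality of the $H$-spectral Littlewood--Paley pieces.
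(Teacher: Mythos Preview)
Your overall architecture matches the paper's: Littlewood--Paley for $H$, then the orthonormal Strichartz estimate of Corollary~\ref{1271749} on the spectral pieces, then a comparison of the resulting $H$-adapted Besov norm with the standard $\dot B^s_{2,2\beta}$. Two points deserve comment.

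First, the analytic ingredients. You propose to obtain the square-function estimate and the Besov comparison from a H\"ormander-type spectral multiplier theorem via heat-kernel or dispersive bounds. The paper takes a more direct route that is in fact the entire purpose of Assumption~\ref{129026}: under those hypotheses the wave operators $W_\pm$ are bounded on $L^p$ for $1<p<\infty$ (by \cite{EG}), and the intertwining relation $f(H)=W_\pm f(H_0)W_\pm^*$ immediately transfers both the Littlewood--Paley theorem (Proposition~\ref{1291227}) and the Besov comparison (Lemma~\ref{1291301}) from $H_0$ to $H$. Concretely, the paper computes $\|H^{s/2m}\phi_j(H^{1/2m})u\|_2=\|W_\pm|D|^s\phi_j(|D|)W_\pm^*u\|_2\le\||D|^s\phi_j(|D|)W_\pm^*u\|_2\lesssim 2^{js}\|\phi_j(|D|)W_\pm^*u\|_2$, and then uses the $\dot B^s_{2,2\beta}$-boundedness of $W_\pm^*$. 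Your heat-kernel route is plausible but not established in this setting, whereas the wave-operator route is a citation.

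Second, a genuine imprecision in your argument: norm equivalence of the $H$-adapted and standard $\dot H^s$ spaces does \emph{not} let you ``treat $\{g_k\}$ as orthonormal in $\dot H^s$''. The pieces $\phi_j(H^{1/2m})u$ are orthogonal in any $H$-adapted inner product (for $|j-k|\ge2$; note your claim of pairwise disjoint spectral supports is false for adjacent indices, and the paper splits into even and odd $j$ for this reason), but there is no reason for $|D|^s\phi_j(H^{1/2m})u$ to be orthogonal in $L^2$. The fix is to observe that the proof of Corollary~\ref{1271749} actually yields $\|fe^{-itH}H_0^{-\theta}\|_{\mathfrak{S}^{2\beta'}}\lesssim\|f\|$; since $\langle Hu,u\rangle\approx\langle H_0u,u\rangle$ gives $H_0^\theta H^{-\theta}\in\mathcal B(L^2)$, one also has $\|fe^{-itH}H^{-\theta}\|_{\mathfrak{S}^{2\beta'}}\lesssim\|f\|$, and by duality this is exactly the orthonormal estimate for systems orthonormal in the $H$-adapted space. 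With this adjustment your argument goes through.
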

For the sufficient condition of $\|H^{1/2} u\|_{\frac{2d}{d-2}} \lesssim \|H^{1/2} _0 u\|_{\frac{2d}{d-2}}$, see Example \ref{1272012}. We also give a result for the case of $d/{2m}$ admissible pairs in Section 2. Our next results concern the wave and Klein-Gordon equations with inverse square type potentials. In the following, $M^{p, q}$ denotes the Morrey-Campanato spaces. 
\begin{corollary}\label{12131724}
Let $H= H_0 +V$, $H_0 = -\Delta$, $V:\mathbb{R}^d \rightarrow \mathbb{R}$, $d \ge3$.
We assume $V\in X^\sigma_d :=\{ V:\mathbb{R}^d \rightarrow \mathbb{R} \mid |x|V \in M^{d,2\sigma} ,x\cdot \nabla V \in M^{d/2,\sigma}  \} $ for some $ \sigma \in (\frac{d-1}{2},\frac{d}{2})$,
and 
\begin{align*}
\langle Hf,f\rangle  \approx \| \nabla f \|^2_2,
\quad 
\langle (-\Delta-V-x\cdot \nabla V)f,f \rangle \gtrsim \| \nabla f \|^2_2 
\end{align*}
hold. Then 
\begin{align}
\left\| \sum_{n=0}^ \infty{\nu_n|e^{-it\sqrt{H}} f_n|^2} \right\|_{L^{p}_t L^{q}_x} \lesssim \| \nu_n\|_{\ell^\alpha} \label{12131753}
\end{align}
holds for $(2p, 2q)$: $\frac{d-1}{2}$ admissible pair satisfying $1\le q \le \frac{d+1}{d-1}$, $\{f_n\}$: orthonormal system in $\dot{H}^s$, $s= \frac{d+1}{2} (\frac{1}{2} -\frac{1}{2q})$, $\alpha = \frac{2q}{q+1}$.
\end{corollary}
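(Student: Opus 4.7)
The plan is to reduce this corollary to a general orthonormal Strichartz theorem for the half-wave propagator $e^{-it\sqrt{H}}$ (which the paper formulates, in parallel with Theorem~\ref{1251816}, in terms of a Kato-smoothing hypothesis on $H$ plus the known orthonormal Strichartz estimate for $e^{-it\sqrt{H_0}}$). For $\frac{d-1}{2}$-admissible pairs $(2p,2q)$ with $q \le \frac{d+1}{d-1}$ and $\alpha = \frac{2q}{q+1}$, the free orthonormal Strichartz estimate at the required Sobolev regularity $s = \frac{d+1}{2}(\frac{1}{2}-\frac{1}{2q})$ is in \cite{FS, BKS1, BLN}, so the real task is to verify the smoothing hypothesis for potentials in $X^\sigma_d$.

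The key input is the Morawetz-type smoothing estimate
\[\|\,|x|^{-1/2} e^{-it\sqrt{H}} f\|_{L^2_t L^2_x} \lesssim \|f\|_{\dot{H}^{1/2}},\]
together with its Morrey-Campanato refinement adapted to the norms defining $X^\sigma_d$. This follows from the two hypotheses by the standard virial/multiplier argument applied to the half-wave equation $i\partial_t u = \sqrt{H}\,u$: the equivalence $\langle Hf,f\rangle \approx \|\nabla f\|^2_2$ identifies the $H$-energy space with $\dot{H}^{1/2}$ at the natural scale, while the positivity $\langle(-\Delta - V - x\cdot \nabla V)f,f\rangle \gtrsim \|\nabla f\|^2_2$ is exactly what forces the correct sign when one integrates in time the commutator of the dilation-type multiplier $A = \tfrac{1}{2}(x\cdot\nabla|x|^{-1} + |x|^{-1}x\cdot\nabla)$ with $\sqrt{H}$. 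The $X^\sigma_d$ control on $|x|V$ and $x\cdot\nabla V$ bounds the remaining error terms in the correct Morrey-Campanato norm, following the Burq-Planchon-Stalker-Tahvildar-Zadeh scheme.

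The scalar smoothing is then upgraded to the Schatten-valued form required by Frank-Sabin duality through the $TT^*$ identity, which reduces it to a Schatten bound on $W(\sqrt{H}-z)^{-1}W^*$ in a half-plane; by the resolvent identity this is transferred from the analogous free bound. Plugging this Schatten-smoothing and the free orthonormal Strichartz estimate into the Duhamel/Stone-formula expansion of $e^{-it\sqrt{H}}$, and using Christ-Kiselev to handle the retarded time integral off the endpoint, produces (\ref{12131753}).

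The main obstacle is the Keel-Tao-type endpoint $q = p = \frac{d+1}{d-1}$ sitting at the top of the admissible range, where Christ-Kiselev fails and one must control the retarded Schatten bound directly; this requires the sharp weak-type Frank-Sabin estimate tailored to the Morrey-Campanato weight $|x|V \in M^{d,2\sigma}$, together with uniform-in-$z$ Schatten bounds on $W(\sqrt{H_0}-z)^{-1}W^*$ of the right exponent. That uniform resolvent-Schatten bound, rather than any step of the perturbative reduction, is the technical core of the proof.
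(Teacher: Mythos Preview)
Your proposal has two concrete gaps. First, the ``key input'' you write down,
\[
\|\,|x|^{-1/2} e^{-it\sqrt{H}} f\|_{L^2_t L^2_x} \lesssim \|f\|_{\dot H^{1/2}},
\]
fails the scaling test (left side scales like $\lambda^{1/2-(d+1)/2}$, right side like $\lambda^{1/2-d/2}$) and is therefore false as a global estimate. The weight that actually works is $|x|^{-1}$, and the paper never proves half-wave smoothing directly: it factors $V=v_1^*v_2$ with $v_1=|x|^{-1}$, $v_2=|x|V$, quotes \cite{BM} for the \emph{Schr\"odinger} smoothness ($|x|^{-1}$ is $H_0$-smooth, $|x|V$ is $H$-smooth under exactly the $X^\sigma_d$ and virial hypotheses), checks $H^{1/4}H_0^{-1/4}\in\mathcal B(L^2)$ from $\langle Hf,f\rangle\lesssim\|\nabla f\|_2^2$, and then feeds all of this into the abstract Theorem~\ref{1292322}, whose proof converts Schr\"odinger-smoothness to $\sqrt{H}$-smoothness via Lemma~\ref{1212101}. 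No Schatten resolvent bounds or ``Schatten-smoothing upgrade'' are needed: the perturbation step in Theorem~\ref{1292322} uses only operator-norm smoothing, with the Schatten input coming solely from the free estimate via Lemma~\ref{1113719}.

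Second, your ``main obstacle'' is inverted. The point $q=\tfrac{d+1}{d-1}$ is precisely where $s=\tfrac12$, so that the orthonormal system sits in $D(H_0^{1/4})=\dot H^{1/2}$ and Theorem~\ref{1292322} applies verbatim, using the free estimate from \cite{BLN}; there is no Christ--Kiselev failure (the Schatten version, Lemma~\ref{2311152210}, only needs $\alpha>1$, which holds since $\alpha=\tfrac{2q}{q+1}>1$ for $q>1$). The genuine issue is the \emph{opposite} end: for $1<q<\tfrac{d+1}{d-1}$ one has $s<\tfrac12$ and Theorem~\ref{1292322} does not apply directly. The paper handles this in one line by interpolating the $q=\tfrac{d+1}{d-1}$ estimate with the trivial $q=1$ case.
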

\begin{corollary}\label{12131859}
Let $H= H_0 +V$, $H_0 = -\Delta$, $V:\mathbb{R}^d \rightarrow \mathbb{R}$, $d \ge3$.
We assume $V\in X^\sigma_d :=\{ V:\mathbb{R}^d \rightarrow \mathbb{R} \mid |x|V \in M^{d,2\sigma} ,x\cdot \nabla V \in M^{d/2,\sigma}  \} $ for some $ \sigma \in (\frac{d-1}{2},\frac{d}{2})$,
and 
\begin{align*}
\langle Hf,f\rangle  \gtrsim \| \nabla f \|^2_2,
\quad 
\langle (-\Delta-V-x\cdot \nabla V)f,f \rangle \gtrsim \| \nabla f \|^2_2 ,
\quad
\langle Vu, u\rangle \lesssim \|u\|^2 _{H^1}
\end{align*}
hold. Then we have
\begin{align}
\left\| \sum_{n=0}^ \infty{\nu_n|e^{-it\sqrt{H+1}} f_n|^2} \right\|_{L^{p}_t L^{q}_x} \lesssim \| \nu_n\|_{\ell^\alpha} \label{12131926}
\end{align}
if either of the following holds.
\begin{enumerate}
\item $(2p, 2q)$: $\frac{d-1}{2}$ admissible pair satisfying $1\le q \le \frac{d+1}{d-1}$, $\{f_n\}$: orthonormal system in $H^s$, $s= \frac{d+1}{2} (\frac{1}{2} -\frac{1}{2q})$, $\alpha = \frac{2q}{q+1}$.
\item $(2p, 2q)$: $\frac{d}{2}$ admissible pair satisfying $1\le q \le \frac{d+2}{d}$, $\{f_n\}$: orthonormal system in $H^s$, $s= \frac{d+2}{2} (\frac{1}{2} -\frac{1}{2q})$, $\alpha = \frac{2q}{q+1}$.
\end{enumerate}
\end{corollary}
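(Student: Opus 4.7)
The plan is to decompose the Klein--Gordon propagator $e^{-it\sqrt{H+1}}$ via the functional calculus of $H$ into a high-spectrum part behaving like the wave propagator $e^{-it\sqrt{H}}$ and a low-spectrum part behaving like the Schr\"odinger propagator $e^{-itH}$. Case~(1), the $\tfrac{d-1}{2}$-admissible range, is inherited from the wave regime, whereas case~(2), the $\tfrac{d}{2}$-admissible range, is inherited from the Schr\"odinger regime that only becomes visible at low spectrum.

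Fix a smooth partition of unity $1 = \chi_0(\lambda) + \chi_1(\lambda)$ on $[0,\infty)$ with $\supp \chi_0 \subset [0,2]$ and $\supp \chi_1 \subset [1,\infty)$, and define $P_j := \chi_j(H)$ by the spectral theorem (applicable because $\sigma(H)\subset [0,\infty)$ by $\langle Hf,f\rangle \gtrsim \|\nabla f\|_2^2$). On the range of $P_1$ I factor
\[ e^{-it\sqrt{H+1}} P_1 = e^{-it\sqrt{H}}\, e^{-it r(H)} P_1, \qquad r(\lambda) = \frac{1}{\sqrt{\lambda+1}+\sqrt{\lambda}}, \]
where $r$ is smooth and bounded on $[1,\infty)$, so $e^{-itr(H)}P_1$ is uniformly bounded on $L^2$ and on $H^s$ in $t$ by functional calculus. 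The orthonormal Strichartz estimate for this piece then reduces to Corollary~\ref{12131724} for $e^{-it\sqrt{H}}$, using $\|P_1 f\|_{H^s}\approx \|P_1 f\|_{\dot H^s}$. This disposes of case~(1) and of the high-spectrum contribution in case~(2).

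For the low-spectrum piece $e^{-it\sqrt{H+1}}P_0$, write $\sqrt{H+1} = 1 + \psi(H)$ with $\psi(\lambda) = \sqrt{\lambda+1}-1$, a smooth diffeomorphism of $[0,2]$ satisfying $\psi(\lambda) \sim \lambda/2$ as $\lambda \to 0$. A change of variables $\mu = \psi(\lambda)$ in the spectral representation transfers Strichartz-type dispersive bounds for $e^{-itH}P_0$ to $e^{-it\psi(H)}P_0$. The underlying Schr\"odinger orthonormal Strichartz estimate at $\tfrac{d}{2}$-admissible pairs is provided by Theorem~\ref{1251821} applied with $m=1$, and the hypothesis $\langle Vu,u\rangle \lesssim \|u\|_{H^1}^2$ yields the equivalence $\|(H+1)^{s/2}f\|_{L^2} \approx \|f\|_{H^s}$, which matches the inhomogeneous regularity imposed on $\{f_n\}$.

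The main technical obstacle is carrying out the spectral reparametrization $\mu = \psi(\lambda)$ at the orthonormal (Schatten) level rather than merely for a single datum: one must check that the Kato smoothness estimates underlying Theorem~\ref{1251821} compose cleanly with the Jacobian $\psi'(\lambda)^{-1/2}$ introduced by the change of variables in the spectral integral, so that the dual $\mathfrak{S}^{\alpha'}$-bound on $W e^{-it\sqrt{H+1}}(W e^{-it\sqrt{H+1}})^*$ is preserved. Once this is verified, splitting each $f_n$ as $P_0 f_n + P_1 f_n$ and recombining by the triangle inequality in $\mathfrak{S}^\alpha$ yields the claimed bounds in both cases.
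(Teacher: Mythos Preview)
Your high/low spectral splitting is a natural heuristic, but it does not close at the orthonormal level, and the paper proceeds quite differently.

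The central gap is the treatment of the $t$-dependent phase. The factorization
\[
e^{-it\sqrt{H+1}}P_1 \;=\; e^{-it\sqrt H}\,e^{-itr(H)}P_1
\]
is algebraically correct, but to invoke Corollary~\ref{12131724} you would need $\{e^{-itr(H)}P_1 f_n\}_n$ to be a \emph{fixed} orthonormal system in $\dot H^s$; it depends on $t$, so the orthonormal estimate for $e^{-it\sqrt H}$ does not apply. In the dual (Schatten) formulation this manifests as the fact that $e^{-itr(H)}$ commutes with neither the multiplication operator $f(t,x)$ nor with $(H_0+1)^{-s/2}$, so the $\mathfrak S^{2\alpha'}$ bound for the wave piece cannot be transported. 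The same obstruction hits the low-spectrum reparametrization $\mu=\psi(\lambda)$, which you yourself flag as unresolved. There is a second problem specific to case~(2): Corollary~\ref{12131724} only furnishes $\tfrac{d-1}{2}$-admissible estimates for $e^{-it\sqrt H}$, so even granting the phase issue it would \emph{not} cover the high-spectrum contribution at $\tfrac{d}{2}$-admissible pairs.

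The paper avoids all of this by never splitting in spectrum and never reducing to wave or Schr\"odinger propagators. It applies the abstract perturbation Theorem~\ref{1212216} directly to the Klein--Gordon group: from $\langle Vu,u\rangle\lesssim\|u\|_{H^1}^2$ one gets $(H+1)^{1/4}(H_0+1)^{-1/4}\in\mathcal B(L^2)$, the $H_0$- and $H$-smoothness of $|x|^{-1}$ and $|x|V$ are exactly those already verified for Corollary~\ref{12131724}, and the free input is the orthonormal Strichartz estimate for $e^{-it\sqrt{1-\Delta}}$ from \cite{BLN}, which already contains \emph{both} the $\tfrac{d-1}{2}$- and the $\tfrac{d}{2}$-admissible ranges. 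No frequency decomposition is needed.
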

We also give some results on the magnetic Schr\"odinger operator and the Schr\"odinger operator with critical inverse square potentials in Section 3.
However, the above estimates are possible only for restricted pairs $(p, q)$. For the Klein-Gordon equation, we give a result for more general admissible pairs but its proof is based on the microlocal analysis and we assume that electromagnetic potentials are smooth functions.
\begin{corollary}\label{1216058}
Assume $d \ge 3$, $H=(D+A)^2 +V$, $A \in C^{\infty} (\R^d ; \R^d)$, $V \in C^{\infty} (\R^d ; \R)$ and they satisfy
\[|\partial^{\alpha} _x A(x)| \lesssim \langle x \rangle^{-2-\epsilon -|\alpha|}, \quad |\partial^{\alpha} _x V(x)| \lesssim \langle x \rangle^{-2-\epsilon -|\alpha|} \]
for some $\epsilon >0$. Furthermore we assume $H \ge \gamma >-1$ for some $\gamma \in \R$, zero is a regular point of $H$, $\sigma (H)=\sigma_{\ac} (H)$ and $\sigma^{W} (H) +1 \gtrsim \langle \xi \rangle^{2}$. Here $\sigma^{W} (H)$ denotes the symbol of $H$ when we think of it as a Weyl quantization. Then
\[\left\| \sum_{n=0}^ \infty{\nu_n|e^{-it\sqrt{H +1}} f_n|^2} \right\|_{L^{q/2}_t L^{r/2}_x} \lesssim \| \nu_n\|_{\ell^\beta}\]
holds for $q, r, \beta, \{f_n\}$ satisfying either of the following:
\begin{enumerate}
\item $(q, r)$ is a $d/2$-admissible pair satisfying $2 \le r < \frac{2(d+1)}{d-1}$. $\{f_n\}$ is an orthonormal system in $H^s$ with $s=\frac{d+2}{2} (1/2 -1/r)$. $\beta = \frac{2r}{r+2}$.

\item $(q, r)$ is a $d/2$-admissible pair satisfying $\frac{2(d+1)}{d-1} \le r \le \frac{2d}{d-2}$. $\{f_n\}$ is an orthonormal system in $H^s$ with $s=\frac{d+2}{2} (1/2 -1/r)$. $1 \le \beta < q/2$.

\item $(q, r)$ is a $\frac{d-1}{2}$-admissible pair satisfying $2 \le r < \frac{2d}{d-2}$. $\{f_n\}$ is an orthonormal system in $H^s$ with $s=\frac{d+1}{2} (1/2 -1/r)$. $\beta = \frac{2r}{r+2}$.

\item $(q, r)$ is a $\frac{d-1}{2}$-admissible pair. We assume $r \in [6, \infty)$ if $d=3$ and $\frac{2d}{d-2} \le r \le \frac{2(d-1)}{d-3}$ if $d \ge 4$. $\{f_n\}$ is an orthonormal system in $H^s$ with $s=\frac{d+1}{2} (1/2 -1/r)$. $1 \le \beta < q/2$.
\end{enumerate}
\end{corollary}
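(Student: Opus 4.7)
My plan is to combine the Frank--Sabin duality principle with a microlocal parametrix that transfers the known free-case orthonormal Klein--Gordon estimates to the perturbed setting. The four cases in the statement match exactly the admissible ranges for the free equation proved in \cite{BLN} and \cite{BKS1}, so microlocal analysis plays only the role of the conjugation.

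First, by the duality of \cite{FS} and \cite{BHLNS}, each estimate is equivalent to a Schatten bound of the form
\[ \bigl\| W (H+1)^{-s/2} e^{-it\sqrt{H+1}} P_{ac}(H) e^{is\sqrt{H+1}} (H+1)^{-s/2} \overline{W}\bigr\|_{\mathfrak{S}^{2\beta'}} \lesssim \|W\|^2_{L^{2(q/2)'}_t L^{2(r/2)'}_x}, \]
where the $(H+1)^{-s/2}$ factors come from dualizing the $H^s$-norm on the orthonormal system. Under the smoothness and decay hypotheses on $A,V$ together with $\sigma^W(H)+1\gtrsim \langle \xi\rangle^2$, the Weyl calculus yields $\sqrt{H+1}\in\mathrm{OPS}^1$ with principal symbol a short-range perturbation of $\langle \xi\rangle$. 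I would then construct Isozaki--Kitada type outgoing and incoming Fourier integral operators $J_{\pm}$ with phases solving the eikonal equation associated with $\sqrt{\sigma^W(H)+1}$, such that
\[ e^{-it\sqrt{H+1}} P_{ac}(H) = J_{\pm} e^{-it\sqrt{-\Delta+1}} J_{\pm}^{*} + R_{\pm}(t), \]
with $R_\pm(t)$ short-range in the Kato-smoothing sense. The assumption $\sigma(H)=\sigma_{\ac}(H)$ ensures that this decomposition captures the full spectral content of $P_{ac}(H)$, and the regularity of zero rules out the usual low-energy obstruction.

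Once the parametrix is in place, the Schatten bound reduces to the free-case analogue (which is known in all four admissible ranges) together with Schatten bounds for the composition $W (H+1)^{-s/2} J_\pm$ and for the remainder $R_\pm$. The main obstacle is the Schatten control of the FIO pieces: pseudodifferential $L^2$-boundedness is insufficient, and the correct exponent $2\beta'$ must be obtained by interpolating a Hilbert--Schmidt ($\mathfrak{S}^2$) estimate computed directly from the kernel of $J_\pm$ against a trivial $\mathfrak{S}^{\infty}$ bound, while verifying that the phase of $J_\pm$ does not distort the spacetime norm of $W$. The remainder $R_\pm$ is absorbed using the Kato-smoothing arguments of \cite{Ho}. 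Cases (2) and (4), with strict inequality $\beta<q/2$, accommodate an $\epsilon$-loss coming from the Littlewood--Paley decomposition linking the low-frequency regime $\sqrt{H+1}\sim 1$ (Schr\"odinger-type, giving the $d/2$-admissible line) to the high-frequency regime $\sqrt{H+1}\sim\sqrt{H}$ (wave-type, giving the $(d-1)/2$-admissible line); cases (1) and (3) require no such loss.
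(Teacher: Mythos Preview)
Your approach is genuinely different from the paper's, and the gap lies precisely where you flag the ``main obstacle.''

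The paper does \emph{not} build a propagator-level parametrix. It uses the Duhamel formula for $e^{-it\sqrt{H+1}}$ in terms of $e^{-it\sqrt{H_0+1}}$ and the perturbation $\tilde V = DA+AD+|A|^2+V$, packaged as an abstract theorem (Theorem~\ref{12141937}). That theorem reduces the orthonormal estimate to the free one together with a handful of $L^2$-operator bounds, the only nontrivial one being
\[
\langle D\rangle^{-1/2}\langle x\rangle^{1+\epsilon'} (H+1)^{-\theta+1/4}\,\tilde V\,\langle D\rangle^{2(\theta-1/4)}\langle D\rangle^{-1/2}\langle x\rangle^{1+\epsilon'}\in\mathcal{B}(L^2),
\]
with $w=W=\langle x\rangle^{-1-\epsilon'}\langle D\rangle^{1/2}$ providing the Kato smoothing for both $H_0$ and $H$. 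Microlocal analysis enters only to check this single bound: one writes $(H+1)^{-\theta+1/4}$ via the Cauchy integral $\int_\Gamma z^{-s}(H+1-z)^{-1}dz$ and constructs a Weyl parametrix for the resolvent, obtaining $(H+1)^{-\theta+1/4}=a^W(x,D_x)+\text{remainder}$ with $a\in S(\langle\xi\rangle^{-2\theta+1/2},g)$. No Schatten estimate beyond the free one is needed at any stage.

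Your Isozaki--Kitada route would have to produce Schatten-$\mathfrak{S}^{2\beta'}$ bounds for $W(H+1)^{-s/2}J_\pm$ as an operator $L^2_x\to L^2_{t,x}$, and your proposed mechanism---interpolating a kernel Hilbert--Schmidt bound against $\mathfrak{S}^\infty$---does not work: $J_\pm$ are order-zero FIOs with distributional kernels, not Hilbert--Schmidt, and composing with a spacetime multiplier $W$ does not localize the kernel enough to recover $\mathfrak{S}^2$. The Schatten exponent $2\beta'$ in the free estimate comes from analytic interpolation of oscillatory integral operators with complex powers (as in \cite{FS}, \cite{BLN}), and there is no known black-box transfer of that structure through an FIO conjugation. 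Your reading of the strict inequality $\beta<q/2$ in cases~(2) and~(4) is also off: it is already present in the free estimates of \cite{BLN} and is not an artifact of any perturbation loss; the paper's argument treats all four cases uniformly once $s\in[1/2,1]$ (cases with $s\le 1/2$ were already handled by the simpler Theorem~\ref{1212216}, cf.\ Corollary~\ref{12131921}).
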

We note that though the above results do not contain the endpoint estimates, they are reduced to the ordinary endpoint Strichartz estimates. This is because the orthonormal Strichartz estimates with $\beta =1$ are proved by applying the triangle inequality to the ordinary endpoint Strichartz estimates. In Appendix A and B, we give some examples of the endpoint Strichartz estimates for the wave and Klein-Gordon equations which can be deduced by an abstract perturbation theorem.

As an application of the above results, we also prove the local existence of a solution to the semi-relativistic Hartree equation with electromagnetic potentials for infinitely many particles. Similarly to the case of the higher order or fractional Schr\"odinger operator, we also prove the refined Strichartz estimates for both the wave and Klein-Gordon equations. See Section 3 for more details about this topics. As our final result on the orthonormal Strichartz estimates, we prove the estimates for the massive Dirac operator with potentials. See Section 3 for notations used in the following theorem.
\begin{thm} \label{12212051}
Assume $V(x)$ is a $\C$-valued $4 \times 4$ matrix satisfying $V(x) = V^* (x)$. Furthermore we assume $V \in C^{\infty} (\R^3; \C^4)$, $|\partial^{\alpha} _x V(x)| \lesssim \langle x \rangle^{-2-|\alpha|}$ for all $\alpha \in \N^{3} _{0}$ and $|\partial^{\beta} _x V(x)| \lesssim \epsilon \langle x \rangle^{-2-|\beta|}$ for all $|\beta| \le 1$ and sufficiently small $0<\epsilon$. Then $\mathcal{D} + \beta +V$ is a self-adjoint operator on $\mathcal{H}$ and it satisfies $\sigma (\mathcal{D} + \beta +V) =\sigma_{\ac} (\mathcal{D} + \beta +V) = \R \setminus (-1, 1)$ and $(\mathcal{D} + \beta +V)^2 \approx (\mathcal{D} + \beta)^2 = (1-\Delta)I_4$. Furthermore we have
\[\left\| \sum_{n=0}^{\infty} \nu_n |e^{-it(\mathcal{D} + \beta +V)} g_n|^2 \right\|_{L^{q/2} _t L^{r/2} _x} \lesssim \|\{\nu_n\}\|_{\ell^{\beta}}\]
for all orthonormal systems $\{g_n\}$ in $H^s (\R^3 ; \C^4)$. Here $q, r, \beta, s$ are as in Theorem \ref{12211346}.
\end{thm}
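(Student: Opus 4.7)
My plan is to reduce the Dirac orthonormal Strichartz estimate to the Klein--Gordon orthonormal Strichartz estimate of Theorem \ref{12211346} by exploiting the squaring identity for the Dirac operator. The core observation is that $(\mathcal{D}+\beta+V)^2$ is a matrix Schr\"odinger operator close to $(1-\Delta)I_4$, so $e^{-itH_V}$ can be rewritten in terms of the Klein--Gordon-type propagator $e^{\pm it\sqrt{K}}$ with $K:=H_V^2$.

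First I would verify that $H_V := \mathcal{D}+\beta+V$ is self-adjoint on $H^1(\R^3;\C^4)$ by Kato--Rellich, which applies because $V$ is a bounded multiplication operator under the assumed decay. A direct calculation gives
\[(\mathcal{D}+\beta+V)^2 \;=\; (\mathcal{D}+\beta)^2 + \{\mathcal{D}+\beta,V\} + V^2 \;=\; (1-\Delta)I_4 + W,\]
where $W$ is a first-order matrix differential operator whose coefficients are built from $V$ and $\nabla V$ and decay like $\langle x\rangle^{-2}$. The smallness assumption on the low-order derivatives of $V$, combined with a KLMN-type form argument, then yields the equivalence $H_V^2 \approx (1-\Delta)I_4$ asserted in the theorem.

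Next, for the spectral structure $\sigma(H_V)=\sigma_{\ac}(H_V)=\R\setminus(-1,1)$, I would pass to $K=H_V^2$: the spectral mapping theorem reduces the problem to showing $\sigma(K)=\sigma_{\ac}(K)=[1,\infty)$ together with the absence of eigenvalues of $H_V$ at $\pm 1$. For $K$, Mourre theory with the generator of dilations $A=\tfrac12(x\cdot D+D\cdot x)$ applies in the matrix-Schr\"odinger setting, giving a strict Mourre estimate on $(1,\infty)$ and hence a limiting absorption principle; absence of embedded eigenvalues and of thresholds-as-eigenvalues follows from standard Carleman-type estimates adapted to $W$, using the smallness and $\langle x\rangle^{-2-|\alpha|}$ decay. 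Finally, setting $P_\pm := \mathbf{1}_{\pm[1,\infty)}(H_V)$, one has $H_V=\sqrt{K}\,P_+-\sqrt{K}\,P_-$ and therefore
\[e^{-itH_V} \;=\; e^{-it\sqrt{K}}P_+ + e^{it\sqrt{K}}P_-.\]
Given the orthonormal system $\{g_n\}$ in $H^s$ with weights $\{\nu_n\}$, I would form $\gamma:=\sum_n \nu_n|g_n\rangle\langle g_n|\in\mathfrak{S}^\beta$ and decompose $\gamma=\sum_{\epsilon,\epsilon'\in\{\pm\}} P_\epsilon \gamma P_{\epsilon'}$. The density of $e^{-itH_V}\gamma e^{itH_V}$ is then a sum of four terms of the form $\rho\bigl(e^{\mp it\sqrt{K}}P_\epsilon\gamma P_{\epsilon'}e^{\pm it\sqrt{K}}\bigr)$, each of which is controlled by Theorem \ref{12211346} applied to $K$; here the Schatten and $H^s$ norms of $P_\epsilon\gamma P_{\epsilon'}$ are controlled by those of $\gamma$ because $P_\pm$ commute with any function of $K$ and $K\approx 1-\Delta$ ensures $P_\pm$ is bounded on $H^s$.

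The principal obstacle is twofold. First, one has to establish the limiting absorption principle for $K$ uniformly up to the threshold $1$, which relies on the smallness hypothesis on low-order derivatives of $V$ rather than just decay; second, one must verify that Theorem \ref{12211346}, presumably stated for scalar operators, extends to the $4\times 4$ matrix setting. The latter typically goes through componentwise, since $K$ differs from $(1-\Delta)I_4$ by a lower-order perturbation and the Kato-smoothing / duality framework underlying the Klein--Gordon estimate is insensitive to bounded matrix coefficients; verifying this carefully, while using the equivalence $H_V^2\approx(1-\Delta)I_4$ to match the $H^s$ spaces on both sides, is the main technical task.
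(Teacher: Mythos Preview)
Your proposal contains a genuine gap: you misidentify what Theorem \ref{12211346} provides. That theorem is the orthonormal Strichartz estimate for the \emph{free} massive Dirac operator $\mathcal{D}+\beta$, not a result for perturbed Klein--Gordon-type operators. So ``Theorem \ref{12211346} applied to $K$'' with $K=(1-\Delta)I_4+W$ is not available; you would instead need a matrix-valued analogue of Corollary \ref{1216058} (the orthonormal Strichartz estimate for $e^{-it\sqrt{H+1}}$ with $H$ a scalar magnetic Schr\"odinger operator), and establishing that is precisely where all the hard work lies. In particular, the proof of Corollary \ref{1216058} already requires the microlocal parametrix machinery for fractional powers; extending it to a $4\times4$ system with first-order off-diagonal perturbation $W$ is not a routine componentwise argument, because $W$ couples the components.

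The paper takes a different and more direct route: it applies the Duhamel formula at the level of the first-order Dirac equation itself, writing $e^{-it(\mathcal{D}+\beta+V)}$ as $e^{-it(\mathcal{D}+\beta)}$ plus a Duhamel integral, and then uses the free Dirac estimate (Theorem \ref{12211346}) together with the vector-valued duality principle (Lemma \ref{1219111}) and the Christ--Kiselev-type lemma. The perturbation term is controlled by Kato smoothing of $\langle x\rangle^{-1}$ for both $\mathcal{D}+\beta$ and $\mathcal{D}+\beta+V$, and the residual operator $\langle x\rangle\{(\mathcal{D}+\beta)^2\}^{s/2}V\{(\mathcal{D}+\beta+V)^2\}^{-s/2}\langle x\rangle$ is shown to be $L^2$-bounded via a parametrix for $\{(\mathcal{D}+\beta+V)^2\}^{-s/2}$ built from diagonal symbols. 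Your squaring strategy is conceptually sound and would ultimately need the same microlocal ingredients, but it relocates rather than avoids them, and as written it rests on a result you do not actually have.
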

Since there seems to be no result on the orthonormal Strichartz estimates for the Dirac operator, even if we assume $V=0$, we first prove the estimates for the free Dirac operator including massless and massive cases in Section 3.

Though there are few results on the orthonormal Strichartz estimates for PDEs with potentials, there are many results on the ordinary Strichartz estimates. For example, see Introduction in \cite{Ho} and references therein for the Schr\"odinger operator. For the wave or Klein-Gordon equation, see Appendix A and B in this paper. There are still many references to be mentioned not only for the ordinary Strichartz estimates but also for other topics e.g. infinitely many particle systems and the refined Strichartz estimates. They are given in each sections below. 

This paper is organized as follows. In Section 2, we prove the orthonormal Strichartz estimates for the higher order or fractional Schr\"odinger operators based on an abstract perturbation method and apply them to the Hartree equations and the refined Strichartz estimates. In Section 3, we prove the estimates for the wave, Klein-Gordon and Dirac equations by using the perturbation method and the microlocal analysis and apply them to the semi-relativistic Hartree equation and the refined Strichartz estimates. In Appendix A and B, we prove the abstract perturbation theorems for the ordinary Strichartz estimates for the wave and Klein-Gordon equations respectively including the endpoint case. We also give their applications to some operators including the magnetic Schr\"odinger operator, higher order or fractional Schr\"odinger operator.
\subsubsection*{\textbf{Notations}}

\begin{itemize}
\item
For a measure space $(X, d\mu)$, $ L^p(X)$ denotes the ordinary Lebesgue space and its norm is denoted by $\| \cdot \|_p$. Similarly, $L^{p, q} (X)$ denotes the ordinary Lorentz space and its norm is denoted by $\|\cdot\|_{p, q}$.

\item
 $\mathcal{F}$ denotes the Fourier transform on $\mathcal{S}'$. Here $\mathcal{S}'$ denotes the set of all the tempered distributions.

\item
 For a Banach space $X$, $L^p_t X$ denotes the set of all the measurable functions $f : \mathbb{R} \rightarrow X$ such that $\|f\|_{L^p_t X} :=(\int_{\mathbb{R}} \|f(t)\|^p_X dt)^{1/p} < \infty$.
 
\item For $p, q \in [1, \infty]$ and $s \in \R$, $B^s _{p, q}$ ($\dot{B}^s _{p, q}$) denotes the ordinary inhomogeneous (homogeneous) Besov space. $H^{s, p}$ ($\dot{H}^{s, p}$) denotes the ordinary $L^p$-inhomogeneous (homogeneous) Sobolev space. 

\item For a self-adjoint operator $H$ and a Borel measurable function $f$, $f(H)$ is defined as $f(H)= \int_{\mathbb{R}} f(\lambda) dE(\lambda)$. Here $E(\lambda)$ is the spectral measure associated to $H$.

\item For a normed space $X$, $\mathcal{B}(X)$ denotes the set of all the bounded operators on $X$.
\end{itemize}

\section{Higher order and fractional Schr\"odinger equation}\label{2311152141}
In this section, we prove the orthonormal Strichartz estimates for the higher order Schr\"odinger operator: $H = (-\Delta)^m +V$, $m \in \N$ and the fractional Schr\"odinger opeartor: $H =(-\Delta)^{\sigma} +V$, $\sigma \in \R$. In these cases, concerning the ordinary Strichartz estimates, the following two types are known if $V$ is a very short range or Hardy type potential (\cite{MY1}, \cite{MY2}):
\begin{align}
& \|e^{-itH} P_{\ac}u\|_{L^p _t L^q _x} \lesssim \|u\|_2 \label{1251455}\\
& \||D|^{2(m -1)/p} e^{-itH} P_{\ac}u\|_{L^p _t L^q _x} \lesssim \|u\|_2 \label{1251456}.
\end{align}
Here $P_{\ac}$ denotes the projection onto the absolutely continuous subspace of $H$. (\ref{1251455}) was proved for the sharp $d/{2m}$ or $d/{2\sigma}$ admissible pairs and (\ref{1251456}) was proved for the sharp $d/2$ admissible pairs including the endpoint. Actually (\ref{1251456}) is stronger than (\ref{1251455}) by the Sobolev embedding. The proof of these estimates are based on the perturbation method by Rodnianski and Schlag \cite{RS} (see also \cite{BM}) and their assumptions on $V$ are the same in (\ref{1251455}) and (\ref{1251456}). However, in the orthonormal Strichartz estimates, our conditions on $V$ are different according to admissible pairs. In the first subsection, we consider the orthonormal Strichartz estimates for the $d/{2m}$ or $d/{2\sigma}$ admissible pair under less assumptions. In the next subsection, we consider the $d/2$ admissible case under more assumptions. We also give two applications of the orthonormal Strichartz estimates. In the third subsection, we prove the global existence of a solution to the higher order infinitely many particle system with potentials. Concerning the Schr\"odinger operator, i.e. $m=1$ , this result was proved in the author's previous research \cite{Ho} including magnetic potentials. However there seems to be no result if $m>1$ even for the local existence of a solution except for $V=0$. In the last subsection, we prove the refined Strichartz estimates in terms of the Besov spaces by using the $L^p$-boundedness of wave operators. 
\subsection{\textbf{$d/{2m}$ or $d/{2\sigma}$ admissible case}}
In addition to very short range potentials mentioned in Section 1, we can treat the Hardy type potentials including $V(x) = a |x|^{-2\sigma}$, $a > -C_{\sigma, d},   C_{\sigma, d} := \left \{\frac{2^{\sigma} \Gamma (\frac{d+2\sigma}{4})}{\Gamma (\frac{d-2\sigma}{4})}\right \}^2$. If $\sigma =1$, the same results are proved in \cite{Ho}.
\begin{thm}\label{1251821}
Assume $H= (-\Delta)^{\sigma} +V$, $1<\sigma <d/2$, $(x \cdot \nabla)^l V \in L^1 _{loc}$ for $l=0, 1, 2$, $(x \cdot \nabla)^l V$ is $(-\Delta)^{\sigma}$-form bounded, $|x|^{2\sigma} V \in L^{\infty}$ and
\begin{align}
&\langle Hu, u \rangle \gtrsim \langle (-\Delta)^{\sigma} u, u \rangle \label{1251831}\\
&\langle [H, iA]u, u \rangle \gtrsim \langle (-\Delta)^{\sigma} u, u \rangle \label{1251832} \\
& |\langle [ [H, iA], iA]u, u \rangle | \lesssim \langle [H, iA]u, u \rangle. \label{1251833}
\end{align}
Here $A = \frac{1}{2} (xD + Dx)$ denotes the generator of the dilation group.
\begin{enumerate}
\item
If $(1/r, 1/q) \in \intt (OA_{d/2} C)$, $d/2 = d/r + {2\sigma}/q$, we have
\begin{align}
\left\| \sum_{n=0}^ \infty{\nu_n|e^{-itH} f_n|^2} \right\|_{L^{q/2}_t L^{r/2}_x} \lesssim \| \nu_n\|_{\ell^\beta} \label{1251840}
\end{align}
for $\beta$ satisfying $d/{2\beta} = 1/q +d/r$, $\{f_n\}$: all orthonormal systems in $L^2$.
\item
If $(1/r, 1/q) \in \intt (ODE_{d/2} A_{d/2})$, $d/2 = d/r + {2\sigma}/q$ and $d\ge3$, we have (\ref{1251840}) for all $\beta < q/2$.
\end{enumerate}
\end{thm}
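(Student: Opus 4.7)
The plan is to follow the strategy of \cite{Ho} (which handled the Schr\"odinger case $\sigma=1$) and adapt it to general $\sigma \in (1, d/2)$. The proof proceeds in three stages: first, by Frank--Sabin duality, reduce the orthonormal estimate (\ref{1251840}) to a Schatten-class bound; second, verify this bound for the free operator $H_0 = (-\Delta)^\sigma$; third, transfer the free bound to $H$ by Kato's smooth perturbation theory powered by the Mourre estimates (\ref{1251832}) and (\ref{1251833}). Under the stated hypotheses, Mourre's theorem rules out positive eigenvalues of $H$, and together with the form lower bound (\ref{1251831}) this gives $\sigma(H) = \sigma_{\ac}(H)$, so no spectral projection $P_{\ac}$ needs to appear in (\ref{1251840}).

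By Frank--Sabin duality, (\ref{1251840}) is equivalent to a Schatten-class estimate of the form
\[
\|W e^{-itH}\|_{\mathfrak{S}^{2\beta'}(L^2_x \to L^2_{t,x})} \lesssim \|W\|_{L^{2q/(q-2)}_t L^{2r/(r-2)}_x}
\]
for all multiplication operators $W$, where $\beta'$ is the H\"older dual of $\beta$. In the unperturbed case $V \equiv 0$, the corresponding bound for $e^{-itH_0}$ is known: for the sharp region $(1/r, 1/q) \in \intt(OA_{d/2}C)$ it is contained in \cite{BLN} and \cite{BKS2}, while for the enlarged region $\intt(ODE_{d/2}A_{d/2})$ the non-sharp bound $\beta < q/2$ is obtained by a Littlewood--Paley decomposition combined with a frequency-localized orthonormal Strichartz estimate in the spirit of \cite{BKS2}.

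For the perturbation step, the Mourre estimates (\ref{1251832})--(\ref{1251833}) with conjugate operator $A = (xD+Dx)/2$ yield, via Mourre's theorem, a limiting absorption principle for $H$ on $(0,\infty)$, and together with $|x|^{2\sigma}V \in L^\infty$ and (\ref{1251831}) this implies the $H$- and $H_0$-smoothness of $|V|^{1/2}$ in Kato's sense, with a uniform bound. Writing the Duhamel identity
\[
e^{-itH} = e^{-itH_0} - i \int_0^t e^{-i(t-s)H_0}\, \sgn(V)|V|^{1/2} \cdot |V|^{1/2} e^{-isH}\, ds,
\]
one decomposes $W e^{-itH}$ into the free term $W e^{-itH_0}$ (to which the free Schatten bound applies directly) and a composition of $W e^{-i(t-s)H_0}|V|^{1/2}$ against $|V|^{1/2} e^{-isH}$. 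The first factor is controlled in $\mathfrak{S}^{2\beta'}$ by combining the free Schatten bound with the pointwise decay of $V$, while the second is controlled in $L^2_{s,x}$ by Kato smoothness of $|V|^{1/2}$. A Christ--Kiselev argument then removes the time truncation whenever $q/2 > 1$, yielding the desired Schatten bound for $H$ and hence (\ref{1251840}).

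The main obstacle is to upgrade the $L^2$-smoothness of $|V|^{1/2}$ (which Mourre theory supplies directly) to a Schatten-class bound on $W e^{-itH_0}|V|^{1/2}$ at precisely the index $2\beta'$ forced by the dualized statement; this demands a careful interplay between the free orthonormal Strichartz bound and the pointwise decay $|V(x)| \lesssim |x|^{-2\sigma}$, particularly at high frequency and near zero energy where the Mourre inequality weakens. Part~(2) compounds this by introducing the sub-sharp exponent $\beta < q/2$: to keep the dyadic decomposition consistent through the perturbation step one must track the $\sigma$-dependent scaling of $|V|^{1/2}$ against each frequency block so that the final $\ell^\beta$-summation over dyadic pieces still converges.
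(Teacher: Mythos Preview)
Your approach is essentially the paper's: invoke the free orthonormal Strichartz estimates for $H_0=(-\Delta)^\sigma$ from \cite{BLN} (both regimes are Theorem~14 there, so no separate Littlewood--Paley argument is needed for Part~2), then apply the abstract perturbation theorem of \cite{Ho} (Theorem~\ref{1251906} here), with the required $H_0$-smoothness of $|V|^{1/2}\sgn V$ and $H$-smoothness of $|V|^{1/2}$ supplied by \cite{MY1,MY2} via the Mourre machinery you describe.

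Your ``main obstacle'' paragraph, however, misidentifies the difficulty. You do \emph{not} need a Schatten bound on $W e^{-itH_0}|V|^{1/2}$, nor any dyadic tracking through the perturbation step. After Christ--Kiselev the Duhamel term factors as
\[
\bigl(W e^{-itH_0}\bigr)\cdot\Bigl(\int_0^\infty e^{isH_0}\,\sgn(V)|V|^{1/2}\cdot |V|^{1/2} e^{-isH}\,ds\Bigr),
\]
and the $\mathfrak{S}^{2\beta'}$ norm is bounded by the \emph{free} Schatten norm $\|W e^{-itH_0}\|_{\mathfrak{S}^{2\beta'}}$ times the \emph{operator} norm of the integral, which in turn is controlled by the product of the two Kato-smoothing constants. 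No pointwise decay of $V$ enters this step beyond what is already encoded in smoothness, and the free estimate is consumed as a black box (already summed over frequencies), so Part~2 requires no additional bookkeeping. Once you see this factorization the proof is a two-line application of Theorem~\ref{1251906}.
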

Theorem \ref{1251816} and \ref{1251821} are easy consequences of the following abstract perturbation theorem, which was also used to prove the orthonormal Strichartz estimates for the Schr\"odinger operator.
\begin{thm} [\cite{Ho}] \label{1251906}
Let $(X ,d\mu)$ be a $\sigma$-finite measure space and $\mathcal{H}:=L^2(X)$. Assume that for self-adjoint operators $H , H_0$ and densely defined closed operators $Y, Z $ on $\mathcal{H}$, $H = H_0 + V ,V=Y^*Z$ holds in the form sense, i.e.
\[D(H) \cup D(H_0) \subset D(Y) \cap D(Z) \quad \text{and}\quad  \langle Hu,v \rangle=\langle H_0 u,v\rangle+\langle Zu,Yv\rangle \] 
for all $u, v \in D(H) \cap D(H_0)$.
We also assume $Y$ is $H_0$-smooth and $ZP_{ac}(H)$ is $H$-smooth.
If we have 
\[ \left\| \sum_{n=0}^ \infty{\nu_n|e^{-itH_0}f_n|^2} \right\|_{L^p_t L^q_x} \lesssim \| \nu_n\|_{\ell^\alpha}\] for some $p, q \in [1,\infty] $, $ \alpha \in(1,\infty)$ and all orthonormal systems $\{f_n\}$ in $\mathcal{H}$,
then we also have 
\[ \left\| \sum_{n=0}^ \infty{\nu_n|e^{-itH}P_{ac}(H)f_n|^2} \right\|_{L^p_t L^q_x} \lesssim \| \nu_n\|_{\ell^\alpha}.\]
\end{thm}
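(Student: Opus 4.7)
The plan is to combine the Frank--Sabin duality principle for density functions with Kato's smooth perturbation theory, linked by a single Duhamel expansion. As a first step, note that by trace duality against $\mathfrak{S}^{\alpha}$ operators together with a polar decomposition, the hypothesis on $H_0$ is equivalent to the one-body Schatten bound
\[
\left\|\sqrt{W}\,e^{-itH_0}\right\|_{\mathfrak{S}^{2\alpha'}(L^2_x\to L^2_{t,x})} \lesssim \|W\|_{L^{(p/2)'}_t L^{(q/2)'}_x}^{1/2}
\]
for every nonnegative weight $W$, and the goal becomes to prove the same inequality with $e^{-itH_0}$ replaced by $e^{-itH}P_{\ac}(H)$.

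The core step is to apply Duhamel's identity, interpreted in the form sense,
\[
e^{-itH}P_{\ac}(H) = e^{-itH_0}P_{\ac}(H) - i\int_0^t e^{-i(t-s)H_0}\,Y^*Z\,e^{-isH}P_{\ac}(H)\,ds.
\]
The free piece is controlled directly by the hypothesized bound, since right-multiplication by the orthogonal projection $P_{\ac}(H)$ does not increase Schatten norms. For the integral piece, I would factor the operator $f\mapsto\sqrt{W}\int_0^t e^{-i(t-s)H_0}Y^*Z\,e^{-isH}P_{\ac}(H)f\,ds$ through $L^2_{s,x}$ as
\[
L^2_x\ \xrightarrow{\;A\;}\ L^2_{s,x}\ \xrightarrow{\;\mathcal{K}\;}\ L^2_{t,x}, \qquad Af(s,\cdot):=Z\,e^{-isH}P_{\ac}(H)f,
\]
where $\mathcal{K}g(t,x):=\sqrt{W(t,x)}\int_0^t e^{-i(t-s)H_0}Y^*g(s)\,ds$. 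The first arrow $A$ is bounded on $L^2_x\to L^2_{s,x}$ by the assumed $H$-smoothness of $ZP_{\ac}(H)$. To put $\mathcal{K}$ in $\mathfrak{S}^{2\alpha'}$, first drop the retardation: the non-retarded version $\widetilde{\mathcal{K}}g(t,x):=\sqrt{W(t,x)}\,e^{-itH_0}\int_{\R} e^{isH_0}Y^*g(s)\,ds$ factors as
\[
L^2_{s,x}\ \xrightarrow{\;g\mapsto\int e^{isH_0}Y^*g(s)\,ds\;}\ L^2_x\ \xrightarrow{\;\sqrt{W}\,e^{-itH_0}\;}\ L^2_{t,x},
\]
whose first arrow is bounded by the dual form of the $H_0$-smoothness of $Y$ and whose second arrow lies in $\mathfrak{S}^{2\alpha'}$ by the preliminary reduction. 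A Christ--Kiselev-type lemma for Schatten ideals, applicable because $2\alpha'>2$, then transfers the Schatten bound from $\widetilde{\mathcal{K}}$ to the retarded $\mathcal{K}$.

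Combining via the ideal property $\|\mathcal{K}A\|_{\mathfrak{S}^{2\alpha'}}\le\|\mathcal{K}\|_{\mathfrak{S}^{2\alpha'}}\|A\|_{\mathcal{B}}$ closes the estimate for the integral piece, and dualizing once more recovers the orthonormal Strichartz estimate for $e^{-itH}P_{\ac}(H)$. The main obstacle is the rigorous interpretation of the form identity $V=Y^*Z$ throughout the Duhamel iteration: the product $Y^*Z$ need not compose as a single operator on any natural space, so each factorization has to be justified on a dense core contained in $D(Y)\cap D(Z)$ (furnished by the domain hypothesis $D(H)\cup D(H_0)\subset D(Y)\cap D(Z)$) and then extended by density and a limiting argument in the Schatten ideals. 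Once this bookkeeping and the Schatten Christ--Kiselev reduction are in place, the argument is essentially Kato's smoothing mechanism read at the level of Schatten norms rather than single $L^2$-operator norms.
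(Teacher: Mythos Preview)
Your proposal is correct and follows essentially the same route as the paper: the duality principle converting the orthonormal estimate to a Schatten bound for $f\,e^{-itH_0}$ (Lemma~\ref{1113719}), the Duhamel expansion $U_H = U_{H_0} - i\Gamma_{H_0}VU_H$, the Schatten Christ--Kiselev lemma (Lemma~\ref{2311152210}) to remove the retardation, and the factorization through $L^2_{s,x}$ using the $H_0$-smoothness of $Y$ and the $H$-smoothness of $ZP_{\ac}(H)$. The only cosmetic difference is that the paper works with a general multiplier $f$ rather than $\sqrt{W}$ for nonnegative $W$, but these are equivalent formulations of the same duality.
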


\begin{proof}[Proof of Theorem \ref{1251816} and \ref{1251821}]
The orthonormal Strichartz estimates for the free Hamiltonian $H_0 =(-\Delta)^m$ or $H_0 =(-\Delta)^{\sigma}$ are proved in \cite{BLN} (Theorem 14) for the same $q, r, \beta$ as in the statement in Theorem \ref{1251816} and \ref{1251821}. By Theorem \ref{1251906}, it suffices to show that $|V|^{1/2} \sgn V$ is $H_0$-smooth and $|V|^{1/2} P_{\ac} (H)$ is $H$-smooth. However, they are proved in \cite{MY1}(Theorem 1.3) and \cite{MY2}(Corollary 3.5). Note that in Theorem \ref{1251821}, $P_{\ac} (H) = I$ holds.
\end{proof}
Next we consider the critical Hardy potential: $V(x) = -C_{\sigma, d} |x|^{-2\sigma}$. In the following statement, $P_{\rad}$ denotes the orthogonal projection onto the subspace of radial functions: $P_{\rad}: L^2(\R^d) \rightarrow L^2_{\rad}(\R^d), f \mapsto \frac{1}{|S^{d-1}|} \int_{S^{d-1}} f(|x|\theta) d\sigma(\theta)$.
\begin{thm} \label{1252218}
Assume $1<\sigma<d/2$, $H_{crit, \sigma} = (-\Delta)^{\sigma} -C_{\sigma, d} |x|^{-2\sigma}$, $q, r, \beta$ as in Theorem \ref{1251821} 1 or 2. Then
\begin{align}
 \left\| \sum_{n=0}^ \infty{\nu_n|e^{-itH_{crit, \sigma}}P_{\rad}^{\perp} f_n|^2} \right\|_{L^{q/2}_t L^{r/2}_x} \lesssim \| \nu_n\|_{\ell^\beta}
\end{align}
holds. Here $P_{\rad}^{\perp} = I - P_{\rad}$
\end{thm}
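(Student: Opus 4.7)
The strategy is to apply the abstract perturbation Theorem \ref{1251906} restricted to the invariant subspace $P_{\rad}^\perp L^2(\R^d)$, with $H_0 = (-\Delta)^\sigma$, $H = H_{crit,\sigma}$, and the factorization $V = -C_{\sigma,d}|x|^{-2\sigma} = Y^*Z$, where $Y = \sqrt{C_{\sigma,d}}\,|x|^{-\sigma}$ and $Z = -\sqrt{C_{\sigma,d}}\,|x|^{-\sigma}$, exactly as in the proof of Theorem \ref{1251821}. Since $V$ is radial, each of $H_0$, $H_{crit,\sigma}$, and the dilation generator $A = \frac{1}{2}(xD+Dx)$ commutes with $P_{\rad}^\perp$, so every object restricts consistently to that subspace. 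The orthonormal Strichartz bound for $e^{-itH_0}$ on the $(q,r,\beta)$ range of Theorem \ref{1251821} is proved in \cite{BLN} and survives restriction to $P_{\rad}^\perp L^2$, so by Theorem \ref{1251906} it will transfer to $H_{crit,\sigma}$ as soon as we verify that $Y$ is $H_0$-smooth and that $Z$ is $H_{crit,\sigma}$-smooth on $P_{\rad}^\perp L^2$.

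The $H_0$-smoothness of $Y$ is the classical fractional Kato smoothing estimate for $|x|^{-\sigma}$ against $(-\Delta)^\sigma$ (see \cite{MY1}). For the $H_{crit,\sigma}$-smoothness of $Z$ on $P_{\rad}^\perp L^2$, we rerun the Mourre-type argument of Theorem \ref{1251821} with the dilation $A$ as conjugate operator. Two inputs are needed. First, the improved Hardy inequality on the non-radial subspace: decomposing $u = \sum_{l\ge 1} u_l$ into spherical harmonic sectors and using the sharp Hardy constants $C_{\sigma,d,l}$ in each sector, which satisfy $C_{\sigma,d,l} > C_{\sigma,d}$ and are increasing in $l$, one obtains
\[
\langle H_{crit,\sigma} u, u\rangle \;\ge\; \bigl(1 - C_{\sigma,d}/C_{\sigma,d,1}\bigr)\, \langle (-\Delta)^\sigma u, u\rangle
\]
for every $u \in P_{\rad}^\perp \dot H^\sigma(\R^d)$. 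Second, because $(-\Delta)^\sigma$ and $|x|^{-2\sigma}$ are both homogeneous of degree $-2\sigma$, one has the exact algebraic identities
\[
[H_{crit,\sigma}, iA] = 2\sigma H_{crit,\sigma}, \qquad [[H_{crit,\sigma}, iA], iA] = (2\sigma)^2 H_{crit,\sigma}.
\]
These two ingredients together yield conditions (\ref{1251831})--(\ref{1251833}) on $P_{\rad}^\perp L^2$, and the proof of Theorem \ref{1251821} then transfers verbatim to produce the required limiting absorption principle, hence the $H_{crit,\sigma}$-smoothness of $Z$ on the non-radial subspace.

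The one genuine obstacle is the criticality of the coupling constant. On the full space, Hardy's inequality is sharp on radial minimizers, so $\langle H_{crit,\sigma} u, u\rangle \gtrsim \langle (-\Delta)^\sigma u, u\rangle$ degenerates and the Kato-smoothing machinery collapses. Projecting onto $P_{\rad}^\perp$ excises the offending zero angular-momentum sector and restores a strict spectral gap, which is exactly what the projection in the statement buys; once this gap is available the remainder of the argument is parallel to the subcritical scheme of Theorem \ref{1251821}.
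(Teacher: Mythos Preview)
Your proposal is correct and follows the same overall route as the paper: run the abstract perturbation scheme of Theorem \ref{1251906} on the invariant subspace $P_{\rad}^{\perp}L^2$, reducing everything to the $H_0$-smoothness of $|x|^{-\sigma}$ and the $H_{crit,\sigma}$-smoothness of $|x|^{-\sigma}P_{\rad}^{\perp}$. The only difference is one of packaging: the paper simply quotes the latter smoothness estimate from \cite{MY1} (Theorem~6.1), whereas you sketch its proof by invoking the improved Hardy constant on $P_{\rad}^{\perp}$ together with the exact dilation identities $[H_{crit,\sigma},iA]=2\sigma H_{crit,\sigma}$ to verify (\ref{1251831})--(\ref{1251833}) on the non-radial subspace. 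That is precisely the mechanism behind the cited result, so your version is a self-contained expansion of the same argument rather than a different one.
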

If $\sigma =1$, this result is proved in \cite{Ho}. Furthermore, the orthonormal Strichartz estimates for the radial part of $H_{crit, 1}$ are also proved if $d=3$. The proof for the radial part is based on the ground state representation. However, if $\sigma >1$, we do not know the ground state representation for $H_{crit, \sigma}$. So in these cases we have no results on the orthonormal Strichartz estimates (even the usual Strichartz estimates are not proved (see also Section 6 in \cite{MY1})).
\begin{proof}
By repeating the proof of Theorem \ref{1251906}, substituting $P_{\rad}$ for $P_{\ac} (H)$, it suffices to show that $|x|^{-\sigma} P_{\rad}$ is $H_{crit, \sigma}$-smooth. Note that $P_{\rad}$ commutes with $(-\Delta)^{\sigma}, H_{crit, \sigma}$, hence with $f((-\Delta)^{\sigma}), f(H_{crit, \sigma})$ for $f \in L^2 _{loc}$. However, this is proved in Theorem 6.1 in \cite{MY1}.
\end{proof}
Instead of the estimate for the radial part of $H_{crit, \sigma}$, we consider
\[\tilde{H}_{crit, \sigma} = \Lambda_{\sigma} (-\Delta_{\R^{2\sigma}})^{\sigma} \Lambda^{-1} _\sigma\]
on $L^2 _{\rad} (\R^d)$.
Here $\sigma \in [1, d/2) \cap \N$, $\Lambda_{\sigma} f =r^{-\frac{d-2\sigma}{2}} f$. By direct computations,  
\[\tilde{H}_{crit, \sigma} P_{\rad} = \left (-\Delta_{\R^d} - \frac{(d-2\sigma)(d+2\sigma -4)}{4|x|^2} \right)^{\sigma} P_{\rad}\] 
holds. Furthermore, for all $f \in L^2 _{loc}$, we obtain
\[f(\tilde{H}_{crit, \sigma})P_{\rad} = \Lambda_{\sigma} f((-\Delta_{\R^{2\sigma}})^{\sigma})\Lambda^{-1} _\sigma P_{\rad}.  \]
This operator was first considered in \cite{MY1} and the weak type endpoint Strichartz estimates were proved. We prove the orthonormal Strichartz estimates for $\tilde{H}_{crit, \sigma}$ under some restrictions on $d, q, r$. Note that if $\sigma =1$, $\tilde{H}_{crit, 1} P_{\rad} = H_{crit, 1} P_{\rad}$ holds, the Strichatz estimates were proved in \cite{M2} except for the endpoint and the orthonormal Strichartz estimates were proved in \cite{Ho} if $d=3$.
\begin{thm} \label{1252217}
Assume $2\sigma<d< \frac{4\sigma^2}{2\sigma -1}$, $\frac{1}{r} >\frac{\sigma}{d} (\frac{d}{2\sigma} -\frac{2\sigma}{4\sigma-1})$, $d/2 = d/r + 2\sigma/q$ holds. Then
\begin{align*}
\left\| \sum_{n=0}^ \infty{\nu_n|e^{-it\tilde{H}_{crit, \sigma}}P_{\rad} f_n|^2} \right\|_{L^{q/2}_t L^{r/2}_x} \lesssim \| \nu_n\|_{\ell^\beta}
\end{align*}
holds for $\beta$ satisfying $d/2\beta = 1/q + d/r$, $\{f_n\}$: all orthonormal systems in $L^2 (\R^d)$.
\end{thm}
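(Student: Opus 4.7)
The plan is to reduce the estimate to the free fractional Schr\"odinger equation on $\R^{2\sigma}$ via the intertwining $\tilde H_{crit,\sigma} P_{\rad} = \Lambda_\sigma H_0 \Lambda_\sigma^{-1}P_{\rad}$ with $H_0 = (-\Delta_{\R^{2\sigma}})^\sigma$, and then remove the resulting weight via a Hardy--Sobolev inequality before invoking the known orthonormal Strichartz estimate on $\R^{2\sigma}$ from \cite[Theorem~14]{BLN}.

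First I would write $\gamma = \sum \nu_n |f_n\rangle\langle f_n|$, replace it by the radial compression $P_{\rad}\gamma P_{\rad}$ (whose Schatten-$\beta$ norm is bounded by $\|\nu\|_{\ell^\beta}$), and spectrally decompose the latter as $\sum \mu_n |g_n\rangle\langle g_n|$ with $\{g_n\}$ orthonormal in $L^2_{\rad}(\R^d)$. Setting $h_n = c\Lambda_\sigma^{-1}g_n$, which is orthonormal in $L^2_{\rad}(\R^{2\sigma})$, one has
\[\sum \mu_n \bigl| e^{-it\tilde H_{crit,\sigma}}g_n(x) \bigr|^2 = c'\, |x|^{-(d-2\sigma)}\, \tilde\rho(t,|x|),\]
where $\tilde\rho(t,y) = \sum \mu_n |e^{-itH_0}h_n(y)|^2$ is the corresponding density on $\R^{2\sigma}$. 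Converting the $L^{r/2}_x(\R^d)$-norm to $\R^{2\sigma}$-polar coordinates, the target estimate becomes
\[\bigl\| |y|^{-\alpha}\,\tilde\rho \bigr\|_{L^{q/2}_t L^{r/2}_y(\R^{2\sigma})} \lesssim \|\mu\|_{\ell^\beta}, \qquad \alpha = \tfrac{(d-2\sigma)(r-2)}{r}.\]

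Next, using $|y|^{-\alpha}|u|^2 = \bigl| |y|^{-\alpha/2}u \bigr|^2$ and the Hardy--Sobolev inequality $\bigl\| |y|^{-\alpha/2}g \bigr\|_{L^r(\R^{2\sigma})} \lesssim \|(-\Delta)^{\alpha/4}g\|_{L^r(\R^{2\sigma})}$ --- which is valid because the hypothesis $1/r > \sigma/d\,(d/(2\sigma) - 2\sigma/(4\sigma-1))$ implies $\alpha/2 < 2\sigma/r$ --- together with the commutation of $(-\Delta)^{\alpha/4}$ with $e^{-itH_0}$, the problem is upgraded to a derivative-shifted orthonormal Strichartz estimate for $H_0$ on $\R^{2\sigma}$. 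Interpreting this derivative at the Schatten level through the boundedness of $|y|^{-\alpha/2}(-\Delta)^{-\alpha/4}$ on $L^2(\R^{2\sigma})$ (the operator form of Hardy), the orthonormality in $L^2$ is translated into orthonormality in $\dot H^{-\alpha/2}$, and the standard orthonormal Strichartz estimate for $(-\Delta_{\R^{2\sigma}})^\sigma$ from \cite[Theorem~14]{BLN}, applied to radial data in its extended admissibility range, yields the claimed inequality. A direct check shows that the constraints $2\sigma < d < 4\sigma^2/(2\sigma-1)$ and the lower bound on $1/r$ correspond exactly to the admissibility region on $\R^{2\sigma}$ that survives this shift.

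The principal obstacle is bridging the Hardy--Sobolev inequality --- whose natural form concerns a single function in $L^r$ --- with the bilinear structure $\sum \mu_n |u_n|^2$ of the orthonormal Strichartz estimate: the passage must be performed at the operator-ideal level, replacing multiplication by $|y|^{-\alpha/2}$ with the fractional derivative $(-\Delta)^{\alpha/4}$ in Schatten-norm estimates. A secondary technical point is ensuring that the hypothesis on $r$ places $(r,q)$ within the enlarged admissibility range available to radial data for the orthonormal Strichartz estimates of the free higher-order Schr\"odinger equation on $\R^{2\sigma}$.
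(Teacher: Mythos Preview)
Your reduction to $\R^{2\sigma}$ via the intertwining $\Lambda_\sigma$ is correct and matches the paper exactly, and you arrive at the right intermediate target: bounding $\bigl\| |y|^{-\alpha}\tilde\rho \bigr\|_{L^{q/2}_t L^{r/2}_y(\R^{2\sigma})}$ with $\alpha = (d-2\sigma)(1-2/r)$. The gap is in how you remove the weight.

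You propose to use the Hardy--Sobolev inequality $\||y|^{-\alpha/2}g\|_{L^r}\lesssim\|(-\Delta)^{\alpha/4}g\|_{L^r}$, i.e.\ the $L^r$- (or $L^2$-) boundedness of $T:=|y|^{-\alpha/2}(-\Delta)^{-\alpha/4}$, and then pass to the Schatten level. But on the dual side the relevant object is $f\,|y|^{-\alpha/2}e^{-itH_0}=f\,T\,e^{-itH_0}(-\Delta)^{\alpha/4}$, and the duality principle (Lemma~\ref{1113719}) only gives $\mathfrak S^{2\beta'}$-bounds for operators of the form \emph{multiplication by a function} times the propagator. Since $T$ is not a multiplication operator, its mere $L^2$-boundedness only contributes an $\mathfrak S^\infty$ factor, which is useless for the $\mathfrak S^{2\beta'}$ bound you need. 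Equivalently, on the density side: for a nonlocal bounded operator $T$ there is no general inequality $\|\rho(T\gamma T^*)\|_{r/2}\lesssim\|\rho(\gamma)\|_{r/2}$, so you cannot pass from $\sum\mu_n|Tu_n|^2$ to $\sum\mu_n|u_n|^2$ in $L^{r/2}$. You flag this as the ``principal obstacle'' but do not actually overcome it; as stated, the step does not go through. (There is also no ``extended admissibility range for radial data'' in \cite[Theorem~14]{BLN} to fall back on.)

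The paper's fix is to act on the density $\tilde\rho$ directly rather than on the individual $u_n$: since $|y|^{-\alpha}\in L^{\frac{2\sigma}{\alpha},\infty}(\R^{2\sigma})$, H\"older in Lorentz spaces gives
\[
\||y|^{-\alpha}\tilde\rho\|_{L^{r/2}(\R^{2\sigma})}\lesssim \|\tilde\rho\|_{L^{\tilde q,\,r/2}(\R^{2\sigma})},\qquad \tfrac{1}{\tilde q}=1-\tfrac{2}{q},
\]
and then one needs a \emph{Lorentz-space} orthonormal Strichartz estimate $\|\tilde\rho\|_{L^{q/2}_t L^{\tilde q,\,q/2}_y}\lesssim\|\mu\|_{\ell^{\tilde\beta}}$ on $\R^{2\sigma}$. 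That is the content of Lemma~\ref{1252337}, obtained from the standard orthonormal Strichartz by real interpolation; finally one checks $\tilde\beta\ge\beta$ under the hypotheses. The point is that H\"older separates the weight from the whole density in one stroke, so no operator-ideal version of Hardy--Sobolev is ever required.
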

To prove this theorem, we use the following lemma. If $\sigma =1$, this is proved in \cite{Ho}.
\begin{lemma} \label{1252337}
Let $d=2\sigma$, $2\sigma/2 = 2\sigma/r +2\sigma/q$, $(1/r, 1/q) \in \intt (OA_{\sigma} C)$. Here $A_{\sigma} = (\frac{2\sigma-1}{2(2\sigma+1)}, \frac{\sigma}{2\sigma+1})$. Then
\[\left\| \sum_{n=0}^ \infty{\nu_n|e^{-it(-\Delta_{2\sigma})^{\sigma}} \Lambda^{-1} _{\sigma}P_{\rad} f_n|^2} \right\|_{L^{q/2}_t L^{r/2, q/2}_x} \lesssim \| \nu_n\|_{\ell^\beta}\]
holds for $\beta$ satisfying $2\sigma/2\beta =1/q +2\sigma/r$.
\end{lemma}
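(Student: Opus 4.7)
The plan is to adapt the $\sigma=1$ argument of \cite{Ho} to general $\sigma\in\N$ by combining the Frank--Sabin duality with complex interpolation. Since $d=2\sigma$ forces $\Lambda_\sigma=\mathrm{Id}$, the operator in question is simply $e^{-it(-\Delta_{2\sigma})^\sigma}P_{\rad}$. By the duality principle of Frank--Sabin, the desired orthonormal Strichartz estimate is equivalent to a Schatten-class bound of the form
\[
\bigl\|W_1\,e^{-it(-\Delta_{2\sigma})^\sigma}P_{\rad}\,e^{it(-\Delta_{2\sigma})^\sigma}W_2\bigr\|_{\mathfrak{S}^{\beta'}(L^2_tL^2_x)}\lesssim\|W_1\|_Y\,\|W_2\|_Y,
\]
where $Y=L^{(q/2)'}_t L^{(r/2)',(q/2)'}_x$ is the predual of the Lorentz-valued target.

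Next I would verify this Schatten bound at the two endpoints of the relevant boundary segment. At the vertex $C=(1/2,0)$, corresponding to $\beta=1$ and the trivial $\mathfrak{S}^\infty$ case, the estimate reduces by Minkowski's inequality to the ordinary single-function Strichartz estimate for the free fractional Schr\"odinger equation restricted to radial data in Lorentz spaces, which is available from the radial Strichartz results of \cite{MY1}. At the critical vertex $A_\sigma$, one needs a Hilbert--Schmidt ($\mathfrak{S}^2$) bound in Lorentz norms, which reduces to a pointwise kernel estimate. Since a radial function is the zeroth spherical harmonic mode, the kernel of $e^{-it(-\Delta_{2\sigma})^\sigma}P_{\rad}$ on $(0,\infty)$ (with radial measure $r^{2\sigma-1}\,dr$) is given by a Hankel-type integral
\[
K_t(r,s)=c_\sigma\int_0^{\infty} e^{-it\rho^{2\sigma}}\,(r\rho)^{1-\sigma}J_{\sigma-1}(r\rho)\,(s\rho)^{1-\sigma}J_{\sigma-1}(s\rho)\,\rho^{2\sigma-1}\,d\rho,
\]
and inserting the large-argument asymptotics of $J_{\sigma-1}$ followed by stationary phase applied to $\rho\mapsto -t\rho^{2\sigma}+(r\pm s)\rho$ produces the pointwise decay needed to bound the Hilbert--Schmidt norm of $W_1 e^{-it(-\Delta_{2\sigma})^\sigma}P_{\rad} e^{it(-\Delta_{2\sigma})^\sigma} W_2$ in the desired Lorentz norm.

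Complex interpolation in the Schatten scale with Lorentz-valued $L^p$-spaces (as developed in \cite{FS,BHLNS}) between these two endpoint Schatten bounds then fills in the full open triangle $\intt(OA_\sigma C)$, and Frank--Sabin duality converts the resulting Schatten bound back into the stated orthonormal Strichartz estimate. The main obstacle is the Hilbert--Schmidt endpoint at $A_\sigma$: for $\sigma\ge 2$ the stationary phase analysis of the fractional phase $\rho^{2\sigma}$ is more intricate than in the quadratic case handled in \cite{Ho}, and the kernel bounds must be tracked in several regimes (non-stationary, transition, and stationary) with the correct weights in $r,s$ so that, once paired against the radial measure $r^{2\sigma-1}dr$, they yield precisely the Lorentz--Schatten exponents dictated by the relation $2\sigma/(2\beta)=1/q+2\sigma/r$.
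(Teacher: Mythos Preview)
Two problems. First, the vertex $A_\sigma$ does not lie on the admissibility line $1/r+1/q=1/2$ when $\sigma>1$ (check: $\tfrac{2\sigma-1}{2(2\sigma+1)}+\tfrac{\sigma}{2\sigma+1}=\tfrac{4\sigma-1}{2(2\sigma+1)}\ne\tfrac12$), so interpolating between $C$ and $A_\sigma$ does not sweep out the admissible segment you need; your proposed $\mathfrak{S}^2$ endpoint computation is aimed at a point that is not even an admissible pair. Second, and more fundamentally, the whole Hankel-kernel and stationary-phase program is unnecessary: the Lebesgue-space orthonormal estimate
\[
\Bigl\|\sum_n \nu_n|e^{-it(-\Delta_{2\sigma})^\sigma}g_n|^2\Bigr\|_{L^{q/2}_tL^{r/2}_x}\lesssim\|\nu\|_{\ell^\beta}
\]
for $\{g_n\}$ orthonormal in $L^2(\R^{2\sigma})$ is already established in \cite{BLN} across the entire open segment, so there is nothing to re-derive from scratch.

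The paper's argument is accordingly much shorter. From the known free estimate, the duality principle gives a Schatten bound for $f\mapsto fe^{-it(-\Delta_{2\sigma})^\sigma}$; composing on the right with the bounded operator $\Lambda^{-1}_\sigma P_{\rad}:L^2(\R^d)\to L^2(\R^{2\sigma})$ and dualizing back yields the same estimate with $\Lambda^{-1}_\sigma P_{\rad}f_n$ in place of $g_n$ (this is the ``argument of Lemma 4.6 in \cite{Ho}'' referred to in the outline). One then real-interpolates between two nearby admissible Lebesgue estimates to upgrade $L^{r/2}_x$ to $L^{r/2,q/2}_x$, at the cost of replacing $\ell^\beta$ by $\ell^{\beta,q/2}$; the location in $\intt(OA_\sigma C)$ forces $\beta\le q/2$, so $\|\nu\|_{\ell^{\beta,q/2}}\lesssim\|\nu\|_{\ell^\beta}$ and the lemma follows. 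Note also that in the application (Theorem~\ref{1252217}) the $f_n$ live in $L^2(\R^d)$ with $d>2\sigma$, so $\Lambda^{-1}_\sigma$ is genuinely nontrivial there; the composition step above is precisely what absorbs it, and your reduction to $\Lambda_\sigma=\mathrm{Id}$ discards this.
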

\begin{proof}[Outline of the proof]
The proof of this lemma is essentially the same as Lemma 4.6 in \cite{Ho}. So we only give the outline.  By assumptions on $\sigma, r, q$, we have
\[\left\| \sum_{n=0}^ \infty{\nu_n|e^{-it(-\Delta_{2\sigma})^{\sigma}} g_n|^2} \right\|_{L^{q/2}_t L^{r/2}_x} \lesssim \| \nu_n\|_{\ell^\beta}\]
for all orthonormal systems $\{g_n\}$ in $L^2 (\R^{2\sigma})$. By setting $A(t) = e^{-it(-\Delta_{\R^{2\sigma}})^{\sigma}} \Lambda^{-1} _{\sigma}P_{\rad} : L^2 (\R^d) \rightarrow L^2 (\R^{2\sigma})$, $A_1: L^2 (\R^d) \rightarrow L^{\infty} L^2 (\R^{2\sigma}), u \mapsto A(t)u(x)$, $A_2: L^1 _t L^2 (\R^{2\sigma}) \rightarrow L^2 (\R^d), f \mapsto \int_{\R} A^*(s) f(s) ds$ and repeating the argument as Lemma 4.6 in \cite{Ho}, we obtain
\[\left\| \sum_{n=0}^ \infty{\nu_n|e^{-it(-\Delta_{2\sigma})^{\sigma}} \Lambda^{-1} _{\sigma}P_{\rad} f_n|^2} \right\|_{L^{q/2}_t L^{r/2}_x} \lesssim \| \nu_n\|_{\ell^\beta}.\]
Then by real interpolations: 
\begin{itemize}

\item $(\ell^{p_0}, \ell^{p_1})_{\theta, q} = \ell^{p, q}$. Here $1/p = (1-\theta)/p_0 + \theta/p_1$

\item $(L^{p_0}_t L^{q_0}_x , L^{p_1}_t L^{q_1}_x)_{\theta, p} = L^p _t L^{q, p}_x$. Here $1/p = (1-
\theta)/p_0 + \theta/p_1$ and $1/q = (1-\theta)/q_0 + \theta/q_1$

\end{itemize}
we have 
\[\left\| \sum_{n=0}^ \infty{\nu_n|e^{-it(-\Delta_{2\sigma})^{\sigma}} \Lambda^{-1} _{\sigma}P_{\rad} f_n|^2} \right\|_{L^{q/2}_t L^{r/2, q/2}_x} \lesssim \| \nu_n\|_{\ell^{\beta, q/2}}.\]
Since $(1/r, 1/q) \in \intt (OA_{\sigma} C)$, $1/q < \frac{\sigma}{4\sigma-1}$ holds and this implies $\beta \le q/2$. Hence
\[\| \nu_n\|_{\ell^{\beta, q/2}} \lesssim \| \nu_n\|_{\ell^{\beta}}\]
and we have the desired estimates.
\end{proof}

\begin{proof}[Proof of Theorem \ref{1252217}]
Defining $\tilde{r} :=|x|$ and changing into polar coordinates,
\begin{align}
\left\| \sum_{n=0}^ \infty{\nu_n|e^{-it\tilde{H}_{crit, \sigma}}P_{\rad} f_n|^2} \right\|_{L^{q/2}_t L^{r/2}_x} &\lesssim \left\| \tilde{r}^{\frac{2(d-2\sigma)}{r}} \sum_{n=0}^ \infty{\nu_n|e^{-it\tilde{H}_{crit, \sigma}}P_{\rad} f_n|^2} \right\|_{L^{q/2}_t L^{r/2} (\R^{2\sigma})} \notag \\
& \lesssim \left\| \tilde{r}^{\frac{2(d-2\sigma)}{r} -(d-2 \sigma)} \sum_{n=0}^ \infty{\nu_n|e^{-it(-\Delta_{\R^{2\sigma}})^{\sigma}} \Lambda^{-1} _{\sigma} P_{\rad} f_n|^2} \right\|_{L^{q/2}_t L^{r/2} (\R^{2\sigma})} \label{1261033}.
\end{align}
Set $1/\tilde{q} =1- 2/q$. Then we obtain
\begin{align}
&\left\| \tilde{r}^{\frac{2(d-2\sigma)}{r} -(d-2 \sigma)} \sum_{n=0}^ \infty{\nu_n|e^{-it(-\Delta_{\R^{2\sigma}})^{\sigma}} \Lambda^{-1} _{\sigma} P_{\rad} f_n|^2} \right\|_{L^{r/2} (\R^{2\sigma})} \notag \\
&\lesssim \|\tilde{r}^{\frac{2(d-2\sigma)}{r} -(d-2 \sigma)}\|_{\frac{2\sigma}{(d-2\sigma)(1-2/r)}, \infty} \cdot \left\| \sum_{n=0}^ \infty{\nu_n|e^{-it(-\Delta_{\R^{2\sigma}})^{\sigma}} \Lambda^{-1} _{\sigma} P_{\rad} f_n|^2} \right\|_{L^{\tilde{q}, r/2} (\R^{2\sigma})} \notag \\
& \lesssim \left\| \sum_{n=0}^ \infty{\nu_n|e^{-it(-\Delta_{\R^{2\sigma}})^{\sigma}} \Lambda^{-1} _{\sigma} P_{\rad} f_n|^2} \right\|_{L^{\tilde{q}, r/2} (\R^{2\sigma})} \label{1261031}.
\end{align}
Now we assume $r/2 \ge q/2 \Leftrightarrow r \ge \frac{2}{d} (d+2\sigma)$. Then 
\[(\ref{1261031}) \lesssim \left\| \sum_{n=0}^ \infty{\nu_n|e^{-it(-\Delta_{\R^{2\sigma}})^{\sigma}} \Lambda^{-1} _{\sigma} P_{\rad} f_n|^2} \right\|_{L^{\tilde{q}, q/2} (\R^{2\sigma})}.\]
Combining with (\ref{1261033}) and using Lemma \ref{1252337}, we have
\begin{align}
\left\| \sum_{n=0}^ \infty{\nu_n|e^{-it\tilde{H}_{crit, \sigma}}P_{\rad} f_n|^2} \right\|_{L^{q/2}_t L^{r/2}_x} &\lesssim \left\| \sum_{n=0}^ \infty{\nu_n|e^{-it(-\Delta_{\R^{2\sigma}})^{\sigma}} \Lambda^{-1} _{\sigma} P_{\rad} f_n|^2} \right\|_{L^{q/2} _t L^{\tilde{q}, q/2} (\R^{2\sigma})} \notag \\
& \lesssim \|\nu_n\|_{\ell^{\tilde{\beta}}} \label{1261101}
\end{align}
for $\tilde{\beta}$ satisfying $2\sigma/2\tilde{\beta} = 1/q +2\sigma/2\tilde{q}$.
Precisely speaking, in (\ref{1261101}), we need to assume $1/{\tilde{q}} > \frac{2\sigma-1}{4\sigma-1} \Leftrightarrow 1/r > \frac{\sigma}{d} (\frac{d}{2\sigma} -\frac{2\sigma}{4\sigma-1})$ in order to use Lemma \ref{1252337}. However $d< \frac{4\sigma^2}{2\sigma-1}$ ensures $\frac{d}{2} \frac{1}{d+2\sigma} > \frac{\sigma}{d} (\frac{d}{2\sigma} -\frac{2\sigma}{4\sigma-1})$ and if we take $1/r$ sufficiently close to $\frac{\sigma}{d} (\frac{d}{2\sigma} -\frac{2\sigma}{4\sigma-1})$, we have (\ref{1261101}). Now by direct computations, we obtain
\begin{align*}
\tilde{\beta} \ge \beta \Leftrightarrow 2(1-1/2\sigma -d/2\sigma +d/4\sigma^2)\cdot 1/r \ge (1-1/2\sigma -d/2\sigma +d/4\sigma^2) . 
\end{align*}
Since we assume $2\sigma <d< \frac{4\sigma^2}{2\sigma-1}$ and this is equivalent to $r\ge2$, we have $(1-1/2\sigma -d/2\sigma +d/4\sigma^2) <0$. Therefore $\tilde{\beta} \ge \beta$ and from (\ref{1261101}),
\begin{align*}
\left\| \sum_{n=0}^ \infty{\nu_n|e^{-it\tilde{H}_{crit, \sigma}}P_{\rad} f_n|^2} \right\|_{L^{q/2}_t L^{r/2}_x}\lesssim \|\nu_n\|_{\ell^{\tilde{\beta}}} \lesssim \|\nu_n\|_{\beta}
\end{align*} 
holds.
\end{proof}

\subsection{\textbf{$d/2$ admissible case}}
In this subsection we consider the orthonormal Strichartz estimates for the $d/2$ admissible pair, i.e. for $(q, r)$ such that $2/q = d(1/2-1/r)$. In order to explain our results, we recall the smooth perturbation theory by Kato \cite{KY}. The next theorem is one of the key lemma in the smooth perturbation theory.
\begin{thm}[\cite{KY}]\label{2311152150}

Suppose $\mathcal{H}$ is an arbitrary Hilbert space. Let $A$ be a densely defined closed operator and $H$ be a self-adjoint operator on $\mathcal{H}$. Then the following are equivalent:
\begin{enumerate}[(1)]
\item
$\|Ae^{-itH}u\|_{L^2_t \mathcal{H}} \lesssim \|u\|_\mathcal{H}$ holds for any $u \in \mathcal{H}$.
\item
$ \sup_{z\in \mathbb{C} \backslash \mathbb{R}}|\langle \im(H-z)^{-1} A^* u, A^* u \rangle| \lesssim \|u\|^2_{\mathcal{H}} $ holds.
\end{enumerate}
In particular, if $ \sup_{z\in \mathbb{C} \backslash \mathbb{R}}\| A(H-z)^{-1} A^*\|_{\mathcal{B}(\mathcal{H})} \lesssim 1$, we have (1) and (2).
\end{thm}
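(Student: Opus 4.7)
The plan is the classical $TT^*$ argument combined with Plancherel in the time variable and Stone's formula. Define $T: \mathcal{H} \to L^2_t \mathcal{H}$ by $Tu(t) := Ae^{-itH}u$ on a core where everything makes sense; then (1) is exactly the statement $\|T\|_{\mathcal{B}(\mathcal{H}, L^2_t \mathcal{H})} < \infty$, which by duality is equivalent to $\|TT^*\|_{\mathcal{B}(L^2_t \mathcal{H})} < \infty$. A direct calculation identifies $TT^*$ with convolution in $t$ by the operator-valued kernel $K(t) := Ae^{-itH}A^*$, that is $(TT^*F)(t) = \int_{\mathbb{R}} K(t-s)F(s)\,ds$. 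By the vector-valued Plancherel/multiplier theorem on $L^2_t \mathcal{H}$ one therefore has
\[
\|TT^*\|_{\mathcal{B}(L^2_t \mathcal{H})} = \|\hat K\|_{L^\infty_\tau \mathcal{B}(\mathcal{H})},
\]
and the task reduces to identifying $\hat K$ in terms of the resolvent.

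For this, the key identity comes from the Laplace representations $R(\tau + i\epsilon) = i\int_0^\infty e^{i\tau t - \epsilon t}e^{-itH}\,dt$ and $R(\tau - i\epsilon) = -i\int_{-\infty}^0 e^{i\tau t + \epsilon t}e^{-itH}\,dt$, valid for $\epsilon > 0$, which combine into
\[
2\,\im R(\tau+i\epsilon) = \int_{\mathbb{R}} e^{i\tau t - \epsilon|t|}\,e^{-itH}\,dt.
\]
Sandwiching by $A$ and $A^*$ yields $2A\,\im R(\tau+i\epsilon)A^* = \widehat{e^{-\epsilon|\cdot|}K}(\tau)$, and as $\epsilon \downarrow 0$ the right-hand side converges (via Stone's formula relating $\pi^{-1}\im R(\lambda + i0)$ to $dE_H/d\lambda$) distributionally to $\hat K(\tau)$. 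Since $\im R(z)$ is a positive self-adjoint operator for $z \in \mathbb{C}\setminus\mathbb{R}$, condition (2) is exactly the uniform bound $\sup_{z}\|A\,\im R(z)A^*\|_{\mathcal{B}(\mathcal{H})}\lesssim 1$, which matches $\|\hat K\|_{L^\infty_\tau \mathcal{B}(\mathcal{H})}<\infty$, closing the equivalence (1)$\Leftrightarrow$(2). The final assertion of the theorem is then immediate: if $\sup_z \|AR(z)A^*\|_{\mathcal{B}(\mathcal{H})} \lesssim 1$, then
\[
|\langle \im R(z)A^*u, A^*u\rangle| = |\im\langle AR(z)A^*u, u\rangle| \le \|AR(z)A^*\|\,\|u\|^2,
\]
so the resolvent hypothesis forces (2) and therefore (1).

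The main technical obstacle is the rigorous passage between the distributional object $\hat K(\tau)$, which for a general self-adjoint $H$ may carry singular spectral components, and the pointwise-in-$z$ resolvent bound in (2). The cleanest workaround is never to leave the regularization: for each $\epsilon > 0$ the time-damped kernel $K_\epsilon(t) := e^{-\epsilon|t|}K(t)$ has Fourier transform exactly $2A\,\im R(\cdot+i\epsilon)A^*$, so one runs Plancherel and the $TT^*$ identity uniformly in $\epsilon$, and passes to the limit only in the time integral via monotone convergence to recover (1). Domain issues for unbounded $A$ are handled by first establishing all the identities on a common dense core and extending by density using that $A$ is closed.
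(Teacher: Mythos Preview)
The paper does not give its own proof of this theorem: it is stated with a citation to \cite{KY} (Kato's original paper) and used as a black box throughout. There is therefore no in-paper argument to compare against. Your outline is the standard classical proof via $TT^*$ and Plancherel, and it is essentially correct; the regularization-by-$e^{-\epsilon|t|}$ trick you describe is exactly how one makes the Fourier identification $\widehat{K_\epsilon}(\tau)=2A\,\im R(\tau+i\epsilon)A^*$ rigorous and then passes to the limit by monotone convergence in $\int \|Ae^{-itH}u\|^2\,dt$.
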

The above smoothing estimate is called the Kato smoothing estimate or the Kato-Yajima estimate.

\begin{defn}
If $A$ satisfies the condition in Theorem~\ref{2311152150}, we say that $A$ is $H$-smooth.
\end{defn}
We first state our abstract perturbation theorem where we need to consider derivatives.
\begin{thm} \label{1271257}
Let $\mathcal{H}$ be $L^2 (X)$ where $X$ is a $\sigma$-finite measure space. Assume that for a nonnegative self-adjoint operator $H_0$, self-adjoint operator $H$ and densely defined closed operators $Y, Z $ on $\mathcal{H}$, $H = H_0 + V ,V=Y^*Z$ holds in the form sense, i.e.
\[D(H) \cup D(H_0) \subset D(Y) \cap D(Z) \quad \text{and}\quad  \langle Hu,v \rangle=\langle H_0 u,v\rangle+\langle Zu,Yv\rangle \] 
for all $u, v \in D(H) \cap D(H_0)$. We also assume $YH^{\theta} _0$ is $H_0$-smooth and 
\[\|Ze^{-itH} H^{-\theta} _0 u\|_{L^2 _t L^2 _x} \lesssim \|u\|_2\]
holds for some $\theta \in \R$. If we have
\[ \left\| \sum_{n=0}^ \infty{\nu_n|e^{-itH_0}f_n|^2} \right\|_{L^{q/2}_t L^{r/2}_x} \lesssim \| \nu_n\|_{\ell^\alpha}\] for some $q, r \in [2,\infty] $, $ \alpha \in(1,\infty)$ and all orthonormal systems $\{f_n\}$ in $D(H^{\theta} _0)$, we also obtain
\[ \left\| \sum_{n=0}^ \infty{\nu_n|e^{-itH}f_n|^2} \right\|_{L^{q/2}_t L^{r/2}_x} \lesssim \| \nu_n\|_{\ell^\alpha}\]
for the same $q, r, \alpha$.
\end{thm}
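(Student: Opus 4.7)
The plan is to adapt the Schatten-duality argument used for Theorem \ref{1251906}, inserting $H_0^\theta$ and $H_0^{-\theta}$ factors in order to absorb the derivative loss in the perturbation. First I would rewrite each orthonormal system $\{f_n\} \subset D(H_0^\theta)$ as $f_n = H_0^{-\theta} g_n$ with $\{g_n\}$ orthonormal in $L^2$, and then pass to the equivalent Schatten formulation (cf.\ \cite{FS} and the proof of Theorem~\ref{1251906}): the hypothesis becomes a weighted Schatten bound of the form
\[
\bigl\| M\, e^{-itH_0} H_0^{-\theta}\bigr\|_{\mathfrak{S}^{2\alpha'}(L^2_x \to L^2_t L^2_x)} \lesssim \|M\|_{\ast},
\]
where $M$ ranges over multiplication operators and $\|\cdot\|_\ast$ denotes the weight norm that arises naturally in this duality. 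The desired conclusion is equivalent to the same bound with $e^{-itH}$ in place of $e^{-itH_0}$.

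I would then apply Duhamel's formula to obtain
\[
M e^{-itH} H_0^{-\theta} = M e^{-itH_0} H_0^{-\theta} - i M \int_0^t e^{-i(t-s)H_0} Y^* \bigl(Z e^{-isH} H_0^{-\theta}\bigr) ds,
\]
so that the free term is controlled immediately by the hypothesis. The perturbation I would factor as $\mathcal{A}\mathcal{B}$, with
\[
\mathcal{B} g := Z e^{-isH} H_0^{-\theta} g \colon L^2_x \to L^2_s L^2_x
\]
bounded in operator norm by the second smoothing assumption, and
\[
(\mathcal{A} F)(t) := -i M(t) \int_0^t e^{-i(t-s)H_0} Y^* F(s)\, ds \colon L^2_s L^2_x \to L^2_t L^2_x.
\]
The Schatten ideal inequality $\|\mathcal{A}\mathcal{B}\|_{\mathfrak{S}^{2\alpha'}} \le \|\mathcal{A}\|_{\mathfrak{S}^{2\alpha'}} \|\mathcal{B}\|_{\mathrm{op}}$ then reduces the problem to a Schatten bound on $\mathcal{A}$.

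For $\mathcal{A}$, I would first drop the retardedness and analyze the non-retarded version
\[
\tilde{\mathcal{A}} F = -i \bigl(M e^{-itH_0} H_0^{-\theta}\bigr) \int_\R e^{isH_0} (Y H_0^\theta)^* F(s)\, ds,
\]
using $(YH_0^\theta)^* = H_0^\theta Y^*$ and $H_0^{-\theta}H_0^\theta = I$ on sufficiently regular vectors. The $s$-integral is a bounded map $L^2_s L^2_x \to L^2_x$ by the $TT^*$-dual of the $H_0$-smoothness of $Y H_0^\theta$, and $M e^{-itH_0} H_0^{-\theta}$ lies in $\mathfrak{S}^{2\alpha'}$ with the correct bound by the reformulated hypothesis; composition then yields the required bound on $\tilde{\mathcal{A}}$. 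The principal technical obstacle is the passage from $\tilde{\mathcal{A}}$ to the retarded $\mathcal{A}$ within $\mathfrak{S}^{2\alpha'}$, which I would handle by a Christ--Kiselev-type device in the Schatten setting (available since $2\alpha'>2$, i.e.\ $\alpha<\infty$), the same mechanism already implicit in the proof of Theorem \ref{1251906}; the algebra of the unbounded factors $H_0^{\pm\theta}$ is then justified by the usual truncation $g_n \in \mathrm{Ran}\,\chi_{[0,N]}(H_0)$ and passage $N\to\infty$.
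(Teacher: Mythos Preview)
Your proposal is correct and follows essentially the same route as the paper's proof: both arguments pass to the dual Schatten formulation via Lemma~\ref{1113719}, write $M e^{-itH} H_0^{-\theta}$ through Duhamel, insert $H_0^{-\theta}H_0^{\theta}$ in the inhomogeneous term, and control the resulting retarded integral by the Christ--Kiselev lemma in $\mathfrak{S}^{2\alpha'}$ (Lemma~\ref{2311152210}) together with the two smoothing bounds. The only cosmetic difference is the order of factorization: the paper applies Christ--Kiselev to the full Duhamel term and then factors the non-retarded operator as $(f e^{-itH_0}H_0^{-\theta})\circ \int_0^\infty H_0^\theta e^{isH_0}Y^*Z e^{-isH}H_0^{-\theta}\,ds$, whereas you first peel off $\mathcal{B}=Z e^{-isH}H_0^{-\theta}$ and apply Christ--Kiselev to the remaining $\mathcal{A}$; these are equivalent reorganizations of the same three factors.
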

Before proving Theorem \ref{1271257}, we recall some definitions and lemmas.
\begin{defn}[Schatten space]
Let $\mathcal{H}_1$ and $\mathcal{H}_2$ be a Hilbert space. For a compact operator $A : \mathcal{H}_1 \rightarrow \mathcal{H}_2$, $\{ \mu_n \}$: the singular value of $A$  is defined as the set of all the eigenvalues of $(A^*A)^{1/2}$. The Schatten space $\mathfrak{S}^{\alpha} (\mathcal{H}_1 \rightarrow \mathcal{H}_2)$ for $\alpha \in [1, \infty]$ is the set of all compact operators: $\mathcal{H}_1 \rightarrow \mathcal{H}_2$  such that its singular value belongs to $\ell^{\alpha}$. Its norm is defined by the $\ell^\alpha$ norm of the singular value. We also use the following notation for simplicity: $\|S\|_{\mathfrak{S}^{\alpha} (\mathcal{H}_0)} := \|S\|_{\mathfrak{S}^{\alpha} (\mathcal{H}_0 \rightarrow \mathcal{H}_0)}$.
\end{defn}

\noindent
For $A \in \mathcal{B}(L^2 (X))$, where $X$ is a $\sigma$-finite measure space, $\rho_A (x) : = k_A (x, x)$ denotes the density function of $A$. Here $k_A (x, y)$ is the integral kernel of $A$. We sometimes write $\rho (A)$.
For the proof of properties associated to the Schatten spaces, see \cite{S}, \cite{Ha}. 
We often use Hölder's inequality for the Schatten space:
\[\|ST\|_{\mathfrak{S}^{\alpha} (\mathcal{H}_0 \rightarrow \mathcal{H}_2)} \lesssim \|S\|_{\mathfrak{S}^{\alpha_1} (\mathcal{H}_1 \rightarrow \mathcal{H}_2)} \cdot \|T\|_{\mathfrak{S}^{\alpha_2} (\mathcal{H}_0 \rightarrow \mathcal{H}_1)}\]
Here $1/\alpha = 1/\alpha_1 + 1/\alpha_2$.
We use the following notations for the Schatten spaces: 
\begin{gather*}
\ \mathfrak{S}^{\alpha}_{t, x}= \mathfrak{S}^{\alpha} (L^2_{t, x}) , \mathfrak{S}^{\alpha}_{x \rightarrow {t, x}}= \mathfrak{S}^{\alpha} (L^2_x \rightarrow L^2_{t, x}) , \mathfrak{S}^{\alpha}_{t, x \rightarrow x}= \mathfrak{S}^{\alpha} (L^2_{t, x} \rightarrow L^2_x)
\end{gather*}
Here $L^2_{t,x} = L^2_t X$ for some Hilbert space $X$ and $x$ denotes the variable in $X$.
The next lemma is called the duality principle. This was first proved in \cite{FS} and changed into the following form by \cite{Ha}.
\begin{lemma}[ \cite{FS}, \cite{Ha}]\label{1113719}
Let $p, q, \alpha \in[1, \infty]$, $A :\mathbb{R} \rightarrow \mathcal{B}(L^2(X))$ be a strongly continuous function. Then the following are equivalent:

(1) For any $\gamma \in \mathfrak{S}^{\alpha} (L^2(X))$, 
\[\| \rho(A(t) \gamma A(t)^*)\|_{L^p_t L^q_x} \lesssim \| \gamma \|_{\mathfrak{S}^\alpha} .\]

(2) For any $f\in L^{2p'}_t L^{2q'}_x$, 
\[\|fA(t)\|_{\mathfrak{S}^{2\alpha'} _{x\rightarrow (t,x)}} \lesssim \|f\|_{L^{2p'}_t  L^{2q'}_x}\] 
\end{lemma}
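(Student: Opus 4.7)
The plan is to reduce both (1) and (2), by duality in the Schatten scale and in the Lebesgue scale, to one and the same operator-norm estimate on $A^{*}M_{f}A$. Viewing $A$ as a single operator $A\colon L^{2}(X)\to L^{2}_{t,x}$ via $(Au)(t,x)=(A(t)u)(x)$, so that $A^{*}g=\int_{\R}A(t)^{*}g(t,\cdot)\,dt$, let $M_{f}$ denote pointwise multiplication by $f(t,x)$ on $L^{2}_{t,x}$.

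First I establish the pairing identity. For $\gamma\in\mathfrak{S}^{\alpha}$ and $f\in L^{p'}_{t}L^{q'}_{x}$, applying the defining identity $\int \rho_{B}(x)f(x)\,dx=\tr(M_{f}B)$ slice-by-slice at $B=A(t)\gamma A(t)^{*}$ and integrating in $t$ formally gives
\[
\int_{\R}\!\!\int_{X}\rho\bigl(A(t)\gamma A(t)^{*}\bigr)(x)\,f(t,x)\,dx\,dt \;=\; \tr_{L^{2}(X)}\bigl(\gamma\,A^{*}M_{f}A\bigr).
\]
I would justify this first for finite-rank $\gamma$ and $f\in C_{c}^{\infty}$, where all traces are genuine and Fubini is routine, and then extend to the general case by density, using Schatten--Lebesgue duality inequalities as a priori bounds. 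Having the identity, Lebesgue duality and Schatten duality $\|S\|_{\mathfrak{S}^{\alpha'}}=\sup_{\|\gamma\|_{\mathfrak{S}^{\alpha}}\le 1}|\tr(S\gamma)|$ together yield, by taking suprema, that (1) is equivalent to
\[
\|A^{*}M_{f}A\|_{\mathfrak{S}^{\alpha'}(L^{2}(X))}\;\lesssim\;\|f\|_{L^{p'}_{t}L^{q'}_{x}},\qquad f\in L^{p'}_{t}L^{q'}_{x}.\qquad(\star)
\]

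Next I show $(\star)\Leftrightarrow$ (2). For $(\star)\Rightarrow$ (2), given $g\in L^{2p'}_{t}L^{2q'}_{x}$ I set $f=|g|^{2}$ so that $A^{*}M_{f}A=(M_{|g|}A)^{*}(M_{|g|}A)$; the classical identity $\|T^{*}T\|_{\mathfrak{S}^{\alpha'}}=\|T\|_{\mathfrak{S}^{2\alpha'}}^{2}$, together with the polar factorization $g=e^{i\theta}|g|$ (which gives $\|gA\|_{\mathfrak{S}^{2\alpha'}}=\||g|A\|_{\mathfrak{S}^{2\alpha'}}$), converts $(\star)$ into (2). For (2)$\Rightarrow(\star)$, given $f$ I write $f=u|f|$ with $|u|\equiv 1$ and factor $A^{*}M_{f}A=(M_{|f|^{1/2}}A)^{*}M_{u}(M_{|f|^{1/2}}A)$; H\"older's inequality for Schatten classes together with $\|M_{u}\|_{\mathcal{B}(L^{2}_{t,x})}\le 1$ and (2) yields
\[
\|A^{*}M_{f}A\|_{\mathfrak{S}^{\alpha'}}\le\|M_{|f|^{1/2}}A\|_{\mathfrak{S}^{2\alpha'}_{x\rightarrow {t,x}}}^{2}\lesssim\||f|^{1/2}\|_{L^{2p'}_{t}L^{2q'}_{x}}^{2}=\|f\|_{L^{p'}_{t}L^{q'}_{x}},
\]
which is $(\star)$.

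The main obstacle is the pairing identity itself: when $\gamma$ lies only in $\mathfrak{S}^{\alpha}$ with $\alpha>1$, the density $\rho(A(t)\gamma A(t)^{*})$ exists only through the distributional pairing with $f$, and $A^{*}M_{f}A$ need not be trace class on its own. Making the bookkeeping rigorous requires approximating $\gamma$ by finite-rank operators and $f$ by bounded, compactly supported functions, and then passing to the limit with control provided by the very duality inequalities one is attempting to prove, so some care is needed to avoid circular reasoning. Once the identity is established, the remainder of the argument is a purely algebraic chain of Schatten--H\"older manipulations and the $T^{*}T$ identity.
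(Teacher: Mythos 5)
Your argument is correct and is essentially the standard proof of the duality principle from Frank–Sabin (and Hadama), which is exactly what the paper relies on here — the paper itself omits the proof and cites those references. The reduction of both statements to the bound $\|A^{*}M_{f}A\|_{\mathfrak{S}^{\alpha'}}\lesssim\|f\|_{L^{p'}_{t}L^{q'}_{x}}$ via the pairing identity, followed by the $T^{*}T$ identity and Schatten–H\"older factorization, is the same route, and your handling of the finite-rank/density issue is the standard and adequate resolution.
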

\begin{remark}

In [FS] and [Ha], the above lemma is proved for $L^2(\mathbb{R}^d)$. However, the modification of the proof in the case of $L^2(X)$ is straightforward, thus we omit the proof here.
\end{remark}
If $(1)$ or $(2)$ in Lemma \ref{1113719} holds, by defining $\gamma := \displaystyle \sum_{n=0}^{\infty} \nu_n |f_n \rangle \langle f_n |$ for an orthonormal system $\{f_n\}$ in $L^2 (X)$, we obtain
\[\left\| \sum_{n=0}^{\infty} \nu_n |A(t)f_n|^2 \right\|_{L^p _t L^q _x} \lesssim \|\nu_n\|_{\ell^{\alpha}}.\]
Thus this lemma is useful to our setting.

We also use the following Christ-Kiselev type lemma.
\begin{lemma}[\cite{GK}, \cite{BS}, \cite{Ha}]\label{2311152210}

Let $-\infty \le a < b \le\infty$, $\alpha \in(1, \infty)$, $\mathcal{H}$ be an arbitrary Hilbert space. Let $K :(t,\tau) \mapsto K(t,\tau) \in \mathcal{B}(\mathcal{H})$ be a strongly continuous function. Assume that 
\[ \widetilde{T} g(t)=\int_a^b K(t, \tau) g(\tau) d\tau\]
defines a bounded operator on $L^2_t \mathcal{H}$ and $\widetilde{T} \in \mathfrak{S}^{\alpha}(L^2_t \mathcal{H})$.
Then 
\[Tg(t)=\int_a^t K(t, \tau) g(\tau) d\tau\]
 also satisfies $T \in \mathfrak{S}^{\alpha}(L^2_t \mathcal{H})$ and $\|T\|_{\mathfrak{S}^{\alpha} }  \lesssim \| \widetilde{T}\|_{\mathfrak{S}^{\alpha}}.$
\end{lemma}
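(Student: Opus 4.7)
The plan is to follow a Schatten-valued analogue of the Christ--Kiselev dyadic decomposition. By a standard density argument I may reduce to the case where $[a,b]$ is a bounded interval and $K$ is jointly continuous with compact support, so that $\widetilde T$ is Hilbert--Schmidt with $M:=\|\widetilde T\|_{\mathfrak{S}^\alpha}^\alpha$ finite and nonzero. I then introduce a dyadic grid adapted to the Schatten mass of $\widetilde T$: defining the nondecreasing continuous function $F(t):=\|\widetilde T\,\chi_{(-\infty,t]}\|_{\mathfrak{S}^\alpha}^\alpha$, which runs from $0$ to $M$, I partition $[a,b]$ at each level $j\ge 0$ into $2^j$ consecutive intervals $I_{j,k}$ over which $F$ increases by exactly $M/2^j$, so that $I_{j,k}=I_{j+1,2k}\cup I_{j+1,2k+1}$.

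Every pair $\tau<t$ in $[a,b]$ first separates into opposite children of some $I_{j-1,k}$ at a unique minimal level $j\ge 1$, with $\tau\in I_{j,2k}$ and $t\in I_{j,2k+1}$. This yields the telescoping decomposition
\[
T=\sum_{j\ge 1}S_j,\qquad S_j:=\sum_{k=0}^{2^{j-1}-1}\chi_{I_{j,2k+1}}\,\widetilde T\,\chi_{I_{j,2k}}.
\]
For fixed $j$ the indicator projections $\chi_{I_{j,2k+1}}$ have mutually orthogonal ranges as $k$ varies (and similarly for $\chi_{I_{j,2k}}$), so the summands in $S_j$ have mutually orthogonal final and initial subspaces. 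This gives the block-orthogonality identity $\|S_j\|_{\mathfrak{S}^\alpha}^\alpha=\sum_k\|\chi_{I_{j,2k+1}}\widetilde T\chi_{I_{j,2k}}\|_{\mathfrak{S}^\alpha}^\alpha$, which is the cornerstone of the argument.

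The main obstacle is then to upgrade this identity to a bound of the form $\|S_j\|_{\mathfrak{S}^\alpha}\le C_\alpha\,2^{-c_\alpha j}\|\widetilde T\|_{\mathfrak{S}^\alpha}$ with $c_\alpha>0$, so that summation over $j$ converges. The naive estimate $\|\chi_{I_{j,2k+1}}\widetilde T\chi_{I_{j,2k}}\|_{\mathfrak{S}^\alpha}^\alpha\le\|\widetilde T\chi_{I_{j,2k}}\|_{\mathfrak{S}^\alpha}^\alpha=M/2^j$ yields only $\|S_j\|_{\mathfrak{S}^\alpha}^\alpha\le M/2$, uniform in $j$, and is insufficient. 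The required refinement couples the orthogonality above with a Rubio de Francia--type square function estimate for Schatten classes, equivalently obtained via complex interpolation between the trivial $\mathfrak{S}^2$ bound (where Schatten mass distributes exactly over the partition) and Schatten weak-type estimates for the triangular projection. This is precisely where the hypothesis $\alpha\in(1,\infty)$ enters, and the constants blow up at $\alpha=1$ and $\alpha=\infty$, consistent with the known failure of the lemma at the endpoints. Once this geometric decay is secured, summation over $j$ produces $\|T\|_{\mathfrak{S}^\alpha}\lesssim_\alpha\|\widetilde T\|_{\mathfrak{S}^\alpha}$; the passage from $\tau<t$ to $\tau\le t$ is harmless since $\{t=\tau\}$ has measure zero, and the initial approximation is justified by lower semicontinuity of the Schatten norm under strong convergence.
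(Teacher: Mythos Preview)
The paper does not give its own proof here; it records the lemma with citations to \cite{GK}, \cite{BS}, \cite{Ha} and remarks that the extension from $L^2(\mathbb{R}^d)$ to a general Hilbert space $\mathcal{H}$ is immediate. The underlying fact is the classical boundedness of the \emph{triangular truncation} on $\mathfrak{S}^\alpha$ for $\alpha\in(1,\infty)$ (Macaev/Gohberg--Krein, or the double operator integral formulation of Birman--Solomyak): the map $\widetilde T\mapsto T$ is exactly the lower-triangular projection of the kernel, and that is the full content of the lemma.

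Your dyadic Christ--Kiselev outline does not close. Two points:
\begin{itemize}
\item The equality $\|\widetilde T\,\chi_{I_{j,2k}}\|_{\mathfrak{S}^\alpha}^\alpha=M/2^j$ is not what your function $F$ gives. You defined $F(t)=\|\widetilde T\,\chi_{(-\infty,t]}\|_{\mathfrak{S}^\alpha}^\alpha$, and for $\alpha\neq 2$ the increment $F(t_2)-F(t_1)$ is \emph{not} $\|\widetilde T\,\chi_{(t_1,t_2]}\|_{\mathfrak{S}^\alpha}^\alpha$; Schatten $\alpha$-norms do not split additively over an orthogonal decomposition of the domain. So even your ``naive'' bound is unjustified as written.
\item More seriously, the Christ--Kiselev mechanism in $L^p\!\to\! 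L^q$ produces geometric decay in $j$ from the exponent gap $1/p-1/q>0$. Here input and output live in the \emph{same} Schatten class, so no such gain is available; you correctly observe that the block-diagonal identity alone yields only a uniform bound on $\|S_j\|_{\mathfrak{S}^\alpha}$. Your proposed fix is to invoke ``Schatten weak-type estimates for the triangular projection'' and interpolate with $\alpha=2$. But boundedness (or weak-type boundedness) of the triangular projection is precisely the statement you are asked to prove, so this is circular: once you have those estimates, the dyadic decomposition is superfluous and the lemma follows directly.
\end{itemize}
In short, the dyadic scaffolding does none of the work; the entire content sits in the triangular-truncation bound you import at the end. If you want a self-contained route, follow the references the paper cites: either the Gohberg--Krein/Macaev argument, or the Birman--Solomyak double operator integral approach (which realises the triangular projection as a transferred Hilbert transform and uses the UMD property of $\mathfrak{S}^\alpha$ for $1<\alpha<\infty$).
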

\begin{remark}
In Theorem 3.1 of \cite{Ha}, it is assumed that $\mathcal{H}=L^2(\mathbb{R}^d)$. However, the proof in the case of $\mathcal{H}$ is just the same as the comments in Section 3.1 of \cite{Ha}. Hence we omit the proof here.
\end{remark}
\begin{proof}[Proof of Theorem \ref{1271257}]
By the Duhamel formula, we have
\begin{gather*} 
\ U_{H} = U_{H_0} -i  \Gamma_{H_0} V U_{H} \\
\ U_{H}=e^{-itH} ,U_{H_0}=e^{-itH_0}, \Gamma_{H_0} g(t) = \int_0^t e^{i(t-s)H_0} g(s) ds .
\end{gather*}
Hence
\begin{align}
fU_{H} H^{-\theta} _0 = fU_{H_0} H^{-\theta} _0  -i f \Gamma_{H_0} V U_{H}  H^{-\theta} _0 \label{1271326}
\end{align}
holds for all $f: \R \times X \rightarrow \C$. By the assumption and Lemma \ref{1113719}, we have
\begin{align}
\|fU_{H_0} H^{-\theta} _0 \|_{\mathfrak{S}^{2\alpha'}} \lesssim \|f\|_{L^{2(q/2)'} _t L^{2(r/2)'} _x}. \label{1271634}
\end{align}
For the second term in (\ref{1271326}), we have
\begin{align}
\| -i f \Gamma_{H_0} V U_{H}  H^{-\theta} _0 \|_{\mathfrak{S}^{2\alpha'}} &\lesssim \left\|-ifU_{H_0} H^{-\theta} _0 \int_{0}^{\infty} H^{\theta} _0 e^{isH_0} Y^* Z e^{-isH} H^{-\theta} _0 ds \right\|_{\mathfrak{S}^{2\alpha'}} \label{1271631}\\
& \lesssim \|f\|_{L^{2(q/2)'} _t L^{2(r/2)'} _x} \cdot \|YH^{\theta} _0 e^{-isH_0}\|_{L^2 _x \rightarrow L^2 _t L^2 _x} \cdot \|Ze^{-isH} H^{-\theta} _0\|_{L^2 _x \rightarrow L^2 _t L^2 _x} \label{1271633}\\
& \lesssim  \|f\|_{L^{2(q/2)'} _t L^{2(r/2)'} _x}. \label{1271636}
\end{align}
Here, in (\ref{1271631}), we have used Lemma \ref{2311152210}, in (\ref{1271633}), we have used (\ref{1271634}), and in (\ref{1271636}), $H_0$-smoothness of $YH^{\theta} _0$ and $\|Ze^{-itH} H^{-\theta} _0 u\|_{L^2 _t L^2 _x} \lesssim \|u\|_2$ are used. Hence we obtain
\[\|fU_{H} H^{-\theta} _0 \|_{\mathfrak{S}^{2\alpha'}} \lesssim \|f\|_{L^{2(q/2)'} _t L^{2(r/2)'} _x}. \]
By Lemma \ref{1113719}, we have the desired estimates.
\end{proof}
As an application of Theorem \ref{1271257}, first, we prove the estimates for very short range potentials.
\begin{proof}[Proof of Corollary \ref{1271749}]
Set $-\theta = \frac{d}{2m} (m-1) (1/2 - 1/r)$. Then $-\theta \in (0, 1/2]$ holds since $(q, r)$ is $d/2$ admissible. Decompose $V=v^* _1 v_2$ such that $|v_1 (x)| \lesssim \langle x \rangle^{-m+d(1-m)(1/2 -1/r)-\epsilon}$, $|v_2 (x)| \lesssim \langle x \rangle^{-m-d(1-m)(1/2 -1/r)-\epsilon}$.

\noindent (step1) In this step we prove $\|v_1 H^{\theta} _0 e^{-isH_0}\|_{L^2 _x \rightarrow L^2 _t L^2 _x} \lesssim 1$. By the complex interpolation with respect to $r$, we may assume $r=2$ or $r= \frac{2d}{d-2}$. If $r=2$, then $\theta =0$ and
\begin{align*}
\|\langle x \rangle^{-m-\epsilon} e^{-itH_0} u\|_{L^2 _t L^2 _x} \lesssim \|e^{-itH_0} u\|_{L^2 _t L^{\frac{2d}{d-2m}}} \lesssim \|u\|_2
\end{align*}
holds. Here, in the first inequality, we have used H\"older's inequality and in the second inequality, we have used the following endpoint Strichartz estimates, which were proved in \cite{MY1}:
\[\|e^{-itH_0}u\|_{L^2 _t L^{\frac{2d}{d-2m}} _x} \lesssim \|u\|_2.\]
If $r= \frac{2d}{d-2}$, then $-\theta = \frac{m-1}{2m}$ and 
\begin{align*}
\|\langle x \rangle^{-2m+1-\epsilon} |D|^{-(m-1) u}\|_2 \lesssim \||D|^{-(m-1)} u\|_{\frac{2d}{d-4m+2}} \lesssim \|u\|_{\frac{2d}{d-2m}}
\end{align*}
holds by the H\"older inequality and the Sobolev embedding. Then
\begin{align*}
\|\langle x \rangle^{-2m+1-\epsilon} |D|^{-(m-1)}  e^{-itH_0} u\|_{L^2 _t L^2 _x} \lesssim \|e^{-itH_0}u\|_{L^2 _t L^{\frac{2d}{d-2m}} _x} \lesssim \|u\|_2
\end{align*}
holds by the endpoint Strichartz estimates. Hence we obtain $\|v_1 H^{\theta} _0 e^{-isH_0}\|_{L^2 _x \rightarrow L^2 _t L^2 _x} \lesssim 1$.

\noindent (step2) In this step we prove $\|v_2 e^{-itH} H^{-\theta} _0\|_{L^2 _x \rightarrow L^2 _t L^2 _x} \lesssim 1$. By using the complex interpolation again, we may assume $r=2$ or $r=\frac{2d}{d-2}$. If $r=2$, then $\theta=0$ and
\[\|v_2 e^{-itH} u\|_{L^2 _t L^2 _x} \lesssim \|e^{-itH} u\|_{L^2 _t L^{\frac{2d}{d-2m}}} \lesssim \|u\|_2\]
holds by the H\"older inequality and the Strichartz estimates. If $r=\frac{2d}{d-2}$, then $-\theta = \frac{m-1}{2m}$ and 
\begin{align*}
\|v_2 e^{-itH} H^{-\theta} _0 u\|_{L^2 _t L^2 _x} &\lesssim \|e^{-itH} H^{-\theta} _0 u\|_{L^2 _t L^{\frac{2d}{d-2}} _x} \\
& = \|H^{-\theta} H^{\theta} _0 H^{-\theta} _0 e^{-itH} H^{\theta} H^{-\theta} _0 u\|_{L^2 _t L^{\frac{2d}{d-2}} _x}  \\
& \lesssim \|H^{-\theta} _0 e^{-itH} H^{\theta} H^{-\theta} _0 u\|_{L^2 _t L^{\frac{2d}{d-2}} _x}
\end{align*}
holds. Here, in the first line, we have used H\"older's inequality and in the last line, we have used
$\|H^{-\theta} u\|_{\frac{2d}{d-2}} \lesssim \|H^{-\theta} _0 u\|_{\frac{2d}{d-2}}$, which follows from $\|H^{1/2} u\|_{\frac{2d}{d-2}} \lesssim \|H^{1/2} _0 u\|_{\frac{2d}{d-2}}$, $-\theta \in (0, 1/2]$ and the complex interpolation. Then by the endpoint Strichartz estimates:
\[ \||D|^{m-1} e^{-itH} u\|_{L^2 _t L^{\frac{2d}{d-2}} _x} \lesssim \|u\|_2,\]
we have
\begin{align*}
\|H^{-\theta} _0 e^{-itH} H^{\theta} H^{-\theta} _0 u\|_{L^2 _t L^{\frac{2d}{d-2}} _x} \lesssim \|H^{\theta} H^{-\theta} _0 u\|_2 \lesssim \|u\|_2.
\end{align*}
In the last inequality, we have used $\langle H_0 u, u \rangle \lesssim \langle Hu, u \rangle$ and the complex interpolation. Hence we obtain $\|v_2 e^{-itH} H^{-\theta} _0\|_{L^2 _x \rightarrow L^2 _t L^2 _x} \lesssim 1$.

Finally, by step1, step2, Theorem \ref{1271257}, and the free orthonormal Strichartz estimates:
\[ \left\| \sum_{n=0}^ \infty{\nu_n|e^{-itH_0} f_n|^2} \right\|_{L^{q/2} _t L^{r/2} _x} \lesssim \| \nu_n\|_{\ell^\beta}\]
for the same $q, r, \beta, s$, which were proved in \cite{BLN}, we obtain the desired estimates.
\end{proof}
\begin{example} \label{1272012}
Assume $|V(x)| \lesssim \langle x \rangle^{-2m-\epsilon}$, $V(x) \ge 0$, $-x \cdot \nabla V(x) \ge 0$, $m< \frac{d-2}{4}$ holds. If $H$ does not have zero resonances,  then assumptions in Corollary \ref{1271749} hold.
\end{example}
\begin{proof}
Since $V$ is a repulsive potential, $\sigma (H) = \sigma _{\ac} (H)$ holds (This is an easy consequence of the virial identity. See \cite{FSWY} for its proof.). The assumption $V(x) \ge 0$ implies $\langle H_0 u, u \rangle \lesssim \langle Hu, u \rangle$. Then it suffices to show $\|H u\|_{\frac{2d}{d-2}} \lesssim \|H _0 u\|_{\frac{2d}{d-2}}$ since by the complex interpolation we obtain $\|H^{1/2} u\|_{\frac{2d}{d-2}} \lesssim \|H^{1/2} _0 u\|_{\frac{2d}{d-2}}$. By the H\"older inequality and the Sobolev embedding, we have the desired estimates:
\[\|Vu\|_{\frac{2d}{d-2}} \lesssim \|V\|_{d/2m} \|u\|_{\frac{2d}{d-4m-2}} \lesssim \||D|^{2m} u\|_{\frac{2d}{d-2}} = \|H_0 u\|_{\frac{2d}{d-2}}.\]
\end{proof}
Next we consider the Hardy type potentials.
\begin{corollary} \label{1272125}
Let $H$ be as in Theorem \ref{1251821} and assume $\sigma< \frac{d+2}{4}$, $\|H^{1/2} u\|_{\frac{2d}{d-2}, 2} \lesssim \|H^{1/2} _0 u\|_{\frac{2d}{d-2}, 2}$. Here $\|\cdot\|_{{p, q}}:= \|\cdot\|_{L^{p, q}}$ denotes the norm of the Lorentz spaces. Then, for $(q, r)$: $d/2$ admissible pair satisfying $d/r + 2\sigma/q <d$, we have
\begin{enumerate}
\item[(1)] For $r$ satisfying $2 \le r < \frac{2(d+1)}{d-1}$, $\{f_n\}$: all orthonormal systems in $\dot{H}^s$, $s= d(1-\sigma)(1/2 - 1/r)$, $\beta = \frac{2r}{r+2}$,
\[ \left\| \sum_{n=0}^ \infty{\nu_n|e^{-itH} f_n|^2} \right\|_{L^{q/2} _t L^{r/2} _x} \lesssim \| \nu_n\|_{\ell^\beta}\]
holds. 
\item[(2)] For $d \ge 3$, $r$ satisfying $\frac{2(d+1)}{d-1} \le r \le \frac{2d}{d-2}$, $\{f_n\}$: all orthonormal systems in $\dot{H}^s$, $s= d(1-\sigma)(1/2 - 1/r)$, the same estimate as (1) holds if $1 \le \beta <q/2$.
\end{enumerate}
\end{corollary}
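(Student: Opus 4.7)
The plan is to deduce the corollary from Theorem~\ref{1271257} applied with $H_0 = (-\Delta)^\sigma$ and the factorization $V = Y^*Z$, $Y = |V|^{1/2}\sgn V$, $Z = |V|^{1/2}$, where $|Y|,|Z| \lesssim |x|^{-\sigma}$. I choose
\[
\theta := \frac{d(1-\sigma)}{2\sigma}\Bigl(\frac{1}{2}-\frac{1}{r}\Bigr),
\]
so that $2\sigma\theta = s$ and, consequently, $\{H_0^\theta f_n\}$ is orthonormal in $L^2$ whenever $\{f_n\}$ is orthonormal in $\dot H^s$. With this choice the free orthonormal Strichartz estimate for $(-\Delta)^\sigma$ of \cite{BLN} supplies the input hypothesis of Theorem~\ref{1271257} in the ranges (1) and (2) of the statement, so it remains only to verify the two Kato-type smoothing bounds
\[
\bigl\|YH_0^\theta e^{-itH_0}u\bigr\|_{L^2_{t,x}} \lesssim \|u\|_2
\qquad\text{and}\qquad
\bigl\|Ze^{-itH}H_0^{-\theta}u\bigr\|_{L^2_{t,x}} \lesssim \|u\|_2.
\]

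I would reduce each of these to the two endpoints $r=2$ and $r=\tfrac{2d}{d-2}$ by complex interpolation in $r$ (equivalently in $\theta$). At $r=2$ we have $\theta=0$ and both bounds reduce to the $H_0$- and $H$-smoothness of $|V|^{1/2}$, which are exactly the Kato--Yajima estimates of \cite{MY1} (Theorem~1.3) and \cite{MY2} (Corollary~3.5) already used in the proof of Theorem~\ref{1251821}. At the nontrivial endpoint $r=\tfrac{2d}{d-2}$ one has $-\theta = \tfrac{\sigma-1}{2\sigma} \in (0,\tfrac12]$ and $H_0^{-\theta} = |D|^{\sigma-1}$. For the first bound I would combine H\"older in Lorentz spaces (using $|x|^{-\sigma}\in L^{d/\sigma,\infty}$) with the Lorentz Hardy--Littlewood--Sobolev embedding $|D|^{-(\sigma-1)}\colon L^{\frac{2d}{d-2},2}\to L^{\frac{2d}{d-2\sigma},2}$ and the endpoint Strichartz estimate $\|e^{-itH_0}u\|_{L^2_tL^{\frac{2d}{d-2\sigma},2}_x}\lesssim \|u\|_2$ for the free fractional operator from \cite{MY1}. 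For the second bound the same H\"older step applies; one then upgrades the standing hypothesis $\|H^{1/2}u\|_{\frac{2d}{d-2},2}\lesssim \|H_0^{1/2}u\|_{\frac{2d}{d-2},2}$ to $\|H^{-\theta}u\|_{\frac{2d}{d-2},2}\lesssim \|H_0^{-\theta}u\|_{\frac{2d}{d-2},2}$ via complex interpolation against the trivial identity (this is where the restriction $\sigma<\tfrac{d+2}{4}$ keeps $-\theta$ in a usable range), and concludes with the endpoint Strichartz estimate $\||D|^{\sigma-1}e^{-itH}u\|_{L^2_tL^{\frac{2d}{d-2},2}_x}\lesssim \|u\|_2$ for $H$ proved in \cite{MY2}.

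The hard part will be carrying the Lorentz refinements through the whole argument. Unlike the very-short-range case of Corollary~\ref{1271749}, where $V \in L^{d/2m}$ permits ordinary H\"older, the Hardy-type potential only belongs to the weak class $L^{d/\sigma,\infty}$, so at the endpoint $r=\tfrac{2d}{d-2}$ any attempt to use H\"older with $|x|^{-\sigma}\in L^{d/\sigma}$ fails outright. For this reason every endpoint Strichartz estimate for both $H_0$ and $H$ must be invoked in its sharp form on the Lorentz scale $L^2_tL^{\ast,2}_x$, and the Sobolev-type assumption in the hypotheses is phrased in Lorentz norms for the same reason. Once the two Lorentz-refined endpoint smoothing bounds are established, complex interpolation against the $r=2$ case produces them for every intermediate $r$, and Theorem~\ref{1271257} delivers the conclusion in the ranges (1) and (2).
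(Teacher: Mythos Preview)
Your overall strategy via Theorem~\ref{1271257} and the free estimates of \cite{BLN} is exactly what the paper does, but the \emph{symmetric} factorization $Y=|V|^{1/2}\sgn V$, $Z=|V|^{1/2}$ (so $|Y|,|Z|\lesssim|x|^{-\sigma}$) does not close at the endpoint $r=\tfrac{2d}{d-2}$, and this is a genuine gap. Your chain for the first bound is inconsistent: H\"older with $|x|^{-\sigma}\in L^{d/\sigma,\infty}$ forces the Lorentz exponent $\tfrac{2d}{d-2\sigma}$, the Sobolev step $|D|^{-(\sigma-1)}\colon L^{\frac{2d}{d-2},2}\to L^{\frac{2d}{d-2\sigma},2}$ then requires $e^{-itH_0}u\in L^2_tL^{\frac{2d}{d-2},2}_x$, yet the $d/2\sigma$-endpoint Strichartz you invoke only delivers $e^{-itH_0}u\in L^2_tL^{\frac{2d}{d-2\sigma},2}_x$. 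The same mismatch ruins the second bound: H\"older with $|x|^{-\sigma}$ again lands you at $L^{\frac{2d}{d-2\sigma},2}$, while the $d/2$-endpoint Strichartz for $H$ that you cite lives at $L^{\frac{2d}{d-2},2}$. In fact a scaling check shows $|x|^{-\sigma}|D|^{-(\sigma-1)}$ is \emph{not} $H_0$-smooth: the operator $|x|^{-\sigma}|D|^{-(\sigma-1)}(H_0-z)^{-1}|D|^{-(\sigma-1)}|x|^{-\sigma}$ scales like $|z|^{-(\sigma-1)/\sigma}$ and blows up at zero energy.

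The fix, which is what the paper does, is to use an $r$-\emph{dependent asymmetric} splitting
\[
v_1=|x|^{-\sigma+d(1-\sigma)(\frac12-\frac1r)},\qquad v_2=|x|^{\sigma+d(\sigma-1)(\frac12-\frac1r)}\,V,
\]
so that at $r=\tfrac{2d}{d-2}$ one has $v_1=|x|^{-2\sigma+1}$ and $|v_2|\lesssim|x|^{-1}$. Then H\"older with $v_1$ produces $L^{\frac{2d}{d-4\sigma+2},2}$, which after Sobolev $|D|^{-(\sigma-1)}\colon L^{\frac{2d}{d-2\sigma},2}\to L^{\frac{2d}{d-4\sigma+2},2}$ matches the $d/2\sigma$-endpoint Strichartz exactly; and H\"older with $v_2$ produces $L^{\frac{2d}{d-2},2}$, which matches the $d/2$-endpoint Strichartz $\||D|^{\sigma-1}e^{-itH}u\|_{L^2_tL^{\frac{2d}{d-2},2}_x}\lesssim\|u\|_2$ for $H$. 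The commutation trick with $H^{-\theta}H_0^{\theta}$ (using the Lorentz hypothesis) together with $\|H^{\theta}H_0^{-\theta}\|_{\mathcal B(L^2)}\lesssim 1$ from $\langle Hu,u\rangle\gtrsim\langle H_0u,u\rangle$ then finishes step~2 exactly as you outlined. The asymmetry in the weights is precisely what reconciles the negative power $H_0^{\theta}=|D|^{-(\sigma-1)}$ on the $H_0$-side with the positive power $H_0^{-\theta}=|D|^{\sigma-1}$ on the $H$-side; both $v_1 H_0^{\theta}$ and $v_2 H_0^{-\theta}$ are then scale-invariant, which is why the uniform resolvent bounds hold.
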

For the proof of Corollary \ref{1272125}, we use the following Strichartz estimates.
\begin{lemma} [\cite{MY1}] \label{1272211}
Let $H$ be as in Theorem \ref{1251821}. Then, for all $d/2$ admissible pair $(p, q)$, we have
\begin{align}
\||D|^{2(\sigma -1)/p} e^{-itH} u\|_{L^p _t L^{q, 2} _x} \lesssim \|u\|_2. \label{1272158}
\end{align}
Furthermore, if $(\tilde{p}, \tilde{q})$ is a $d/2\sigma$ admissible pair, we have
\begin{align}
\|e^{-itH} u\|_{L^{\tilde{p}} _t L^{\tilde{q}, 2} _x} \lesssim \|u\|_2. \label{128932}
\end{align}
\end{lemma}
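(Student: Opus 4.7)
The plan is to follow the standard Kato smooth perturbation scheme exactly as invoked in the earlier applications of Theorem~\ref{1251906} and Theorem~\ref{1271257}, adapted to produce the Lorentz-refined target and the Sobolev derivative weight $|D|^{2(\sigma-1)/p}$. First I would establish the free-propagator estimates for $H_0 = (-\Delta)^{\sigma}$. The dispersive bound $\|e^{-itH_0} u\|_{\infty} \lesssim |t|^{-d/{2\sigma}} \|u\|_1$ follows from stationary phase for the symbol $e^{-it|\xi|^{2\sigma}}$; feeding this into the Keel--Tao abstract $TT^*$ scheme yields the $d/{2\sigma}$-admissible estimate $\|e^{-itH_0} u\|_{L^{\tilde p}_t L^{\tilde q, 2}_x} \lesssim \|u\|_2$, the Lorentz target emerging from the real-interpolation form of the Keel--Tao argument. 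A scaling identity (the $d/2$-admissible line differs from the $d/{2\sigma}$-admissible line by the homogeneous Sobolev regularity $-d(\sigma-1)(\tfrac12-\tfrac1q)=-2(\sigma-1)/p$) upgrades this to the $d/2$-admissible version weighted by $|D|^{2(\sigma-1)/p}$.

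Next I would set up the two Kato smoothing estimates
\[
\||x|^{-\sigma} e^{-itH_0} u\|_{L^2_t L^2_x} + \||x|^{-\sigma} e^{-itH} u\|_{L^2_t L^2_x} \lesssim \|u\|_2.
\]
By Theorem~\ref{2311152150} these reduce to uniform resolvent bounds for $|x|^{-\sigma}(H_0-z)^{-1}|x|^{-\sigma}$ and $|x|^{-\sigma}(H-z)^{-1}|x|^{-\sigma}$ on $L^2$. The free bound is the classical Kato--Yajima estimate for the fractional Laplacian. The bound for $H$ is obtained by Mourre theory with the dilation generator $A = \frac{1}{2}(xD+Dx)$: hypotheses (\ref{1251831})--(\ref{1251833}) are exactly the strict positive-commutator estimate on the whole spectrum together with the second-commutator regularity needed to run Mourre's limiting absorption principle (there are no thresholds because $\sigma(H)=\sigma_{\ac}(H)$ under the standing assumptions of Theorem~\ref{1251821}).

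Finally I would glue the two inputs together via Duhamel. Writing $V = Y^* Z$ with $Y=\sgn(V)|V|^{1/2}$, $Z=|V|^{1/2}$ — both controlled pointwise by $|x|^{-\sigma}$ thanks to $|x|^{2\sigma}V\in L^\infty$ — the smoothing step gives the $H_0$-smoothness of $Y$ and the $H$-smoothness of $Z$. Then
\[
|D|^{2(\sigma-1)/p} e^{-itH} u = |D|^{2(\sigma-1)/p} e^{-itH_0} u - i\, |D|^{2(\sigma-1)/p} e^{-itH_0}\!\int_0^t e^{isH_0} Y^* Z\, e^{-isH} u\, ds.
\]
The first term is bounded by Step~1; for the second I replace $\int_0^t$ by $\int_{\mathbb R}$ via Lemma~\ref{2311152210} (applied to Lorentz-valued targets by a standard Calderón extension), apply Step~1 to the outer propagator, and split by Cauchy--Schwarz in $s$ into the product of the two smoothing factors. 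The $d/{2\sigma}$-admissible estimate (\ref{128932}) is the same argument with no derivative factor. The main obstacle is Step~2 for $H$: turning the algebraic commutator inequalities (\ref{1251831})--(\ref{1251833}) into a genuine limiting absorption principle that is uniform on compact subsets of the real axis requires the full Mourre machinery, whereas everything else is formal once that resolvent bound is available.
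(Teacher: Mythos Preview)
The paper does not prove this lemma; it is quoted from \cite{MY1} (Theorem~1.8 there), with only the remark that the proof in \cite{MY1} already yields the Lorentz-refined target $L^{q,2}_x$. Your outline is exactly the strategy of \cite{MY1}: free Strichartz for $(-\Delta)^{\sigma}$ via dispersive bounds and Keel--Tao, the Kato--Yajima estimate for $|x|^{-\sigma}$ with respect to $H$ from Mourre theory with the dilation generator $A$ (this is precisely what hypotheses (\ref{1251831})--(\ref{1251833}) are set up to deliver), and Duhamel to transfer the free estimate to $H$.

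There is one genuine gap. Your Duhamel step invokes the Christ--Kiselev lemma to replace $\int_0^t$ by $\int_{\mathbb R}$; that works only when the target time exponent satisfies $p>2$, since the input from the smoothing step lives in $L^2_t L^2_x$. The lemma as stated covers \emph{all} $d/2$-admissible pairs, including the endpoint $(p,q)=(2,\tfrac{2d}{d-2})$, where Christ--Kiselev is unavailable. In \cite{MY1} (and compare the endpoint mechanism in Theorem~\ref{1226344} of this paper) the endpoint is handled instead through the Keel--Tao \emph{retarded inhomogeneous} estimate combined with $H$-supersmoothness of $|x|^{-\sigma}$, i.e.\ the uniform two-sided resolvent bound $\sup_{z\notin\mathbb R}\||x|^{-\sigma}(H-z)^{-1}|x|^{-\sigma}\|<\infty$ rather than merely the imaginary-part bound. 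Your Mourre argument in fact yields this stronger resolvent estimate, so the repair is available, but the argument as written does not close at $p=2$.
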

\begin{remark}
In the statement of Theorem 1.8 in \cite{MY1}, their results are described as $\||D|^{2(\sigma -1)/p} e^{-itH} u\|_{L^p _t L^{q} _x} \lesssim \|u\|_2$. However, in their proof, the stronger estimates (\ref{1272158}) are proved. The same comment holds for (\ref{128932}).
\end{remark}
\begin{proof}[Proof of Corollary \ref{1272125} ]
We decompose $V$ in the similar way as in the proof of Corollary \ref{1271749}: $V=v^* _1 v_2 = |x|^{-\sigma+d(1-\sigma)(1/2 -1/r)} \cdot (|x|^{\sigma+d(\sigma-1)(1/2 -1/r)} V)$. Set $-\theta = \frac{d}{2\sigma} (\sigma-1)(1/2-1/r) \in (0, 1/2]$.

\noindent (step1) In this step we prove $\|v_1 H^{\theta} _0 e^{-isH_0}\|_{L^2 _x \rightarrow L^2 _t L^2 _x} \lesssim 1$. As before we assume $r=2$ or $r= \frac{2d}{d-2}$. If $r=2$, then $\theta =0$ and
\begin{align*}
\| |x|^{-\sigma} e^{-itH_0} u\|_{L^2 _t L^2 _x} \lesssim \|e^{-itH_0} u\|_{L^2 _t L^{\frac{2d}{d-2\sigma}, 2}} \lesssim \|u\|_2
\end{align*}
holds. Here, in the first inequality, we have used H\"older's inequality for the Lorentz space and in the second inequality, we have used the endpoint Strichartz estimates (\ref{128932}). If $r=\frac{2d}{d-2}$, then $-\theta = \frac{\sigma-1}{2\sigma}$ and
\begin{align*}
\| |x|^{-2\sigma+1} |D|^{-(\sigma-1)} e^{-itH_0} u\|_{L^2 _t L^2 _x} &\lesssim \||D|^{-(\sigma-1)} e^{-itH_0} u\|_{L^2 _t L^{\frac{2d}{d-4\sigma+2}, 2} _x} \\
&\lesssim \|e^{-itH_0} u\|_{L^2 _t L^{\frac{2d}{d-2\sigma}, 2}} \lesssim \|u\|_2
\end{align*}
holds. Here, in the first line, we have used the H\"older inequality, in the second inequality, we have used the Sobolev inequality: $|D|^{-(\sigma-1)}: L^{\frac{2d}{d-2\sigma}, 2} \rightarrow L^{\frac{2d}{d-4\sigma+2}, 2}$ and in the last inequality, we have used (\ref{128932}). Hence $\|v_1 H^{\theta} _0 e^{-isH_0}\|_{L^2 _x \rightarrow L^2 _t L^2 _x} \lesssim 1$ holds.

\noindent (step2) In this step, we prove $\|v_2 e^{-itH} H^{-\theta} _0\|_{L^2 _x \rightarrow L^2 _t L^2 _x} \lesssim 1$. As before we may assume $r=2$ or $r= \frac{2d}{d-2}$. If $r=2$, then $\theta =0$ and
\begin{align*}
\|v_2 e^{-itH} u\|_{L^2 _t L^2 _x} &\lesssim \||x|^{\sigma} V e^{-itH} u\|_{L^2 _t L^2 _x} \lesssim \||x|^{-\sigma} e^{-itH} u\|_{L^2 _t L^2 _x} \\
& \lesssim \|e^{-itH} u\|_{L^2 _t L^{\frac{2d}{d-2\sigma}, 2}} \lesssim \|u\|_2
\end{align*}
holds. Here we have used $|x|^{2\sigma} V \in L^{\infty}$ and the endpoint Strichartz estimates (\ref{128932}). If $r=\frac{2d}{d-2}$, then $-\theta = \frac{\sigma-1}{2\sigma}$ and
\begin{align*}
\||x|^{2\sigma-1} V e^{-itH} H^{-\theta} _0 u\|_{L^2 _t L^2 _x} &\lesssim \||x|^{-1} e^{-itH} H^{-\theta} _0 u\|_{L^2 _t L^2 _x} \lesssim \|e^{-itH} H^{-\theta} _0 u\|_{L^2 _t L^{\frac{2d}{d-2}, 2} _x} \\
& = \|H^{-\theta} H^{\theta} _0  H^{-\theta} _0 e^{-itH} H^{\theta} H^{-\theta} _0 u\|_{L^2 _t L^{\frac{2d}{d-2}, 2} _x} \\
& \lesssim \|H^{-\theta} _0 e^{-itH} H^{\theta} H^{-\theta} _0 u\|_{L^2 _t L^{\frac{2d}{d-2}, 2} _x} \\
& \lesssim \|H^{\theta} H^{-\theta} _0 u\|_2 \lesssim \|u\|_2
\end{align*}
holds. Here in the third line, we have used $\|H^{-\theta} H^{\theta} _0\|_{\mathcal{B} (L^{\frac{2d}{d-2}, 2} )} \lesssim 1$, which follows from $\|H^{1/2} u\|_{\frac{2d}{d-2}, 2} \lesssim \|H^{1/2} _0 u\|_{\frac{2d}{d-2}, 2}$ and the complex interpolation and in the last line, we have used $\|H^{\theta} H^{-\theta} _0\|_{\mathcal{B} (L^2)} \lesssim 1$, which follows from the assumption $\langle Hu, u \rangle \gtrsim \langle (-\Delta)^{\sigma} u, u \rangle$ and the complex interpolation. Hence we obtain $\|v_2 e^{-itH} H^{-\theta} _0\|_{L^2 _x \rightarrow L^2 _t L^2 _x} \lesssim 1$.

By step1, step2, Theorem \ref{1271257} and the free orthonormal Strichartz estimates, we have the desired estimates.
\end{proof}
\begin{example} \label{1281026}
Assume $H$ as in Theorem \ref{1251821} and $1< \sigma < \frac{d-2}{4}$. Then all the assumptions in Corollary \ref{1272125} holds.
\end{example}
\begin{proof}
It suffices to show $\|H u\|_{\frac{2d}{d-2}, 2} \lesssim \|H _0 u\|_{\frac{2d}{d-2}, 2}$. This follows from the H\"older inequality and the Sobolev embedding as follows:
\begin{align*}
\|Vu\|_{\frac{2d}{d-2}, 2} \lesssim \||x|^{-2\sigma} u\|_{\frac{2d}{d-2}, 2} \lesssim \|u\|_{\frac{2d}{d-4\sigma-2}, 2} \lesssim  \||D|^{2\sigma} u\|_{\frac{2d}{d-2}, 2} =  \|H _0 u\|_{\frac{2d}{d-2}, 2}.
\end{align*}
\end{proof}
Finally we consider the critical Hardy potential case.
\begin{thm}[Critical Hardy potential]
Let $H_{crit, \sigma} = (-\Delta)^{\sigma} -C_{\sigma, d} |x|^{-2\sigma}$ and $1< \sigma < \frac{d-2}{4}$. Then, for $(q, r)$: $d/2$ admissible pair satisfying $d/r + 2\sigma/q <d$, we have
\begin{enumerate}
\item[(1)] For $r$ satisfying $2 \le r < \frac{2(d+1)}{d-1}$, $\{f_n\}$: all orthonormal systems in $\dot{H}^s$, $s= d(1-\sigma)(1/2 - 1/r)$, $\beta = \frac{2r}{r+2}$,
\[ \left\| \sum_{n=0}^ \infty{\nu_n|e^{-itH_{crit, \sigma}} P^{\perp} _{\rad}f_n|^2} \right\|_{L^{q/2} _t L^{r/2} _x} \lesssim \| \nu_n\|_{\ell^\beta}\]
holds. 
\item[(2)] For $d \ge 3$, $r$ satisfying $\frac{2(d+1)}{d-1} \le r \le \frac{2d}{d-2}$, $\{f_n\}$: all orthonormal systems in $\dot{H}^s$, $s= d(1-\sigma)(1/2 - 1/r)$, the same estimate as (1) holds if $1 \le \beta <q/2$.
\end{enumerate}
\end{thm}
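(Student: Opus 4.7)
The plan is to combine the strategy of Corollary~\ref{1272125}, which treats subcritical Hardy potentials at the $d/2$-admissible scaling via Theorem~\ref{1271257}, with the non-radial projection trick of Theorem~\ref{1252218}, which handled the critical Hardy potential at the $d/(2\sigma)$-admissible scaling. First I would establish a $P_{\rad}^{\perp}$-version of Theorem~\ref{1271257}: since $P_{\rad}^{\perp}$ commutes with both $H_0=(-\Delta)^{\sigma}$ and $H_{crit,\sigma}$ by rotation invariance, one may insert it on both sides of the Duhamel expansion, and the argument of Theorem~\ref{1271257}---based on the duality principle (Lemma~\ref{1113719}) and the Christ--Kiselev lemma (Lemma~\ref{2311152210})---carries over verbatim. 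The upshot is that if $V = Y^{*}Z$, $YH_0^{\theta}$ is $H_0$-smooth, and $\|Ze^{-itH_{crit,\sigma}} H_0^{-\theta} P_{\rad}^{\perp} u\|_{L^2_{t,x}} \lesssim \|u\|_2$, then the desired orthonormal bound follows from the free orthonormal Strichartz estimate of \cite{BLN}.

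Next I would imitate the decomposition in the proof of Corollary~\ref{1272125}: write $V = -C_{\sigma,d}|x|^{-2\sigma} = v_1^{*} v_2$ with $|v_1(x)| \lesssim |x|^{-\sigma + d(1-\sigma)(1/2 - 1/r)}$ and $|v_2(x)| \lesssim |x|^{-\sigma - d(1-\sigma)(1/2 - 1/r)}$, and set $-\theta = \frac{d}{2\sigma}(\sigma-1)(1/2 - 1/r) \in (0, 1/2]$. The $H_0$-smoothness of $v_1 H_0^{\theta}$ is exactly step~1 of the proof of Corollary~\ref{1272125} and uses only the free endpoint estimates of Lemma~\ref{1272211}. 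For the bound on $v_2 e^{-itH_{crit,\sigma}} H_0^{-\theta} P_{\rad}^{\perp}$ I would copy step~2 of Corollary~\ref{1272125} by complex interpolation between the endpoints $r=2$ and $r=2d/(d-2)$; the crucial substitution is that the $H$-Strichartz estimate is now used in its non-radial form $\|e^{-itH_{crit,\sigma}} P_{\rad}^{\perp} u\|_{L^2_t L^{2d/(d-2\sigma),2}_x} \lesssim \|u\|_2$ together with the weighted analogue $\||D|^{\sigma-1} e^{-itH_{crit,\sigma}} P_{\rad}^{\perp} u\|_{L^2_t L^{2d/(d-2),2}_x} \lesssim \|u\|_2$. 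The norm equivalence $\|H_{crit,\sigma}^{1/2} u\|_{L^{2d/(d-2),2}} \approx \|H_0^{1/2} u\|_{L^{2d/(d-2),2}}$ on $P_{\rad}^{\perp} L^2$, needed when reinserting $H_{crit,\sigma}^{-\theta} H_0^{\theta}$, follows from the improved Hardy inequality on non-radial functions combined with a Sobolev embedding, as in Example~\ref{1281026}; the range $1<\sigma<(d-2)/4$ is what ensures that the weight $|x|^{-2\sigma}$ is Sobolev-controlled at the endpoint.

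The principal obstacle is the derivation of the two endpoint Strichartz estimates for $e^{-itH_{crit,\sigma}} P_{\rad}^{\perp}$ displayed above. I would obtain them by a $TT^{*}$/Duhamel perturbation argument as in \cite{MY1,MY2}: start from the free endpoint Strichartz estimates of Lemma~\ref{1272211}, and close the bootstrap using the Kato smoothness of $|x|^{-\sigma} P_{\rad}^{\perp}$ with respect to $H_{crit,\sigma}$ supplied by Theorem~6.1 of \cite{MY1}, together with the Kato smoothness of $|x|^{-\sigma}$ with respect to $H_0$. With these ingredients in place, the modified Theorem~\ref{1271257} yields the stated orthonormal bound in both ranges (1) and (2) of $r$, with the value of $\beta$ inherited unchanged from the free case of \cite{BLN}; the distinction between the two ranges is simply that for $r \ge 2(d+1)/(d-1)$ one only obtains $\beta < q/2$ at the level of the free estimate.
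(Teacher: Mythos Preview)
Your proposal is correct and follows essentially the same route as the paper: a $P_{\rad}^{\perp}$-version of Theorem~\ref{1271257}, the same splitting $V=v_1^*v_2$ with $-\theta=\frac{d}{2\sigma}(\sigma-1)(1/2-1/r)$, step~1 copied from Corollary~\ref{1272125}, and step~2 handled at the two endpoints $r=2$ and $r=2d/(d-2)$ using the Kato--Yajima and Strichartz estimates for $H_{crit,\sigma}P_{\rad}^{\perp}$ together with the improved (non-radial) Hardy inequality.

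Two small clarifications. First, the paper does not re-derive the perturbed endpoint Strichartz estimates $\||D|^{2(\sigma-1)/p}P_{\rad}^{\perp}e^{-itH_{crit,\sigma}}u\|_{L^p_tL^{q,2}_x}\lesssim\|u\|_2$: these are quoted directly from \cite[Theorem~6.1]{MY1}, so your proposed $TT^*$/Duhamel derivation is unnecessary (though correct in spirit, since that is how \cite{MY1} proves them). Second, the improved Hardy inequality with constant $\tilde C_{\sigma,d}>C_{\sigma,d}$ is used not for the Lorentz-space bound you indicate, but for the $L^2$ inequality $\|H_0^{-\theta}P_{\rad}^{\perp}u\|_2\lesssim\|H_{crit,\sigma}^{-\theta}P_{\rad}^{\perp}u\|_2$, i.e.\ to show $\langle H_0 P_{\rad}^{\perp}u,P_{\rad}^{\perp}u\rangle\lesssim\langle H_{crit,\sigma}P_{\rad}^{\perp}u,P_{\rad}^{\perp}u\rangle$; the one-sided Lorentz bound $\|H_{crit,\sigma}^{-\theta}u\|_{L^{2d/(d-2),2}}\lesssim\|H_0^{-\theta}u\|_{L^{2d/(d-2),2}}$ needs only the Sobolev argument of Example~\ref{1281026} and holds without any projection.
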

\begin{proof}
First, note that $P_{\rad} (P^{\perp} _{\rad})$ commutes with $|D|^{\alpha}$ and $H_{crit, \sigma}$. Hence they commute with $f(|D|^{\alpha})$ and $f(H_{crit, \sigma})$ for $f \in L^2 _{loc}$. By repeating the proof of Theorem \ref{1271257}, it suffices to show that $v_1 H^{\theta} _0$ is $H_0$-smooth and 
\begin{align}
\|v_2 e^{-itH_{crit, \sigma}} H^{-\theta} _0 P^{\perp} _{\rad} u\|_{L^2 _t L^2 _x} \lesssim \|u\|_2 \label{1281227}
\end{align}
holds for $\theta =-\frac{d}{2\sigma} (\sigma-1)(1/2-1/r)$, $v_1 = |x|^{-\sigma+d(1-\sigma)(1/2 -1/r)}$, $v_2 = -C_{\sigma, d} |x|^{-\sigma- d(1-\sigma)(1/2 -1/r)}$. However, $H_0$-smoothness of $v_1 H^{\theta} _0$ follows from the proof of Corollary \ref{1272125}. Then we consider (\ref{1281227}). As before we assume $r=2$ or $r= \frac{2d}{d-2}$. If $r=2$, 
\begin{align*}
\|v_2 e^{-itH_{crit, \sigma}} H^{-\theta} _0 P^{\perp} _{\rad} u\|_{L^2 _t L^2 _x} = \||x|^{-\sigma} e^{-itH_{crit, \sigma}} P^{\perp} _{\rad} u\|_{L^2 _t L^2 _x} \lesssim \|u\|_2
\end{align*}
holds. In the last inequality, we have used the Kato-Yajima estimates: 
\[ \sup_{z \in \C \setminus \R} \||x|^{-\sigma} P^{\perp} _{\rad} (H_{crit, \sigma} -z)^{-1} P^{\perp} _{\rad} |x|^{-\sigma}\|_{L^2 \rightarrow L^2} < \infty ,\]
which was proved in \cite{MY1} Theorem 6.1. If $r= \frac{2d}{d-2}$, then
\begin{align*}
\|v_2 e^{-itH_{crit, \sigma}} H^{-\theta} _0 P^{\perp} _{\rad} u\|_{L^2 _t L^2 _x} &\lesssim \|e^{-itH_{crit, \sigma}} H^{-\theta} _0 P^{\perp} _{\rad} u\|_{L^2 _t L^{\frac{2d}{d-2}, 2} _x} \\
& = \|H^{-\theta} H^{\theta} _0 H^{-\theta} _0 P^{\perp} _{\rad} e^{-itH_{crit, \sigma}}  P^{\perp} _{\rad} H^{\theta} H^{-\theta} _0u\|_{L^2 _t L^{\frac{2d}{d-2}, 2} _x} \\
& \lesssim  \| H^{-\theta} _0 P^{\perp} _{\rad} e^{-itH_{crit, \sigma}}  P^{\perp} _{\rad} H^{\theta} H^{-\theta} _0u\|_{L^2 _t L^{\frac{2d}{d-2}, 2} _x} 
\end{align*}
holds. Here we have used $\|H^{-\theta} _{crit, \sigma} u\|_{\frac{2d}{d-2}, 2} \lesssim \|H^{-\theta} _0 u\|_{\frac{2d}{d-2}, 2}$, which can be deduced by the same way as in Example \ref{1281026}. By using the usual Strichartz estimates (\cite{MY1} Theorem 6.1):
\[\||D|^{2(\sigma -1)/p} P^{\perp} _{\rad} e^{-itH_{crit, \sigma}} u\|_{L^p _t L^{q, 2} _x} \lesssim \|u\|_2\]
we obtain
\begin{align*}
\|v_2 e^{-itH_{crit, \sigma}} H^{-\theta} _0 P^{\perp} _{\rad} u\|_{L^2 _t L^2 _x} &\lesssim \|P^{\perp} _{\rad} H^{\theta} _{crit, \sigma} H^{-\theta} _0 u\|_2.
\end{align*}
Now it suffices to show 
\begin{align}
\| H^{-\theta} _0 P^{\perp} _{\rad} u\|_2 \lesssim \| H^{-\theta} _{crit, \sigma} P^{\perp} _{\rad} u\|_2 . \label{1281317}
\end{align}
Since $-\theta \in (0, 1/2]$, by the complex interpolation, we may assume $-\theta =1/2$. Then (\ref{1281317}) is equivalent to
\[\langle H_0 P^{\perp} _{\rad} u, P^{\perp} _{\rad} u \rangle \lesssim \langle H_{crit, \sigma} P^{\perp} _{\rad} u, P^{\perp} _{\rad} u \rangle.\]
This estimate follows from the following sharp Hardy inequality:
\[\tilde{C}_{\sigma, d} \int |x|^{-2\sigma} |u(x)|^2 dx \le \||D|^{\sigma} u\|_2\]
for all $u \in P^{\perp} _{\rad} (C^{\infty} _0)$, which was also proved in \cite{MY1}. Here the most important point is that $\tilde{C} _{\sigma, d} := \left\{\frac{2^{\sigma} \Gamma (\frac{d+2\sigma+2}{4})}{\Gamma (\frac{d-2\sigma-2}{4})}\right\}^2$ is strictly larger than $C_{\sigma, d}$, which is the best constant of the usual Hardy inequality.
\end{proof}

\subsection{\textbf{Applications to infinitely many particle system}}
In this subsection, we consider one application of the orthonormal Strichartz estimates to the nonlinear Schr\"odinger equation (H). Here, we consider the higher order or fractional type Hamiltonian: $H= (-\Delta)^{\sigma} +V$. Concerning previous researches, if $\sigma =1$ and $V=0$, \cite{LS} and \cite{CHP1} proved the local existence and global existence results which have physical phenomena in the background. In \cite{FS}, they proved the global existence of the solution by using the improved set of the exponent  in the orthonormal Strichartz estimates if $\sigma =1$ and $V=0$. In \cite{Ho}, the author proved the global existence of a solution for the magnetic Schr\"odinger operator: $H=(D+A)^2 +V$. If $\sigma=1$ and $V=0$, there are some results on the scattering or the stability of stationary solutions. For example, see \cite{LS2}, \cite{CHP2} and \cite{Ha}. Concerning the case $\sigma >1$, \cite{BLN} proved the local existence of the solution if $V=0$. As far as the author knows there seems to be no result on the global existence of the solution even if we assume $V=0$. In this subsection, we prove the global existence of a solution for (\ref{2311152204}) with potentials.
For the proof of Theorem \ref{1281958}, we need one lemma, which gives the existence of a solution to the higher order or fractional linear Schr\"odinger equations with time-dependent potentials.
\begin{lemma} \label{1281959}
Assume $H$ is as in Theorem \ref{1251816} or Theorem \ref{1251821} and $\sigma (H) = \sigma _{\ac} (H)$.  Let $V(t, x) = V_1 (t, x) +V_2 (t, x)$ satisfy $V_1 \in L^{\alpha}_T L^p _x$ and $V_2 \in L^{\beta} _T L^{\infty} _x$ for some $p \ge 1, \alpha \ge 1, \beta >1, 0 \le 1/\alpha < 1- \frac{d}{2mp}$. Here $L^q_T L^r _x$ denotes $L^q ([-T, T]; L^r _x)$ for $T \in (0, \infty]$. Then the integral equation:
\[ u(t) = e^{-i(t-s)H} u_0 -i \int_{s}^{t} e^{-i(t-s')H} V(s')u(s') ds' , \quad  t \in [-T, T]\]
has a unique solution $u \in C_T L^2 _x \cap L^{\theta} _{loc, T} L^q _x$ for $q= \frac{2p}{p-1}, \theta = \frac{4mp}{d}$. Furthermore the mass conservation law; $\|u(t)\|_2 = \|u_0\|_2$ holds. As before, if $H$ is as in Theorem \ref{1251821}, we assume the above condition substituting $\sigma$ for $m$.
\end{lemma}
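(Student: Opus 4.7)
The plan is a standard Banach fixed point argument in a Strichartz norm, followed by iteration via mass conservation to upgrade the local solution to a global one.

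First, I would verify that the pair $(\theta, q)$ with $q = 2p/(p-1)$ and $\theta = 4mp/d$ is sharp $d/(2m)$-admissible: indeed $d/q + 2m/\theta = d(p-1)/(2p) + d/(2p) = d/2$, and the hypothesis $1/\alpha < 1 - d/(2mp)$ forces $p > d/(2m)$, so $q \in [2, 2d/(d-2m))$. Consequently the ordinary Strichartz estimates for $e^{-itH}$ at this pair are available from the references \cite{MY1, MY2} already cited in this section (they can equivalently be recovered from Theorem~\ref{1251816} at $\beta = 1$ by the triangle inequality applied to the rank-one operator $|u_0\rangle\langle u_0|$). Together with duality and the Christ--Kiselev lemma, these yield
\[
\bigl\|e^{-itH} u_0 \bigr\|_{L^\infty_T L^2_x \cap L^\theta_T L^q_x} + \Bigl\|\int_s^t e^{-i(t-s')H} F(s')\, ds'\Bigr\|_{L^\infty_T L^2_x \cap L^\theta_T L^q_x} \lesssim \|u_0\|_2 + \|F\|_{L^{\theta'}_T L^{q'}_x} + \|F\|_{L^1_T L^2_x}.
\]

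Next I would set $X_T := C_T L^2_x \cap L^\theta_T L^q_x$ and define
\[
\Phi(u)(t) := e^{-i(t-s)H} u_0 - i \int_s^t e^{-i(t-s')H} \bigl(V_1(s') + V_2(s')\bigr) u(s')\, ds'.
\]
I will place $V_1 u$ in $L^{\theta'}_T L^{q'}_x$ and $V_2 u$ in $L^1_T L^2_x$. H\"older in $x$ with $1/q' = 1/p + 1/q$ (equivalent to $q = 2p/(p-1)$) followed by H\"older in $t$ gives
\[
\|V_1 u\|_{L^{\theta'}_T L^{q'}_x} \lesssim T^{1 - d/(2mp) - 1/\alpha}\, \|V_1\|_{L^\alpha_T L^p_x}\, \|u\|_{L^\theta_T L^q_x},
\]
with a \emph{positive} power of $T$ by the strict hypothesis on $\alpha$, together with
\[
\|V_2 u\|_{L^1_T L^2_x} \lesssim T^{1 - 1/\beta}\, \|V_2\|_{L^\beta_T L^\infty_x}\, \|u\|_{L^\infty_T L^2_x},
\]
and $1 - 1/\beta > 0$ since $\beta > 1$. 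Choosing $T_0$ small in terms of the relevant norms of $V_1, V_2$, the map $\Phi$ sends a suitable ball of $X_{T_0}$ into itself and is a strict contraction, so Banach's fixed point theorem produces a unique local solution in $X_{T_0}$.

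Finally, to extend globally I would establish the mass conservation law $\|u(t)\|_2 = \|u_0\|_2$ by the standard regularization argument: approximate $u_0$ by Schwartz functions and each $V_j$ by smooth bounded potentials, solve the corresponding smooth Cauchy problem for which the identity $\tfrac{d}{dt}\|u\|_2^2 = -2\,\mathrm{Im}\,\langle V u, u\rangle = 0$ is rigorous (the last equality relying on the real-valuedness of $V$, as is the case in the intended applications), and pass to the limit using the Strichartz bounds and continuous dependence on the data. Because the local time $T_0$ depends on $V_1, V_2$ only through their $L^\alpha L^p$ and $L^\beta L^\infty$ norms on the current slab, which remain finite on every bounded time interval by assumption, conservation of the $L^2$ norm permits iterating the local step a finite number of times to cover any compact subinterval of $[-T, T]$, giving the asserted global solution in $C_T L^2_x \cap L^\theta_{\mathrm{loc}, T} L^q_x$. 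The main technical obstacle will be the rigorous justification of the mass conservation identity at the low Strichartz regularity, but the requisite mollification and limiting argument is routine.
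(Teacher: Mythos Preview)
Your fixed-point argument for local existence is essentially identical to the paper's: both place $V_1 u$ in $L^{\theta'}_T L^{q'}_x$ and $V_2 u$ in $L^1_T L^2_x$, extract the same positive powers $T^{1-d/(2mp)-1/\alpha}$ and $T^{1-1/\beta}$ via H\"older, and contract for small $T$; the paper then glues to define $U(t,s)$ on $[-T,T]$.

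For mass conservation the paper takes a different route and explicitly says why: ``Instead of the regularizing technique in \cite{Y}, we use the method in \cite{O}.'' It expands $\|u(t)\|_2^2$ directly from the Duhamel formula as $\|u_0\|_2^2$ plus a cross term $-2\,\im\int_0^t\langle e^{-isH}u_0,V(s)u(s)\rangle\,ds$ plus the square of the inhomogeneous piece; the latter is rewritten as a double integral in $(s,s')$, symmetrized, and collapsed (by substituting the integral equation back in) to exactly the opposite of the cross term. Every pairing is justified at the Strichartz regularity $u\in C_TL^2_x\cap L^\theta_TL^q_x$ using only H\"older with $1/p+2/q=1$ and $1/\alpha+2/\theta<1$.

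Your regularization plan is not wrong in spirit, but the step ``approximate each $V_j$ by smooth bounded potentials and pass to the limit using continuous dependence'' is not as routine as you suggest: smooth functions are not dense in $L^\beta_TL^\infty_x$, so you cannot approximate $V_2$ in the very norm that controls the contraction, and hence cannot straightforwardly conclude $u_\epsilon\to u$. This can be patched, but the paper's Ozawa-type algebraic argument avoids the issue entirely and is the cleaner choice here.
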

Lemma \ref{1281959} for $H=-\Delta$ was first proved in \cite{Y}. If $H=(D+A)^2 +V$, it was proved in \cite{Ho} and was applied to the global existence of a unique solution to the infinitely many particle systems.
\begin{corollary} \label{1282056}
Under assumptions in Lemma \ref{1281959}, There exists a family of unitary operators $\{U(t, s)\}$ such that
\begin{itemize}

\item $U(t, s)$ is strongly continuous with respect to $(t, s)$.

\item $U(t, s)U(s, r) = U(t, r)$ holds.

\item $U(t, s)u_0$ is a unique solution to the integral equation in Lemma \ref{1281959}.

\end{itemize}
\end{corollary}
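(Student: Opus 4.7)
The plan is to define the propagator $U(t,s) : L^2 \to L^2$ by $U(t,s) u_0 := u(t)$, where $u \in C_T L^2_x \cap L^{\theta}_{loc, T} L^q_x$ is the unique solution to the integral equation of Lemma \ref{1281959} with initial time $s$ and initial datum $u_0$. Linearity of $U(t,s)$ is immediate from uniqueness: for $u_0, v_0 \in L^2$ and $a, b \in \C$, the function $aU(\cdot, s)u_0 + bU(\cdot, s)v_0$ lies in the solution class and satisfies the same integral equation as $U(\cdot, s)(au_0 + bv_0)$, so the two coincide. The mass conservation identity $\|u(t)\|_2 = \|u_0\|_2$ from Lemma \ref{1281959} then shows that $U(t,s)$ is an isometry of $L^2$ for every $t, s \in [-T, T]$.

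Next I would establish the Chapman--Kolmogorov identity $U(t,s) U(s, r) = U(t,r)$. Setting $t = s$ in the integral equation gives $U(s, s) = I$. Fix $r, s \in [-T, T]$ and $u_0 \in L^2$, and consider
\begin{equation*}
v(\tau) := U(\tau, r) u_0, \qquad w(\tau) := U(\tau, s)\bigl(U(s, r) u_0\bigr), \qquad \tau \in [-T, T].
\end{equation*}
Both $v$ and $w$ lie in $C_T L^2_x \cap L^{\theta}_{loc, T} L^q_x$ by applying Lemma \ref{1281959} (once to $v$, and twice to $w$ with initial data $u_0$ and $U(s,r)u_0$ respectively), and both satisfy the integral equation on $[-T, T]$ with initial time $s$ and initial value $U(s,r)u_0$. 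Uniqueness in Lemma \ref{1281959} forces $v \equiv w$, giving the identity upon evaluation at $\tau = t$.

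Unitarity now follows by specialization: $U(t,s)U(s,t) = U(t,t) = I$ and $U(s,t)U(t,s) = I$, so $U(t,s)^{-1} = U(s,t)$ exists, and combined with the isometry property this makes $U(t,s)$ unitary. For joint strong continuity, fix $(t_0, s_0) \in [-T,T]^2$ and $u_0 \in L^2$. Using the Chapman--Kolmogorov identity $U(t, s) = U(t, s_0) U(s_0, s)$ and the fact that $U(t, s_0)$ is an isometry, I would write
\begin{equation*}
\| U(t, s) u_0 - U(t_0, s_0) u_0 \|_2 \le \| U(s_0, s) u_0 - u_0 \|_2 + \| U(t, s_0) u_0 - U(t_0, s_0) u_0 \|_2.
\end{equation*}
The first term tends to zero as $s \to s_0$ by the $C_T L^2$ regularity of $\tau \mapsto U(\tau, s_0) u_0$ at $\tau = s_0$, where it equals $u_0$; the second tends to zero as $t \to t_0$ by the same regularity. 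Thus $U(t,s) u_0 \to U(t_0, s_0) u_0$ in $L^2$.

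The only delicate point is ensuring that $w = U(\cdot, s)\bigl(U(s,r)u_0\bigr)$ really lives in the uniqueness class $C_T L^2_x \cap L^{\theta}_{loc, T} L^q_x$ on the full interval $[-T, T]$ rather than just on sub-intervals, so that the uniqueness statement of Lemma \ref{1281959} legitimately applies to the comparison $v$ versus $w$. This is, however, immediate from two successive applications of Lemma \ref{1281959} on $[-T, T]$, since the time interval on which the lemma furnishes a solution does not depend on the initial time within it; once this is in hand, the rest of the argument is a routine bookkeeping.
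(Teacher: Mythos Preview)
Your argument is correct and follows the same route as the paper (and the references \cite{Y}, \cite{Ho} it invokes): define $U(t,s)$ via the unique solution, read off linearity and the cocycle identity from uniqueness, upgrade the mass-conservation isometry to unitarity via $U(t,s)U(s,t)=I$, and deduce joint strong continuity from the $C_TL^2$ regularity. One small imprecision: in the strong-continuity estimate your first term is $\|U(s_0,s)u_0-u_0\|_2$, but you justify it by continuity of $\tau\mapsto U(\tau,s_0)u_0$; to connect the two you should insert the one-line identity $\|U(s_0,s)u_0-u_0\|_2=\|U(s_0,s)(u_0-U(s,s_0)u_0)\|_2=\|u_0-U(s,s_0)u_0\|_2$, which uses the isometry and the cocycle relation you have already established.
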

\begin{proof}[Proof of Lemma \ref{1281959} and Corollary \ref{1282056}]
In the proof, we follow the same line as in \cite{Y} and \cite{Ho}. Let $q, \theta$ be as in the statement. Set
\begin{align*}
&\mathcal{X}(a) = \mathcal{X}(a, q) = C(I; L^2 _x) \cap L^{\theta} (I; L^q _x) \\
&\mathcal{X}^* (a) = \mathcal{X}^* (a, q) = L^1 (I; L^2 _x) + L^{\theta'} (I; L^{q'} _x) \\
&\mathcal{M} (a) = L^{\alpha} (I; L^p _x) + L^{\beta} (I; L^{\infty} _x)
\end{align*}
for $0< a < 1/2$ and $I = [-a, a]$. First, we prove
\[\|Vu\|_{\mathcal{X}^*(a)} \lesssim (2a)^{\gamma} \|V\|_{\mathcal{M}(a)} \|u\|_{\mathcal{X}(a)}\]
for  $\gamma = min (1-\frac{1}{\beta}, 1- \frac{d}{2mp} -\frac{1}{\alpha}) >0$. By the assumptions, we have $1/\alpha < 1-\frac{d}{2mp} = 1/{\theta'} -1/\theta$. Therefore
\begin{align*}
&\|V_1 u\|_{L^{\theta'} (I; L^{q'} _x)} \lesssim \|V_1\|_{L^{\frac{2mp}{2mp-d}} (I; L^{p} _x)} \|u\|_{L^{\theta} (I; L^q _x)} \lesssim |I|^{1-\frac{d}{2mp} -1/\alpha} \|V_1\|_{L^{\alpha} (I; L^p _x)} \|u\|_{\mathcal{X}(a)} \\
& \|V_2 u\|_{L^1 (I; L^2 _x)} \lesssim \|V_2\|_{L^{1} (I; L^{\infty} _x)} \|u\|_{C(I; L^2 _x)} \lesssim |I|^{1-1/\beta} \|V_2\|_{L^{\beta} (I; L^{\infty} _x)} \|u\|_{\mathcal{X}(a)}
\end{align*}
holds. Then we obtain
\begin{align*}
\|Vu\|_{\mathcal{X}^*(a)} &\le \inf_{
V=V_1 +V_2, 
V_1 \in L^{\alpha} (I; L^p _x), V_2 \in L^{\beta} (I; L^{\infty} _x)} (\|V_1 u\|_{L^{\theta'} (I; L^{q'} _x)} + \|V_2 u\|_{L^1 (I; L^2 _x)}) \\
& \lesssim (2a)^{\gamma} \inf_{V=V_1 +V_2, 
V_1 \in L^{\alpha} (I; L^p _x), V_2 \in L^{\beta} (I; L^{\infty} _x)} (\|V_1\|_{L^{\alpha} (I; L^p _x)} + \|V_2\|_{L^{\beta} (I; L^{\infty} _x)}) \|u\|_{\mathcal{X}(a)} \\
& = (2a)^{\gamma} \|V\|_{\mathcal{M}(a)} \|u\|_{\mathcal{X}(a)}
\end{align*}
By this estimate and the Strichartz estimates for $H$, we have
\begin{align*}
&\|Qu\|_{\mathcal{X}(a)} \lesssim \|Vu\|_{\mathcal{X}^* (a)} \lesssim a^{\gamma} \|V\|_{\mathcal{M}(a)} \|u\|_{\mathcal{X}(a)} \\
& Qu(t) = \int_{0}^{t} e^{-i(t-s)H} V(s)u(s) ds . 
\end{align*}
Hence if we take $a$ sufficiently small, we have
\begin{align*}
&\Phi : \mathcal{X}(a) \rightarrow \mathcal{X}(a), u \mapsto e^{-itH} u_0 -iQu \\
& \|\Phi (u-v)\|_{\mathcal{X}(a)} \le \frac{1}{2} \|u-v\|_{\mathcal{X}(a)} .
\end{align*}
Then $\Phi$ is a contraction mapping on $\mathcal{X}(a)$ and our integral equation for $s=0$ has a unique solution $u(t) = (1+ iQ)^{-1} u_0 \in \mathcal{X}(a)$. By taking $u_0 = e^{isH} u_0$ and $V = V(t+s)$, we have a unique solution $u(t)$ to the integral equation with initial data $u(s) = u_0$ on $[s-a, s+a]$. Then $U(t, s)u_0 := u(t)$ is a linear operator on $L^2 _x$ by construction. By glueing these $U(t, s)$, we have a unique solution on $[-T, T]$ and a family of linear operators $\{U(t, s)\}$ for $t, s \in [-T, T]$. Hence it suffices to show that $\|U(t, s)u_0\|_2 = \|u_0\|_2$ holds. Instead of the regularizing technique in \cite{Y}, we use the method in \cite{O}. We assume $s=0$ for the sake of simplicity. Then
\begin{align*}
\|U(t, 0)u_0\|^2 _2 &= \left\|e^{-itH} u_0 -i \int_{0}^{t} e^{-i(t-s)H} V(s)u(s) ds \right\|^2_2 \\
& = \|u_0\|^2 _2 + \left\| \int_{0}^{t} e^{-i(t-s)H} V(s)u(s) ds \right\|^2 _2 -2 \im \int_{0}^{t} \langle e^{-isH} u_0 , V(s)u(s) \rangle ds .
\end{align*}
Note that the third term is well-defined since $e^{-isH} u_0, u(s) \in C_{T} L^2 \cap L^{\theta} _{loc, T} L^{q} _x$ ensures that 
\begin{align*}
 \int_{0}^{t} |\langle e^{-isH} u_0 , V(s)u(s) \rangle| ds & \lesssim _{t} \|V_1\|_{L^{\alpha} _{[0, t]} L^p _x} \|e^{-itH} u_0\|_{L^{\theta} _T L^q _x} \|u\|_{L^{\theta} _{[0, t]} L^q _x} + \int_{0}^{t} \|V_2 (s)\|_{\infty} \|u_0\|_2 \|u\|_{L^{\infty} _{T} L^2 _x} ds \\
& < {\infty}
\end{align*}
by the relation: $1/p + 1/q + 1/q =1, 1/\alpha + 1/\theta + 1/\theta <1$. We transform the second term as
\begin{align*}
\left\| \int_{0}^{t} e^{-i(t-s)H} V(s)u(s) ds \right\|^2 _2 &= \re \int_{0}^{t} \int_{0}^{t} \langle V(s)u(s) , e^{-i(s-s')H} V(s')u(s') \rangle ds' ds \\
& = 2 \re \int_{0}^{t} \int_{0}^{s} \langle V(s)u(s) , e^{-i(s-s')H} V(s')u(s') \rangle ds' ds \\
& = -2 \im \int_{0}^{t} \langle V(s)u(s) , u(s) + i\int_{0}^{s} e^{-i(s-s')H} V(s')u(s') ds \rangle ds \\
& = 2 \im \int_{0}^{t} \langle e^{-isH} u_0 , V(s)u(s) \rangle ds .
\end{align*}
Then we have $\|U(t, 0)u_0\|_2 = \|u_0\|_2$.
\end{proof}
\begin{proof}[Outline of the proof of Theorem \ref{1281958}]
The proof is essentially the same as in \cite{Ho} Theorem 2.18 (local existence part is just like \cite{FS}). So we only give the outline of the proof. By repeating the proof of Lemma 4.14 in \cite{Ho}, we obtain
\[\| \rho_{\gamma(t)}\|_{L^{q/2}_t L^{r/2}_x} \lesssim \|\gamma_0\|_{\mathfrak{S}^{\beta}} + \left\|\int_{\R} e^{isH} |R(s)| e^{-isH} ds \right\|_{\mathfrak{S}^{\beta}}\]
for
\begin{align*}
\gamma(t) = e^{-itH} \gamma_{0} e^{itH} -i \int_{0}^{t} e^{-i(t-s)H} R(s)e^{i(t-s)H} ds
\end{align*}
Here $\gamma_0 \in \mathfrak{S}^{\beta}$ and $R(t)$ is an operator-valued function.
Let $R >0$ be such that $\|\gamma_0\|_{\mathfrak{S}^{\beta}} < R$, $T =T(R)$ be chosen later. Set
\[ X := \left\{ (\gamma , \rho) \in C([0, T]; \mathfrak{S}^{\beta}) \times L^{q/2} ([0, T]; L^{r/2} _x) \mid \|\gamma\|_{C([0, T]; \mathfrak{S}^{\beta})} + \|\rho\|_{L^{q/2} ([0, T]; L^{r/2} _x)} \le CR \right\}.\]
Here $C>0$ is later chosen independent of $R$.
We define 
\begin{align*}
&\Phi (\gamma , \rho) =(\Phi_1 (\gamma , \rho), \rho [\Phi_1 (\gamma, \rho)]), \\
& \Phi_1 (\gamma , \rho)(t) = e^{-itH} \gamma_0 e^{itH} -i \int_{0}^{t} e^{-i(t-s)H} [w*\rho , \gamma] e^{i(t-s)H} ds .
\end{align*}
For $(\gamma , \rho) \in X$, by the orthonormal Strichartz estimates and the unitary invariance of the Schatten norms, we have
\begin{align*}
\| \Phi_1 (\gamma , \rho)\|_{C([0, T]; \mathfrak{S}^{\beta})} &\lesssim \|\gamma_0\|_{\mathfrak{S}^{\beta}} + 2\int_{0}^{t} \|w* \rho\|_{\infty} \|\gamma\|_{\mathfrak{S}^{\beta}} ds \\
& \lesssim R + 2T^{1/(q/2)'} \|w\|_{(r/2)'} \|\rho\|_{L^{q/2} ([0, T]; L^{r/2} _x)} \|\gamma\|_{C([0, T]; \mathfrak{S}^{\beta}) } \\
& \lesssim R + 2\|w\|_{(r/2)'} C^2 T^{1/(q/2)'} R^2 .
\end{align*}
By the above estimates, 
\begin{align*}
\|\rho [\Phi_1 (\gamma , \rho)]\|_{L^{q/2} ([0, T]; L^{r/2} _x)} &\lesssim R + 2 \int_{0}^{T} \|w* \rho\|_{\infty} \|\gamma\|_{\mathfrak{S}^{\beta}} ds \\
& \le R + 2\|w\|_{(r/2)'} C^2 T^{1/(q/2)'} R^2 .
\end{align*}
If we take $C$ large enough and $T$ small enough, we have $\Phi : X \rightarrow X$. By the similar computation, $\Phi$ is a contraction mapping on $X$. Hence there exists a unique solution to (H) on $[0, T_{max} )$. To prove the global existence, it suffices to show that $\gamma (t) = U(t) \gamma_0 U(t)^* (=: \eta (t))$ holds on $t \in [0, T_{max} - \epsilon ]$ since this implies $\|\gamma (t)\|_{\mathfrak{S}^{\beta}} = \|\gamma_0\|_{\mathfrak{S}^{\beta}}$ . Here the unitary operator $U(t)$ is defined by
\begin{itemize}

\item $U(t)$ is strongly continuous

\item $U(0) = I$

\item $U(t)u_0 = e^{-itH} u_0 -i \int_{0}^{t} e^{-i(t-s)H} w* \rho_{\gamma}(s)U(s)u_0 ds$ on $[0, T_{max} - \epsilon ]$. 
\end{itemize}
In this step we have used Corollary \ref{1282056}. The difficult point in the proof of $\gamma (t) = U(t) \gamma_0 U(t)^* $ is that $U(t)$ is not differentiable and hence we need a little technical computation. However they are exactly the same as in the proof of Theorem 2.18 in \cite{Ho} (just substituting $\mathfrak{S}^{\beta}$ for $\mathfrak{S}^{\frac{2q}{q+1}}$). So we omit the details.
\end{proof}

\subsection{\textbf{Refined Strichartz estimate}}
In this subsection we prove the refined Strichartz estimates. They were proved in \cite{FS} for $H=-\Delta$ and in \cite{BLN} for $H=(-\Delta)^{\sigma}$ and $H=(1-\Delta)^{1/2}$. In \cite{Ho}, the refined Strichartz estimates for $H=(D+A)^2 +V$ was proved by using the $L^p$-boundedness of spectral multipliers and the Littlewood-Paley theory. See \cite{BLN} for more details about this topic including their applications. Now we consider the refined Strichartz estimates for $H=(-\Delta)^m +V$. To deduce the $L^p$-boundedness of spectral multipliers we assume Assumption \ref{129026}.
Under Assumption \ref{129026}, \cite{EG} proved that wave operators: $W_{\pm} := \slim_{t \rightarrow \pm \infty} e^{itH} e^{-itH_0}$ are bounded operators on $L^p (\R^d)$ if $1<p<\infty$. Concerning the $L^p$-boundedness of wave operators for higher order operators, see \cite{EG}, \cite{GY}, \cite{MWY} and references therein. As a corollary we obtain the following result.
\begin{corollary} \label{1291118}
Let $f \in C^{\infty} ((0, \infty))$ satisfy $| \partial^{k} _{s} f(s)| \lesssim s^{-k}$ and $H$ be as in Assumption \ref{129026}. Then for any $p \in (1, \infty)$,
\[\|f(\lambda H)\|_{\mathcal{B} (L^p)} \lesssim 1\]
holds uniformly in $\lambda >0$. Furthermore, if $H \ge 0$, we obtain
\begin{align}
\|f(\lambda H^{\frac{1}{2m}})\|_{\mathcal{B} (L^p)} \lesssim 1 \label{1291126}
\end{align}
uniformly in $\lambda >0$.
\end{corollary}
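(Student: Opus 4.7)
The plan is to reduce the spectral-multiplier bound for $H$ to the corresponding bound for the free operator $H_0=(-\Delta)^m$, exploiting the $L^p$-boundedness of the wave operators $W_\pm = \slim_{t\to\pm\infty} e^{itH} e^{-itH_0}$ on $L^p(\R^d)$ for $1<p<\infty$ cited from \cite{EG}, and then applying the Mikhlin multiplier theorem to the free spectral multiplier.

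First I would invoke asymptotic completeness of $W_\pm$: under Assumption \ref{129026} one has $W_\pm^* W_\pm = I$ and $W_\pm W_\pm^* = P_{\ac}(H)$, so the intertwining identity together with the Borel functional calculus yields
\[
f(\lambda H)\, P_{\ac}(H) = W_\pm\, f(\lambda H_0)\, W_\pm^*
\]
for any $f$ defined on $\sigma(H_0)\subset [0,\infty)$. On the orthogonal complement of the AC subspace (finite-dimensional, corresponding to possible negative eigenvalues), $f(\lambda H)$ acts as a finite-rank operator bounded on every $L^p$ uniformly in $\lambda$ once $f$ is extended by zero on $(-\infty,0]$. Combined with the $L^p$-boundedness of $W_\pm$ and $W_\pm^*$, it is therefore enough to prove
\[
\sup_{\lambda>0}\|f(\lambda H_0)\|_{\mathcal{B}(L^p)} \lesssim 1.
\]

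Next I would recognize $f(\lambda H_0)$ as the Fourier multiplier with symbol $m_\lambda(\xi) = f(\lambda|\xi|^{2m})$ and verify the Mikhlin condition uniformly in $\lambda>0$. By Fa\`a di Bruno, $\partial^\alpha_\xi m_\lambda(\xi)$ is a finite linear combination of terms of the form $f^{(k)}(\lambda|\xi|^{2m})\cdot \lambda^k\cdot p_{\alpha,k}(\xi)$, where $p_{\alpha,k}$ is a homogeneous polynomial of degree $2mk-|\alpha|$; the hypothesis $|\partial_s^k f(s)|\lesssim s^{-k}$ then gives the scale-invariant cancellation
\[
|f^{(k)}(\lambda|\xi|^{2m})|\cdot \lambda^k\cdot|\xi|^{2mk} \lesssim 1,
\]
so each such term is of order $|\xi|^{-|\alpha|}$ and the Mikhlin theorem applies. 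This settles the first claim. For the second, $H\ge 0$ makes $H^{1/(2m)}$ a bona fide self-adjoint operator and, under Assumption \ref{129026}, the spectrum of $H$ is purely absolutely continuous; the intertwining then yields $f(\lambda H^{1/(2m)}) = W_\pm f(\lambda|D|) W_\pm^*$, and the analogous Fa\`a di Bruno calculation---with $2m$ replaced by $1$---verifies the Mikhlin condition for the symbol $f(\lambda|\xi|)$.

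The main obstacle I expect is the uniform verification of the Mikhlin condition in $\lambda$; it rests on the scale-invariant cancellation displayed above, which is precisely what the hypothesis $|\partial_s^k f(s)|\lesssim s^{-k}$ is tailored to produce.
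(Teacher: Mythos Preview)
Your proposal is correct and follows essentially the same route as the paper: intertwine via the $L^p$-bounded wave operators to reduce $f(\lambda H)$ to $f(\lambda H_0)$, then verify the H\"ormander--Mikhlin condition for the symbol $f(\lambda|\xi|^{2m})$ (resp.\ $f(\lambda|\xi|)$) uniformly in $\lambda$. Your handling of the pure-point part is slightly more elaborate than needed---extending $f$ by zero on $(-\infty,0]$ already makes that contribution vanish, which is exactly the paper's observation that $f(\lambda H)P_{\ac}(H)=f(\lambda H)$ since $\supp f\subset(0,\infty)$ and $\sigma(H)\cap[0,\infty)=\sigma_{\ac}(H)$.
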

\begin{proof}
Since the latter proof is similar, so we only consider the former estimate. By the intertwining property, we obtain
\[f(\lambda H) P_{\ac} (H) = W_{\pm} f(\lambda (-\Delta)^m)W^* _{\pm} .\] 
Since $\supp f \subset [0, \infty)$ and $\sigma (H) \cap [0, \infty ) = \sigma_{\ac} (H)$, $f(\lambda H)P_{\ac} (H) =f(\lambda H)$ holds. The symbol of $f(\lambda (-\Delta)^m)$ is $f(\lambda |\xi|^{2m})$ and it satisfies $|\partial^{\alpha} _{\xi} f(\lambda |\xi|^{2m})| \lesssim |\xi|^{-\alpha}$ by our assumption. Then by the H\"ormander-Mikhlin theorem, $\|f(\lambda (-\Delta)^m)\|_{\mathcal{B} (L^p)} \lesssim 1$ holds uniformly in $\lambda>0$. Hence by the $L^p$-boundedness of $W_{\pm}$ and $W^* _{\pm}$, we obtain
\begin{align*}
\|f(\lambda H)\|_{\mathcal{B} (L^p)} = \|W_{\pm} f(\lambda (-\Delta)^m)W^* _{\pm} \|_{\mathcal{B} (L^p)} \lesssim 1.
\end{align*}
\end{proof}
Next we prove the Littlewood-Paley theorem for $H$. If $H=-\Delta$, this result is well-known. In \cite{M1}, $H=-\Delta +V$ was considered for large class of $V \ge0$. The author considered the magnetic Schr\"odinger operator in \cite{Ho}.
\begin{proposition} \label{1291227}
Assume $H$ be as in Assumption \ref{129026} and $H\ge0$. Let $1<p<\infty$ and $\phi_{j}$ be the homogeneous Littlewood-Paley decomposition. Then
\begin{align*}
& \|u\|_{p} \approx \|\|\{\phi_j (H) u\}\|_{\ell^2}\|_{p} \\
& \|u\|_{p} \approx \|\|\{\phi_j ({H}^{\frac{1}{2m}}) u\}\|_{\ell^2}\|_{p} 
\end{align*}
holds.
\end{proposition}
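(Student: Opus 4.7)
The plan is to transfer the classical (free) Littlewood--Paley theorem to $H$ by conjugating with the wave operators $W_\pm = \slim_{t\to\pm\infty} e^{itH}e^{-itH_0}$, which under Assumption~\ref{129026} extend to bounded operators on $L^p$ for $1<p<\infty$ by the result of \cite{EG}. Under our hypotheses ($H\ge 0$, no positive eigenvalues, zero regular), $\sigma(H)=\sigma_{\ac}(H)\subset[0,\infty)$, so $P_{\ac}(H)=I$ and the intertwining property yields
\[ \phi_j(H) = W_\pm\, \phi_j(H_0)\, W_\pm^*, \qquad H_0 = (-\Delta)^m. \]

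To upgrade the scalar $L^p$-boundedness of $W_\pm$ to the vector-valued (square-function) setting, I would apply Khintchine's inequality with a Rademacher sequence $\{\epsilon_j\}$:
\[ \Bigl\|\Bigl(\sum_j |\phi_j(H)u|^2\Bigr)^{1/2}\Bigr\|_p \approx \Bigl(\mathbb{E}_\epsilon \Bigl\|W_\pm \sum_j \epsilon_j \phi_j(H_0)\, W_\pm^* u\Bigr\|_p^p \Bigr)^{1/p}. \]
Pulling $W_\pm$ out, using its $L^p$-boundedness, and then running Khintchine in reverse reduces the estimate to
\[ \Bigl\|\Bigl(\sum_j |\phi_j(H_0)\, W_\pm^* u|^2\Bigr)^{1/2}\Bigr\|_p. \]
The remaining step is the classical Littlewood--Paley theorem for $H_0 = (-\Delta)^m$, giving this quantity $\approx \|W_\pm^* u\|_p \approx \|u\|_p$. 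Indeed, $\phi_j(H_0)$ has Fourier symbol $\phi(|\xi|^{2m}/2^j)$, supported in the annulus $|\xi|\sim 2^{j/(2m)}$, which is a rescaled dyadic decomposition governed by the H\"ormander--Mikhlin theorem. For the second equivalence the argument is identical with $H_0^{1/(2m)}=|D|$ in place of $H_0$, so that $\phi_j(H_0^{1/(2m)})$ is literally the standard dyadic cutoff $\phi(|\xi|/2^j)$.

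The reverse inequalities are obtained by running the same chain backward, exploiting $W_\pm W_\pm^* = P_{\ac}(H) = I$ together with the isometry $W_\pm^* W_\pm = I$. The main technical issue is to ensure that the annular decomposition for $H_0$ and its rescaled version for $H_0^{1/(2m)}$ satisfy the H\"ormander--Mikhlin condition uniformly in the random signs, so that Khintchine may legitimately be combined with the scalar multiplier theorem; this is essentially routine, and it is the one point where one should be careful with the dilation by $2^{j/(2m)}$ rather than $2^j$. Once this bookkeeping is in place, no further obstacle remains and both equivalences in the statement follow.
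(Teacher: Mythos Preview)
Your proposal is correct and follows essentially the same route as the paper: both arguments rest on the $L^p$-boundedness of the wave operators from \cite{EG}, the intertwining identity $\phi_j(H)=W_\pm\phi_j(H_0)W_\pm^*$ (valid since $P_{\ac}(H)=I$), Khintchine/Rademacher randomization, and the H\"ormander--Mikhlin theorem for the free operator. The only cosmetic differences are that the paper first packages the scalar multiplier bound as a separate corollary (Corollary~\ref{1291118}) and obtains the reverse inequality $\|u\|_p\lesssim\|Su\|_p$ by the standard duality trick $|\langle u,v\rangle|\le 5\|Su\|_p\|Sv\|_{p'}$, whereas you conjugate by $W_\pm$ directly and recover the reverse bound from completeness $W_\pm W_\pm^*=I$ on $L^p$.
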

\begin{proof}
We only consider the latter estimate since the former is similarly proved. We follow the argument in \cite{M1}. Set
\[Su(x) = \|\{\phi_j ({H}^{\frac{1}{2m}}) u\}\|_{\ell^2}, \quad S_{\pm} u=\left(\sum_{\pm j \ge0} |\phi_j ({H}^{\frac{1}{2m}}) u|^2\right)^{1/2}.\]
Since zero is not an eigenvalue,
\[\langle u, v \rangle = \sum_{j, k \in \Z} \langle \phi_j ({H}^{\frac{1}{2m}}) u, \phi_k ({H}^{\frac{1}{2m}}) v \rangle\]
holds. By the almost orthogonality of $\phi_j ({H}^{\frac{1}{2m}})$ and H\"older's inequality, we have
\begin{align*}
|\langle u, v \rangle| \le \sum_{j, k \in \Z} \int |\phi_j ({H}^{\frac{1}{2m}}) u \phi_k ({H}^{\frac{1}{2m}}) v| dx &= \int \sum_{j, k \in \Z, |j-k| \le 2} |\phi_j ({H}^{\frac{1}{2m}}) u \phi_k ({H}^{\frac{1}{2m}}) v| dx \\
& \le 5 \int Su(x) Sv(x) \le 5\|Su\|_p \|Sv\|_{p'}.
\end{align*}
If we prove $\|Su\|_p \lesssim \|u\|_p$ for all $p \in (1, \infty)$, we obtain $\|u\|_p \lesssim \|Su\|_p$ by the above inequality and the duality argument. First we consider $S_+$. Let $r_j (t)$ be the Rademacher functions on $[0, 1]$, i.e. $r_0 (t) =1$ for $t \in [0, 1/2]$, $r_0 (t)=-1$ for $t \in (1/2, 1)$, $r_j (t) =r_0 (2^j t)$ for $j \ge 1$ and we extend them periodically. Set
\[ m_t (s) = \sum_{j \ge 0} r_j (t) \phi (\frac{s}{2^j}).\]
By the support property of $\phi_j$, we have $|\partial^k _s m_t (s)| \lesssim s^{-k}$. Therefore $m_t$ satisfies the condition on $f$ in Corollary \ref{1291118} since $\supp \phi$ is away from zero. Hence
\[\|m_t ({H}^{\frac{1}{2m}} )u\|_p \lesssim \|u\|_p\]
holds for $p \in (1, \infty)$. Now we use Khintchin's inequality:
\[ \{a_m\} \in \ell^2, F(t):= \sum_{m \in \Z_{\ge0}} a_m r_m (t) \Rightarrow \|F\|_{L^p _t} \approx \|\{a_m\}\|_{\ell^2}\]
for all $p \in (1, \infty)$. Then we have
\begin{align*}
\|S_{+} u\|_p \approx \|\|m_t ({H}^{\frac{1}{2m}})u\|_{L^p _t}\|_p = \|m_t ({H}^{\frac{1}{2m}})u\|_{L^p ([0, 1]: L^p _x)} \lesssim \|u\|_p
\end{align*}
for all $p \in (1, \infty)$. For $S_{-}$, Set $m_t (s) = \sum_{j \ge 0} r_j (t) \phi (2^j s)$. Then $m_t \in C^{\infty} _0$ and the above argument can be also applied to this case. Hence we are done.\end{proof}
\begin{remark} \label{12131522}
In this section, decay assumptions on $V$ are due to the $L^p$-boundedness of the spectral multipliers: $f(H)$ for $f \in C^{\infty} ((0, \infty))$ satisfy $| \partial^{k} _{s} f(s)| \lesssim s^{-k}$. If $f \in C^{\infty} _0$ and $V \in C^{\infty}$, this estimate would be proved by the Helffer-Sj\"ostrand formula: 
\[f(H)=\frac{i}{2\pi} \int_{\C} \frac{\partial \tilde{f}}{\partial \bar{z}} (H-z)^{-1} dz \wedge d\bar{z}\]
and the microlocal parametrix construction of resolvents under much weaker decay assumptions. However this method does not seem to apply to $f \in C^{\infty} ((0, \infty))$ satisfying $| \partial^{k} _{s} f(s)| \lesssim s^{-k}$.
\end{remark}
The next lemma implies a relation between the Besov space associated to $H^{\frac{1}{2m}}$ and the usual one.
\begin{lemma} \label{1291301}
Let $H$ be as in Proposition \ref{1291227}. Assume there exists $s_0$ and $s_1$ such that $s_0 <s<s_1$ and $H^{\frac{s_j}{2m}} H^{-\frac{s_j}{2m}} _0$ $[\resp \quad \langle H \rangle^{\frac{s_j}{2m}} \langle H_0 \rangle^{-\frac{s_j}{2m}}]$ is bounded on $L^p$ for some $1<p<\infty$. Then we have
\[\|u\|_{\dot{B}^s _{p, q} (H)} \lesssim \|u\|_{\dot{B}^s _{p, q}}\]
\[ [\resp \quad \|u\|_{{B}^s _{p, q} (H)} \lesssim \|u\|_{{B}^s _{p, q}}]\]
for all $1\le q\le \infty$. Here 
\[\|u\|_{\dot{B}^s _{p, q} (H)}:= \|\{\|\phi_j ({H}^{\frac{1}{2m}})u\|_p\}\|_{\ell^q}.\]
and $\|\cdot\|_{B^s _{p, q} (H)}$ is defined similarly by using the inhomogeneous Littlewood-Paley decomposition .
\end{lemma}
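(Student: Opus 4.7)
The plan is to compare the $H$-adapted Littlewood--Paley blocks with the standard $|D|$-adapted ones via the endpoint hypotheses on $H^{s_i/(2m)}H_0^{-s_i/(2m)}$, and then sum using off-diagonal decay produced by dyadic rescaling and Young's inequality on $\ell^q$.

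First I would fix a standard dyadic partition of unity $\sum_{k}\tilde\phi_k\equiv 1$ on $(0,\infty)$ with $\tilde\phi_k(\lambda)=\tilde\phi(\lambda/2^k)$, $\tilde\phi\in C^\infty_c((0,\infty))$, and expand
\[
\phi_j(H^{1/(2m)})u=\sum_{k}\phi_j(H^{1/(2m)})\,\tilde\phi_k(H_0^{1/(2m)})u.
\]
For each $i\in\{0,1\}$ I would insert $H^{-s_i/(2m)}H^{s_i/(2m)}$ and $H_0^{s_i/(2m)}H_0^{-s_i/(2m)}$ to obtain the factorization
\[
\phi_j(H^{1/(2m)})\tilde\phi_k(H_0^{1/(2m)})=\bigl[\phi_j(H^{1/(2m)})H^{-s_i/(2m)}\bigr]\bigl[H^{s_i/(2m)}H_0^{-s_i/(2m)}\bigr]\bigl[H_0^{s_i/(2m)}\tilde\phi_k(H_0^{1/(2m)})\bigr].
\]
The two outer factors are dyadically rescaled spectral multipliers, of the form $2^{-js_i}\psi(H^{1/(2m)}/2^j)$ and $2^{ks_i}\tilde\psi(H_0^{1/(2m)}/2^k)$ with $\psi,\tilde\psi\in C^\infty_c((0,\infty))$. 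By Corollary~\ref{1291118} applied to $H$ and to $H_0=(-\Delta)^m$, each outer factor is bounded on $L^p$ uniformly in $j,k$ by $\lesssim 2^{-js_i}$ and $\lesssim 2^{ks_i}$ respectively, while the middle factor is $L^p$-bounded by the hypothesis of the lemma. Multiplying,
\[
\|\phi_j(H^{1/(2m)})\tilde\phi_k(H_0^{1/(2m)})\|_{L^p\to L^p}\lesssim 2^{(k-j)s_i},\qquad i=0,1.
\]

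Next I would apply the $i=0$ bound when $k\ge j$ and the $i=1$ bound when $k<j$; since $s-s_0>0$ and $s_1-s>0$, setting $\epsilon=\min(s-s_0,s_1-s)>0$ yields the off-diagonal estimate
\[
2^{js}\|\phi_j(H^{1/(2m)})u\|_p\lesssim\sum_{k}2^{-\epsilon|j-k|}\bigl(2^{ks}\|\tilde\phi_k(H_0^{1/(2m)})u\|_p\bigr).
\]
Young's convolution inequality on $\ell^q$ with the summable kernel $\{2^{-\epsilon|n|}\}$ then gives $\|u\|_{\dot B^s_{p,q}(H)}\lesssim\|u\|_{\dot B^s_{p,q}}$. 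The inhomogeneous statement is proved by the same argument with $H^{s_i/(2m)}$ and $H_0^{s_i/(2m)}$ replaced by $\langle H\rangle^{s_i/(2m)}$ and $\langle H_0\rangle^{s_i/(2m)}$ and an inhomogeneous Littlewood--Paley decomposition; the low-frequency block $\phi_0$ is handled by the same factorization since $\psi(\lambda)=\phi(\lambda)\langle\lambda\rangle^{-s_i}$ still meets the hypothesis of Corollary~\ref{1291118}.

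The main obstacle is securing the spectral-multiplier bound on $\phi_j(H^{1/(2m)})H^{-s_i/(2m)}$ with a constant independent of $j$, since the argument fails without this uniformity. That uniformity is exactly what Corollary~\ref{1291118} delivers through the intertwining with $(-\Delta)^m$ by the wave operators $W_\pm$ and the $L^p$-boundedness of $W_\pm$ under Assumption~\ref{129026}. Once this uniform bound is in hand, the rest is the essentially routine comparison in the frequency scale outlined above.
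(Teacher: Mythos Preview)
Your argument is correct, but it proceeds quite differently from the paper's. The paper exploits the intertwining identity $\phi_j(H^{1/(2m)})=W_{\pm}\,\phi_j(|D|)\,W_{\pm}^{*}$ directly: the $L^p$-boundedness of $W_{\pm}$ immediately gives $\|u\|_{\dot B^{s}_{p,q}(H)}\lesssim\|W_{\pm}^{*}u\|_{\dot B^{s}_{p,q}}$, and then the hypothesis $H^{s_j/(2m)}H_0^{-s_j/(2m)}\in\mathcal{B}(L^p)$ is used only to show that $W_{\pm}^{*}$ is bounded on $\dot H^{s_j,p}$, after which the real-interpolation identity $(\dot H^{s_0,p},\dot H^{s_1,p})_{\theta,q}=\dot B^{s}_{p,q}$ finishes the job in one line. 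Your route avoids real interpolation entirely and instead produces explicit off-diagonal decay $2^{-\epsilon|j-k|}$ between the $H$-adapted and $H_0$-adapted Littlewood--Paley blocks, then sums by Young's inequality; this is the classical ``almost-diagonal'' comparison familiar from Besov-space theory. The paper's argument is shorter and more conceptual, while yours is more self-contained (no interpolation black box) and makes the dependence on the gap $\min(s-s_0,s_1-s)$ visible. One small point: to pass from the operator-norm bound $\|\phi_j(H^{1/(2m)})\tilde\phi_k(H_0^{1/(2m)})\|_{L^p\to L^p}\lesssim 2^{(k-j)s_i}$ to an estimate against $\|\tilde\phi_k(H_0^{1/(2m)})u\|_p$ rather than $\|u\|_p$, you are implicitly inserting a fattened bump $\bar\phi_k$ with $\bar\phi_k\tilde\phi_k=\tilde\phi_k$; this is routine but worth stating once.
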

\begin{proof}
We only consider the former estimate. By the intertwining property and the $L^p$-boundedness of wave operators,  
\begin{align*}
\|\phi_j ({H}^{\frac{1}{2m}})u\|_p = \|W_{\pm} \phi_j ({H_0}^{\frac{1}{2m}}) W^* _{\pm} u\|_p \lesssim  \|\phi_j ({H_0}^{\frac{1}{2m}}) W^* _{\pm} u\|_p 
\end{align*}
holds. By taking weighted $\ell^{q}$ norm, we obtain
\begin{align}
\|u\|_{\dot{B}^s _{p, q} (H)} \lesssim \|W^* _{\pm} u\|_{\dot{B}^s _{p, q}}. \label{1291443}
\end{align}
By the assumption, 
\[\||D|^{s_j}  W^*_{\pm} u\|_p = \|W^* _{\pm} H^{s_j/2m} u\|_p \le \|H^{s_j/2m} u\|_p \lesssim \||D|^{s_j}  u\|_p \]
holds. This implies the $\dot{H}^{s_j, p}$-boundedness of $W^* _{\pm}$. By the real interpolation: 
\[(\dot{H}^{s_0, p}, \dot{H}^{s_1, p})_{\theta, q} =\dot{B}^{s} _{p, q}, \quad s=(1-\theta)s_0 + \theta s_1\]
$W^* _{\pm}$ is bounded on $\dot{B}^{s} _{p, q}$. Combining with (\ref{1291443}), we obtain the desired estimate.
\end{proof}
The first result on the refined Strichartz estimates concerns the $d/2m$ admissible pairs.
\begin{thm} \label{1291900}
Assume $H$ be as in Assumption \ref{129026} and $H\ge0$. Let $r, q, \beta$ be as in Theorem \ref{1251816}. If $H^{s_j} H^{-s_j} _0$ are bounded on $L^2 (\R^d)$ for some $\pm s_j>0$, we have
\[\|e^{-itH} u\|_{L^q _t L^r _x} \lesssim \|u\|_{\dot{B}^0 _{2,2\beta}}.\]
\end{thm}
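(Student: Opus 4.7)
The plan is to combine the spectral Littlewood-Paley square function of Proposition \ref{1291227} with the orthonormal Strichartz estimate of Theorem \ref{1251816}, imitating the Frank-Sabin refinement for the free Schr\"odinger equation but with the spectral pieces $\phi_j(H^{1/2m})u$ in place of the Fourier Littlewood-Paley pieces. At the end, Lemma \ref{1291301} will convert the $H$-adapted Besov norm into the usual one.

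First I would apply Proposition \ref{1291227} with exponent $r$ pointwise in $t$ and take the $L^{q/2}_t$ norm; since $\phi_j(H^{1/2m})$ commutes with $e^{-itH}$, this yields
\[\|e^{-itH}u\|_{L^q_t L^r_x}^2 \lesssim \left\|\sum_{j\in\Z}|e^{-itH}\phi_j(H^{1/2m})u|^2\right\|_{L^{q/2}_t L^{r/2}_x}.\]
Write $f_j := \phi_j(H^{1/2m})u$ and $\nu_j := \|f_j\|_2^2$. The support properties of $\phi$ force $\phi_j\phi_k \equiv 0$ whenever $|j-k|$ exceeds some fixed integer $N$, so $\langle f_j,f_k\rangle = \langle u,(\phi_j\phi_k)(H^{1/2m})u\rangle = 0$ for those pairs. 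Decomposing $\Z$ into $N+1$ arithmetic progressions $I_0,\dots,I_N$ of common difference $N+1$, each normalized family $\{f_j/\|f_j\|_2\}_{j\in I_\ell}$ is orthonormal in $L^2$, and applying Theorem \ref{1251816} in each $I_\ell$ with weights $\nu_j$ and summing over the finitely many $\ell$'s gives
\[\left\|\sum_{j\in\Z}|e^{-itH}f_j|^2\right\|_{L^{q/2}_t L^{r/2}_x} \lesssim \|\{\nu_j\}_{j\in\Z}\|_{\ell^\beta} = \|u\|_{\dot B^0_{2,2\beta}(H)}^2,\]
the last equality being the definition of the $H$-adapted Besov norm after taking square roots.

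It remains to pass from $\dot B^0_{2,2\beta}(H)$ to the ordinary $\dot B^0_{2,2\beta}$, which is exactly what Lemma \ref{1291301} with $p=2$ and $s=0$ provides; its hypothesis that $H^{s/2m}H_0^{-s/2m}$ be bounded on $L^2$ for one positive and one negative value of $s$ is the standing assumption $\pm s_j>0$ of the theorem after the trivial rescaling $s \mapsto s/(2m)$. The only mildly delicate point in the whole argument is justifying the almost-orthogonality of the spectral Littlewood-Paley pieces, which is resolved by the arithmetic-progression splitting above; every other ingredient, including the $L^p$-boundedness of spectral multipliers of $H$ underpinning Proposition \ref{1291227}, has already been established earlier in this section.
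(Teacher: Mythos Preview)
Your proof is correct and follows essentially the same route as the paper, which defers to Corollary 2.14 in \cite{Ho} but whose method is spelled out explicitly in the proof of Theorem \ref{1291939}: apply the spectral Littlewood-Paley square function from Proposition \ref{1291227}, split the indices into finitely many classes with pairwise orthogonal pieces (the paper uses simply $2\Z$ and $2\Z+1$, you allow a general step $N+1$), apply the orthonormal Strichartz estimate of Theorem \ref{1251816}, and finish with Lemma \ref{1291301}. The only point you leave implicit is that Theorem \ref{1251816} carries the projection $P_{\ac}(H)$; under Assumption \ref{129026} together with $H\ge 0$ one has $\sigma(H)=\sigma_{\ac}(H)$, so $P_{\ac}(H)=I$ and the projection is harmless---the paper makes exactly this remark in its one-line proof.
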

\begin{proof}
By using Proposition \ref{1291227} and Lemma \ref{1291301}, the proof is an easy modification of Corollary 2.14 in \cite{Ho}. So we omit the details here. We remark that under our assumption, we have $\sigma(H) = \sigma_{\ac} (H)$.
\end{proof}
\begin{example} \label{1291915}
Assume $H$ be as in Assumption \ref{129026} and $\langle Hu, u \rangle \gtrsim \langle H_0 u, u\rangle$. Then assumptions in Theorem \ref{1291900} hold.
\end{example}
\begin{proof}
By the Hardy inequality, we also have
\[\langle Hu, u \rangle \lesssim \||D|^m u\|^2 _2 + \|\langle x \rangle^{-m} u\|^2 _2 \lesssim \||D|^m u\|^2 _2 = \langle H_0 u, u\rangle.\]
Hence $H^{\pm 1/2} H^{\mp 1/2} _0$ are bounded on $L^2 (\R^d)$.
\end{proof}
Finally we prove the result for the $d/2$ admissible pair.
\begin{proof}[Proof of Theorem \ref{1291939}]
By Proposition \ref{1291227} and the orthonormal Strichartz estimates, we have
\begin{align*}
\|e^{-itH} u\|^2 _{L^q _t L^r _x} &\lesssim \left\|\sum_{j \in \Z} |e^{-itH} \phi_{j} (H^{\frac{1}{2m}}) u|^2\right\|_{L^{q/2} _t L^{r/2} _x} \\
& \lesssim \left\|\sum_{j \in 2\Z} |e^{-itH} \phi_{j} (H^{\frac{1}{2m}}) u|^2\right\|_{L^{q/2} _t L^{r/2} _x} + \left\|\sum_{j \in 2\Z +1} |e^{-itH} \phi_{j} (H^{\frac{1}{2m}}) u|^2\right\|_{L^{q/2} _t L^{r/2} _x} \\
& \lesssim \|\{\|\phi_j (H^{\frac{1}{2m}}) u\|_{\dot{H}^s}\}\|^2 _{2\beta}.
\end{align*}
By the intertwining property, 
\begin{align*}
\|\phi_j (H^{\frac{1}{2m}}) u\|_{\dot{H}^s} \lesssim \|H^{s/2m} \phi_j (H^{\frac{1}{2m}}) u\|_{2} &= \|W_{\pm} |D|^s \phi_j (|D|) W^* _{\pm} u\|_2 \\
& \le \| |D|^s \phi_j (|D|) W^* _{\pm} u\|_2
\end{align*}
holds. Then, by the Bernstein inequality, we obtain 
\[\|\phi_j (H^{\frac{1}{2m}}) u\|_{\dot{H}^s} \lesssim 2^{js} \|\phi_j (|D|) W^* _{\pm} u\|_2.\]
Hence
\[\|e^{-itH} u\|^2 _{L^q _t L^r _x} \lesssim \|W^* _{\pm} u\|^2 _{\dot{B}^s _{2, 2\beta}}\]
holds. Since $\frac{s}{2m} =  \frac{1-m}{2m} \cdot d(1/2-1/r) \in (-1/2, 0)$ and $\langle Hu, u \rangle \approx \langle H_0 u, u\rangle$, we have $s_0, s_1 \in \R$ such that $s_0 < s <s_1$ and $H^{\frac{s_j}{2m}} H^{-\frac{s_j}{2m}} _0$ are bounded on $L^2 (\R^d)$. Then, by the proof of Lemma \ref{1291301}, we obtain
\[\|e^{-itH} u\|^2 _{L^q _t L^r _x} \lesssim \|W^* _{\pm} u\|^2 _{\dot{B}^s _{2, 2\beta}} \lesssim \|u\|^2 _{\dot{B}^s _{2, 2\beta}}. \]
\end{proof}
\section{Wave, Klein-Gordon and Dirac equations}
In this section, we consider the orthonormal Strichartz estimates for wave, the Klein-Gordon and the Dirac equations. First, we consider wave and the Klein-Gordon equations. For some restricted pairs $(p, q)$, we can prove the estimate easily  under less assumptions. However, it is difficult to prove the estimates for all the pairs since in some cases we need to handle derivatives. In such cases we use the method of microlocal analysis to deduce the estimates. By using these results, we prove the local existence of a solution to the semi-relativistic Hartree equation for infinitely many particles. Next we consider the Dirac equation. Concerning the orthonormal Strichartz estimates for the Dirac equation, there seems to be no result. So first, we deduce the estimates for the free operator based on that of the Klein-Gordon equation and the vector-valued duality principle. Then we consider the massive Dirac equation with potentials. In this way, we also use the construction of the parametrix for some fractional powers of elliptic operators based on the microlocal analysis. For wave and the Klein-Gordon equations, we also consider the refined Strichartz estimates.    
\subsection{\textbf{Wave and Klein-Gordon equations}}
The orthonormal Strichartz estimates for wave and the Klein-Gordon equations were first considered in \cite{FS} for some restricted pairs and generalized to more pairs by \cite{BLN}. In \cite{BLN}, the optimality of their estimates was also proved. However there seems to be no result for both wave and the Klein-Gordon equations with potentials. As in the case of the Schr\"odinger equation, first, we deduce the abstract perturbation theorem based on the smooth perturbation theory and then we apply it to some cases including magnetic operators. 
\begin{thm} \label{1292322}
Let $\mathcal{H}:=L^2 (X)$ for some $\sigma$-finite measure space $X$. Assume self-adjoint operators $H, H_0$ and densely defined closed operators $v_1, v_2$ on $\mathcal{H}$ satisfy $H=H_0 +V=H_0 + v^*_1 v_2$ in the form sense. Furthermore we assume $v_1$ is $H_0$-smooth, $v_2$ is $H$-smooth, $H, H_0 \ge 0$, $H^{1/4} H^{-1/4} _0$ is bounded on $\mathcal{H}$ and $0 \notin \sigma_{p} (H) \cup \sigma_{p} (H_0)$. If 
\[\left\| \sum_{n=0}^ \infty{\nu_n|e^{-it\sqrt{H_0}} f_n|^2} \right\|_{L^{p}_t L^{q}_x} \lesssim \| \nu_n\|_{\ell^\alpha}\]
holds for all $\{f_n\}$: orthonormal systems in $D(H^{1/4} _0)$ and some $p, q \in [1, \infty]$, $\alpha \in (1, \infty)$, then
\[\left\| \sum_{n=0}^ \infty{\nu_n|e^{-it\sqrt{H}} f_n|^2} \right\|_{L^{p}_t L^{q}_x} \lesssim \| \nu_n\|_{\ell^\alpha}\]
holds for the same $p, q, \alpha$.
\end{thm}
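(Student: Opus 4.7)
The plan is to adapt the proof of Theorem~\ref{1271257} to the half-wave evolution. The first step is to transfer the Kato smoothness hypotheses from $H, H_0$ to their square roots. Using the spectral theorem and Plancherel after the change of variables $\lambda = \mu^2$, one establishes
\[
\|B e^{-it\sqrt{A}} u\|_{L^2_t L^2_x}^2 = 2\,\|B A^{1/4} e^{-itA} u\|_{L^2_t L^2_x}^2
\]
for any nonnegative self-adjoint $A$ with $0 \notin \sigma_p(A)$, so that $B$ is $\sqrt{A}$-smooth if and only if $B A^{1/4}$ is $A$-smooth. Applied to $(A,B)=(H_0, v_1 H_0^{-1/4})$ and $(H, v_2 H^{-1/4})$, this converts the given smoothness of $v_1, v_2$ into the smoothing bounds $\|v_1 H_0^{-1/4} e^{-it\sqrt{H_0}} u\|_{L^2_{t,x}} \lesssim \|u\|_2$ and $\|v_2 H^{-1/4} e^{-it\sqrt{H}} u\|_{L^2_{t,x}} \lesssim \|u\|_2$ that will feed the perturbation argument.

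Next, by the duality principle (Lemma~\ref{1113719}) the assumed orthonormal Strichartz bound for $e^{-it\sqrt{H_0}}$ on $D(H_0^{1/4})$ is equivalent to the Schatten estimate
\[
\|f e^{-it\sqrt{H_0}} H_0^{-1/4}\|_{\mathfrak{S}^{2\alpha'}_{x\to t,x}} \lesssim \|f\|_{L^{2p'}_t L^{2q'}_x},
\]
and the target is the analogue with $H$ in place of $H_0$. I would apply the first-order Duhamel formula
\[
e^{-it\sqrt{H}} H_0^{-1/4} = e^{-it\sqrt{H_0}} H_0^{-1/4} - i\int_0^t e^{-i(t-s)\sqrt{H_0}} (\sqrt{H}-\sqrt{H_0}) e^{-is\sqrt{H}} H_0^{-1/4}\, ds,
\]
multiply by $f$, pull out the factor $f e^{-it\sqrt{H_0}} H_0^{-1/4}$ (inserting $H_0^{-1/4} H_0^{1/4}$), and invoke Christ--Kiselev (Lemma~\ref{2311152210}) to reduce the Schatten bound on the correction term to the boundedness on $\mathcal H$ of
\[
W := \int_0^\infty H_0^{1/4} e^{is\sqrt{H_0}} (\sqrt{H}-\sqrt{H_0}) e^{-is\sqrt{H}} H_0^{-1/4}\, ds.
\]

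To bound $W$, I would expand $\sqrt{H}-\sqrt{H_0}$ via the integral representation
\[
\sqrt{H}-\sqrt{H_0} = \frac{2}{\pi}\int_0^\infty \lambda^2 (H_0+\lambda^2)^{-1} v_1^* v_2 (H+\lambda^2)^{-1}\, d\lambda,
\]
test $\langle Wu, v\rangle$ against arbitrary $v\in\mathcal{H}$, and apply Cauchy--Schwarz in $s$. The two smoothing estimates from the first step then reduce matters to a $\lambda$-integral involving the resolvent factors $H_0^{1/2}(H_0+\lambda^2)^{-1}$ and $H^{1/4}(H+\lambda^2)^{-1}$, with the $H_0^{-1/4}$ weight routed through $(H+\lambda^2)^{-1}$ using the assumed boundedness of $H^{1/4}H_0^{-1/4}$. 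The main obstacle I anticipate is the $\lambda$-integrability: the naive operator-norm bounds yield an exactly borderline $\lambda^{-1}$ profile, so closing the estimate will likely require a dyadic decomposition interpolating between the two smoothing estimates to extract additional decay where $\lambda^2$ is well separated from the spectra of $H$ and $H_0$.
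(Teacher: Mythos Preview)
Your first step---transferring Kato smoothness from $H,H_0$ to their square roots---is correct and is exactly the content of Lemma~\ref{1212101} in the paper. The duality reduction via Lemma~\ref{1113719} is also the right framework.

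The essential divergence from the paper, and the source of the gap you yourself flag, is the choice of Duhamel formula. You use the first-order identity for $e^{-it\sqrt{H}}$, which forces the perturbation term $\sqrt{H}-\sqrt{H_0}$ into the integrand. This operator has no direct factorization through $v_1^*v_2$, so you resort to the resolvent integral $\sqrt{H}-\sqrt{H_0}=\frac{2}{\pi}\int_0^\infty \lambda^2 (H_0+\lambda^2)^{-1}V(H+\lambda^2)^{-1}\,d\lambda$ and then face the borderline $\lambda^{-1}$ behaviour you describe. The proposed remedy---a dyadic interpolation between the two smoothing estimates---is not substantiated, and it is unclear how the available hypotheses would supply the missing decay.

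The paper sidesteps this entirely by using the \emph{second-order} wave Duhamel formula
\[
e^{-it\sqrt{H}}g=\cos t\sqrt{H_0}\,g-i\,\frac{\sin t\sqrt{H_0}}{\sqrt{H_0}}\sqrt{H}\,g-\int_0^t\frac{\sin(t-s)\sqrt{H_0}}{\sqrt{H_0}}\,Ve^{-is\sqrt{H}}g\,ds,
\]
which places $V=v_1^*v_2$ directly in the integrand. After multiplying by $fH_0^{-1/4}$ and expanding $\sin(t-s)\sqrt{H_0}$ via the addition formula, the Christ--Kiselev step reduces to bounding $\int_0^\infty H_0^{-1/4}\cos s\sqrt{H_0}\,v_1^*v_2\,e^{-is\sqrt{H}}H_0^{-1/4}\,ds$ (and the analogous $\sin$ term) in $\mathcal B(\mathcal H)$. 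Pairing against a test vector and applying Cauchy--Schwarz in $s$, this is controlled by the product of $\|v_1 H_0^{-1/4}\cos s\sqrt{H_0}\,\cdot\|_{L^2_{s,x}}$ and $\|v_2 H^{-1/4}e^{-is\sqrt{H}}\,\cdot\|_{L^2_{s,x}}$, using exactly the smoothing bounds from your first step together with the hypothesis that $H^{1/4}H_0^{-1/4}$ is bounded. No spectral integral in $\lambda$ ever appears.

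In short: switch to the second-order Duhamel formula and the proof closes cleanly with the ingredients you already have; the first-order route introduces an artificial obstruction.
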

Throughout this subsection, we use the following lemma, which was proved in \cite{D}.
\begin{lemma} \label{1212101}
Let $\nu \in \R$ and $H$ be a self-adjoint operator on any Hilbert space $\mathcal{H}$. We assume $H+\nu \ge0$ and $H+\nu$ is injective. If $A$ is $H$-smooth $[\resp H-supersmooth ]$, then $A(H+\nu)^{-1/4}$ is $\sqrt{H+\nu}$-smooth $[\resp \sqrt{H+\nu}-supersmooth]$.
\end{lemma}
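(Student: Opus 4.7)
The plan is to mimic the Schr\"odinger-level perturbation argument of Theorem \ref{1271257} at the half-wave level. By the duality principle (Lemma \ref{1113719}), after the substitution $g_n=H_0^{1/4}f_n$ the hypothesis is equivalent to the Schatten bound
\[
\|f\,e^{-it\sqrt{H_0}}\,H_0^{-1/4}\|_{\mathfrak{S}^{2\alpha'}_{x\to(t,x)}}\lesssim\|f\|_{L^{2p'}_tL^{2q'}_x},
\]
and the target conclusion translates to the analogous estimate with $(\sqrt{H_0},H_0^{-1/4})$ replaced by $(\sqrt{H},H^{-1/4})$. The boundedness of $H^{1/4}H_0^{-1/4}$ on $\mathcal{H}$, together with its adjoint $H_0^{-1/4}H^{1/4}$, will be used to interchange the two fractional-power weights where necessary.

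The first substantive step is to lift the given Schr\"odinger-level smoothness to half-wave smoothness via Lemma \ref{1212101}: since $H,H_0\ge 0$ are injective (as $0\notin\sigma_p(H)\cup\sigma_p(H_0)$), applying that lemma with $\nu=0$ promotes the $H_0$-smoothness of $v_1$ to $\sqrt{H_0}$-smoothness of $v_1 H_0^{-1/4}$, and the $H$-smoothness of $v_2$ to $\sqrt{H}$-smoothness of $v_2 H^{-1/4}$. These are the analogues, at the half-wave level, of the $YH_0^\theta$ and $ZP_{\ac}(H)$ smoothing ingredients in Theorem \ref{1271257}.

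The central identities driving the perturbation are the first-order Duhamel formula
\[
e^{-it\sqrt{H}}-e^{-it\sqrt{H_0}}=-i\int_0^t e^{-i(t-s)\sqrt{H_0}}(\sqrt{H}-\sqrt{H_0})e^{-is\sqrt{H}}\,ds
\]
together with the functional-calculus representation
\[
\sqrt{H}-\sqrt{H_0}=\frac{1}{\pi}\int_0^\infty \lambda^{1/2}(H+\lambda)^{-1}v_1^*v_2(H_0+\lambda)^{-1}\,d\lambda,
\]
which displays the perturbation as a $\lambda$-superposition of $v_1^*v_2$-type factorizations. Substituting this representation into the Duhamel remainder, applying the Christ--Kiselev lemma (Lemma \ref{2311152210}) to replace $\int_0^t ds$ by $\int_0^\infty ds$, and factoring the resulting operator as $[f\,e^{-it\sqrt{H_0}}H_0^{-1/4}]\circ N$ reduces the matter to showing that $N$ is bounded on $\mathcal{H}$; the first factor is then controlled directly by the hypothesis.

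The main obstacle lies in bounding $N$. Cauchy--Schwarz in $(s,x)$ reduces it to $L^2_sL^2_x$-norms of $v_1(H+\lambda)^{-1}e^{-is\sqrt{H_0}}(\cdot)$-type and $v_2(H_0+\lambda)^{-1}e^{-is\sqrt{H}}(\cdot)$-type operators. Commuting each resolvent with its own Hamiltonian, and using the resolvent identity to interchange $H$ and $H_0$ whenever the resolvent does not commute with the propagator it faces, followed by the two smoothing estimates of the previous step, reduces each factor to a spectral quantity such as $\|H^{1/4}(H+\lambda)^{-1}u\|$ or $\|H_0^{1/4}(H_0+\lambda)^{-1}u\|$. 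Convergence of the $\lambda$-integration then follows from the scale-invariant spectral identity $\int_0^\infty\lambda^{1/2}\,\|A^{1/4}(A+\lambda)^{-1}u\|^2\,d\lambda\approx\|u\|^2$ (for any nonnegative injective self-adjoint $A$, via the change of variables $\lambda=\mu\tau$ inside the spectral integral). The boundedness of $H^{1/4}H_0^{-1/4}$ enters precisely when a power of $H$ has to be exchanged for the same power of $H_0$ in order to align a smoothing estimate with the fractional weight appearing in the Duhamel expansion.
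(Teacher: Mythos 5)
Your proposal does not address the statement it is supposed to prove. What you have written is a sketch of Theorem \ref{1292322} (the abstract orthonormal Strichartz perturbation theorem for the half-wave propagator $e^{-it\sqrt{H}}$): the factorization $V=v_1^*v_2$, the Duhamel formula, the Christ--Kiselev lemma, the duality principle, and the boundedness of $H^{1/4}H_0^{-1/4}$ all belong to that theorem, not to Lemma \ref{1212101}. Worse, your ``first substantive step'' is to \emph{apply} Lemma \ref{1212101} with $\nu=0$ in order to promote $H_0$-smoothness of $v_1$ to $\sqrt{H_0}$-smoothness of $v_1H_0^{-1/4}$. Read as a proof of Lemma \ref{1212101} this is circular; read as a proof of Theorem \ref{1292322} it answers a different question. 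The lemma at hand concerns a single self-adjoint operator $H$ and a single abstract closed operator $A$ on an arbitrary Hilbert space; there is no perturbation, no potential, no orthonormal system, and no Strichartz estimate anywhere in it. (The paper itself gives no proof but cites \cite{D}.)

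A correct argument is a short piece of spectral calculus. By Kato's characterization (Theorem \ref{2311152150}) together with the Poisson-integral representation of $\im\langle(H-z)^{-1}v,v\rangle$, the $H$-smoothness of $A$ with constant $C$ is equivalent to the statement that for every $u$ the measure $d\|E(\lambda)A^*u\|^2$ is absolutely continuous with density bounded a.e.\ by $(C/\pi)\|u\|^2$, where $E$ is the spectral measure of $H$. The spectral measure of $\sqrt{H+\nu}$ is $\tilde E(\mu)=E(\mu^2-\nu)$ for $\mu\ge0$, so with $B:=A(H+\nu)^{-1/4}$, hence $B^*u=(H+\nu)^{-1/4}A^*u$,
\begin{align*}
\frac{d}{d\mu}\bigl\|\tilde E(\mu)B^*u\bigr\|^2
=\frac{d}{d\mu}\int_{\lambda+\nu\le\mu^2}(\lambda+\nu)^{-1/2}\,d\|E(\lambda)A^*u\|^2
=2\,\frac{d}{d\lambda}\|E(\lambda)A^*u\|^2\Big|_{\lambda=\mu^2-\nu}
\le\frac{2C}{\pi}\|u\|^2,
\end{align*}
because the Jacobian $2\mu=2\sqrt{\lambda+\nu}$ of the substitution $\lambda=\mu^2-\nu$ is exactly cancelled by the weight $(\lambda+\nu)^{-1/2}$ contributed by the factor $(H+\nu)^{-1/4}$; the hypotheses $H+\nu\ge0$ and injectivity of $H+\nu$ guarantee that $(H+\nu)^{-1/4}$ and $\sqrt{H+\nu}$ are well defined. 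This gives $\sqrt{H+\nu}$-smoothness of $A(H+\nu)^{-1/4}$, and the supersmooth case is identical with the corresponding resolvent characterization. None of the machinery in your proposal is needed for, or relevant to, this statement.
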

\begin{proof}[Proof of Theorem \ref{1292322}]
By the Duhamel formula, we have
\[e^{-it\sqrt{H}} g = \cos t\sqrt{H_0} g -i \frac{\sin t\sqrt{H_0}}{\sqrt{H_0}} \sqrt{H} g - \int_{0}^{t} \frac{\sin(t-s)\sqrt{H_0}}{\sqrt{H_0}} (Ve^{-is\sqrt{H}} g) ds\]
for all $g \in D(\sqrt{H})$. Then, for all $f: \R \times X \rightarrow \C$, we obtain
\begin{align*}
fe^{-it\sqrt{H}} H^{-1/4} _0 = f \cos t\sqrt{H_0} H^{-1/4} _0  &-i f \sin t\sqrt{H_0} H^{-1/4} _0 (H^{-1/4} _0 H^{1/4})(H^{1/4} H^{-1/4} _0) \\
&- f \int_{0}^{t} \frac{\sin(t-s)\sqrt{H_0}}{\sqrt{H_0}} (Ve^{-is\sqrt{H}} H^{-1/4} _0) ds
\end{align*}
as an operator. For the first and second term, by our assumption and Lemma \ref{1113719}, 
\begin{align*}
& \|f \cos t\sqrt{H_0} H^{-1/4} _0\|_{\mathfrak{S}^{2\alpha'}_{x \rightarrow t, x}} \lesssim \|f\|_{L^{2p'} _t L^{2q'} _x} \\
& \|f \sin t\sqrt{H_0} H^{-1/4} _0 (H^{-1/4} _0 H^{1/4})(H^{1/4} H^{-1/4} _0)\|_{\mathfrak{S}^{2\alpha'}_{x \rightarrow t, x}} \lesssim \|f\|_{L^{2p'} _t L^{2q'} _x} 
\end{align*}
holds. For the last term, it suffices to show 
\[\left\| f \int_{0}^{\infty} \frac{\sin(t-s)\sqrt{H_0}}{\sqrt{H_0}} (Ve^{-is\sqrt{H}} H^{-1/4} _0) ds \right\|_{\mathfrak{S}^{2\alpha'}_{x \rightarrow t, x}} \lesssim \|f\|_{L^{2p'} _t L^{2q'} _x} \]
by Lemma \ref{2311152210}. By our assumption, we obtain
\begin{align*}
& \left\| f \int_{0}^{\infty} \frac{\sin(t-s)\sqrt{H_0}}{\sqrt{H_0}} (Ve^{-is\sqrt{H}} H^{-1/4} _0) ds \right\|_{\mathfrak{S}^{2\alpha'}_{x \rightarrow t, x}} \\
&\lesssim \left\|f(t)\sin t \sqrt{H_0} H^{-1/4} _0 \int_{0}^{\infty} \cos s\sqrt{H_0} H^{-1/4} _0 Ve^{-is\sqrt{H}} H^{-1/4} _0 ds \right\|_{\mathfrak{S}^{2\alpha'}_{x \rightarrow t, x}} \\
& + \left\|f(t)\cos t \sqrt{H_0} H^{-1/4} _0 \int_{0}^{\infty} \sin s\sqrt{H_0} H^{-1/4} _0 Ve^{-is\sqrt{H}} H^{-1/4} _0 ds \right\|_{\mathfrak{S}^{2\alpha'}_{x \rightarrow t, x}} \\
& \lesssim \|f\|_{L^{2p'} _t L^{2q'} _x} \left\|\int_{0}^{\infty} \cos s\sqrt{H_0} H^{-1/4} _0 Ve^{-is\sqrt{H}} H^{-1/4} _0 ds \right\|_{\mathcal{B}(L^2)} \\
& + \|f\|_{L^{2p'} _t L^{2q'} _x} \left\|\int_{0}^{\infty} \sin s\sqrt{H_0} H^{-1/4} _0 Ve^{-is\sqrt{H}} H^{-1/4} _0 ds \right\|_{\mathcal{B}(L^2)} .
\end{align*}
For the first term, by Lemma \ref{1212101}, 
\begin{align*}
&\left| \left\langle \int_{0}^{\infty} \cos s\sqrt{H_0} H^{-1/4} _0 Ve^{-is\sqrt{H}} H^{-1/4} _0 uds, v \right\rangle \right| \\
&= \left| \int_{0}^{\infty} \left\langle v_2 e^{-is\sqrt{H}} H^{-1/4} _0 u, v_1 \cos s\sqrt{H_0} H^{-1/4} _0 v  \right\rangle ds \right| \\
& \lesssim \|v_2 e^{-is\sqrt{H}} H^{-1/4} _0 u\|_{L^2 _t L^2 _x} \|v_1 \cos s\sqrt{H_0} H^{-1/4} _0 v \|_{L^2 _t L^2 _x} \\
& \lesssim \|H^{1/4} H^{-1/4} _0 u\|_2 \|v\|_2 \lesssim \|u\|_2 \|v\|_2 
\end{align*}
holds. By the duality argument we obtain
\[\left\|\int_{0}^{\infty} \cos s\sqrt{H_0} H^{-1/4} _0 Ve^{-is\sqrt{H}} H^{-1/4} _0 ds \right\|_{\mathcal{B}(L^2)} \lesssim 1.\]
By repeating similar calculations, we also obtain
\[\left\|\int_{0}^{\infty} \sin s\sqrt{H_0} H^{-1/4} _0 Ve^{-is\sqrt{H}} H^{-1/4} _0 ds \right\|_{\mathcal{B}(L^2)} \lesssim 1.\]
Hence we have the desired estimates.
\end{proof}
Next we state the result concerning the Klein-Gordon equation.
\begin{thm} \label{1212216}
Let $\mathcal{H}:=L^2 (X)$ for some $\sigma$-finite measure space $X$. Assume self-adjoint operators $H, H_0$ and densely defined closed operators $v_1, v_2$ on $\mathcal{H}$ satisfy $H=H_0 +V=H_0 + v^*_1 v_2$ in the form sense. Furthermore we assume $v_1$ is $H_0$-smooth, $v_2$ is $H$-smooth, $H_0 +1 \ge0$, $H +1 \ge0$, $0 \notin \sigma_{p} (H+1) \cup \sigma_{p} (H_0 +1)$ and  $(H+1)^{1/4} (H _0 +1)^{-1/4}$ is bounded on $\mathcal{H}$. If 
\[\left\| \sum_{n=0}^ \infty{\nu_n|e^{-it\sqrt{H_0 +1}} f_n|^2} \right\|_{L^{p}_t L^{q}_x} \lesssim \| \nu_n\|_{\ell^\alpha}\]
holds for all $\{f_n\}$: orthonormal systems in $D((H_0 +1)^{1/4} )$ and some $p, q \in [1, \infty]$, $\alpha \in (1, \infty)$, then
\[\left\| \sum_{n=0}^ \infty{\nu_n|e^{-it\sqrt{H+1}} f_n|^2} \right\|_{L^{p}_t L^{q}_x} \lesssim \| \nu_n\|_{\ell^\alpha}\]
holds for the same $p, q, \alpha$.
\end{thm}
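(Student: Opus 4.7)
The plan is to mimic the proof of Theorem \ref{1292322} almost verbatim, simply replacing $H_0$ by $H_0+1$ and $H$ by $H+1$ throughout; what made that argument work was only that both operators were nonnegative and injective, so that fractional powers and Lemma \ref{1212101} apply, and the hypotheses of the present theorem are tailored so that these same ingredients are available for the shifted operators. Let me abbreviate $K_0:=\sqrt{H_0+1}$ and $K:=\sqrt{H+1}$, so that $K^2-K_0^2=V$.

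First I would derive the Duhamel formula for $e^{-itK}g$. Since $u(t):=e^{-itK}g$ satisfies $(\partial_t^2+K_0^2)u=-Vu$ with $u(0)=g$ and $\partial_t u(0)=-iKg$, the standard reduction to a second-order equation with source gives
\begin{equation*}
e^{-itK}g=\cos(tK_0)\,g-i\,\frac{\sin(tK_0)}{K_0}Kg-\int_0^t\frac{\sin((t-s)K_0)}{K_0}\,Ve^{-isK}g\,ds.
\end{equation*}
Multiplying on the right by $(H_0+1)^{-1/4}$ and on the left by a cutoff $f:\R\times X\to\C$, by the duality principle (Lemma \ref{1113719}) it is enough to prove
\begin{equation*}
\bigl\|fe^{-itK}(H_0+1)^{-1/4}\bigr\|_{\mathfrak{S}^{2\alpha'}_{x\to t,x}}\lesssim\|f\|_{L^{2p'}_t L^{2q'}_x}.
\end{equation*}

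Next, the two free terms are immediate: by the hypothesis on the free orthonormal estimate (again via Lemma \ref{1113719}), each of $f\cos(tK_0)(H_0+1)^{-1/4}$ and $f\sin(tK_0)(H_0+1)^{-1/4}$ is bounded from $L^{2p'}_tL^{2q'}_x$ into $\mathfrak{S}^{2\alpha'}_{x\to t,x}$; for the second term I would insert the factorization
$\frac{\sin(tK_0)}{K_0}K(H_0+1)^{-1/4}=\sin(tK_0)(H_0+1)^{-1/4}\cdot(K_0^{-1/2}K^{1/2})(K^{1/2}K_0^{-1/2})$, after which the operator $(H+1)^{1/4}(H_0+1)^{-1/4}$ and its adjoint take care of the mismatch by the hypothesis. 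For the integral term I would apply the Christ–Kiselev lemma (Lemma \ref{2311152210}) to replace the retarded integral by the full integral over $\R$, then split $\sin((t-s)K_0)=\sin(tK_0)\cos(sK_0)-\cos(tK_0)\sin(sK_0)$ and factor each piece as (a free propagator acting on $f$) composed with (a bounded operator on $L^2$). This reduces matters to the $L^2$-boundedness of
\begin{equation*}
\int_0^\infty\cos(sK_0)(H_0+1)^{-1/4}\,V\,e^{-isK}(H_0+1)^{-1/4}\,ds
\end{equation*}
and the analogous expression with $\sin$.

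The main obstacle, and the only place where the new hypotheses really enter, is bounding this last operator on $L^2$; this is where Lemma \ref{1212101} does the work. I would factor $V=v_1^*v_2$ and test the bilinear form against $u,v\in L^2$:
\begin{align*}
\Bigl|\Bigl\langle\int_0^\infty\cos(sK_0)(H_0+1)^{-1/4}Ve^{-isK}(H_0+1)^{-1/4}u\,ds,v\Bigr\rangle\Bigr|\\
\le\bigl\|v_2 e^{-isK}(H+1)^{-1/4}(H+1)^{1/4}(H_0+1)^{-1/4}u\bigr\|_{L^2_sL^2_x}\cdot\bigl\|v_1\cos(sK_0)(H_0+1)^{-1/4}v\bigr\|_{L^2_sL^2_x}.
\end{align*}
Because $v_1$ is $H_0$-smooth and $v_2$ is $H$-smooth, Lemma \ref{1212101}, applied to $H_0+1$ and $H+1$ (both injective and nonnegative by assumption), gives that $v_1(H_0+1)^{-1/4}$ is $K_0$-smooth and $v_2(H+1)^{-1/4}$ is $K$-smooth; the two $L^2_sL^2_x$ norms are therefore controlled by $\|u\|_2$ and $\|v\|_2$ respectively, with the extra factor $(H+1)^{1/4}(H_0+1)^{-1/4}$ absorbed by hypothesis. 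A duality argument closes the estimate and hence the proof; the sine integral is handled identically.
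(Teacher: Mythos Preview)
Your proposal is correct and follows exactly the approach the paper indicates: the paper's own proof simply states that one substitutes $H+1$ for $H$ and $H_0+1$ for $H_0$ in the proof of Theorem~\ref{1292322} and omits the details. Your write-up faithfully carries out that substitution, using the Duhamel formula, Lemma~\ref{1113719}, Lemma~\ref{2311152210}, and Lemma~\ref{1212101} in the same roles as in the wave case.
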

\begin{proof}
The proof is just substituting $H+1$ for $H$, $H_0 +1$ for $H_0$ in the proof of Theorem \ref{1292322}. So we omit the details.
\end{proof}
Now we apply Theorem \ref{1292322} to the Schr\"odinger operator. The first result (Corollary \ref{12131724}) concerns the inverse square type potentials including $V(x) = -a|x|^{-2}$ for $a < \frac{(d-2)^2}{4}$.
\begin{proof}[Proof of Corollary \ref{12131724}]
Since $\langle Hf,f\rangle  \lesssim \| \nabla f \|^2_2$ holds, $H^{1/4} H^{-1/4} _0$ is bounded on $L^2 (\R^d)$. Furthermore, $\frac{1}{|x|}$ is $H_0$-smooth by Proposition 5.3 in \cite{BM}. $H$-smoothness of $|x|V$ follows from Theorem 2.5 in \cite{BM}. The assumption $0 \notin \sigma_{p} (H) \cup \sigma_{p} (H_0)$ also follows from the above $H_0$-smoothness or $H$-smoothness result. Then, if $q=\frac{d+1}{d-1}$, (\ref{12131753}) follows from the corresponding free estimate (Theorem 2 in \cite{BLN}) and Theorem \ref{1292322}. Since (\ref{12131753}) for $q=1$ is trivial, we obtain the desired estimates by the interpolation.
\end{proof}
Next we consider the magnetic Schr\"odinger operator.
\begin{corollary}\label{12131747}
Assume $d \ge3$, $A:\R^d \rightarrow \R^d, V:\R^d \rightarrow \R$ satisfy $|A(x)| + |\langle x \rangle V(x)| \lesssim \langle x \rangle^{-1-\epsilon}$, $\langle x \rangle^{1+\epsilon'}A(x) \in \dot{W}^{1/2, 2d}$, $A \in C^0(\R^d)$ for some $0<\epsilon'<\epsilon$. Let $H = -\Delta +A(x)\cdot D + D\cdot A(x) + V(x)$ or $H = (D +A(x))^2 +V(x)$. Here $D=-i\nabla$. Furthermore we assume zero is neither an eigenvalue nor a resonance of $H$ and $H\ge0$. Then (\ref{12131753}) holds for the same $p, q, \alpha, s$.
\end{corollary}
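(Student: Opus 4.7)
The plan is to apply Theorem \ref{1292322} with $H_0=-\Delta$, which reduces (\ref{12131753}) for $e^{-it\sqrt{H}}$ to the free wave orthonormal Strichartz estimate of \cite{FS}, \cite{BLN}. As in the proof of Corollary \ref{12131724}, only the endpoint $q=\frac{d+1}{d-1}$ needs the perturbation argument; the case $q=1$ is trivial (triangle inequality) and the intermediate range follows by real interpolation. The whole task therefore reduces to verifying the four hypotheses of Theorem \ref{1292322}: a form factorization $W:=H-H_0=v_1^*v_2$ with $v_1$ being $H_0$-smooth and $v_2$ being $H$-smooth, boundedness of $H^{1/4}H_0^{-1/4}$ on $L^2$, and $0\notin\sigma_p(H)\cup\sigma_p(H_0)$.

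For the factorization, expand $W=\sum_j(A_jD_j+D_jA_j)+V$ in the first formulation (with an extra $|A|^2$ term in the second) and use a matrix-valued splitting. A direct form computation shows that $W=v_1^*v_2$ holds when $v_1$ has components $|A_j|^{1/2}$, $|A_j|^{1/2}\sgn(A_j)D_j$, $|V|^{1/2}$ and $v_2$ has components $|A_j|^{1/2}\sgn(A_j)D_j$, $|A_j|^{1/2}$, $|V|^{1/2}\sgn(V)$, so that the two symmetric pairings $A_jD_j$ and $D_jA_j$ are generated one from each pair of components; this is the magnetic decomposition already employed for the Schr\"odinger case in \cite{Ho}. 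Under the decay hypotheses all multiplicative weights appearing in $v_1,v_2$ are dominated pointwise by $\langle x\rangle^{-1/2-\epsilon/2}$.

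With this decomposition, $H_0$- and $H$-smoothness of the purely multiplicative components reduce to Kato-Yajima estimates: the $H_0$-bound $\sup_{z\notin\R}\|\langle x\rangle^{-1/2-\delta}(-\Delta-z)^{-1}\langle x\rangle^{-1/2-\delta}\|_{\mathcal B(L^2)}<\infty$ is classical (Proposition 5.3 of \cite{BM}), and its magnetic counterpart for $H$ is supplied by Theorem 2.5 of \cite{BM} under the hypotheses of the corollary. The corresponding property for the first-order components $\langle x\rangle^{-1/2-\epsilon/2}D_j$ requires the stronger gradient-weighted bounds $\sup_{z\notin\R}\|\langle x\rangle^{-1/2-\delta}D(-\Delta-z)^{-1}D^*\langle x\rangle^{-1/2-\delta}\|_{\mathcal B(L^2)}<\infty$ for $H_0$ (again classical) and the analogous bound for $H$. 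The latter is the main technical obstacle, and it is precisely the role of the regularity assumption $\langle x\rangle^{1+\epsilon'}A\in\dot W^{1/2,2d}$ together with the continuity of $A$: combined with the absence of a zero eigenvalue or resonance of $H$, these hypotheses enable a resolvent perturbation argument comparing $H$ to $H_0$ (in the framework of \cite{BM}) which yields the gradient smoothing uniformly down to the spectral edge.

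Finally, boundedness of $H^{1/4}H_0^{-1/4}$ on $L^2$ follows by complex interpolation with the identity from the upper form bound $\langle Hf,f\rangle\lesssim\|\nabla f\|_2^2$, a routine consequence of Cauchy-Schwarz and the Hardy inequality $\|\langle x\rangle^{-1}f\|_2\lesssim\|\nabla f\|_2$ (valid for $d\ge 3$) applied with $|A|,|\langle x\rangle V|\lesssim\langle x\rangle^{-1-\epsilon}$. The conditions $0\notin\sigma_p(-\Delta)$ (classical for $d\ge 3$) and $0\notin\sigma_p(H)$ (included in the hypothesis) complete the verification; Theorem \ref{1292322} then delivers the endpoint estimate, and interpolation with $q=1$ yields (\ref{12131753}) in the full stated range.
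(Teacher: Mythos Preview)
Your overall strategy --- reduce to Theorem \ref{1292322}, verify a form factorization with the required smoothness, use Hardy for the $L^2$-boundedness of $H^{1/4}H_0^{-1/4}$, then interpolate --- matches the paper's. The gap is in the factorization of the magnetic terms: the splitting $A_jD_j=(|A_j|^{1/2})^*\cdot(|A_j|^{1/2}\sgn(A_j)D_j)$ does \emph{not} yield $H_0$-smooth factors, and it is not the decomposition used in \cite{Ho}. The multiplicative piece satisfies only $|A_j|^{1/2}\lesssim\langle x\rangle^{-1/2-\epsilon/2}$, and a weight $\langle x\rangle^{-s}$ with $s<1$ is not $(-\Delta)$-smooth (already $\langle x\rangle^{-s}(-\Delta)^{-1}\langle x\rangle^{-s}$ is unbounded on $L^2$ for $s<1$, so the uniform resolvent bound fails at zero energy). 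The gradient bound you call ``classical'', $\sup_{z}\|\langle x\rangle^{-1/2-\delta}D(-\Delta-z)^{-1}D^*\langle x\rangle^{-1/2-\delta}\|<\infty$, also fails: since $-\Delta(-\Delta-z)^{-1}=I+z(-\Delta-z)^{-1}$ and Agmon's estimate gives $\|\langle x\rangle^{-1/2-\delta}(-\Delta-\lambda\mp i0)^{-1}\langle x\rangle^{-1/2-\delta}\|\sim\lambda^{-1/2}$ for large $\lambda$, the norm grows like $\lambda^{1/2}$.

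The decomposition actually used (here and in \cite{Ho}) balances a \emph{half}-derivative on each factor: with $w=\langle x\rangle^{-\tau}$, $\tau\in(1/2,1/2+\epsilon')$, one sets $Z_1=|D|^{1/2}w$, $Y_1^*=Aw^{-1}D|D|^{-1/2}$, $Y_2=Z_1$, $Z_2=Y_1$, together with $Y_3=|V|^{1/2}\sgn V$, $Z_3=|V|^{1/2}$. The required $H_0$- and $H$-(super)smoothness then reduce to the half-derivative Kato--Yajima estimate $\sup_{z}\|\langle x\rangle^{-1/2-\delta}|D|^{1/2}(H-z)^{-1}|D|^{1/2}\langle x\rangle^{-1/2-\delta}\|<\infty$, established in \cite{EGS}; the regularity hypothesis $\langle x\rangle^{1+\epsilon'}A\in\dot W^{1/2,2d}$ enters precisely here. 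With this corrected factorization (and noting $P_{\ac}(H)=I$ under the assumptions), the remainder of your argument goes through verbatim.
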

\begin{proof}
By the Hardy inequality, we have
\begin{align*}
&|\langle Vu, u\rangle| \lesssim \left\|\frac{u}{|x|}\right\|^2 _2 \lesssim \|Du\|^2 _2 = \langle H_0 u, u\rangle \\
& |\langle ADu, u\rangle| +|\langle DAu, u\rangle| \lesssim \|Du\|^2 _2 +\left\|\frac{u}{|x|}\right\|^2 _2 \lesssim \langle H_0 u, u\rangle.
\end{align*}
Hence $H^{1/4} H^{-1/4} _0$ is bounded on $L^2 (\R^d)$. As in the proof of Theorem 2.10 in \cite{Ho}, we decompose 
\[H= - \Delta + \sum_{j=1}^{3} Y_j^*Z_j .\] Here 
\begin{gather*} 
\ Y_3 =|V|^{1/2}\sgn V, \quad  Z_3 = |V|^{1/2}, \quad Y_1^* = Aw^{-1}D|D|^{-1/2}, \\
\ Z_1 =|D|^{1/2}w, \quad  Y_2 =Z_1, \quad Z_2 =Y_1 \quad and \quad  w= \langle x \rangle^{- \tau} \quad for \quad \tau \in(1/2, 1/2 +\epsilon').
\end{gather*}
Since $P_{\ac} (H) =I$ holds under our assumptions, $Y_j$ are $H_0$-supersmooth and $Z_j$ are $H$-supersmooth (See the proof of Theorem 2.10 in \cite{Ho}). The assumption $0 \notin \sigma_{p} (H) \cup \sigma_{p} (H_0)$ follows from the Kato smoothing estimates and the assumption that zero is a regular point. Then we obtain (\ref{12131753}) by the same argument as in the proof of Corollary \ref{12131724}.
\end{proof}
Concerning the critical inverse square potential, we obtain the following result.
\begin{corollary}\label{12162148}
Let $d \ge 3$ and $H=-\Delta -\frac{(d-2)^2}{4} |x|^{-2}$. Then we have
\begin{align*}
\left\| \sum_{n=0}^ \infty{\nu_n|e^{-it\sqrt{H}} P^{\perp} _{\rad} f_n|^2} \right\|_{L^{p}_t L^{q}_x} \lesssim \| \nu_n\|_{\ell^\alpha}
\end{align*}
for the same $p, q, \alpha, s, \{f_n\}$ as in Corollary \ref{12131724}.
\end{corollary}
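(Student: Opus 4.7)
The plan is to imitate the proof of Corollary \ref{12131724} (i.e.\ apply Theorem \ref{1292322}), but insert the projection $P^{\perp}_{\rad}$ throughout. The key observation is that $P^{\perp}_{\rad}$ commutes with $-\Delta$ and with multiplication by $|x|^{-2}$, hence with $H_0$, $H$, and every Borel function thereof; in particular $e^{-it\sqrt{H}} P^{\perp}_{\rad} = P^{\perp}_{\rad} e^{-it\sqrt{H}} P^{\perp}_{\rad}$. I factor the critical potential as $V = v_1^{*} v_2$ with $v_1 = c|x|^{-1}$, $v_2 = c|x|^{-1}$ for an appropriate constant $c$, and then need three ingredients on the non-radial subspace: first, $H_0$-smoothness of $|x|^{-1} P^{\perp}_{\rad}$, which is immediate from the standard Kato--Yajima bound (Proposition 5.3 in \cite{BM}) and the fact that $P^{\perp}_{\rad}$ commutes with resolvents of $H_0$; second, $H$-smoothness of $|x|^{-1} P^{\perp}_{\rad}$, which is exactly Theorem 6.1 of \cite{MY1} (the Kato--Yajima estimate at the critical Hardy threshold for non-radial data); third, the form equivalence giving boundedness of $H^{1/4} H_0^{-1/4} P^{\perp}_{\rad}$ on $L^2$.

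The form equivalence is the one genuinely new input compared to the subcritical Corollary \ref{12131724}. It rests on the improved Hardy inequality on the non-radial subspace,
\[
\tilde{C}_{1,d} \int |x|^{-2} |u(x)|^2 \, dx \le \|\nabla u\|_2^2, \qquad u \in P^{\perp}_{\rad}(C^{\infty}_{0}),
\]
with $\tilde{C}_{1,d} = \{2\Gamma((d+4)/4)/\Gamma((d-4)/4)\}^2$ strictly greater than $C_{1,d} = (d-2)^2/4$, which was recalled in the preceding proof of the critical Hardy Schr\"odinger estimate. This yields
\[
\left(1 - \frac{(d-2)^2}{4\tilde{C}_{1,d}}\right) \|\nabla u\|_2^2 \le \langle H u, u\rangle \le \|\nabla u\|_2^2
\]
for $u \in P^{\perp}_{\rad}(C^{\infty}_{0})$, so $H \approx H_0$ on $\mathrm{Ran}(P^{\perp}_{\rad})$; complex interpolation then gives $\|H^{1/4} H_0^{-1/4} P^{\perp}_{\rad}\|_{\mathcal{B}(L^2)} \lesssim 1$. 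The same strict lower bound rules out $0$ as an eigenvalue of $H$ restricted to the non-radial subspace, which is the last hypothesis of Theorem \ref{1292322}.

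With these ingredients in hand, the Duhamel expansion of Theorem \ref{1292322} goes through verbatim after the substitution $H \rightsquigarrow H P^{\perp}_{\rad}$, $H_0 \rightsquigarrow H_0 P^{\perp}_{\rad}$. Combined with the free orthonormal Strichartz estimate for $e^{-it\sqrt{-\Delta}}$ (Theorem~2 of \cite{BLN}) applied to $\{P^{\perp}_{\rad} f_n\}$ (which remains an orthonormal system in $\dot{H}^s$ after projection, with unit-or-smaller norm), this yields the claimed estimate at the endpoint $q = (d+1)/(d-1)$. The case $q = 1$ is trivial by the triangle inequality and the fact that $e^{-it\sqrt{H}}$ is unitary on $L^2$, and the full range $1 \le q \le (d+1)/(d-1)$ then follows by real interpolation, exactly as in the proof of Corollary \ref{12131724}.

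I expect the only delicate point to be the $H$-smoothness of $|x|^{-1} P^{\perp}_{\rad}$: at the critical coupling the usual resolvent estimate breaks down precisely because of the radial sector, and one must invoke the sharp gap $\tilde{C}_{1,d} - (d-2)^2/4 > 0$ to recover a uniform resolvent bound on $\mathrm{Ran}(P^{\perp}_{\rad})$. Since this is already carried out in \cite{MY1}, it can be quoted directly, and no further analytic work beyond the bookkeeping above is needed.
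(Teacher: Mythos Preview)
Your proposal is correct and follows essentially the same route as the paper: repeat the Duhamel argument of Theorem~\ref{1292322} with $P^{\perp}_{\rad}$ inserted (using that it commutes with functions of $H$ and $H_0$), and check the three inputs---$H_0$-smoothness of $|x|^{-1}$, $H$-smoothness of $|x|^{-1}P^{\perp}_{\rad}$, and the form bound $H^{1/4}H_0^{-1/4}\in\mathcal{B}(L^2)$. The only minor difference is that the paper gets the form bound directly from the ordinary Hardy inequality (since $V\le 0$ gives $H\le H_0$ trivially, and ordinary Hardy gives $H\ge 0$), whereas you invoke the improved non-radial Hardy inequality, which is more than needed here; the paper also cites Proposition~3.4 of \cite{M2} rather than \cite{MY1} for the $H$-smoothness of $|x|^{-1}P^{\perp}_{\rad}$.
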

\begin{proof}
Since $P^{\perp} _{\rad}$ commutes with $f(H_0)$ and $f(H)$ for all $f \in L^2 _{loc} (\R)$, by repeating the proof of Theorem \ref{1292322} with the following Duhamel formula:
\begin{align*}
e^{-it\sqrt{H}} P^{\perp} _{\rad} g = &\cos t\sqrt{H_0} P^{\perp} _{\rad} g -i \frac{\sin t\sqrt{H_0}}{\sqrt{H_0}} \sqrt{H} P^{\perp} _{\rad} g \\
&- \int_{0}^{t} \frac{\sin(t-s)\sqrt{H_0}}{\sqrt{H_0}} (VP^{\perp} _{\rad} e^{-is\sqrt{H}} g) ds
\end{align*}
for all $g \in D(\sqrt{H})$, it suffices to show that $H^{1/4} H^{-1/4} _0$ is bounded on $L^2 (\R^d)$, $\frac{1}{|x|}$ is $-\Delta$-smooth and $\frac{1}{|x|} P^{\perp} _{\rad}$ is $H$-smooth. The $L^2$-boundedness of $H^{1/4} H^{-1/4} _0$ follows from the Hardy inequality as in the above proof. The $-\Delta$-smoothness of $\frac{1}{|x|}$ follows from Theorem 1 in \cite{KatoYajima}. Finally $H$-smoothness of $\frac{1}{|x|} P^{\perp} _{\rad}$ follows from Proposition 3.4 in \cite{M2}.
\end{proof}
Next we give some applications of Theorem \ref{1212216}. The first application (Corollary \ref{12131859}) includes the Schr\"odinger operator with inverse square potentials.
\begin{proof}[Proof of Corollary \ref{12131859}]
Since $\langle Vu, u\rangle \lesssim \|u\|^2 _{H^1}$ is assumed, we obtain $\langle (H+1)u, u\rangle \lesssim \langle (H_0 +1)u, u\rangle$. Hence $(H+1)^{1/4} (H_0 +1)^{-1/4}$ is bounded on $L^2 (\R^d)$. Then (\ref{12131926}) can be proved by the same way as in Corollary \ref{12131724} using Theorem \ref{1212216} and the free estimates (Theorem 3 in \cite{BLN}).
\end{proof}
\begin{corollary}\label{12131921}
Assume $d \ge3$, $A:\R^d \rightarrow \R^d, V:\R^d \rightarrow \R$ satisfy $|A(x)| + |\langle x \rangle V(x)| \lesssim \langle x \rangle^{-1-\epsilon}$, $\langle x \rangle^{1+\epsilon'}A(x) \in \dot{W}^{1/2, 2d}$, $A \in C^0(\R^d)$ for some $0<\epsilon'<\epsilon$. Let $H = -\Delta +A(x)\cdot D + D\cdot A(x) + V(x)$ or $H = (D +A(x))^2 +V(x)$. Here $D=-i\nabla$. Furthermore we assume zero is neither an eigenvalue nor a resonance of $H$, $H+1\ge0$ and $\sigma _{p} (H) \cap (-\infty, 0) =\emptyset$. Then we have (\ref{12131926}) for the same $p, q, \alpha, s$.
\end{corollary}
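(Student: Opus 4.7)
The plan is to mimic the proof of Corollary \ref{12131747}, substituting Theorem \ref{1212216} for Theorem \ref{1292322} and the free Klein--Gordon orthonormal Strichartz estimate (Theorem 3 in \cite{BLN}) for the free wave one. Concretely, I will verify the four hypotheses of Theorem \ref{1212216}: the two smoothness conditions, the operator inequality $(H+1)^{1/4}(H_0+1)^{-1/4}\in\mathcal{B}(L^2)$, and the spectral condition $0\notin\sigma_p(H+1)\cup\sigma_p(H_0+1)$.

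First I would establish the domain comparison. By the Hardy inequality and the decay hypothesis $|A(x)|+|\langle x\rangle V(x)|\lesssim \langle x\rangle^{-1-\epsilon}$,
\[
|\langle Vu,u\rangle|+|\langle A\cdot Du,u\rangle|+|\langle D\cdot Au,u\rangle|\lesssim \|Du\|_2^2+\left\|\frac{u}{|x|}\right\|_2^2\lesssim \langle H_0u,u\rangle,
\]
so $\langle(H+1)u,u\rangle\lesssim\langle(H_0+1)u,u\rangle$; since $H+1\ge 0$ is assumed, complex interpolation gives $(H+1)^{1/4}(H_0+1)^{-1/4}\in\mathcal{B}(L^2)$. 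The spectral hypothesis is immediate: $\sigma(H_0)\subset[0,\infty)$ so $-1\notin\sigma_p(H_0)$, and $\sigma_p(H)\cap(-\infty,0)=\emptyset$ gives $-1\notin\sigma_p(H)$.

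Next I would reuse verbatim the three--piece Kato factorization from the proof of Corollary \ref{12131747}, writing
\[
H=-\Delta+\sum_{j=1}^{3}Y_j^*Z_j,\qquad V\text{ encoded in }Y_3,Z_3,\ \ A\text{ encoded in }Y_1,Z_1,Y_2,Z_2,
\]
with the same choices of weights $w=\langle x\rangle^{-\tau}$, $\tau\in(1/2,1/2+\epsilon')$. The assumptions that $0$ is neither an eigenvalue nor a resonance of $H$, together with the hypotheses $A\in C^0$, $\langle x\rangle^{1+\epsilon'}A\in\dot W^{1/2,2d}$, are exactly those of Theorem 2.10 in \cite{Ho}, which yields $H_0$--supersmoothness of each $Y_j$ and $H$--supersmoothness of each $Z_j$. (Note that $P_{\ac}(H)=I$ here: the Kato smoothing estimates rule out positive eigenvalues, the resonance/eigenvalue hypothesis at zero rules out zero, and negative eigenvalues are excluded by hypothesis.)

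With these ingredients I apply Theorem \ref{1212216} (extended to a sum of factorizations in the obvious way: each Duhamel cross term in the proof is estimated by Cauchy--Schwarz against the corresponding pair $(Y_j,Z_j)$, and one sums in $j$) together with the free Klein--Gordon orthonormal Strichartz estimate of \cite{BLN} to obtain (\ref{12131926}) in both endpoint cases $q=\frac{d+1}{d-1}$ (for $\frac{d-1}{2}$--admissible pairs) and $q=\frac{d+2}{d}$ (for $\frac{d}{2}$--admissible pairs). The trivial case $q=1$ follows from the triangle inequality and $L^2$--unitarity of $e^{-it\sqrt{H+1}}$, and real interpolation between the endpoints yields the full range claimed. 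The main conceptual step, which is really the only one requiring care, is checking that the proof of Theorem \ref{1212216} goes through with a sum $V=\sum_j Y_j^*Z_j$ rather than a single factorization; this is mechanical since the Duhamel identity is linear in $V$ and each cross term $\int_0^\infty \cos s\sqrt{H_0+1}\,(H_0+1)^{-1/4}\,Y_j^*Z_k e^{-is\sqrt{H+1}}(H+1)^{-1/4}\,ds$ is controlled by pairing the $H_0$--smoothness of $Y_j$ with the $H$--smoothness of $Z_k$, exactly as in the single--piece version.
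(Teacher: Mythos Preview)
Your proposal is correct and follows essentially the same route as the paper, which simply says the proof is ``just the same as in Corollary \ref{12131747}'' (i.e., swap Theorem \ref{1292322} for Theorem \ref{1212216} and the free wave estimate for the free Klein--Gordon estimate from \cite{BLN}). One small imprecision: since $V=\sum_j Y_j^*Z_j$, the Duhamel expansion produces only diagonal terms $Y_j^*Z_j$, not cross terms $Y_j^*Z_k$ with $j\neq k$; otherwise your verification of the hypotheses of Theorem \ref{1212216} and the interpolation argument are exactly what is needed.
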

\begin{proof}
The proof is just the same as in Corollary \ref{12131747}. So we omit the details.
\end{proof}
By a proof similar to Corollary \ref{12162148}, we obtain the result for the critical inverse square potential.
\begin{corollary}\label{12162254}
Let $d \ge 3$ and $H=-\Delta -\frac{(d-2)^2}{4} |x|^{-2}$. Then we have
\begin{align*}
\left\| \sum_{n=0}^ \infty{\nu_n|e^{-it\sqrt{H+1}} P^{\perp} _{\rad} f_n|^2} \right\|_{L^{p}_t L^{q}_x} \lesssim \| \nu_n\|_{\ell^\alpha}
\end{align*}
for the same $p, q, \alpha, s, \{f_n\}$ as in Corollary \ref{12131859}.
\end{corollary}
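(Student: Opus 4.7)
The plan is to repeat the argument of Corollary \ref{12162148} almost verbatim, replacing $\sqrt{H}$ and $\sqrt{H_0}$ by $\sqrt{H+1}$ and $\sqrt{H_0+1}$ and invoking the Klein--Gordon abstract perturbation theorem (Theorem \ref{1212216}) in place of the wave version (Theorem \ref{1292322}). Because $H \ge 0$ by the sharp Hardy inequality, both $H+1$ and $H_0+1$ are bounded below by $1$, so the hypothesis $0 \notin \sigma_p(H+1) \cup \sigma_p(H_0+1)$ is automatic and Lemma \ref{1212101} is available for converting $H_0$-smoothness of $v_1$ and $H$-smoothness of $v_2$ into $\sqrt{H_0+1}$- and $\sqrt{H+1}$-smoothness of the corresponding operators multiplied by $(H_j+1)^{-1/4}$.

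Concretely, I would first write the projected Duhamel identity
\[
e^{-it\sqrt{H+1}} P^{\perp}_{\rad} g = \cos t\sqrt{H_0+1}\, P^{\perp}_{\rad} g - i\frac{\sin t\sqrt{H_0+1}}{\sqrt{H_0+1}}\sqrt{H+1}\, P^{\perp}_{\rad} g - \int_{0}^{t} \frac{\sin(t-s)\sqrt{H_0+1}}{\sqrt{H_0+1}}\bigl(V P^{\perp}_{\rad} e^{-is\sqrt{H+1}} g\bigr)\,ds,
\]
which is legitimate because $P^{\perp}_{\rad}$ commutes with every Borel function of $H_0$ and of $H$ (and with $V$ itself, since $V$ is radial). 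Decomposing $V = v_1^* v_2$ with $v_1 = \tfrac{d-2}{2}|x|^{-1}$ and $v_2 = -\tfrac{d-2}{2}|x|^{-1}$, the Schatten-norm estimation of each piece of this identity proceeds exactly as in the proof of Theorem \ref{1212216}, via the duality principle (Lemma \ref{1113719}) and the Christ--Kiselev lemma (Lemma \ref{2311152210}). In this way the proof reduces to four ingredients: (a) the free Klein--Gordon orthonormal Strichartz estimate for the quadruple $(p,q,\alpha,s)$ of Corollary \ref{12131859}, applied to the density of $P^{\perp}_{\rad} \gamma_0 P^{\perp}_{\rad}$ (an operator whose Schatten norm is dominated by $\|\gamma_0\|_{\mathfrak{S}^{\alpha}}$), which is Theorem~3 of \cite{BLN}; (b) $H_0$-smoothness of $|x|^{-1}$, which is Theorem~1 of \cite{KatoYajima}; (c) $H$-smoothness of $|x|^{-1} P^{\perp}_{\rad}$, which is Proposition~3.4 of \cite{M2}; (d) $L^2$-boundedness of $(H+1)^{1/4}(H_0+1)^{-1/4}$ on $\mathrm{Ran}\, P^{\perp}_{\rad}$.

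The main obstacle is (d): because $V$ is the \emph{critical} Hardy potential, the naive comparison $\langle Hu,u\rangle \lesssim \|\nabla u\|_2^2$ fails on all of $L^2$ and no positive constant can be extracted. The rescue is the sharp improved non-radial Hardy inequality
\[
\tilde{C}_{1,d} \int |x|^{-2} |u(x)|^2\,dx \le \|\nabla u\|_2^2, \qquad u \in P^{\perp}_{\rad}(C_c^{\infty}),
\]
whose best constant $\tilde{C}_{1,d}$ is strictly larger than $C_{1,d} = \tfrac{(d-2)^2}{4}$, exactly as exploited in Section~2 for the higher-order critical Hardy problem. This yields $\langle Hu,u\rangle \approx \langle H_0 u, u\rangle$ on $\mathrm{Ran}\, P^{\perp}_{\rad}$, hence $\langle (H+1)u,u\rangle \approx \langle (H_0+1)u,u\rangle$ there, and complex interpolation with the trivial identity at exponent $0$ upgrades this form equivalence to the desired $L^2$-boundedness of $(H+1)^{1/4}(H_0+1)^{-1/4} P^{\perp}_{\rad}$. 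With (a)--(d) verified, Theorem \ref{1212216} closes the argument exactly as in Corollary \ref{12162148}.
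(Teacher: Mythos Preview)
Your approach is correct and coincides with the paper's, which simply says the result follows by a proof similar to Corollary~\ref{12162148}. However, your treatment of point~(d) contains a misstatement and an unnecessary detour. The inequality you actually need is the \emph{upper} bound
\[
\langle (H+1)u,u\rangle \le \langle (H_0+1)u,u\rangle,
\]
which follows immediately from $V = -\tfrac{(d-2)^2}{4}|x|^{-2} \le 0$, valid on all of $L^2$; by operator monotonicity (or complex interpolation) this gives $(H+1)^{1/4}(H_0+1)^{-1/4}\in\mathcal{B}(L^2)$ without any restriction to $\mathrm{Ran}\,P^{\perp}_{\rad}$. Your claim that ``$\langle Hu,u\rangle \lesssim \|\nabla u\|_2^2$ fails on all of $L^2$'' is therefore incorrect---it holds with constant~$1$. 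The improved non-radial Hardy inequality is only needed when one requires the \emph{lower} bound $\langle Hu,u\rangle \gtrsim \langle H_0 u,u\rangle$ (as in the $d/2$-admissible results of Section~2), which plays no role here. This is exactly why the paper disposes of the analogous step in Corollary~\ref{12162148} in one line, citing the ordinary Hardy inequality.
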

As an application of the above estimates, we prove the refined Strichartz estimates for wave and the Klein-Gordon equations. If $A=V=0$, this result was proved in \cite{BLN} based on the orthonormal Strichartz estimates. First we give a result where $A=0$.
\begin{corollary}\label{12141329}
Assume $H$ is as in Corollary \ref{12131724} or as in Corollary \ref{12131747} and satisfies $A=0$. If $V\ge 0$, we obtain 
\begin{align}
\|e^{-it\sqrt{H}} u\|_{L^{2p} _t L^{2q} _x} \lesssim \|u\|_{\dot{B}^s _{2, 2\alpha}} \label{12141716}
\end{align}
for the same $p, q, \alpha, s$ as in Corollary \ref{12131724}.
\end{corollary}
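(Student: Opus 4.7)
The plan is to mirror the proof of Theorem \ref{1291939}, replacing the dyadic decomposition $\phi_j(H^{1/2m})$ by $\phi_j(\sqrt{H})$ and the propagator $e^{-itH}$ by $e^{-it\sqrt{H}}$. The ingredients will be a Littlewood-Paley square-function characterization adapted to $\sqrt{H}$, the orthonormal Strichartz estimate of Corollary \ref{12131724} or \ref{12131747} recast via the Schatten duality principle, the intertwining identity $\phi_j(\sqrt{H}) = W_\pm \phi_j(|D|) W_\pm^*$, and the Bernstein inequality.

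First I would establish $\|u\|_p \approx \|(\sum_j |\phi_j(\sqrt{H}) u|^2)^{1/2}\|_p$ for $1<p<\infty$, in analogy with Proposition \ref{1291227}. Since $V \ge 0$, the semigroup $e^{-tH}$ enjoys Gaussian upper bounds, so a H\"ormander-type spectral multiplier theorem of Duong-Ouhabaz-Sikora type produces the square-function estimate. Combined with the splitting into $j$ even and $j$ odd, this gives
\[\|e^{-it\sqrt{H}} u\|_{L^{2p}_t L^{2q}_x}^2 \lesssim \sum_{\sigma\in\{0,1\}}\left\|\sum_{j\in 2\Z+\sigma}|e^{-it\sqrt{H}}\phi_j(\sqrt{H})u|^2\right\|_{L^p_t L^q_x}.\]

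Each inner sum equals $\rho(e^{-it\sqrt{H}}\gamma_\sigma e^{it\sqrt{H}})$ where $\gamma_\sigma = \sum_{j\in 2\Z+\sigma}|\phi_j(\sqrt{H})u\rangle\langle\phi_j(\sqrt{H})u|$, so by Lemma \ref{1113719} applied to Corollary \ref{12131724} or \ref{12131747} with data in $\dot H^s$, it is controlled by $\|\gamma_\sigma\|_{\mathfrak{S}^\alpha(\dot H^s)}$. Using the intertwining property (valid because our hypotheses force $\sigma_{\ac}(H)=\sigma(H)$), I would write $\gamma_\sigma = W_\pm \tilde\gamma_\sigma W_\pm^*$ with $\tilde\gamma_\sigma = \sum_{j\in 2\Z+\sigma} |\phi_j(|D|) W_\pm^* u\rangle\langle \phi_j(|D|) W_\pm^* u|$. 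The vectors $\phi_j(|D|) W_\pm^* u$ of fixed parity have disjoint frequency annuli and are thus mutually orthogonal in $\dot H^s$, so $\tilde\gamma_\sigma$ is diagonal and Bernstein yields
\[\|\tilde\gamma_\sigma\|_{\mathfrak{S}^\alpha(\dot H^s)} = \|\{\|\phi_j(|D|) W_\pm^* u\|_{\dot H^s}^2\}\|_{\ell^\alpha} \approx \|W_\pm^* u\|_{\dot B^s_{2,2\alpha}}^2.\]
To close, I would use $\phi_j(|D|) W_\pm^* = W_\pm^* \phi_j(\sqrt{H})$ together with the Sobolev equivalence $\|H^{s/2}u\|_2 \approx \||D|^s u\|_2$ (obtained by interpolating $\langle Hu,u\rangle \approx \|\nabla u\|_2^2$ down from some $s_1>s$) to deduce $\|W_\pm^* u\|_{\dot B^s_{2,2\alpha}} \lesssim \|u\|_{\dot B^s_{2,2\alpha}}$ in the spirit of Lemma \ref{1291301}.

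The main obstacle lies in two points. First, the spectral multiplier theorem for $\sqrt H$ and the $L^p$-boundedness of $W_\pm$ are classical for the short-range setting of Corollary \ref{12131747} with $A=0$ via Yajima's theorem, but for the inverse-square case of Corollary \ref{12131724} one must appeal to specialized wave-operator bounds that hold only on a restricted range of $p$, which must be checked to cover the admissibility region built into the orthonormal Strichartz inequality. Second, the final Besov transfer via Lemma \ref{1291301} requires the Sobolev equivalence $\|H^{s_j/2} u\|_2 \approx \||D|^{s_j} u\|_2$ for some $s_0 < s < s_1$; from $\langle Hu,u\rangle \approx \|\nabla u\|_2^2$ this is immediate for $|s_j|\le 1$, but the allowed range of $s$ in the statement reaches up to $(d+1)/4$, so extending the equivalence to higher exponents via iterating $H\approx -\Delta$ and exploiting $V\ge 0$ is the delicate technical step.
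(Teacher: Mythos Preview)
Your approach is essentially the same as the paper's: Littlewood--Paley for $\sqrt{H}$ (the paper invokes Proposition~2.9 in \cite{M1}, which is precisely the heat-kernel based result you describe), then the orthonormal Strichartz estimate, intertwining, Bernstein, and the Besov transfer in the style of Lemma~\ref{1291301}. The density-operator formulation you use is a clean way to handle the orthogonality in $\dot H^s$.

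However, the two ``obstacles'' you flag are not actual obstacles. First, no $L^p$-boundedness of $W_\pm$ beyond $p=2$ is needed anywhere: the target space is $\dot B^s_{2,2\alpha}$, so in the argument of Lemma~\ref{1291301} every instance of $\|W_\pm\|_{L^p\to L^p}$ or $\|W_\pm^*\|_{L^p\to L^p}$ occurs with $p=2$, which is just the isometry property of wave operators. Their existence and completeness in the inverse-square setting follow from the Kato smoothness already established. Second, you have miscomputed the range of $s$: since $(2p,2q)$ is $\frac{d-1}{2}$-admissible with $1\le q\le\frac{d+1}{d-1}$, one has
\[
s=\frac{d+1}{2}\Bigl(\frac12-\frac{1}{2q}\Bigr)\in\Bigl[0,\tfrac12\Bigr],
\]
not up to $(d+1)/4$. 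The form equivalence $\langle Hu,u\rangle\approx\|\nabla u\|_2^2$ already yields $\|H^{t/2}u\|_2\approx\||D|^t u\|_2$ for all $t\in[-1,1]$ by interpolation and duality, so one may take any $s_0\in(-1,s)$ and $s_1\in(s,1)$ in Lemma~\ref{1291301} with no further work.
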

\begin{proof}
Since $V\ge 0$ holds, we have $\|u\|_{p} \approx \|\|\{\phi_j (\sqrt{H}) u\}\|_{\ell^2}\|_{p}$ for all $p \in (1, \infty)$ by Proposition 2.9 in \cite{M1}. Furthermore, by $\langle Hu, u \rangle \approx \langle H_0 u, u \rangle$, the $\dot{B}^s _{2, 2\alpha}$-boundedness of the wave operator $W^{*} _{\pm}$ can be proved in the same way as in the proof of Lemma \ref{1291301}. Hence by repeating the proof of Theorem \ref{1291939}, we obtain the desired estimates.
\end{proof}
By the similar proof as above, we obtain the following estimate.
\begin{corollary} \label{12141736}
Assume $H$ is as in Corollary \ref{12131859} or as in Corollary \ref{12131921} and satisfies $A=0$. If $V\ge 0$, we obtain 
\begin{align}
\|e^{-it\sqrt{H+1}} u\|_{L^{2p} _t L^{2q} _x} \lesssim \|u\|_{B^s _{2, 2\alpha}} \label{121417100}
\end{align}
for the same $p, q, \alpha, s$ as in Corollary \ref{12131859}. 
\end{corollary}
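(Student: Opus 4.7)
The plan is to mimic the proof of Corollary \ref{12141329} / Theorem \ref{1291939}, replacing (i) the homogeneous Littlewood-Paley decomposition associated with $\sqrt{H}$ by the \emph{inhomogeneous} one associated with $\sqrt{H+1}$, and (ii) the wave orthonormal Strichartz estimates by the Klein-Gordon versions in Corollary \ref{12131859} or Corollary \ref{12131921}. The switch from $\dot B^s_{2,2\alpha}$ to $B^s_{2,2\alpha}$ is forced by the Bessel-type symbol $\sqrt{H+1}$, whose spectrum lies in $[1,\infty)$.

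First I would establish the inhomogeneous square-function equivalence
\[
\|f\|_p \approx \Bigl\|\bigl(\textstyle\sum_{j\ge 0} |\phi_j(\sqrt{H+1}) f|^2\bigr)^{1/2}\Bigr\|_p, \qquad p \in (1,\infty),
\]
for an inhomogeneous dyadic partition $\{\phi_j\}_{j\ge 0}$ (with $\phi_0$ a fixed low-frequency cutoff and $\phi_j$ dyadic bumps at scale $2^j$ for $j\ge 1$). Since $V \ge 0$, Proposition 2.9 of \cite{M1} furnishes the analogous statement for $\sqrt{H}$; the inhomogeneous $\sqrt{H+1}$-version is obtained by the same Rademacher function trick used in Proposition \ref{1291227}, noting that $\sigma(\sqrt{H+1}) \subset [1,\infty)$ so only finitely many low-index $\phi_j$'s effectively contribute. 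As $A=0$ is assumed, the $L^p$-boundedness of the resulting spectral multipliers $m_t(\sqrt{H+1}) = \sum_{j\ge 0} r_j(t)\,\phi(\sqrt{H+1}/2^j)$ reduces to a scalar Schr\"odinger setting covered by \cite{M1}.

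Next, split $j$ into even and odd indices (so that the $\phi_j(\sqrt{H+1})u$ stay almost-orthogonal in $H^s$), and combine the above equivalence with Corollary \ref{12131859}/\ref{12131921} to obtain
\begin{align*}
\|e^{-it\sqrt{H+1}} u\|_{L^{2p}_t L^{2q}_x}^2
&\lesssim \Bigl\|\sum_{j \ge 0} \bigl|e^{-it\sqrt{H+1}} \phi_j(\sqrt{H+1}) u\bigr|^2\Bigr\|_{L^p_t L^q_x} \\
&\lesssim \|\{\|\phi_j(\sqrt{H+1}) u\|_{H^s}\}\|_{\ell^{2\alpha}}^2.
\end{align*}
The intertwining $\phi_j(\sqrt{H+1}) = W_\pm \phi_j(\sqrt{H_0+1}) W_\pm^*$, the $L^2$-unitarity of $W_\pm$, and Bernstein's inequality applied to $\phi_j(\sqrt{H_0+1}) W_\pm^* u$ then yield $\|\phi_j(\sqrt{H+1}) u\|_{H^s} \lesssim \langle 2^j\rangle^{s} \|\phi_j(\sqrt{H_0+1}) W_\pm^* u\|_2$, so that the $\ell^{2\alpha}$-sum is bounded by $\|W_\pm^* u\|_{B^s_{2,2\alpha}}^2$.

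Finally, $\|W_\pm^* u\|_{B^s_{2,2\alpha}} \lesssim \|u\|_{B^s_{2,2\alpha}}$ follows by real interpolation between the $H^{s_j,2}$-boundedness of $W_\pm^*$ for $s_0 < s < s_1$, which is the inhomogeneous analogue of Lemma \ref{1291301}; the required form equivalence $\langle Hu,u\rangle \approx \langle H_0 u,u\rangle$ is built into the hypotheses of Corollary \ref{12131859}/\ref{12131921}. The main obstacle is the first step: verifying the inhomogeneous Littlewood-Paley equivalence for $\sqrt{H+1}$ under only $V \ge 0$ (rather than the much stronger decay of Assumption \ref{129026}). The assumption $A=0$ is crucial here, as it reduces the spectral multiplier analysis to a scalar Schr\"odinger situation where \cite{M1} directly applies; the Bessel shift from $\sqrt{H}$ to $\sqrt{H+1}$ is then a routine perturbation at the multiplier-calculus level.
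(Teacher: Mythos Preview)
Your approach is correct and matches the paper's: the paper simply says the proof is ``similar to'' Corollary~\ref{12141329}, and you have accurately unpacked what that entails---the inhomogeneous Littlewood--Paley decomposition via \cite{M1} (using $V\ge 0$ and $A=0$), the Klein--Gordon orthonormal Strichartz estimates from Corollary~\ref{12131859}/\ref{12131921}, intertwining plus Bernstein, and the $B^s_{2,2\alpha}$-boundedness of $W_\pm^*$ through the inhomogeneous analogue of Lemma~\ref{1291301}. The ``obstacle'' you flag in the last paragraph is not a real one: since $\sigma(\sqrt{H+1})\subset[1,\infty)$, the multipliers $\phi_j(\sqrt{H+1})$ for $j\ge 0$ can be rewritten as $f_j(\sqrt{H})$ with $f_j(\lambda)=\phi_j(\sqrt{\lambda^2+1})$, and these satisfy the same Mihlin-type bounds handled by Proposition~2.9 in \cite{M1}, so the passage from $\sqrt{H}$ to $\sqrt{H+1}$ is indeed routine.
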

Next we give a result including the magnetic Schr\"odinger operator.
\begin{corollary} \label{12141711}
Assume $H$ is as in Corollary \ref{12131747} and satisfies $\langle Hu, u \rangle \gtrsim \langle H_0 u, u \rangle$ and $\|V_{-}\|_{K_d} < \frac{\pi^{d/2}}{\Gamma (d/2 -1)}$. Then (\ref{12141716}) holds for the same $p, q, \alpha, s$. Here $\|\cdot\|_{K_d}$ denotes the Kato norm. See Definition 4.9 in \cite{Ho} for its definition.
\end{corollary}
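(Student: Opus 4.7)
The plan is to follow the proofs of Theorem \ref{1291939} and Corollary \ref{12141329}, replacing the non-negativity assumption $V\ge 0$ (which was used to invoke the Littlewood-Paley theorem of Miao from \cite{M1}) by the Kato subcriticality hypothesis $\|V_-\|_{K_d} < \pi^{d/2}/\Gamma(d/2-1)$. This is the sharp condition ensuring that $e^{-tH}$ still enjoys Gaussian upper heat kernel bounds, via the diamagnetic pointwise inequality $|e^{-tH}u(x)| \le e^{t(\Delta-V_-)}|u|(x)$ combined with the standard Kato-class perturbation theory for $-\Delta-V_-$. From the Gaussian bound one deduces that every spectral multiplier $m(\sqrt H)$ whose symbol satisfies $|\partial^k_s m(s)| \lesssim s^{-k}$ is bounded on $L^p(\R^d)$ for $1<p<\infty$, and then the Rademacher/Khintchine argument of Proposition \ref{1291227} yields the square function equivalence
\[\|u\|_p \approx \Bigl\|\bigl(\textstyle\sum_j |\phi_j(\sqrt H) u|^2\bigr)^{1/2}\Bigr\|_p.\]

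Next I would upgrade the Besov boundedness of $W_\pm^\ast$. The hypothesis $\langle Hu,u\rangle \approx \langle H_0 u,u\rangle$ implies $L^2$-boundedness of $H^{\pm 1/2}H_0^{\mp 1/2}$, and via the intertwining identity $H^{s_j/2} W_\pm^\ast = W_\pm^\ast |D|^{s_j}$ on $P_{ac}(H)\mathcal H = \mathcal H$ (zero being regular and the absence of negative eigenvalues following from Corollary \ref{12131747}), this gives $\dot H^{s_j,2}$-boundedness of $W_\pm^\ast$ for $s_j = \pm 1/2$. Real interpolation, exactly as in Lemma \ref{1291301}, then yields boundedness on $\dot B^s_{2,2\alpha}$ for every $s\in(-1/2,1/2)$, which covers the relevant range of $s$ arising from the admissibility constraints.

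Assembling these, starting from the Littlewood-Paley bound I would write
\[\|e^{-it\sqrt H}u\|_{L^{2p}_t L^{2q}_x}^2 \lesssim \Bigl\|\sum_{j\in 2\Z} |e^{-it\sqrt H}\phi_j(\sqrt H) u|^2\Bigr\|_{L^p_t L^q_x} + \Bigl\|\sum_{j\in 2\Z+1} |e^{-it\sqrt H}\phi_j(\sqrt H) u|^2\Bigr\|_{L^p_t L^q_x},\]
splitting into even and odd indices so that the dyadic blocks are almost orthogonal in $\dot H^s$. Applying the orthonormal Strichartz estimate from Corollary \ref{12131747} to the orthonormal system $\{\|\phi_j(\sqrt H)u\|_{\dot H^s}^{-1}\phi_j(\sqrt H)u\}$ in $\dot H^s$ with weights $\nu_j=\|\phi_j(\sqrt H)u\|_{\dot H^s}^2$, then invoking the intertwining $\phi_j(\sqrt H)u = W_\pm\phi_j(|D|)W_\pm^\ast u$ together with the Bernstein inequality $\||D|^s\phi_j(|D|)f\|_2 \lesssim 2^{js}\|\phi_j(|D|)f\|_2$, I obtain
\[\|e^{-it\sqrt H}u\|_{L^{2p}_t L^{2q}_x}^2 \lesssim \|W_\pm^\ast u\|_{\dot B^s_{2,2\alpha}}^2,\]
and Step~2 finishes the proof.

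The main obstacle is the first step: for a magnetic Schrödinger operator with an indefinite $V$, establishing the $L^p$-boundedness of Hörmander-Mikhlin-type multipliers of $\sqrt H$ requires Gaussian heat kernel bounds that are not available from positivity alone. This is precisely where the sharp Kato-norm inequality $\|V_-\|_{K_d} < \pi^{d/2}/\Gamma(d/2-1)$ is used, through the diamagnetic inequality reducing the problem to the scalar Schrödinger semigroup with potential $-V_-$, which is still sub-Markovian under the sharp subcriticality threshold. Once the Gaussian estimate is in hand, the remainder of the argument is a direct transcription of the proofs of Theorem \ref{1291939} and Corollary \ref{12141329}.
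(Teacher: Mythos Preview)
Your proposal is correct and follows essentially the same route as the paper: the paper invokes Proposition 4.12 of \cite{Ho} for the Littlewood-Paley theorem under the Kato subcriticality hypothesis, notes that $\langle Hu,u\rangle \approx \langle H_0 u,u\rangle$ (combining the assumed $\gtrsim$ with the $\lesssim$ already shown in Corollary \ref{12131747}), and then repeats verbatim the argument of Corollary \ref{12141329}/Theorem \ref{1291939}. You have simply unpacked what that cited proposition does (Gaussian heat kernel bounds via the diamagnetic inequality and Kato-class perturbation, then the Rademacher/Khintchine square-function argument), so the two proofs coincide.
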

\begin{proof}
Under our assumptions, the Littlewood-Paley theorem for $H$ was proved in Proposition 4.12 in \cite{Ho}. By the proof of Corollary \ref{12131747} and our assumptions, we have $\langle Hu, u \rangle \approx \langle H_0 u, u \rangle$. Then the proof is just the same as before and we omit the details.
\end{proof}
\begin{corollary}\label{12141800}
Assume $H$ is as in Corollary \ref{12131921} and satisfies $\langle (H+1) u, u \rangle \gtrsim \|u\|^2 _{H^1}$ and $\|V_{-}\|_{K_d} < \frac{\pi^{d/2}}{\Gamma (d/2 -1)}$. Then (\ref{121417100}) holds for the same $p, q, \alpha, s$.
\end{corollary}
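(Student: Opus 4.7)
The plan is to adapt the proof of Corollary \ref{12141711} to the inhomogeneous setting of the Klein-Gordon equation, in exactly the way Corollary \ref{12141736} adapts Corollary \ref{12141329}. First, I would establish an inhomogeneous Littlewood-Paley theorem for $\sqrt{H+1}$: under the Kato-norm smallness assumption $\|V_-\|_{K_d} < \pi^{d/2}/\Gamma(d/2-1)$, the form equivalence $\langle (H+1)u,u\rangle \approx \|u\|_{H^1}^2$, and the magnetic decay hypotheses of Corollary \ref{12131921}, the same Rademacher/Khintchin argument used in Proposition \ref{1291227} (with the $L^p$-boundedness of spectral multipliers $m(\sqrt{H+1})$ playing the role previously supplied by Corollary \ref{1291118}, and derived here from Proposition 4.12 of \cite{Ho} combined with a functional calculus argument that absorbs the shift by $+1$) gives
\[
\|u\|_p \;\approx\; \Bigl\|\Bigl(\textstyle\sum_{j\ge -1} |\phi_j(\sqrt{H+1})u|^2\Bigr)^{1/2}\Bigr\|_p, \qquad p\in(1,\infty),
\]
where $\{\phi_j\}_{j\ge -1}$ is the inhomogeneous Littlewood-Paley partition.

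Second, I would split this square function into even- and odd-indexed $j$ so that, within each subsum, the operators $\phi_j(\sqrt{H+1})$ have pairwise quasi-orthogonal ranges. After normalizing, each subsum is of orthonormal form, so Corollary \ref{12131921} applied to $e^{-it\sqrt{H+1}}$ yields
\[
\|e^{-it\sqrt{H+1}}u\|_{L^{2p}_t L^{2q}_x}^{2} \;\lesssim\; \Bigl\|\Bigl\{\|\phi_j(\sqrt{H+1})u\|_{H^{s}}\Bigr\}\Bigr\|_{\ell^{2\alpha}}^{2}
\]
for $p,q,\alpha,s$ as in Corollary \ref{12131859}.

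Third, I would transfer the $\sqrt{H+1}$-Besov norm on the right to the standard inhomogeneous Besov norm $B^{s}_{2,2\alpha}$. Using the intertwining identity $\phi_j(\sqrt{H+1})=W_\pm\,\phi_j(\sqrt{H_0+1})\,W_\pm^*$ (wave operators exist and are $L^p$-bounded under the hypotheses on $A,V$ by the results cited in the proof of Corollary \ref{12131747}), the Bernstein inequality $\|\phi_j(\sqrt{H_0+1})f\|_{H^s}\lesssim 2^{j s^+}\|\phi_j(\sqrt{H_0+1})f\|_2$ on dyadic annuli, and the assumed two-sided form equivalence $\langle (H+1)u,u\rangle \approx \|u\|_{H^1}^2$ (which gives $L^2$-boundedness of $(H+1)^{\pm s_j/2}(H_0+1)^{\mp s_j/2}$ for $s_j$ on either side of $s$), real interpolation as in Lemma \ref{1291301} yields
\[
\Bigl\|\{\|\phi_j(\sqrt{H+1})u\|_{H^s}\}\Bigr\|_{\ell^{2\alpha}} \;\lesssim\; \|W_\pm^{\,*}u\|_{B^{s}_{2,2\alpha}} \;\lesssim\; \|u\|_{B^{s}_{2,2\alpha}},
\]
which completes the bound.

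The main obstacle will be step one: establishing the inhomogeneous Littlewood-Paley equivalence for $\sqrt{H+1}$ in the magnetic setting. The Kato-norm smallness condition is precisely what guarantees Gaussian heat kernel bounds for $e^{-t(H+1)}$ (via Khasminskii's lemma applied to the diamagnetic-dominated semigroup), which in turn feed the H\"ormander-type spectral multiplier theorem needed to bound $m(\sqrt{H+1})$ on $L^p$. The low-frequency block $j=-1$ needs to be treated separately from the high-frequency ones, but since we are in the inhomogeneous regime and $H+1 \ge c>0$ is coercive by assumption, no singularity at the spectral origin arises, and the argument of Proposition \ref{1291227} transfers without further change.
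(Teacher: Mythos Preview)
Your proposal follows essentially the same three-step scheme as the paper: an (inhomogeneous) Littlewood--Paley theorem for $H$ via the Kato-norm smallness assumption (Proposition 4.12 in \cite{Ho}), the orthonormal Strichartz estimate of Corollary \ref{12131921}, and a transfer to the standard Besov norm through the intertwining relation and form equivalence. The paper's own proof is just the single sentence ``The proof is similar so we omit the details,'' so your expansion is faithful to the intended argument.

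One remark: you state that ``wave operators exist and are $L^p$-bounded under the hypotheses on $A,V$ by the results cited in the proof of Corollary \ref{12131747}.'' That corollary cites Kato-smoothing results (\cite{EGS}, \cite{Ho}), not $L^p$-boundedness of wave operators, and $L^p$-boundedness of $W_\pm$ for magnetic Schr\"odinger operators with merely $\langle x\rangle^{-1-\epsilon}$ decay on $A$ is not established in the references here. Fortunately your argument only uses this at $p=2$: the target Besov space is $B^{s}_{2,2\alpha}$, so in Lemma \ref{1291301} and the Bernstein step you only need $\|W_\pm^* u\|_2 \le \|u\|_2$ and the $L^2$-boundedness of $(H+1)^{\pm s_j/2}(H_0+1)^{\mp s_j/2}$, both of which follow from the spectral assumptions of Corollary \ref{12131921} together with $\langle (H+1)u,u\rangle \approx \|u\|_{H^1}^2$. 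So the logic is sound, but you should replace the $L^p$-claim by the $L^2$-statement actually needed.
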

\begin{proof}
The proof is similar so we omit the details.
\end{proof}
Next we extend the orthonormal Strichartz estimates for more general $(p, q)$. In order to prove the estimate for $(p, q)$ close to the endpoint, we need to handle derivatives which did not appear in the previous setting. Especially we need to estimate the fractional powers of $H$ or $H+1$. To overcome this difficulty, we use a method from the microlocal analysis and construct a parametrix though it only works for the Klein-Gordon equation. As our first step, we prove an abstract perturbation theorem suitable for our setting as before.
\begin{thm}\label{12141937}
Let $\mathcal{H}:= L^2 (X)$ for a $\sigma$-finite measure space $X$. Assume self-adjoint operators $H, H_0$ and a densely defined closed operator $V$ satisfy
\[H=H_0 +V, \quad D(H) \cup D(H_0) \subset D(V).\]
Furthermore we assume there exist $\theta \in \R$, densely defined injective closed operators $w$ and $W$ which satisfy the following:
\begin{align*}
&H+1, H_0 +1\ge 0, \quad (H+1)^{\theta} (H_0 +1)^{-\theta} \in \mathcal{B} (\mathcal{H}), \\
&(H_0 +1)^{\theta -1/2} (H+1)^{1/2} (H_0 +1)^{-\theta} \in \mathcal{B} (\mathcal{H}), \\
&w^{-1} (H+1)^{-\theta+1/4} V (H_0 +1)^{\theta -1/4} W^{-1} \in \mathcal{B} (\mathcal{H}).
\end{align*}
Suppose $w$ is $H$-smooth and $W$ is $H_0$-smooth.
If we have
\[\left\| \sum_{n=0}^ \infty{\nu_n|e^{-it\sqrt{H_0 +1}} f_n|^2} \right\|_{L^{p}_t L^{q}_x} \lesssim \| \nu_n\|_{\ell^\alpha}\]
for some $p, q \in [1, \infty]$, $\alpha \in (1, \infty)$ and all $\{f_n\}$: orthonormal systems in $D((H_0 +1)^{\theta})$, then
\[\left\| \sum_{n=0}^ \infty{\nu_n|e^{-it\sqrt{H +1}} f_n|^2} \right\|_{L^{p}_t L^{q}_x} \lesssim \| \nu_n\|_{\ell^\alpha}\]
holds for the same $p, q, \alpha, \theta, \{f_n\}$.
\end{thm}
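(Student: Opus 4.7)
The proof follows the blueprint of Theorems~\ref{1292322} and~\ref{1212216} with the derivative loss $\theta$ tracked explicitly. Interpreting $u(t)=e^{-it\sqrt{H+1}}g$ as a solution of the Klein-Gordon equation with forcing $-Vu$ gives the Duhamel identity
\[
e^{-it\sqrt{H+1}}g = \cos(t\sqrt{H_0+1})\,g - i\,\frac{\sin(t\sqrt{H_0+1})}{\sqrt{H_0+1}}\sqrt{H+1}\,g - \int_{0}^{t}\frac{\sin((t-s)\sqrt{H_0+1})}{\sqrt{H_0+1}}\,V\,e^{-is\sqrt{H+1}}g\,ds.
\]
Lemma~\ref{1113719} reduces the target orthonormal inequality to the Schatten bound $\|f\,e^{-it\sqrt{H+1}}(H_0+1)^{-\theta}\|_{\mathfrak{S}^{2\alpha'}_{x\to t,x}}\lesssim \|f\|_{L^{2p'}_t L^{2q'}_x}$. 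Substituting $g=(H_0+1)^{-\theta}h$ into the Duhamel identity and left-multiplying by $f$ splits this bound into three pieces.

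The leading piece $f\cos(t\sqrt{H_0+1})(H_0+1)^{-\theta}$ is controlled directly by writing $\cos=\tfrac12(e^{+}+e^{-})$ and invoking the dualized free hypothesis. The second piece is factored as
\[
-if\,\sin(t\sqrt{H_0+1})(H_0+1)^{-\theta}\cdot(H_0+1)^{\theta-1/2}(H+1)^{1/2}(H_0+1)^{-\theta};
\]
H\"older for the Schatten ideals, the free estimate applied to $\sin$ for the first bracket, and the second structural hypothesis for the second bracket close the bound.

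The integral piece is the substantive step. By Lemma~\ref{2311152210} it suffices to consider the untruncated version; expanding $\sin((t-s)\sqrt{H_0+1})=\sin(t\cdot)\cos(s\cdot)-\cos(t\cdot)\sin(s\cdot)$ and writing $(H_0+1)^{-1/2}=(H_0+1)^{-\theta}(H_0+1)^{\theta-1/2}$ reduces matters, via H\"older and the free estimate, to proving $\|(H_0+1)^{\theta-1/2}T\|_{\mathcal{B}(\mathcal{H})}\lesssim 1$ for
\[
T = \int_0^\infty \chi(s\sqrt{H_0+1})\,V\,e^{-is\sqrt{H+1}}(H_0+1)^{-\theta}\,ds,\qquad \chi\in\{\sin,\cos\}.
\]
Exploiting $V=V^*$, I rewrite the third hypothesis as $V=(H_0+1)^{-\theta+1/4}W^*B^*w^*(H+1)^{\theta-1/4}$ with $B^*\in\mathcal{B}(\mathcal{H})$, and substitute. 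The key cancellation is that on the $v$-side of the bilinear form $\langle(H_0+1)^{\theta-1/2}Tu,v\rangle$ the factor
\[
\chi(s\sqrt{H_0+1})(H_0+1)^{-\theta+1/4}\cdot(H_0+1)^{\theta-1/2}=\chi(s\sqrt{H_0+1})(H_0+1)^{-1/4}
\]
collapses to the exact order demanded by Lemma~\ref{1212101}, while on the $u$-side $(H+1)^{\theta-1/4}(H_0+1)^{-\theta}=(H+1)^{-1/4}\cdot(H+1)^{\theta}(H_0+1)^{-\theta}$. Cauchy-Schwarz in the $s$-integrated bilinear form then reduces to three ingredients: the $H_0$-smoothness of $W$ together with Lemma~\ref{1212101} yields $\|W\chi(s\sqrt{H_0+1})(H_0+1)^{-1/4}v\|_{L^2_sL^2_x}\lesssim\|v\|_2$; the $H$-smoothness of $w^*$ gives the analogous bound $\|w^*e^{-is\sqrt{H+1}}(H+1)^{-1/4}\cdot\|_{L^2_sL^2_x}\lesssim\|\cdot\|_2$; and the first structural hypothesis bounds the intertwining factor $(H+1)^{\theta}(H_0+1)^{-\theta}$ on $L^2$.

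The main obstacle is the bookkeeping in the third step: the precise exponents in the three structural hypotheses are exactly what allow the $(H_0+1)$- and $(H+1)$-powers to simplify to $(H_0+1)^{-1/4}$ and $(H+1)^{-1/4}$ respectively, so that the Kato smoothing of $W$ for $H_0$ and of $w$ for $H$ applies cleanly after Lemma~\ref{1212101}. A subsidiary technicality is that the $H$-smoothness of $w$ enters through $w^*$, which is automatic in all applications in this paper since $w$ is a real-valued multiplication operator. Assembling the three Schatten bounds by the triangle inequality and dualizing back via Lemma~\ref{1113719} concludes the proof.
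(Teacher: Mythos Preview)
Your proof is correct and follows essentially the same route as the paper: Duhamel formula, reduction via the duality principle (Lemma~\ref{1113719}), direct estimates for the $\cos$ and $\sin$ terms, Christ--Kiselev (Lemma~\ref{2311152210}) and the addition formula for the integral term, and then the key factorization that collapses the $(H_0+1)$- and $(H+1)$-powers to $(\cdot)^{-1/4}$ so that Lemma~\ref{1212101} applies. One small caveat: your closing remark that ``$w$ is a real-valued multiplication operator'' is not accurate in the main application (Corollary~\ref{1216058}), where $w=\langle x\rangle^{-1-\epsilon'}\langle D\rangle^{1/2}$; the paper's own proof has the same implicit adjoint issue at this step.
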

\begin{proof}
By the Duhamel formula, we obtain 
\begin{align*}
fe^{-it\sqrt{H+1}} (H_0 +1)^{-\theta} = &f\cos t \sqrt{H_0 +1} (H_0 +1)^{-\theta} \\
&-i f \sin t\sqrt{H_0 +1} (H_0 +1)^{-\theta} (H_0 +1)^{\theta -1/2} \sqrt{H+1} (H_0 +1)^{-\theta} \\
& -f\int_{0}^{t} \frac{\sin (t-s)\sqrt{H_0 +1}}{\sqrt{H_0 +1}} Ve^{-is\sqrt{H+1}} (H_0 +1)^{-\theta} ds.
\end{align*}
By the assumptions, we estimate the first and second term as
\begin{align*}
&\|f\cos t \sqrt{H_0 +1} (H_0 +1)^{-\theta}\|_{\mathfrak{S}^{2\alpha'}_{x \rightarrow t, x}} \lesssim \|f\|_{L^{2p'} _t L^{2q'} _x}, \\
& \|f \sin t\sqrt{H_0 +1} (H_0 +1)^{-\theta} (H_0 +1)^{\theta -1/2} \sqrt{H+1} (H_0 +1)^{-\theta}\|_{\mathfrak{S}^{2\alpha'}_{x \rightarrow t, x}} \\
&\lesssim \|f\|_{L^{2p'} _t L^{2q'} _x} \|(H_0 +1)^{\theta -1/2} \sqrt{H+1} (H_0 +1)^{-\theta}\|_{\mathcal{B} (\mathcal{H})} \lesssim \|f\|_{L^{2p'} _t L^{2q'} _x}.
\end{align*}
For the third term, by Lemma \ref{2311152210}, we obtain
\begin{align*}
&\left\|f\int_{0}^{t} \frac{\sin (t-s)\sqrt{H_0 +1}}{\sqrt{H_0 +1}} Ve^{-is\sqrt{H+1}} (H_0 +1)^{-\theta} ds \right\|_{\mathfrak{S}^{2\alpha'}_{x \rightarrow t, x}} \\
&\lesssim \left\|f\int_{0}^{\infty} \frac{\sin (t-s)\sqrt{H_0 +1}}{\sqrt{H_0 +1}} Ve^{-is\sqrt{H+1}} (H_0 +1)^{-\theta} ds \right\|_{\mathfrak{S}^{2\alpha'}_{x \rightarrow t, x}} \\
& \lesssim \|f\|_{L^{2p'} _t L^{2q'} _x} \left\|\int_{0}^{\infty} (H_0 +1)^{\theta -1/2} \cos s\sqrt{H_0 +1} Ve^{-is\sqrt{H+1}} (H_0 +1)^{-\theta} ds \right\|_{\mathcal{B} (\mathcal{H})} \\
&+ \|f\|_{L^{2p'} _t L^{2q'} _x} \left\|\int_{0}^{\infty} (H_0 +1)^{\theta -1/2} \sin s\sqrt{H_0 +1} Ve^{-is\sqrt{H+1}} (H_0 +1)^{-\theta} ds \right\|_{\mathcal{B} (\mathcal{H})}.
\end{align*}
Concerning the second factor of the first term in the last inequality, we have
\begin{align*}
&\left\|\int_{0}^{\infty} (H_0 +1)^{\theta -1/2} \cos s\sqrt{H_0 +1} Ve^{-is\sqrt{H+1}} (H_0 +1)^{-\theta} ds \right\|_{\mathcal{B} (\mathcal{H})} \\
& \lesssim \left\|\int_{0}^{\infty} (H_0 +1)^{ -1/4} \cos s\sqrt{H_0 +1} W W^{-1} (H_0 +1)^{\theta -1/4} V\right. \\ 
&\left. \quad \quad \quad  \cdot (H+1)^{-\theta +1/4} w^{-1} w e^{-is\sqrt{H+1}} (H+1)^{\theta -1/4} (H_0 +1)^{-\theta} ds \right\|_{\mathcal{B} (\mathcal{H})} \\
& \lesssim \|W \cos t\sqrt{H_0 +1} (H_0 +1)^{ -1/4}\|_{\mathcal{B} (\mathcal{H} \rightarrow L^2 _t \mathcal{H})} \cdot \|w^{-1} (H+1)^{-\theta+1/4} V (H_0 +1)^{\theta -1/4} W^{-1} \|_{\mathcal{B} (\mathcal{H})} \\
& \quad \quad \cdot \|we^{-it\sqrt{H+1}} (H+1)^{-1/4} \|_{\mathcal{B} (\mathcal{H} \rightarrow L^2 _t \mathcal{H})} \cdot \|(H+1)^{\theta} (H_0 +1)^{-\theta}\|_{\mathcal{B} (\mathcal{H})} \\
& \lesssim 1
\end{align*}
by our assumptions. By similar calculations, we also obtain
\[\left\|\int_{0}^{\infty} (H_0 +1)^{\theta -1/2} \sin s\sqrt{H_0 +1} Ve^{-is\sqrt{H+1}} (H_0 +1)^{-\theta} ds \right\|_{\mathcal{B} (\mathcal{H})} \lesssim 1\]
Hence we have 
\[\|fe^{-it\sqrt{H+1}} (H_0 +1)^{-\theta}\|_{\mathfrak{S}^{2\alpha'}_{x \rightarrow t, x}} \lesssim \|f\|_{L^{2p'} _t L^{2q'} _x}\]
and by Lemma \ref{1113719}, we obtain the desired estimates.
\end{proof}
By the same way, we can prove the following theorem corresponding to wave equations.
\begin{thm}\label{12152027}
Let $\mathcal{H}:= L^2 (X)$ for a $\sigma$-finite measure space $X$. Assume self-adjoint operators $H, H_0$ and a densely defined closed operator $V$ satisfy
\[H=H_0 +V, \quad D(H) \cup D(H_0) \subset D(V).\]
Furthermore we assume there exist $\theta \in \R$, densely defined injective closed operators $w$ and $W$ which satisfy the following:
\begin{align*}
&H, H_0 \ge 0, \quad H^{\theta} {H_0 }^{-\theta} \in \mathcal{B} (\mathcal{H}), \\
&{H_0 }^{\theta -1/2} H^{1/2} {H_0 }^{-\theta} \in \mathcal{B} (\mathcal{H}), \\
&w^{-1} H^{-\theta+1/4} V {H_0 }^{\theta -1/4} W^{-1} \in \mathcal{B} (\mathcal{H}).
\end{align*}
Suppose $w$ is $H$-smooth and $W$ is $H_0$-smooth. If we have
\[\left\| \sum_{n=0}^ \infty{\nu_n|e^{-it\sqrt{H_0 }} f_n|^2} \right\|_{L^{p}_t L^{q}_x} \lesssim \| \nu_n\|_{\ell^\alpha}\]
for some $p, q \in [1, \infty]$, $\alpha \in (1, \infty)$ and all $\{f_n\}$: orthonormal systems in $D({H_0 }^{\theta})$, then
\[\left\| \sum_{n=0}^ \infty{\nu_n|e^{-it\sqrt{H }} f_n|^2} \right\|_{L^{p}_t L^{q}_x} \lesssim \| \nu_n\|_{\ell^\alpha}\]
holds for the same $p, q, \alpha, \theta, \{f_n\}$.
\end{thm}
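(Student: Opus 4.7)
The plan is to mimic the proof of Theorem~\ref{12141937} line by line, replacing $H_0+1$ and $H+1$ throughout by $H_0$ and $H$. Starting from the Duhamel identity for the wave propagator
\[
e^{-it\sqrt{H}}\,g = \cos t\sqrt{H_0}\,g - i\frac{\sin t\sqrt{H_0}}{\sqrt{H_0}}\sqrt{H}\,g - \int_{0}^{t} \frac{\sin(t-s)\sqrt{H_0}}{\sqrt{H_0}}\,V e^{-is\sqrt{H}}\,g\,ds,
\]
valid on $D(\sqrt{H})$, I would multiply on the right by $H_0^{-\theta}$ and on the left by the multiplication operator $f$, and use the duality principle (Lemma~\ref{1113719}) to reduce the desired orthonormal estimate to the Schatten bound
\[
\|f\,e^{-it\sqrt{H}}\,H_0^{-\theta}\|_{\mathfrak{S}^{2\alpha'}_{x\rightarrow t,x}} \lesssim \|f\|_{L^{2p'}_t L^{2q'}_x}.
\]

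The first Duhamel term is dispatched by the duality principle applied to the assumed orthonormal estimate for $e^{-it\sqrt{H_0}}$. For the second, I would write
\[
\tfrac{\sin t\sqrt{H_0}}{\sqrt{H_0}}\sqrt{H}\,H_0^{-\theta} = \sin t\sqrt{H_0}\,H_0^{-\theta}\cdot H_0^{\theta-1/2}H^{1/2}H_0^{-\theta},
\]
so that the bounded factor on the right may be absorbed and the free orthonormal estimate applied to $\sin t\sqrt{H_0}\,H_0^{-\theta}$. For the integral term I would apply the Christ--Kiselev lemma (Lemma~\ref{2311152210}) to replace $\int_0^t$ by $\int_0^\infty$, split $\sin(t-s)\sqrt{H_0}$ via the angle-subtraction identity, and further factor the singular operator as $H_0^{-1/2}=H_0^{-\theta}\cdot H_0^{\theta-1/2}$, sending the first factor to the $t$-variable side and the second to the $s$-variable side. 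The $t$-dependent piece is then controlled by the free estimate through duality, while the $s$-integral becomes an operator-norm quantity of the form
\[
\left\|\int_0^\infty H_0^{\theta-1/2}\cos s\sqrt{H_0}\,V e^{-is\sqrt{H}}H_0^{-\theta}\,ds\right\|_{\mathcal{B}(\mathcal{H})}
\]
and an analogous $\sin$-variant. Inserting $WW^{-1}$ and $w^{-1}w$ around $V$ and writing $H_0^{\theta-1/2}=H_0^{-1/4}\cdot H_0^{\theta-1/4}$ factorizes each integrand into the product
\[
\|W\cos s\sqrt{H_0}\,H_0^{-1/4}\|_{\mathcal{B}(\mathcal{H}\rightarrow L^2_s\mathcal{H})}\cdot\|w^{-1}H^{-\theta+1/4}VH_0^{\theta-1/4}W^{-1}\|_{\mathcal{B}(\mathcal{H})}\cdot\|we^{-is\sqrt{H}}H^{-1/4}\|_{\mathcal{B}(\mathcal{H}\rightarrow L^2_s\mathcal{H})}\cdot\|H^{\theta}H_0^{-\theta}\|_{\mathcal{B}(\mathcal{H})},
\]
in which the outer two smoothing norms are finite by Lemma~\ref{1212101} applied with $\nu=0$ to the $H_0$-smoothness of $W$ and the $H$-smoothness of $w$, while the remaining two operator norms are bounded by hypothesis.

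The main obstacle, and the reason the wave case is not simply a corollary of the Klein--Gordon result, is the low-frequency singularity of $1/\sqrt{H_0}$: whereas in Theorem~\ref{12141937} one has $H_0+1\ge 1$ so that $\sqrt{H_0+1}$ is automatically bounded below, here the injectivity of $\sqrt{H_0}$ (and of $\sqrt{H}$) must be invoked in order to apply Lemma~\ref{1212101} with $\nu=0$. This injectivity is already implicit in the hypothesis $H^{\theta}H_0^{-\theta}\in\mathcal{B}(\mathcal{H})$, which requires $H_0^{-\theta}$ to be densely defined, so $0\notin\sigma_p(H_0)\cup\sigma_p(H)$; once this is recorded, all the factorizations above are legitimate on dense domains and the argument of Theorem~\ref{12141937} transfers without further change, yielding the claimed Schatten bound and, via Lemma~\ref{1113719} in the reverse direction, the conclusion.
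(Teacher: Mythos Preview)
Your proposal is correct and follows exactly the route the paper intends: the paper states that Theorem~\ref{12152027} is proved ``by the same way'' as Theorem~\ref{12141937}, and your line-by-line translation---Duhamel formula, duality principle, Christ--Kiselev, angle subtraction, and the four-factor splitting invoking Lemma~\ref{1212101}---is precisely that argument with $H_0+1,\,H+1$ replaced by $H_0,\,H$. Your remark that Lemma~\ref{1212101} with $\nu=0$ requires injectivity of $H$ and $H_0$, and that this is implicit in the well-definedness of the negative powers appearing in the hypotheses, is a correct clarification that the paper leaves tacit.
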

We apply the above theorem to the magnetic Schr\"odinger operator. We need to estimate $w^{-1} (H+1)^{-\theta+1/4} V (H_0 +1)^{\theta -1/4} W^{-1}$ or $w^{-1} H^{-\theta+1/4} V {H_0 }^{\theta -1/4} W^{-1}$ but this seems difficult. In this paper we use the construction of a parametrix and hence we assume $A$ and $V$ are smooth. However, even though we assume the above conditions, we have no result for wave equations since $H^{\theta}$ may behave as a singular integral operator like $|D|^{\alpha}$. We only prove the result for the Klein-Gordon equation here.
Before proving Corollary \ref{1216058}, we collect some definitions for pseudodifferential operators. We use the symbol class $S(m, g)$, which was introduced by H\"ormander. See \cite{Hor} for its definitions and properties. For $a \in S(m, g)$ and $u \in \mathcal{S}$, we define
\[a^{W} (x, D_x) u(x) = (2\pi)^{-d} \int\int e^{i(x-y)\xi} a\left(\frac{x+y}{2}, \xi\right) u(y) dyd\xi .\]
Then $a^{W} (x, D_x) u \in \mathcal{S}$ and $a^{W} (x, D_x)$ is continuous on $\mathcal{S}$. We extend $a^{W} (x, D_x) : \mathcal{S}' \rightarrow \mathcal{S}'$ by continuity. For the $L^2$-boundedness of $a^{W} (x, D_x)$, we use the Calder\`on-Vaillancourt theorem (Theorem 18.6.3 in \cite{Hor}) several times.
\begin{proof}[Proof of Corollary \ref{1216058}]
We define $w=W= \langle x \rangle^{-1-\epsilon'} \langle D \rangle^{1/2}$ for sufficiently small $\epsilon' >0$. Then $w$ is $-\Delta$-smooth and $W$ is $H$-smooth by Theorem 1.1 in \cite{EGS}. Since $\langle (H+1)u, u \rangle \lesssim \|u\|^2 _{H^1}$ holds, $(H_0 +1)^{-1/2} (H+1)^{1/2}$ is bounded on $L^2 (\R^d)$ and by the complex interpolation, we obtain
\begin{align*}
&(H_0 +1)^{\theta -1/2} (H+1)^{1/2} (H_0 +1)^{-\theta} \in \mathcal{B} (L^2 (\R^d)), \\
&(H+1)^{\theta} (H_0 +1)^{-\theta} \in \mathcal{B} (L^2 (\R^d))
\end{align*}
for all $\theta \in [0, 1/2]$. Now we may assume $1/2 \le s \le 1$ since other cases are already proved in Corollary \ref{12131921} under less assumptions. By Theorem \ref{12141937} and the orthonormal Strichartz estimates for the free Hamiltonian (Theorem 6, 7 in \cite{BLN}), it suffices to show
\begin{align}
&\|\langle D \rangle^{-1/2} \langle x \rangle^{1+\epsilon'} (H+1)^{-\theta +1/4} \tilde{V} \langle D \rangle^{2(\theta -1/4)} \langle D \rangle^{-1/2} \langle x \rangle^{1+\epsilon'}\|_{\mathcal{B} (L^2 (\R^d))} \lesssim 1, \label{12191840} \\
&\tilde{V} = DA+AD+|A|^2 +V \notag
\end{align}
for all $\theta \in [1/4, 1/2]$. By our assumptions, we have
\begin{align}
(H+1)^{-\theta + 1/4} = a^{W} _{-\theta +1/4, n} (x, D_x) + \int_{\Gamma} z^{-s} (H+1-z)^{-1} r^{W} _n (x, D_x, z) dz \label{12191757}
\end{align}
for all $n \in \N$.
Here $a _{-\theta +1/4, n} \in S(m^{-\theta +1/4}, g)$ for $g=\frac{dx^2}{\langle x \rangle^2} + \frac{d\xi^2}{\langle \xi \rangle^2}$ and $m(x, \xi) = \langle \xi \rangle^2$. $\Gamma = \{\lambda e^{i\eta} \mid \lambda >0\} \cup \{\lambda e^{-i\eta} \mid \lambda >0\}$ for some $\eta \in (0, \frac{\pi}{2})$. $r_n (x, \xi, z)$ is a smooth function with respect to $(x, \xi, z)$ and also analytic with respect to $z$. $r_n (x, \xi, z)$ satisfies the following bound:
\[\langle x \rangle^{|\alpha|} \langle \xi \rangle^{|\beta|} |\partial^{\alpha} _x \partial^{\beta} _{\xi} r_n (x, \xi, z)| \lesssim_{n, k, \eta} h(x, \xi)^{n} m(x, \xi) (\sigma^{W} (H)(x, \xi) +1+\lambda)^{-1}\]
for all $(x, \xi) \in \R^{2d}$, $\alpha, \beta \in \N^d _{0}$ satisfying $|\alpha|+|\beta| \le k$, $k \in \N$, $z=\lambda + i \lambda \tan \eta$. Here $h(x, \xi)=\langle x \rangle^{-1} \langle \xi \rangle^{-1}$ holds. Equality (\ref{12191757}) is proved in Appendix B in \cite{Ybook} by using a construction of a parametrix for resolvents. Actually, in subsection 3.3, we prove a generalization of (\ref{12191757}) to some matrix-valued pseudodifferential operators and the proof of (\ref{12191757}) is easier. Hence we omit the details here.
By (\ref{12191757}), we obtain
\begin{align}
&\langle D \rangle^{-1/2} \langle x \rangle^{1+\epsilon'} (H+1)^{-\theta +1/4} \tilde{V} \langle D \rangle^{2(\theta -1/4)} \langle D \rangle^{-1/2} \langle x \rangle^{1+\epsilon'} \notag \\
&= \langle D \rangle^{-1/2} \langle x \rangle^{1+\epsilon'} (a^{W} _{-\theta +1/4, n} (x, D_x))^* \tilde{V} \langle D \rangle^{2(\theta -1/4)} \langle D \rangle^{-1/2} \langle x \rangle^{1+\epsilon'} \notag \\
& \quad \quad  + \langle D \rangle^{-1/2} \langle x \rangle^{1+\epsilon'} \left(\int_{\Gamma} z^{-s} (H+1-z)^{-1} r^{W} _n (x, D_x, z) dz \right)^* \tilde{V} \langle D \rangle^{2(\theta -1/4)} \langle D \rangle^{-1/2} \langle x \rangle^{1+\epsilon'} \label{12191852}
\end{align}
For the first term in (\ref{12191852}), by $a _{-\theta +1/4, n} \in S(m^{-\theta +1/4}, g)$, the Calder\`on-Vaillancourt theorem and symbolic calculus, we obtain
\[\|\langle D \rangle^{-1/2} \langle x \rangle^{1+\epsilon'} (a^{W} _{-\theta +1/4, n} (x, D_x))^* \tilde{V} \langle D \rangle^{2(\theta -1/4)} \langle D \rangle^{-1/2} \langle x \rangle^{1+\epsilon'}\|_{\mathcal{B} (L^2 (\R^d))} \lesssim 1.\]
For the adjoint of the second term, we transform it into
\begin{align}
&\langle x \rangle^{1+\epsilon'} \langle D \rangle^{-1/2} \langle D \rangle^{2(\theta -1/4)} \tilde{V} \int_{\Gamma} z^{-s} (H+1-z)^{-1} r^{W} _n (x, D_x, z) dz \langle x \rangle^{1+\epsilon'} \langle D \rangle^{-1/2} \notag \\
& = \langle x \rangle^{1+\epsilon'} \langle D \rangle^{-1/2} \langle D \rangle^{2(\theta -1/4)} \tilde{V} \langle D \rangle^{-1} \cdot \langle D \rangle (H+1)^{-1/2} \notag \\
& \quad \quad \quad \cdot \int_{\Gamma} z^{-s} (H+1)^{1/2} (H+1-z)^{-1} (r^{W} _n (x, D_x, z)\langle x \rangle^{1+\epsilon'} \langle D \rangle^{-1/2}) dz. \label{12192103}
\end{align}
The first factor is bounded by the Calder\`on-Vaillancourt theorem since we assume $\theta \in [1/4, 1/2]$, $|\partial^{\alpha} _x A(x)| \lesssim \langle x \rangle^{-2-\epsilon -|\alpha|}$ and $|\partial^{\alpha} _x V(x)| \lesssim \langle x \rangle^{-2-\epsilon -|\alpha|}$. For the adjoint of the second factor, we use the same parametrix and obtain
\begin{align*}
(H+1)^{-1/2} \langle D \rangle = \tilde{a}^{W} _n (x, D_x) \langle D \rangle + \int_{\Gamma} z^{-s} (H+1-z)^{-1} \tilde{r}^{W} _n (x, D_x, z) \langle D \rangle dz
\end{align*}
for some $\tilde{a} _n \in S(m^{-1/2}, g)$ and $\tilde{r} _n$ satisfying the same estimates as $r_n$. Then $\tilde{a}^{W} _n (x, D_x) \langle D \rangle$ is bounded by the Calder\`on-Vaillancourt theorem and the latter term is estimated as
\begin{align*}
&\left\| \int_{\Gamma} z^{-s} (H+1-z)^{-1} \tilde{r}^{W} _n (x, D_x, z) \langle D \rangle dz \right\|_{\mathcal{B} (L^2 (\R^d))} \\
&\lesssim \int_{0}^{\infty} \lambda^{-s} (1+ \lambda)^{-1} d \lambda < \infty
\end{align*}
for sufficiently large $n$
since $H \ge \gamma$ implies $\|(H+1-z)^{-1}\|_{\mathcal{B} (L^2 (\R^d))} \lesssim 1$ uniformly in $z \in \Gamma$ and the estimates on $\tilde{r} _n$ implies $\|\tilde{r}^{W} _n (x, D_x, z) \langle D \rangle\|_{\mathcal{B} (L^2 (\R^d))} \lesssim (1+ \lambda)^{-1}$. Hence we obtain 
\[\|(H+1)^{-1/2} \langle D \rangle\|_{\mathcal{B} (L^2 (\R^d))} \lesssim 1.\]
Finally, for the third factor in (\ref{12192103}), we estimate it as
\begin{align*}
&\left\|\int_{\Gamma} z^{-s} (H+1)^{1/2} (H+1-z)^{-1} (r^{W} _n (x, D_x, z)\langle x \rangle^{1+\epsilon'} \langle D \rangle^{-1/2}) dz \right\|_{\mathcal{B} (L^2 (\R^d))} \\
& \lesssim \int_{0}^{\infty} \lambda^{-s} (1+ \lambda)^{-1} d \lambda < \infty
\end{align*}
for sufficiently large $n$ since we have $\|(H+1)^{1/2} (H+1-z)^{-1}\|_{\mathcal{B} (L^2 (\R^d))} \lesssim 1$ uniformly in $z \in \Gamma$ by the same reason as above. Then we obtain
\begin{align*}
&\left\|\langle x \rangle^{1+\epsilon'} \langle D \rangle^{-1/2} \langle D \rangle^{2(\theta -1/4)} \tilde{V} \int_{\Gamma} z^{-s} (H+1-z)^{-1} r^{W} _n (x, D_x, z) dz \langle x \rangle^{1+\epsilon'} \langle D \rangle^{-1/2} \right\|_{\mathcal{B} (L^2 (\R^d))} \\
& \quad \quad \quad \quad \quad \quad \quad \quad \quad \quad \quad \quad \quad \quad \quad \quad \quad \quad \quad \quad \quad \quad \quad \quad \quad \quad \quad \quad \quad \quad \quad \quad \quad \quad \quad \quad  \lesssim 1
\end{align*}
and hence (\ref{12191840}) holds.
\end{proof}
\begin{remark}
The assumption $|\partial^{\alpha} _x A(x)| \lesssim \langle x \rangle^{-2-\epsilon -|\alpha|}$ is needed to estimate the first term in (\ref{12191852}). In order to weaken this assumption to $|\partial^{\alpha} _x A(x)| \lesssim \langle x \rangle^{-1-\epsilon -|\alpha|}$, we need to use the $H_0$ and $H$-smoothness of $\langle x \rangle^{-1/2 -\epsilon} |D|^{1/2}$ instead of $\langle x \rangle^{-1 -\epsilon} \langle D \rangle^{1/2}$. However, if we use that, it seems difficult to use the microlocal analysis since there appears singular integral operators. Furthermore, in the estimate of the second factor in (\ref{12192103}), it is not enough to apply the Fefferman-Phong inequality since it only implies weaker estimates:
\[H \ge A \langle D \rangle^2 -B\]
for some $A, B >0$.
\end{remark}
As a corollary, we obtain the following refined Strichartz estimates. The proof is the same as that of Corollary \ref{12141800}. Hence we omit the details.
\begin{corollary}\label{12162131}
Assume $H$ be as in Corollary \ref{1216058} and $\|V_{-}\|_{K_d} < \frac{\pi^{d/2}}{\Gamma (d/2 -1)}$. Then
\[\|e^{-it\sqrt{H+1}} u\|_{L^{q} _t L^{r} _x} \lesssim \|u\|_{B^s _{2, 2\beta}}\]
holds for the same $q, r, \beta, s$ as in Corollary \ref{1216058}.
\end{corollary}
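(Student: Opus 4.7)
The plan is to follow the argument for Corollary \ref{12141800} and Theorem \ref{1291939} verbatim, adapted to the operator $\sqrt{H+1}$ and to the inhomogeneous Besov space. Three ingredients are needed: (i) a Littlewood--Paley square-function equivalence for $\sqrt{H+1}$ on $L^p$, $p\in(1,\infty)$; (ii) the orthonormal Strichartz estimate of Corollary \ref{1216058}; (iii) a comparison between the Besov norm built from $\sqrt{H+1}$ and the standard one.

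For (i) I would invoke Proposition 4.12 of \cite{Ho}: the hypothesis $\|V_-\|_{K_d}<\pi^{d/2}/\Gamma(d/2-1)$ yields Gaussian upper bounds on the heat kernel of $H$, hence $L^p$-boundedness of $f(\sqrt{H+1})$ for every $f\in C^\infty((0,\infty))$ with $|\partial_s^k f(s)|\lesssim s^{-k}$, and a Rademacher--Khintchin argument as in Proposition \ref{1291227} upgrades this to $\|u\|_p\approx\|(\sum_j|\phi_j(\sqrt{H+1})u|^2)^{1/2}\|_p$ for an inhomogeneous dyadic partition $\{\phi_j\}$. Splitting the square function into its even- and odd-$j$ sub-sums ensures pairwise disjoint spectral supports within each sub-sum, so that with $\nu_j=\|\phi_{2j}(\sqrt{H+1})u\|_{H^s}^2$ the functions $f_j:=\nu_j^{-1/2}\phi_{2j}(\sqrt{H+1})u$ form an orthonormal system in $H^s$ (orthogonality is immediate in the equivalent inner product $\langle(1+H)^s\,\cdot\,,\,\cdot\,\rangle_{L^2}$, which is comparable to the standard $H^s$ inner product under $\langle(1+H)u,u\rangle\approx\|u\|_{H^1}^2$). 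Applying Corollary \ref{1216058} to each sub-family then yields
\[\|e^{-it\sqrt{H+1}}u\|_{L^q_t L^r_x}^2\lesssim \bigl\|\{\|\phi_j(\sqrt{H+1})u\|_{H^s}\}_j\bigr\|_{\ell^{2\beta}}^2.\]

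For (iii), spectral calculus gives $\|\phi_j(\sqrt{H+1})u\|_{H^s}\approx 2^{js}\|\tilde\phi_j(\sqrt{H+1})u\|_2$ with $\tilde\phi_j\equiv 1$ on $\supp\phi_j$, using the $L^2$-boundedness of $(1-\Delta)^{s/2}(1+H)^{-s/2}$, which follows by complex interpolation from the form equivalence $\langle(1+H)u,u\rangle\approx\|u\|_{H^1}^2$. The resulting weighted $\ell^{2\beta}$-sum is $\|u\|_{B^s_{2,2\beta}(\sqrt{H+1})}$, and this in turn is controlled by the standard Besov norm $\|u\|_{B^s_{2,2\beta}}$ via the same $L^2$-equivalence applied at two endpoint values $s_0<s<s_1$ combined with real interpolation, exactly as in Lemma \ref{1291301}. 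The main obstacle is ingredient (i); the remaining steps are essentially formal, whereas verifying the applicability of Proposition 4.12 of \cite{Ho} in the present smooth electromagnetic setting (where the Gaussian heat-kernel bound rests on the Kato-smallness of $V_-$) is the technical core of the argument.
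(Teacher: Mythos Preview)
Your proposal is correct and follows essentially the same three-step template as the paper: Littlewood--Paley for $\sqrt{H+1}$ via Proposition~4.12 of \cite{Ho}, the orthonormal Strichartz estimate of Corollary~\ref{1216058}, and a comparison of the $H$-adapted Besov norm with the standard one. The paper's own proof simply points to Corollary~\ref{12141800}, which in turn follows Theorem~\ref{1291939}; your outline reproduces that argument.

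One minor point of difference worth noting: in step (iii) you describe the Besov comparison as a direct real-interpolation argument from the $L^2$-equivalence $\|(1+H)^{s_j/2}u\|_2\approx\|u\|_{H^{s_j}}$ at two endpoints $s_0<s<s_1$, whereas the paper (via Theorem~\ref{1291939} and Lemma~\ref{1291301}) instead uses the intertwining property $\phi_j(\sqrt{H+1})=W_\pm\phi_j(\langle D\rangle)W_\pm^*$ to pass to the free Littlewood--Paley pieces and then bounds $\|W_\pm^* u\|_{B^s_{2,2\beta}}\lesssim\|u\|_{B^s_{2,2\beta}}$ by real interpolation. Both routes are valid here since $\sigma(H)=\sigma_{\ac}(H)$ is part of the hypotheses of Corollary~\ref{1216058}, so wave operators exist and are unitary on $L^2$. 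Your direct route has the advantage of not invoking wave operators at all (it only needs the abstract fact that real interpolation of the scale $\{D((1+H)^{t/2})\}_t$ yields the Littlewood--Paley Besov spaces built from $\sqrt{H+1}$), while the paper's route makes the reduction to the free decomposition $\phi_j(\langle D\rangle)$ more explicit. Either way the argument goes through, and your reference to Lemma~\ref{1291301} is appropriate since its statement is exactly what you need, even if its proof in the paper proceeds via wave operators.
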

\subsection{\textbf{Semi-relativistic Hartree equation for infinitely many particles}}
In this subsection, we prove the local existence of a solution to the semi-relativistic Hartree equation with electromagnetic potentials for infinitely many particles. The ordinary semi-relativistic Hartree equations are given by 
\begin{align}
\left\{
\begin{array}{l}
i\partial_t u(t)= \sqrt{H+1} u(t) + (w*|u|^2)u \\
u(0)=u_0
\end{array}
\right.
\tag{H1}\label{12271728}
\end{align}
or
\begin{align}
\left\{
\begin{array}{l}
i\partial_t u_j (t)= \sqrt{H+1} u(t) + (w* \displaystyle \sum_{k=1}^{N} |u_k|^2)u_j \\
u_j (0)=u_{0, j}
\end{array}
\right.
\tag{H2}\label{12271731}
\end{align}
for $j =1, \dots ,N$. If $H=-\Delta$, (\ref{12271728}) describes boson stars and (\ref{12271731}) describes a white dwarf which has $N$ electrons. See \cite{L} and \cite{FL2} for (\ref{12271728}) and \cite{FL1} for (\ref{12271731}) for more details about these equations. We consider the infinitely many particle version of (\ref{12271731}), which is given by
\begin{align}
\left\{
\begin{array}{l}
i\partial_t \gamma=[\sqrt{H+1} +w*\rho_{\gamma},\gamma] \\
\gamma(0)=\gamma_0
\end{array}
\right.
\tag{SRH}\label{12271742}
\end{align}
for an operator-valued function $\gamma$. If $H=-\Delta$, the local existence of a solution to (\ref{12271742}) was proved in \cite{BLN}. We follow the argument in \cite{BLN} but some modifications are needed since $e^{-it \sqrt{H+1}}$ does not commute with Fourier multipliers $f(D)$.
First we prove the inhomogeneous orthonormal Strichartz estimates, which was first proved in \cite{FLLS}.
\begin{lemma} \label{12271847}
Assume $H$, $q, r, \beta, s$ are as in Corollary \ref{12131859}, Corollary \ref{12131921} or Corollary \ref{1216058} and satisfies $\langle (H+1)u, u \rangle \approx \langle (H_0 +1) u, u \rangle$. Then the following results hold.
\begin{enumerate}
\item For all $V \in L^{(q/2)'} _t L^{(r/2)'} _x$, 
\[\left\| \int_{\R} e^{it \sqrt{H+1}} (H+1)^{-s/2} V(t) (H+1)^{-s/2} e^{-it \sqrt{H+1}} dt \right\|_{\mathfrak{S}^{\beta'}} \lesssim \|V\|_{L^{(q/2)'} _t L^{(r/2)'} _x}\]

\item For $\gamma_R (t) = \int_{0}^{t} e^{-i(t-t') \sqrt{H+1}} R(t') e^{i(t-t') \sqrt{H+1}} dt'$,
\[\|\rho ((H+1)^{-s/2} \gamma_R (t) (H+1)^{-s/2})\|_{L^{q/2} _t L^{r/2} _x} \lesssim \left\| \int_{\R} e^{it \sqrt{H+1}} |R(t)| e^{-it \sqrt{H+1}} dt \right\|_{\mathfrak{S}^{\beta}}\]
\end{enumerate}
\end{lemma}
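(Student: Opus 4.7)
The plan is to reduce Lemma~\ref{12271847} to the orthonormal Strichartz estimates of Corollaries~\ref{12131859}, \ref{12131921}, \ref{1216058} via duality (Lemma~\ref{1113719}) for Part~(1), and a Christ--Kiselev argument (Lemma~\ref{2311152210}) for Part~(2). Set $U(t) := e^{-it\sqrt{H+1}}$ and $S(t) := U(t)(H+1)^{-s/2}$; since $(H+1)^{-s/2}$ and $U(t)$ commute, $S(t)^* = (H+1)^{-s/2} U(t)^*$. Identifying an orthonormal system $\{f_n\}$ in $H^s$ with $\{(H+1)^{-s/2} g_n\}$ for $\{g_n\}$ orthonormal in $L^2$, the standing hypothesis reads $\|\rho(S(t)\gamma S(t)^*)\|_{L^{q/2}_t L^{r/2}_x} \lesssim \|\gamma\|_{\mathfrak{S}^{\beta}}$, which by Lemma~\ref{1113719} is equivalent to
\[\|f(t,x)\, S(t)\|_{\mathfrak{S}^{2\beta'}_{x \to t,x}} \lesssim \|f\|_{L^{2(q/2)'}_t L^{2(r/2)'}_x}. \qquad (\ast)\]

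For Part~(1), factorize $V(t,x) = \overline{V_1(t,x)} V_2(t,x)$ with $V_1 := \overline{\sgn V}\,|V|^{1/2}$, $V_2 := |V|^{1/2}$, so $|V_j| = |V|^{1/2}$. Then
\[\int_\R S(t)^* V(t) S(t)\, dt = (V_1 S)^* (V_2 S)\]
as operators on $L^2_x$ (with $V_j$ understood as multiplication in $(t,x)$). Hölder's inequality for Schatten classes with $1/\beta' = 1/(2\beta') + 1/(2\beta')$, combined with $(\ast)$ applied to each factor, yields
\[\left\|\int_\R S^* V S\, dt\right\|_{\mathfrak{S}^{\beta'}} \le \|V_1 S\|_{\mathfrak{S}^{2\beta'}} \|V_2 S\|_{\mathfrak{S}^{2\beta'}} \lesssim \||V|^{1/2}\|_{L^{2(q/2)'}_t L^{2(r/2)'}_x}^2 = \|V\|_{L^{(q/2)'}_t L^{(r/2)'}_x}.\]

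For Part~(2), write $(H+1)^{-s/2} \gamma_R(t) (H+1)^{-s/2} = S(t)\left(\int_0^t U^*(t') R(t') U(t')\, dt'\right) S(t)^*$. Dualizing against $V \in L^{(q/2)'}_t L^{(r/2)'}_x$ and using cyclicity of the trace, the LHS of Part~(2) equals
\[\sup_V \left| \iint_{0 \le t' \le t} \tr(S(t)^* V(t) S(t) \cdot U^*(t') R(t') U(t'))\, dt'\, dt \right|.\]
The non-retarded analogue with $\iint_{\R^2}$ factors as $\tr(\int_\R S^* V S\, dt \cdot \int_\R U^* R U\, dt')$ and by Part~(1) and trace duality is bounded by $\|V\|_{L^{(q/2)'}_t L^{(r/2)'}_x} \|\int_\R U^* R U\, dt'\|_{\mathfrak{S}^{\beta}}$. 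The upgrade from $R$ to $|R|$ on the right-hand side follows from decomposing $R$ into its real/imaginary and positive/negative parts and using the eigenvalue monotonicity $\mu_n(A) \le \mu_n(A+B)$ for $A, B \ge 0$. The truncation of $\iint_{\R^2}$ to $\iint_{0 \le t' \le t}$ is then the content of the Christ--Kiselev Lemma~\ref{2311152210}, applied exactly as in Lemma~4.14 of~\cite{Ho}.

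The main obstacle is the Christ--Kiselev step: one has to ensure that the sandwich $S(t)(\,\cdot\,)S(t)^*$ together with the weights $(H+1)^{-s/2}$ preserves the upper-triangular integration structure required by Lemma~\ref{2311152210}. The cleanest route, following~\cite{Ho}, is to apply Christ--Kiselev directly to the intermediate operator $R(\cdot) \mapsto \int_0^t U^*(t') R(t') U(t')\, dt'$ on $L^2_t \mathcal{H}$, so that Schatten boundedness for $\int_\R$ established through Part~(1) transfers cleanly to its retarded counterpart $\int_0^t$.
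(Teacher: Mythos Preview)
Your argument is correct and follows essentially the same route as the paper. For Part~1 the paper tests $\int_\R S^* V S\,dt$ against an arbitrary $\gamma_0 \in \mathfrak{S}^\beta$ and invokes Schatten duality directly, whereas you pass through the Frank--Sabin equivalence $(\ast)$ and then factor $V = \overline{V_1}V_2$ to apply Schatten--H\"older; these are two phrasings of the same duality and yield the same bound. For Part~2 both the paper and you defer to the Christ--Kiselev argument of \cite{Ho}, and your identification of where the hypothesis $\langle (H+1)u,u\rangle \approx \langle (H_0+1)u,u\rangle$ enters (namely, in identifying orthonormal systems in $H^s$ with $(H+1)^{-s/2}$ of $L^2$-orthonormal systems) is exactly how the paper uses it.
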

We note that $\langle (H+1)u, u \rangle \approx \langle (H_0 +1) u, u \rangle$ holds if $H$ is as in Corollary \ref{1216058} and $\langle (H+1)u, u \rangle \lesssim \langle (H_0 +1) u, u \rangle$ holds if $H$ is as in Corollary \ref{12131921} without further assumptions.
\begin{proof}
By the duality principle, the orthonormal Strichartz estimates and $\langle (H+1)u, u \rangle \approx \langle (H_0 +1) u, u \rangle$, we obtain
\[\|\rho ((H+1)^{-s/2} \gamma (t) (H+1)^{-s/2})\|_{L^{q/2} _t L^{r/2} _x} \lesssim \|\gamma_0\|_{\mathfrak{S}^{\beta}}\]
for $\gamma (t) = e^{-it \sqrt{H+1}} \gamma_0 e^{it \sqrt{H+1}}$. Hence we obtain
\begin{align*}
&\left| \tr \left( \gamma_0 \int_{\R} e^{it \sqrt{H+1}} (H+1)^{-s/2} V(t) (H+1)^{-s/2} e^{-it \sqrt{H+1}} dt \right)\right| \\
& = \left| \int_{\R} \int_{\R^d} \rho ((H+1)^{-s/2} \gamma (t) (H+1)^{-s/2}) V(t, x) dtdx \right| \lesssim  \|\gamma_0\|_{\mathfrak{S}^{\beta}} \|V\|_{L^{(q/2)'} _t L^{(r/2)'} _x}
\end{align*}
and by the duality of Schatten spaces, we finish the proof of $1$. $2$ can be proved in the similar way so we omit the details.
\end{proof}
Before stating our result, we define the Sobolev-type Schatten spaces. For $\gamma \in \mathcal{B} (L^2)$, we define $\|\gamma\|_{\mathfrak{S}^{\beta, s}} := \|\langle D \rangle^{s} \gamma \langle D \rangle^{s}\|_{\mathfrak{S}^{\beta}}$ and $\|\gamma\|_{\tilde{\mathfrak{S}}^{\beta, s}} :=\|(H+1)^{s/2} \gamma (H+1)^{s/2}\|_{\mathfrak{S}^{\beta}}$. Under the assumptions in Lemma \ref{12271847}, $\|\gamma\|_{\mathfrak{S}^{\beta, s}} \approx \|\gamma\|_{\tilde{\mathfrak{S}}^{\beta, s}}$ holds.
\begin{thm} \label{1228226}
Assume $H$, $q, r, \beta, s$ are as in Lemma \ref{12271847} and $w \in B^{s+\delta} _{(r/2)', \infty}$ for some $\delta >0$. Then for all $\gamma_0 \in \mathfrak{S}^{\beta, s}$, there exists a unique local solution to (\ref{12271742}) satisfying $\gamma \in C([-T, T]; \mathfrak{S}^{\beta, s})$ and $\rho_{\gamma} \in L^{q/2} _T L^{r/2} _x$ for some $T >0$.
\end{thm}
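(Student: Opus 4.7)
The plan is to solve (\ref{12271742}) by a Banach fixed point argument, following the strategy used for the $H=-\Delta$ case in \cite{BLN} but accounting for the fact that $e^{\pm it\sqrt{H+1}}$ does not commute with $\langle D\rangle^{s}$. Set $U(t):=e^{-it\sqrt{H+1}}$ and define the map
\[
\Phi(\gamma)(t) := U(t)\gamma_0 U(t)^* - i\int_0^t U(t-\tau)\,[w*\rho_\gamma(\tau),\gamma(\tau)]\,U(t-\tau)^*\,d\tau.
\]
I will look for a fixed point in the complete metric space
\[
B_R(T) := \bigl\{\gamma\in C([-T,T];\mathfrak{S}^{\beta,s}) : \rho_\gamma \in L^{q/2}_T L^{r/2}_x,\ \|\gamma\|_{C_T\mathfrak{S}^{\beta,s}} + \|\rho_\gamma\|_{L^{q/2}_T L^{r/2}_x}\le R\bigr\}
\]
with $R\sim\|\gamma_0\|_{\mathfrak{S}^{\beta,s}}$ and $T=T(R)$ small, in analogy with the proof of Theorem~\ref{1281958}.

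The linear piece is handled using the form equivalence $\langle(H+1)u,u\rangle\approx\langle(H_0+1)u,u\rangle$, which implies $\|\cdot\|_{\mathfrak{S}^{\beta,s}}\approx\|\cdot\|_{\tilde{\mathfrak{S}}^{\beta,s}}$. Since $(H+1)^{s/2}$ commutes with $U(t)$, the unitary invariance of the Schatten norm gives $\|U(t)\gamma_0 U(t)^*\|_{\tilde{\mathfrak{S}}^{\beta,s}}=\|\gamma_0\|_{\tilde{\mathfrak{S}}^{\beta,s}}$, and applying Lemma~\ref{12271847}(2) to $\tilde\gamma_0:=(H+1)^{s/2}\gamma_0(H+1)^{s/2}$ yields the density bound $\|\rho(U(t)\gamma_0 U(t)^*)\|_{L^{q/2}_T L^{r/2}_x}\lesssim\|\gamma_0\|_{\mathfrak{S}^{\beta,s}}$.

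For the Duhamel term, the key estimate to establish is
\[
\|(H+1)^{s/2}[w*\rho_\gamma(\tau),\gamma(\tau)](H+1)^{s/2}\|_{\mathfrak{S}^\beta}
\lesssim \|w*\rho_\gamma(\tau)\|_{B^{s+\delta'}_{\infty,\infty}}\,\|\gamma(\tau)\|_{\tilde{\mathfrak{S}}^{\beta,s}}
\]
for some $0<\delta'<\delta$. I would expand the commutator and insert resolutions $(H+1)^{-s/2}(H+1)^{s/2}$, reducing the bound to $L^2$-boundedness of $(H+1)^{s/2}(w*\rho_\gamma)(H+1)^{-s/2}$. Using the equivalence $(H+1)^{s/2}\sim\langle D\rangle^s$ and a fractional Leibniz/Kato--Ponce commutator inequality, this $L^2$-operator norm is controlled by $\|w*\rho_\gamma\|_{B^{s+\delta'}_{\infty,\infty}}$. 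Young's convolution inequality for Besov spaces combined with $w\in B^{s+\delta}_{(r/2)',\infty}$ then gives $\|w*\rho_\gamma(\tau)\|_{B^{s+\delta'}_{\infty,\infty}}\lesssim\|w\|_{B^{s+\delta}_{(r/2)',\infty}}\|\rho_\gamma(\tau)\|_{L^{r/2}_x}$. After integrating in $\tau$, a Hölder step $L^1_T\subset T^{1/(q/2)'}L^{q/2}_T$ together with unitary invariance of $U(t-\tau)$ in Schatten norm yields
\[
\Bigl\|\int_0^t U(t-\tau)[w*\rho_\gamma,\gamma]U(t-\tau)^*d\tau\Bigr\|_{C_T\tilde{\mathfrak{S}}^{\beta,s}} \lesssim T^{1/(q/2)'}\|w\|_{B^{s+\delta}_{(r/2)',\infty}} R^2.
\]
The corresponding $L^{q/2}_T L^{r/2}_x$ bound on the density of the Duhamel term follows from the inhomogeneous estimate Lemma~\ref{12271847}(2) applied with $R(\tau):=[w*\rho_\gamma,\gamma](\tau)$, using the same commutator bound. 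An identical set of estimates for $\Phi(\gamma)-\Phi(\gamma')$ (linear in the differences) shows that $\Phi$ is a contraction on $B_R(T)$ for $T$ small, producing a unique local solution.

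The main technical obstacle is precisely the fractional commutator between $(H+1)^{s/2}$ and multiplication by $w*\rho_\gamma$: since $H$ carries magnetic and electric perturbations, the Kato--Ponce inequality does not apply directly, and one must transit from $(H+1)^{s/2}$ to $\langle D\rangle^s$ using the form equivalence (which requires that $(H+1)^{s/2}\langle D\rangle^{-s}$ and its inverse are $L^2$-bounded) before running the Leibniz argument. The hypothesis $w\in B^{s+\delta}_{(r/2)',\infty}$ is sharp for this step: the $\delta$-gap is consumed in a paraproduct endpoint loss when bounding $\|\cdot\|_{B^{s+\delta'}_{\infty,\infty}}$ by Young's inequality, and any $\delta'\in(0,\delta)$ suffices to close the contraction.
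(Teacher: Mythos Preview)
Your proposal is correct and follows essentially the same route as the paper: a contraction argument in the $\tilde{\mathfrak{S}}^{\beta,s}$-norm, the transfer $(H+1)^{s/2}\sim\langle D\rangle^s$ via the form equivalence, the multiplier bound $\|\langle D\rangle^{\pm s}(w*\rho)\langle D\rangle^{\mp s}\|_{L^2\to L^2}\lesssim\|w*\rho\|_{B^{s+\delta}_{\infty,\infty}}$, and the Besov convolution inequality. The only cosmetic difference is that the paper runs the fixed point on pairs $(\gamma,\rho)$ with $\rho$ treated as an independent variable (so that well-definedness of $\rho_\gamma$ need not be checked along the iteration), whereas you work directly with $\gamma\mapsto\rho_\gamma$; both close identically once Lemma~\ref{12271847} is in hand.
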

\begin{proof}
Let $R >0$ be such that $\|\gamma_0\|_{\tilde{\mathfrak{S}}^{\beta, s}} < R$, $T =T(R)$ be chosen later. Set
\[ X := \left\{ (\gamma , \rho) \in C([0, T]; \tilde{\mathfrak{S}}^{\beta, s}) \times L^{q/2} ([0, T]; L^{r/2} _x) \mid \|\gamma\|_{C([0, T]; \tilde{\mathfrak{S}}^{\beta, s})} + \|\rho\|_{L^{q/2} ([0, T]; L^{r/2} _x)} \le CR \right\}.\]
Here $C>0$ is later chosen independent of $R$.
We define 
\begin{align*}
&\Phi (\gamma , \rho) =(\Phi_1 (\gamma , \rho), \rho [\Phi_1 (\gamma, \rho)]), \\
& \Phi_1 (\gamma , \rho)(t) = e^{-it \sqrt{H+1}} \gamma_0 e^{it \sqrt{H+1}} -i \int_{0}^{t} e^{-i(t-s)\sqrt{H+1}} [w*\rho , \gamma] e^{i(t-s)\sqrt{H+1}} ds .
\end{align*}
Then by Lemma \ref{12271847} and $\|\gamma\|_{\mathfrak{S}^{\beta, s}} \approx \|\gamma\|_{\tilde{\mathfrak{S}}^{\beta, s}}$, we obtain
\begin{align*}
\| \Phi_1 (\gamma , \rho)\|_{C_T \tilde{\mathfrak{S}}^{\beta, s}} &\le \|\gamma_0\|_{\tilde{\mathfrak{S}}^{\beta, s}} + \int_{0}^{t} \|[w*\rho , \gamma]\|_{\tilde{\mathfrak{S}}^{\beta, s}} ds \\
& \lesssim R + \int_{0}^{t} \|[w*\rho , \gamma]\|_{\mathfrak{S}^{\beta, s}} ds \\
& \le R + \int_{0}^{t} \left(\|\langle D \rangle^s w*\rho \langle D \rangle^{-s}\|_{\mathcal{B} (L^2)} + \|\langle D \rangle^{-s} w*\rho \langle D \rangle^{s}\|_{\mathcal{B} (L^2)} \right) \|\gamma\|_{\mathfrak{S}^{\beta, s}} ds.
\end{align*}
Now we use the following inequality $\|fg\|_{H^t} \lesssim \|f\|_{B^{|t|+\delta} _{\infty, \infty}} \|g\|_{H^t}$ for all $t \in \R$ (see \cite{T}, \cite{BLN}) and obtain
\[\|\langle D \rangle^{\pm s} w*\rho \langle D \rangle^{\mp s} u\|_{2} \lesssim \|w*\rho\|_{B^{s+\delta} _{\infty, \infty}} \|u\|_{2} \lesssim\|w\|_{B^{s+\delta} _{(r/2)', \infty}} \|\rho\|_{r/2} \|u\|_2,\]
where in the last inequality we have used Theorem 2.1 in \cite{KS}. Hence
\begin{align*}
\| \Phi_1 (\gamma , \rho)\|_{C_T \tilde{\mathfrak{S}}^{\beta, s}} \lesssim R + \|w\|_{B^{s+\delta} _{(r/2)', \infty}} \|\gamma\|_{C_T \tilde{\mathfrak{S}}^{\beta, s}} \|\rho\|_{L^{q/2} ([0, T]; L^{r/2} _x)} T^{1/(\frac{q}{2})'}
\end{align*}
holds. Similarly, by using Lemma \ref{12271847}, we obtain
\[\|\rho [\Phi_1 (\gamma, \rho)]\|_{L^{q/2} ([0, T]; L^{r/2} _x)} \lesssim R + \|w\|_{B^{s+\delta} _{(r/2)', \infty}} \|\gamma\|_{C_T \tilde{\mathfrak{S}}^{\beta, s}} \|\rho\|_{L^{q/2} ([0, T]; L^{r/2} _x)} T^{1/(\frac{q}{2})'}.\]
Thus $\Phi :X \rightarrow X$ if we take $C$ large and $T$ small enough. Similarly we can prove $\Phi$ is a contraction on $X$ and has a unique fixed point. Since $\|\gamma\|_{\mathfrak{S}^{\beta, s}} \approx \|\gamma\|_{\tilde{\mathfrak{S}}^{\beta, s}}$ holds, we have a unique solution $\gamma$ satisfying $\gamma \in C_T \mathfrak{S}^{\beta, s}$ and $\rho_{\gamma} \in L^{q/2} ([0, T]; L^{r/2} _x)$.
\end{proof}
\begin{remark}
As in the case of higher order equations, it is possible to prove that there exists a family of unitary operators $\{U(t)\}$ on $L^2 (\R^d)$ satisfying 
\begin{enumerate}
\item $U(t)$ is a strongly continuous bounded operator on $H^s$.

\item $\gamma (t)= U(t) \gamma_0 U^* (t)$ holds on $[0, T]$.
\end{enumerate}
However $\{U(t)\}$ are not unitary on $H^s$ generally and it seems difficult to show the global existence of a solution. 
\end{remark}
\subsection{\textbf{Dirac equation}}
In this subsection, we prove the orthonormal Strichartz estimates for the Dirac equation with potentials. Concerning the ordinary Strichartz estimates for the free massive Dirac operator, see \cite{MNO}, \cite{DF} and references therein. For the free massless Dirac operator, see \cite{DF} and references therein. The Strichartz estimates for the Dirac operator with potentials are proved in \cite{DF} based on the perturbation method by \cite{RS} and the Kato smoothing estimates. Since there seems to be no result concerning the orthonormal Strichartz estimates for the Dirac equation, we first deduce the estimates for the free massive and massless Dirac operators. Then we prove the estimates for the perturbed massive Dirac operators based on the Kato smoothing estimates and the microlocal analysis. During  the rest of this subsection, we always assume $d=3$ for the sake of simplicity. First we define the massless and massive Dirac operators. Set
\begin{align*}
&\alpha_k = 
\begin{pmatrix}
0 & \sigma_k \\
\sigma_k & 0
\end{pmatrix}, \quad 
\sigma_1 =
\begin{pmatrix}
0 & 1\\
1 & 0
\end{pmatrix}, \quad
\sigma_2 =
\begin{pmatrix}
0 & -i\\
i & 0
\end{pmatrix}, \quad
\sigma_3 =
\begin{pmatrix}
1 & 0\\
0 & -1
\end{pmatrix} \\
& \beta =
\begin{pmatrix}
I_2 & 0\\
0 & -I_2
\end{pmatrix}.
\end{align*} 
Here $I_2$ denotes the $2 \times 2$ identity matrix. Then the massless Dirac operators is defined by
\begin{align*}
\mathcal{D} = \sum_{k=1}^3 \alpha_k D_k
\end{align*}
and the massive Dirac operator is defined by $\mathcal{D} + \beta$. Note that $\mathcal{D}$ and $\mathcal{D} + \beta$ are self-adjoint operators on $\mathcal{H} =L^2 (\R^3; \C^4)$. In order to establish the vector-valued duality principle, which is an analogue of the ordinary duality principle by Frank-Sabin \cite{FS}, we define density functions as follows.
\begin{defn}\label{1219027}
Let $\gamma \in \mathfrak{S}^{\alpha} (\mathcal{H})$ for some $\alpha \in [1, \infty]$. If there exists $\rho_{\gamma} \in L^1 _{loc} (\R^3 ;\C^4)$ satisfying $\tr (f\gamma) = \int_{\R^3} \rho_{\gamma} (x)f(x)dx$ for all simple functions $f: \R^3 \rightarrow \C^4$, we call $\rho_{\gamma}$ the density function of $\gamma$. Here, for $f(x)=(f_1 (x), f_2 (x), f_3 (x), f_4 (x))$, we define $\rho_{\gamma} (x)f(x)=\sum_{j=1}^4 ({\rho_{\gamma}})_j (x)f_j (x)$. We also regard $f$ as a bounded operator on $\mathcal{H}$ by considering it as a matrix-valued multiplication operator:
\[f= \begin{pmatrix}
f_1              & &   &\text{\huge{0}} \\
     &f_2                            \\
     &        &f_3                   \\    
 {\text{\huge{0}}}  & &  &f_4   \\  
\end{pmatrix}\] 
\end{defn}
Note that density functions are uniquely determined by $\gamma$ if they exist by the fundamental lemma of the calculus of variations. They are also denoted by $\rho (\gamma)$. Next we prove the vector-valued duality principle.
\begin{lemma}\label{1219111}
Assume $s \in \R$. Suppose there exists a symmetric operator $V$ on $\mathcal{H}$ satisfying
$D(\mathcal{D} + \beta ) \subset D(V)$ and $\mathcal{D} + \beta +V$ is self-adjoint on $\mathcal{H}$.
Then the following are equivalent:
\begin{enumerate}
\item $\|\rho (e^{-it(\mathcal{D} + \beta +V)} \{(\mathcal{D} + \beta)^{2}\}^s \gamma \{(\mathcal{D} + \beta)^{2}\}^s e^{it(\mathcal{D} + \beta +V)})\|_{L^p _t L^q _x} \lesssim \|\gamma\|_{\mathfrak{S}^{\alpha}}$

\item $\|fe^{-it(\mathcal{D} + \beta +V)} \{(\mathcal{D} + \beta)^{2}\}^s\|_{\mathfrak{S}^{2\alpha'} _{x \rightarrow t, x}} \lesssim \|f\|_{L^{2p'} _t L^{2q'} _x}$
\end{enumerate}
for $p, q \in [1, \infty]$ and $\alpha \in (1, \infty)$.
\end{lemma}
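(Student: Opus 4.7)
The plan is to adapt the Frank--Sabin duality argument of Lemma \ref{1113719} to the vector-valued setting and the extra spectral weight $\{(\mathcal{D}+\beta)^2\}^s$. Define
\[
A(t) := e^{-it(\mathcal{D}+\beta+V)}\{(\mathcal{D}+\beta)^2\}^s,
\]
regarded as a densely defined closed operator on $\mathcal{H}$ (on $D(\{(\mathcal{D}+\beta)^2\}^s)$ when $s>0$, everywhere when $s\le 0$). Since $\mathcal{D}+\beta+V$ is self-adjoint and $\{(\mathcal{D}+\beta)^2\}^s$ is a self-adjoint function of the self-adjoint operator $\mathcal{D}+\beta$, one has $A(t)^* = \{(\mathcal{D}+\beta)^2\}^s e^{it(\mathcal{D}+\beta+V)}$ on the natural domain. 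Thus (1) is exactly $\|\rho(A(t)\gamma A(t)^*)\|_{L^p_t L^q_x} \lesssim \|\gamma\|_{\mathfrak{S}^\alpha}$, and (2) reads $\|fA\|_{\mathfrak{S}^{2\alpha'}_{x\to t,x}} \lesssim \|f\|_{L^{2p'}_t L^{2q'}_x}$.

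For the direction (2) $\Rightarrow$ (1), I would test against an arbitrary $f \in L^{2p'}_t L^{2q'}_x$ viewed via Definition \ref{1219027} as a $\C^4$-valued diagonal matrix multiplier, and compute the sesquilinear pairing
\[
\int_\R \int_{\R^3} f(t,x)\cdot \rho\bigl(A(t)\gamma A(t)^*\bigr)(x)\,dx\,dt \;=\; \int_\R \tr\bigl(f(t)A(t)\gamma A(t)^*\bigr)\,dt \;=\; \tr(\gamma B^*B),
\]
where $B:\mathcal{H}\to L^2_t\mathcal{H}$ is given by $(Bu)(t,x)=f(t,x)A(t)u(x)$, i.e.\ $B=fA$. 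Schatten H\"older then yields $|\tr(\gamma B^*B)|\le \|\gamma\|_{\mathfrak{S}^\alpha}\|B^*B\|_{\mathfrak{S}^{\alpha'}}=\|\gamma\|_{\mathfrak{S}^\alpha}\|B\|^2_{\mathfrak{S}^{2\alpha'}_{x\to t,x}}$, and (2) controls the last factor by $\|f\|^2_{L^{2p'}_t L^{2q'}_x}$. The vector-valued duality between $L^p_t L^q_x(\R\times\R^3;\C^4)$ and $L^{p'}_t L^{q'}_x(\R\times\R^3;\C^4)$ under the natural pairing then converts this bilinear bound into (1).

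For the converse (1) $\Rightarrow$ (2), the computation runs in reverse: for fixed $f$ and any nonnegative $\gamma\in\mathfrak{S}^\alpha$, write $\|fA\|^2_{\mathfrak{S}^{2\alpha'}_{x\to t,x}}=\|B^*B\|_{\mathfrak{S}^{\alpha'}}$, pair $B^*B$ with $\gamma$ via the same trace identity, and apply (1) to bound $\|\rho(A\gamma A^*)\|_{L^p_t L^q_x}$ against $\||f|^2\|_{L^{p'}_t L^{q'}_x}=\|f\|^2_{L^{2p'}_t L^{2q'}_x}$. Taking the supremum over $\|\gamma\|_{\mathfrak{S}^\alpha}\le 1$ and using the duality $(\mathfrak{S}^\alpha)^* = \mathfrak{S}^{\alpha'}$ yields (2).

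The main technical obstacle is justifying the trace identity above when $\{(\mathcal{D}+\beta)^2\}^s$ is unbounded (i.e.\ $s>0$), so that $A(t)\gamma A(t)^*$ is a priori only defined on a dense subspace and the existence of the density function $\rho(A(t)\gamma A(t)^*)$ must be verified. The standard remedy is to first restrict to finite-rank $\gamma$ whose range lies in $D(\{(\mathcal{D}+\beta)^2\}^s)$, where everything is manifestly trace class and the cyclicity of the trace applies, establish both estimates on this dense subset, and then extend by density once the two-sided bounds are in hand. Apart from this bookkeeping and the vector-valued pairing dictated by Definition \ref{1219027}, the proof is algebraically identical to the scalar case of Lemma \ref{1113719}.
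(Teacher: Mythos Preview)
Your approach is the same as the paper's: both set $A(t)=e^{-it(\mathcal{D}+\beta+V)}\{(\mathcal{D}+\beta)^2\}^s$ and rerun the Frank--Sabin duality (Lemma~\ref{1113719}) with scalar multipliers replaced by the diagonal $4\times4$ action from Definition~\ref{1219027}. There is one slip in your $(2)\Rightarrow(1)$ direction: the displayed trace identity is off by a square. With $B=fA$ one has $B^*B=\int A(t)^*\,\diag\{|f_j(t)|^2\}\,A(t)\,dt$, so $\tr(\gamma B^*B)=\int\tr\bigl(\diag\{|f_j|^2\}\,A\gamma A^*\bigr)\,dt$, which is the pairing of $\rho(A\gamma A^*)$ with $|f|^2\in L^{p'}_tL^{q'}_x$, not with $f$ itself. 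The correct bookkeeping (and what the paper does) is to test $\rho$ against $g\in L^{p'}_tL^{q'}_x$, reduce to $g\ge0$ componentwise via $\gamma\ge0$ and decomposition of general $\gamma$, set $f=\diag\{g_j^{1/2}\}\in L^{2p'}_tL^{2q'}_x$, and then invoke (2). Your $(1)\Rightarrow(2)$ direction already uses $|f|^2$ correctly, and your remarks on the $s>0$ domain issue are apt; with the square fixed, the argument is complete and coincides with the paper's.
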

In the above lemma, since $(\mathcal{D} + \beta)^2 = (1-\Delta)I_4 >0$ holds, $\{(\mathcal{D} + \beta)^{2}\}^s$ are well-defined for all $s \in \R$. 
\begin{proof}
$1 \Rightarrow 2$ is proved by the similar way as in the proof of Lemma 3.1 in \cite{Ha}. We only need to set $A(t) =e^{-it(\mathcal{D} + \beta +V)} \{(\mathcal{D} + \beta)^{2}\}^s$ and substitute $\diag \{|f_1|^2, |f_2|^2, |f_3|^2, |f_4|^2\}$ for $|f|^2$. However note that all computations are done on $\mathcal{H}$ not on $L^2 (\R^d)$. $2 \Rightarrow 1$ is proved as in the proof of Lemma 4.6 in \cite{Ho}. We just substitute $\diag \{|f_1|^{1/2}, |f_2|^{1/2}, |f_3|^{1/2}, |f_4|^{1/2}\}$ for $f^{1/2}$. Hence we omit the details of the proof.
\end{proof}
Since we obtain
\[\tr (f | u \rangle \langle u |) = \int_{\R^3} \sum_{j} f_j (x) |u_j (x)|^2 dx\]
for all $u \in \mathcal{H}$ by an easy computation, $\rho (| u \rangle \langle u |) =(|u_1|^2, |u_2|^2, |u_3|^2, |u_4|^2 )$ holds. If we take $\gamma = \displaystyle \sum_{n=0}^{\infty} \nu_n |f_n \rangle \langle f_n |$ for some orthonormal system $\{f_n\}$ in $\mathcal{H}$, the inequality 1 in Lemma \ref{1219111} becomes 
\begin{align*}
\left\| \sum_{n=0}^{\infty} \nu_n |e^{-it(\mathcal{D} + \beta +V)} \{(\mathcal{D} + \beta)^{2}\}^s f_n|^2 \right\|_{L^p _t L^q _x} \lesssim \|\{\nu_n\}\|_{\ell^{\alpha}}
\end{align*}
for $|e^{-it(\mathcal{D} + \beta +V)} \{(\mathcal{D} + \beta)^{2}\}^s f_n|^2 = \displaystyle \sum_{j=1}^4 |(e^{-it(\mathcal{D} + \beta +V)} \{(\mathcal{D} + \beta)^{2}\}^s f_n)_j|^2$. This implies
\begin{align*}
\left\| \sum_{n=0}^{\infty} \nu_n |e^{-it(\mathcal{D} + \beta +V)} g_n|^2 \right\|_{L^p _t L^q _x} \lesssim \|\{\nu_n\}\|_{\ell^{\alpha}}
\end{align*}
for $g_n = \{(\mathcal{D} + \beta)^{2}\}^s f_n$, which is an orthonormal system in $H^{-2s} (\R^3 ; \C^4)$. By using the above argument and the orthonormal Strichartz estimates for the Klein-Gordon equation, we prove the orthonormal Strichartz estimates for the free massive Dirac equation.
\begin{thm} \label{12211346}
Let $q, r, s, \beta$ be as in Corollary \ref{1216058} with $d=3$. Then we have
\[\left\| \sum_{n=0}^{\infty} \nu_n |e^{-it(\mathcal{D} + \beta)} g_n|^2 \right\|_{L^{q/2} _t L^{r/2} _x} \lesssim \|\{\nu_n\}\|_{\ell^{\beta}}\]
for all orthonormal systems $\{g_n\}$ in $H^s (\R^3 ; \C^4)$.
\end{thm}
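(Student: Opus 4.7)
My plan is to reduce the Dirac orthonormal Strichartz estimate to the free Klein-Gordon orthonormal Strichartz estimate by combining the spectral decomposition of $\mathcal{D}+\beta$ with the vector-valued duality principle of Lemma \ref{1219111}.

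The starting point is the identity $(\mathcal{D}+\beta)^2 = (1-\Delta) I_4$, which gives the spectral decomposition
\[
e^{-it(\mathcal{D}+\beta)} = e^{-it\sqrt{1-\Delta}}\Pi_+ + e^{it\sqrt{1-\Delta}}\Pi_-,
\qquad \Pi_\pm := \tfrac{1}{2}\bigl(I_4 \pm (\mathcal{D}+\beta)(1-\Delta)^{-1/2}\bigr).
\]
Here $\Pi_\pm$ are matrix-valued Fourier multipliers with uniformly bounded symbols, so $\Pi_\pm \in \mathcal{B}(\mathcal{H})$, and they commute with every scalar function of $-\Delta$ (in particular with $(1-\Delta)^{-s/2}I_4$ and with $e^{\mp it\sqrt{1-\Delta}}I_4$). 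Following the discussion immediately preceding the theorem statement, an orthonormal system $\{g_n\}$ in $H^s(\R^3;\C^4)$ is of the form $g_n=(1-\Delta)^{-s/2}f_n$ with $\{f_n\}$ orthonormal in $\mathcal{H}$, so by Lemma \ref{1219111} applied with $V=0$ (and its exponent parameter set to $-s/2$), the desired estimate is equivalent to the dual Schatten bound
\[
\|f\,e^{-it(\mathcal{D}+\beta)}(1-\Delta)^{-s/2}\|_{\mathfrak{S}^{2\beta'}_{x\to t,x}} \lesssim \|f\|_{L^{2(q/2)'}_t L^{2(r/2)'}_x}.
\]

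Plugging in the spectral decomposition, using the triangle inequality and $\|A\Pi_\pm\|_{\mathfrak{S}^{2\beta'}}\le \|A\|_{\mathfrak{S}^{2\beta'}}\|\Pi_\pm\|_{\mathcal{B}(\mathcal{H})}$, the task reduces to estimating $\|f\,e^{\mp it\sqrt{1-\Delta}}(1-\Delta)^{-s/2}\|_{\mathfrak{S}^{2\beta'}_{x\to t,x}}$ for each sign. Since $e^{\mp it\sqrt{1-\Delta}}(1-\Delta)^{-s/2}$ is a scalar operator that acts identically on each component of $\C^4$, and $f=\diag(f_1,\ldots,f_4)$ acts componentwise, this operator is the orthogonal direct sum on $L^2(\R^3;\C^4)=\bigoplus_{j=1}^4 L^2(\R^3)$ of the four scalar operators $f_j\,e^{\mp it\sqrt{1-\Delta}}(1-\Delta)^{-s/2}$. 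Because Schatten-$p$ norms of direct sums split as $\ell^p$-sums of the blocks' Schatten-$p$ norms, the problem reduces to the four scalar bounds
\[
\|f_j\,e^{\mp it\sqrt{1-\Delta}}(1-\Delta)^{-s/2}\|_{\mathfrak{S}^{2\beta'}} \lesssim \|f_j\|_{L^{2(q/2)'}_t L^{2(r/2)'}_x},
\]
each of which is exactly what the scalar duality principle (Lemma \ref{1113719}) produces from the free Klein-Gordon orthonormal Strichartz estimate at the exponents of Corollary \ref{1216058} with $A=V=0$ (equivalently \cite{BLN}). Summing over $j$ and using $\|f_j\|_{L^p}\le\|f\|_{L^p(\C^4)}$ closes the argument.

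The main obstacle is essentially organizational rather than analytical: one must carefully keep track of the matrix structure on $\C^4$, recognize that the spectral decomposition replaces the genuinely matrix-valued flow $e^{-it(\mathcal{D}+\beta)}$ by two scalar Klein-Gordon flows (at the cost of the zero-order multipliers $\Pi_\pm$), and then verify that all Schatten and Lebesgue norms split cleanly over the four components of $\C^4$. No smoothing or microlocal analysis is required here because $V=0$ and the Dirac equation decouples algebraically; those tools re-enter only when passing to the potential-perturbed case of Theorem \ref{12212051}.
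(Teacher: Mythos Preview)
Your proof is correct and follows essentially the same approach as the paper: both reduce to the scalar free Klein--Gordon orthonormal Strichartz estimate via the vector-valued duality principle (Lemma~\ref{1219111}), using the identity $(\mathcal{D}+\beta)^2=(1-\Delta)I_4$ to express $e^{-it(\mathcal{D}+\beta)}$ in terms of scalar Klein--Gordon propagators composed with bounded matrix Fourier multipliers. The paper writes this as $I_4\cos t\sqrt{1-\Delta}-i(\mathcal{D}+\beta)\frac{\sin t\sqrt{1-\Delta}}{\sqrt{1-\Delta}}$ and decomposes entrywise into factors $T_1\circ T_2\circ T_3\circ T_4$, whereas your spectral-projection formulation $e^{-it\sqrt{1-\Delta}}\Pi_++e^{it\sqrt{1-\Delta}}\Pi_-$ together with the block-diagonal Schatten splitting is a slightly cleaner repackaging of the same argument.
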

\begin{proof}
As in \cite{MNO}, we have the following explicit formula:
\[e^{-it(\mathcal{D} + \beta)} =I_4 \cos t \sqrt {1-\Delta} -i(\mathcal{D} + \beta) \frac{\sin t\sqrt{1-\Delta}}{\sqrt{1-\Delta}}\]
as an operator on $\mathcal{H}$. We decompose it into some parts which are either of the following:
\begin{align*}
& g=(g_1, g_2, g_3, g_4) \mapsto (0, \dots, \underbrace{\cos t \sqrt{1-\Delta} g_j}_{j}, \cdots, 0), \\
& g=(g_1, g_2, g_3, g_4) \mapsto (0, \dots, \underbrace{\psi (D) \sin t \sqrt{1-\Delta} g_j}_{k}, \cdots, 0).
\end{align*}
Here $j, k \in \{1, 2, 3, 4\}, \psi \in C^{\infty} (\R)$ and it satisfies $|\partial^{\alpha} _{\xi} \psi (\xi)| \lesssim \langle \xi \rangle^{-|\alpha|}$. Since the former term is easier to treat, we only focus on the latter term. In order to use Lemma \ref{1219111}, we calculate as follows:
\begin{align*}
&f \cdot [ g=(g_1, g_2, g_3, g_4) \mapsto (0, \dots, \underbrace{\psi (D) \sin t \sqrt{1-\Delta} g_j}_{k}, \cdots, 0)] \cdot \{(\mathcal{D} + \beta)^2\}^{-s/2} \\
&= [g \mapsto (0, \dots, \underbrace{f_k \sin t \sqrt{1-\Delta} \{(\mathcal{D} + \beta)^2\}^{-s/2} \psi (D)g_j}_k, \dots, 0) ] \\
& =[v \mapsto (0, \dots, \underbrace{v}_k, \dots, 0)] \circ [u \mapsto f_k \sin t \sqrt{1-\Delta} (1-\Delta)^{-s/2} u] \circ [g_j \mapsto \psi (D) g_j] \circ [g \mapsto g_j] \\
&=T_1 \circ T_2 \circ T_3 \circ T_4.
\end{align*}
Here $T_1 : L^2 _t L^2 (\R^3; \C) \rightarrow L^2 _t L^2 (\R^3; \C^4)$, $T_3 : L^2 (\R^3; \C) \rightarrow  L^2 (\R^3; \C)$ and $T_4 : L^2 (\R^3; \C^4) \rightarrow L^2 (\R^3; \C)$ are bounded operators. Furthermore, concerning $T_2$, we have the following bound
\[\|[u \mapsto f_k \sin t \sqrt{1-\Delta} (1-\Delta)^{-s/2} u]\|_{\mathfrak{S}^{2\beta'} (L^2 (\R^3; \C) \rightarrow L^2 _t L^2 (\R^3; \C))} \lesssim \|f_k\|_{L^{2(q/2)'} _t L^{2(r/2)'} _x}\]
by the orthonormal Strichartz estimates for the Klein-Gordon equation. Hence we obtain
\[\|T_1 \circ T_2 \circ T_3 \circ T_4 \|_{\mathfrak{S}^{2\beta'} (\mathcal{H} \rightarrow L^2 _t \mathcal{H})} \lesssim \|f_k\|_{L^{2(q/2)'} _t L^{2(r/2)'} _x}\]
By taking a summation for $j$ and $k$, we have
\[\|fe^{-it(\mathcal{D} + \beta)} \{(\mathcal{D} + \beta)^2\}^{-s/2}\|_{\mathfrak{S}^{2\beta'} (\mathcal{H} \rightarrow L^2 _t \mathcal{H})} \lesssim \|f\|_{L^{2(q/2)'} _t L^{2(r/2)'} _x} .\]
Then by Lemma \ref{1219111}, we obtain the desired estimates.
\end{proof}
By repeating a similar computation, we obtain the following results concerning the massless Dirac equation. We omit their proofs here.
\begin{lemma} \label{12211616}
Assume $s \in \R$. Suppose there exists a symmetric operator $V$ on $\mathcal{H}$ satisfying
$D(\mathcal{D}) \subset D(V)$ and $\mathcal{D} +V$ is self-adjoint on $\mathcal{H}$.
Then the following are equivalent:
\begin{enumerate}
\item $\|\rho (e^{-it(\mathcal{D} + V)} \{\mathcal{D}^{2}\}^s \gamma \{\mathcal{D}^{2}\}^s e^{it(\mathcal{D} +V)})\|_{L^p _t L^q _x} \lesssim \|\gamma\|_{\mathfrak{S}^{\alpha}}$

\item $\|fe^{-it(\mathcal{D} +V)} \{\mathcal{D}^{2}\}^s\|_{\mathfrak{S}^{2\alpha'} _{x \rightarrow t, x}} \lesssim \|f\|_{L^{2p'} _t L^{2q'} _x}$
\end{enumerate}
for $p, q \in [1, \infty]$ and $\alpha \in (1, \infty)$.
\end{lemma}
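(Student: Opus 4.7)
The plan is to adapt the proof of Lemma~\ref{1219111} line by line, substituting the massless quantities in the obvious places. Set $A(t) := e^{-it(\mathcal{D}+V)} \{\mathcal{D}^2\}^s$, so that the two inequalities to be related read $\|\rho(A(t)\gamma A(t)^*)\|_{L^p_t L^q_x} \lesssim \|\gamma\|_{\mathfrak{S}^\alpha}$ and $\|fA\|_{\mathfrak{S}^{2\alpha'}_{x\to t,x}} \lesssim \|f\|_{L^{2p'}_t L^{2q'}_x}$. Since $\mathcal{D}^2 = -\Delta\, I_4$ is a nonnegative self-adjoint operator on $\mathcal{H}$, $\{\mathcal{D}^2\}^s$ is defined by the functional calculus (it equals $|D|^{2s} I_4$, densely defined when $s<0$ because $0$ is in the spectrum, which is harmless for the duality argument). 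As in Lemma~\ref{1219111}, all computations are to be performed on $\mathcal{H} = L^2(\mathbb{R}^3;\mathbb{C}^4)$ using the matrix-valued multiplication convention of Definition~\ref{1219027}.

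For the direction $(1) \Rightarrow (2)$, I would follow the argument of Lemma 3.1 of \cite{Ha}: for an arbitrary $\gamma \in \mathfrak{S}^\alpha(\mathcal{H})$ one computes
\[
\tr\bigl((fA(t))\, \gamma\, (fA(t))^*\bigr) = \int_{\mathbb{R}}\!\int_{\mathbb{R}^3} \diag(|f_1|^2,|f_2|^2,|f_3|^2,|f_4|^2)\cdot \rho\bigl(A(t)\gamma A(t)^*\bigr)\, dx\, dt,
\]
where the pairing on the right is the one from Definition~\ref{1219027}. H\"older's inequality combined with hypothesis (1) and Schatten duality then yield $\|fA\|^2_{\mathfrak{S}^{2\alpha'}_{x\to t,x}} \lesssim \|f\|^2_{L^{2p'}_t L^{2q'}_x}$. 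For the direction $(2) \Rightarrow (1)$, I would follow Lemma 4.6 of \cite{Ho}, substituting $\diag(|f_1|^{1/2},|f_2|^{1/2},|f_3|^{1/2},|f_4|^{1/2})$ for the scalar multiplier $f^{1/2}$ used there, and invoking Schatten duality in the converse direction to recover the bound on $\rho$.

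The main obstacle is not analytical but rather a matter of bookkeeping: the density $\rho$ is $\mathbb{C}^4$-valued and the multipliers $f$ must be treated as diagonal matrix-valued multiplication operators acting on $\mathcal{H}$, so one must carefully track which vector component of $f$ is paired with which component of $\rho$. This is precisely the same bookkeeping already carried out in the proof of Lemma~\ref{1219111}; no new idea is required beyond those componentwise modifications, which is why I expect the proof to be essentially a verbatim repetition of the massive case with $(\mathcal{D}+\beta)^2$ replaced by $\mathcal{D}^2$ throughout.
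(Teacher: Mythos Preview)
Your proposal is correct and matches the paper's approach exactly: the paper itself omits the proof, stating only that it is a repetition of the massive case (Lemma~\ref{1219111}) with the obvious substitutions, and you have spelled out precisely those substitutions.
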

\begin{thm} \label{12211623}
Let $q, r, s, \beta$ satisfy either of the following:
\begin{enumerate}
\item $(q, r)$ is a $\frac{3-1}{2}$ admissible pair satisfying $2 \le r < 6$. $\{g_n\}$ is an orthonormal system in $\dot{H}^s$ with $s= \frac{3+1}{2} (1/2 -1/r)$. $\beta = \frac{2r}{r+2}$.

\item $(q, r)$ is a $\frac{3-1}{2}$ admissible pair satisfying $r \in [6, \infty)$. $\{g_n\}$ is an orthonormal system in $\dot{H}^s$ with $s= \frac{3+1}{2} (1/2 -1/r)$. $1 \le \beta <q/2$.
\end{enumerate}
Then we have
\[\left\| \sum_{n=0}^{\infty} \nu_n |e^{-it\mathcal{D}} g_n|^2 \right\|_{L^{q/2} _t L^{r/2} _x} \lesssim \|\{\nu_n\}\|_{\ell^{\beta}}\]
for all orthonormal systems $\{g_n\}$ in $\dot{H}^s (\R^3 ; \C^4)$.
\end{thm}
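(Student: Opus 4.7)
The plan is to mirror the proof of Theorem \ref{12211346}, replacing the Klein--Gordon building block by the homogeneous wave propagator. First, by the vector-valued duality principle (Lemma \ref{12211616}) applied with $V=0$ and with the free parameter of that lemma taken equal to $-s/2$, so that $\{\mathcal{D}^{2}\}^{-s/2}=|D|^{-s}I_{4}$, the statement reduces to the Schatten bound
\[
\bigl\| f\, e^{-it\mathcal{D}}\,|D|^{-s}I_{4}\bigr\|_{\mathfrak{S}^{2\beta'}_{x\rightarrow t,x}}\;\lesssim\; \|f\|_{L^{2(q/2)'}_{t}L^{2(r/2)'}_{x}},
\]
where $f=\operatorname{diag}(f_{1},f_{2},f_{3},f_{4})$ acts as a matrix-valued multiplication operator in the sense of Definition \ref{1219027}. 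From the anticommutation relations $\alpha_{j}\alpha_{k}+\alpha_{k}\alpha_{j}=2\delta_{jk}I_{4}$ one has $\mathcal{D}^{2}=-\Delta\, I_{4}$, hence the explicit representation
\[
e^{-it\mathcal{D}}=\cos(t|D|)\,I_{4}\;-\;i\sin(t|D|)\,\mathcal{D}|D|^{-1},
\]
in which $\mathcal{D}|D|^{-1}$ is the matrix-valued Fourier multiplier of symbol $\sum_{k=1}^{3}\alpha_{k}\xi_{k}/|\xi|$, homogeneous of degree zero and therefore bounded on $L^{2}(\R^{3};\C^{4})$ by the H\"ormander--Mikhlin theorem.

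Second, the operator $f\,e^{-it\mathcal{D}}\,|D|^{-s}I_{4}$ splits into sixteen scalar entries, each of the form $T_{1}\circ T_{2}\circ T_{3}\circ T_{4}$, where $T_{1}:L^{2}_{t}L^{2}(\R^{3};\C)\to L^{2}_{t}\mathcal{H}$ places a scalar into a chosen coordinate slot, $T_{4}:\mathcal{H}\to L^{2}(\R^{3};\C)$ projects onto a chosen coordinate, $T_{3}=\psi(D)$ is either the identity (diagonal, from $\cos(t|D|)$) or a scalar zero-order Mikhlin multiplier arising from a matrix entry of $\sum_{k}\alpha_{k}\xi_{k}/|\xi|$ (off-diagonal, from $\sin(t|D|)\mathcal{D}|D|^{-1}$), and $T_{2}$ equals either $f_{k}\cos(t|D|)|D|^{-s}$ or $f_{k}\sin(t|D|)|D|^{-s}$ for some $k$. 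The free orthonormal Strichartz estimates for the scalar wave equation in $\R^{3}$ from \cite{BLN}, which cover exactly the $\frac{d-1}{2}=1$-admissible line with Sobolev index $s=\frac{d+1}{2}(1/2-1/r)$ and Schatten exponent $\beta$ as in the statement, combined with the scalar duality principle, yield
\[
\|f_{k}\cos(t|D|)|D|^{-s}\|_{\mathfrak{S}^{2\beta'}}+\|f_{k}\sin(t|D|)|D|^{-s}\|_{\mathfrak{S}^{2\beta'}}\;\lesssim\; \|f_{k}\|_{L^{2(q/2)'}_{t}L^{2(r/2)'}_{x}}
\]
as operators $L^{2}(\R^{3};\C)\to L^{2}_{t}L^{2}(\R^{3};\C)$. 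Composing with the $L^{2}$-bounded $T_{1},T_{3},T_{4}$ through the Schatten--H\"older inequality $\|AB\|_{\mathfrak{S}^{p}}\le\|A\|_{\mathfrak{S}^{p}}\|B\|_{\mathcal{B}(L^{2})}$ and summing the sixteen contributions gives the Schatten bound of the previous paragraph, and then applying Lemma \ref{12211616} in reverse concludes the proof.

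The main, and essentially the only, technical point is the verification that the zero-order matrix multiplier $\mathcal{D}|D|^{-1}$ is harmless: its symbol is smooth away from the origin and homogeneous of degree zero, so it is bounded on $L^{2}$ and can be absorbed as an operator-norm factor in Schatten--H\"older without touching the exponent $2\beta'$. No spectral analysis of $\mathcal{D}$ is required, since the wave formula already diagonalizes the free propagator, and the matching between the indices of the free-wave orthonormal Strichartz estimates in \cite{BLN} and those of the present statement is exact because $\mathcal{D}$ and $\sqrt{-\Delta}$ share principal symbols up to Clifford matrices and thus live at the same differential scale.
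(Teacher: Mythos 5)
Your proposal is correct and follows essentially the same route the paper intends: it is the massless analogue of the proof of Theorem \ref{12211346}, using the explicit formula $e^{-it\mathcal{D}}=\cos(t|D|)I_4 - i\mathcal{D}|D|^{-1}\sin(t|D|)$ (from $\mathcal{D}^2=-\Delta I_4$), the vector-valued duality principle of Lemma \ref{12211616}, the $T_1\circ T_2\circ T_3\circ T_4$ factorization with the zero-order Mikhlin multiplier absorbed by Schatten--H\"older, and the free scalar wave orthonormal Strichartz estimates of \cite{BLN}. This is exactly the ``similar computation'' the paper omits, so no further comment is needed.
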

Finally, as an application of the above duality principle and the orthonormal Strichartz estimates for the free Dirac operator, we prove the estimates for the massive Dirac operator with potentials. As in the case of the Klein-Gordon equation, we employ the method from the microlocal analysis. Thus it seems difficult to treat the massless Dirac equation in the same way.
\begin{proof}[Proof of Theorem \ref{12212051}]
Since we assume $|\partial^{\beta} _x V(x)| \lesssim \epsilon \langle x \rangle^{-2-|\beta|}$ for all $|\beta| \le 1$, we have $(\mathcal{D} + \beta +V)^2 \approx (\mathcal{D} + \beta)^2$. The self-adjointness of $\mathcal{D} + \beta +V$ and its spectral property are proved in subsection 2.3 in \cite{DF}. By the Duhamel formula, we have
\begin{align*}
&fe^{-it(\mathcal{D} + \beta +V)} \{(\mathcal{D} + \beta)^2\}^{-s/2} = fe^{-it(\mathcal{D} + \beta)} \{(\mathcal{D} + \beta)^2\}^{-s/2} \\
& \quad \quad \quad \quad \quad \quad \quad \quad \quad \quad \quad \quad \quad  -if e^{-it(\mathcal{D} + \beta)} \int_{0}^{t} e^{is(\mathcal{D} + \beta)} Ve^{-is(\mathcal{D} + \beta +V)} \{(\mathcal{D} + \beta)^2\}^{-s/2} ds
\end{align*}
for all $f: \R \times \R^3 \rightarrow \C^4$. For the first term, by Theorem \ref{12211346}, we obtain
\begin{align}
\|fe^{-it(\mathcal{D} + \beta)} \{(\mathcal{D} + \beta)^2\}^{-s/2}\|_{\mathfrak{S}^{2\beta'} (\mathcal{H} \rightarrow L^2 _t \mathcal{H})} \lesssim \|f\|_{L^{2(q/2)'} _t L^{2(r/2)'} _x}. \label{12212304}
\end{align}
For the latter term, by Lemma \ref{2311152210} and (\ref{12212304}), it suffices to show
\begin{align}
\left\| \int_{0}^{\infty} e^{is(\mathcal{D} + \beta)} \{(\mathcal{D} + \beta)^2\}^{s/2} Ve^{-is(\mathcal{D} + \beta +V)} \{(\mathcal{D} + \beta)^2\}^{-s/2} ds\right\|_{\mathcal{B} (\mathcal{H})} \lesssim 1. \label{12212311}
\end{align}
Since $\langle x \rangle^{-1}$ is $\mathcal{D} + \beta$-smooth by Proposition 2.10 in \cite{DF}, it suffices to show
\begin{align*}
\|\langle x \rangle \{(\mathcal{D} + \beta)^2\}^{s/2} Ve^{-is(\mathcal{D} + \beta +V)} \{(\mathcal{D} + \beta)^2\}^{-s/2}\|_{\mathcal{B} (\mathcal{H} \rightarrow L^2 _t \mathcal{H})} \lesssim 1
\end{align*}
to obtain (\ref{12212311}). Furthermore, by $(\mathcal{D} + \beta +V)^2 \approx (\mathcal{D} + \beta)^2$ and the $\mathcal{D} + \beta +V$-smoothness of $\langle x \rangle^{-1}$ (Proposition 2.10 in \cite{DF}), it is reduced to
\begin{align}
\|\langle x \rangle \{(\mathcal{D} + \beta)^2\}^{s/2} V \{(\mathcal{D} + \beta +V)^2\}^{-s/2} \langle x \rangle\|_{\mathcal{B} (\mathcal{H})} \lesssim 1. \label{12212324}
\end{align}   
If $s=0$, this is obvious by our assumptions. Hence we assume $0<s \le1$. Then, for $H=(\mathcal{D} + \beta +V)^2$, we have
\begin{align*}
H^{-s/2} = \frac{1}{2\pi i} \int_{\Gamma} z^{-s/2} (z-H)^{-1} dz
\end{align*}
by the Cauchy theorem. Here $\Gamma = \{re^{i\theta} \mid r>0\} \cup \{re^{-i\theta} \mid r>0\}$ for some $\theta \in (0, \pi/2)$ and the integral is absolutely convergent since $H \gtrsim \gamma$ for some $\gamma >0$. Now we set $H=(H^{ij})_{i, j \in \{1, 2, 3, 4\}}$ and define $p^{ii} _k (z)$ as
\[\sigma^{W} (H^{ii} -z) \#^{W} (p^{ii} _0 (z) + p^{ii} _1 (z) + \dots +p^{ii} _n (z)) =1+r^{ii} _n (z)\]
for $i \in \{1, 2, 3, 4\}, k \in \N_0$. Here $\#^{W}$ denotes the composition in the Weyl calculus and $p^{ii} _k (z) \in S(h^k (m+\lambda)^{-1}, g)$ holds uniformly in $\lambda >0$ for $m(x, \xi) = \langle \xi \rangle^{2}$, $h(x, \xi) = \langle x \rangle^{-1} \langle \xi \rangle^{-1}$, $g= \frac{dx^2}{\langle x \rangle^2} + \frac{d\xi^2}{\langle \xi \rangle^2}$ and $z = \lambda +i\lambda \tan \theta$. Furthermore 
\[\langle x \rangle^{|\alpha|} \langle \xi \rangle^{|\beta|} |\partial^{\alpha} _x \partial^{\beta} _{\xi} r^{ii} _n (x, \xi, z)| \lesssim_{n, k, \eta} h(x, \xi)^{n} m(x, \xi) (\sigma^{W} (H)(x, \xi) +1+\lambda)^{-1}\]
holds for all $(x, \xi) \in \R^{2d}$, $\alpha, \beta \in \N^d _{0}$ satisfying $|\alpha|+|\beta| \le k$, $k \in \N$, $z=\lambda + i \lambda \tan \eta$. This construction is based on the asymptotic expansion
\[a \#^{W} b \sim \sum_{\alpha, \beta} \frac{(-1)^{|\alpha|}}{(2i)^{|\alpha + \beta|} \alpha ! \beta !} (\partial^{\alpha} _x \partial^{\beta} _{\xi} a(x,\xi)) (\partial^{\beta} _x \partial^{\alpha} _{\xi} b(x, \xi))\]
(e.g. see \cite{Ma}) and an induction argument. However this is well-known (see e.g. Proposition 2.12 in \cite{M1}) and we omit the details here. Then we define
\[b^{jj} _n (x, \xi) := \frac{i}{2\pi} \int_{\Gamma} z^{-s/2} (p^{jj} _n (x, \xi, \bar{z}))^* dz .\]
Here $(p^{jj} _n (x, \xi, z))^*$ is a Weyl symbol of $((p^{jj} _n)^{W} (x, D_x, z))^*$ and hence $(p^{jj} _n (x, \xi, z))^* \in S(h^n (m+\lambda)^{-1}, g)$ holds. Then we obtain $b^{jj} _n \in S(m^{-s/2} h^n, g)$ simce
\begin{align*}
\langle x \rangle^{|\alpha|} \langle \xi \rangle^{|\beta|} |\partial^{\alpha} _x \partial^{\beta} _{\xi} b^{jj} _n (x, \xi)| &\lesssim \int_{0}^{\infty} \lambda^{-s/2} h^n (m+ \lambda)^{-1} d\lambda \\
& \lesssim h^n m^{-1} \int_{0}^{m} \lambda^{-s/2} d \lambda +h^n \int_{m}^{\infty} \lambda^{-1-s/2} d\lambda \\
& \lesssim m^{-s/2} h^n
\end{align*}
holds. Now we define
\[a^{jj} _{-s/2} \sim \sum_{n=0}^{\infty} b^{jj} _n \in S(m^{-s/2}, g).\]
By the above argument, we have
\begin{align*}
&(H-z) \diag \{ (p^{jj} _0)^{W} (x, D_x, z) + \dots + (p^{jj} _n)^{W} (x, D_x, z) \} = I+R_n (z) +S_n (z), \\
&R_n (z) = \diag \{(r^{jj} _n)^{W} (x, D_x, z)\}, S_n (z) \in OPS(\langle \xi \rangle (m+\lambda)^{-1} \langle x \rangle^{-2}, g)
\end{align*}
Here $\diag \{a^{jj}\}$ denotes the diagonal matrix whose $(j, j)$ component is $a^{jj}$. Hence we obtain
\[(H-z)^{-1} = \diag \{(p^{jj} _0)^{W} (x, D_x, z) + \dots + (p^{jj} _n)^{W} (x, D_x, z) \} -(R_n (z) +S_n (z))(H-z)^{-1}\]
and by taking its adjoint,
\begin{align}
(H-z)^{-1} = &\diag \{((p^{jj} _0)^{W} (x, D_x, \bar{z}))^* + \dots + ((p^{jj} _n)^{W} (x, D_x, \bar{z}))^* \} \notag \\
& \quad \quad \quad \quad \quad \quad \quad \quad \quad -(H-z)^{-1} (R^* _n (\bar{z}) +S^* _n (\bar{z})) \label{1222129}
\end{align}
holds. By substituting (\ref{1222129}) into Cauchy's integral formula, we obtain
\begin{align*}
H^{-s/2} = & \diag \{(a^{jj} _{-s/2})^{W} (x, D_x)\} + \diag \{(c^{jj} _n)^{W} (x, D_x)\} \\
&  \quad \quad - \frac{1}{2 \pi i} \int_{\Gamma} z^{-s/2} (z-H)^{-1} (R^* _n (\bar{z}) +S^* _n (\bar{z})) dz
\end{align*}
for some $c^{jj} _n \in S(m^{-s/2} h^{n+1}, g)$. For the first and second terms, by the Calder\`on-Vaillancourt theorem, we obtain 
\[\|\langle x \rangle \{(\mathcal{D} + \beta)^2\}^{s/2} V (\diag \{(a^{jj} _{-s/2})^{W} (x, D_x)\} + \diag \{(c^{jj} _n)^{W} (x, D_x)\}) \langle x \rangle\|_{\mathcal{B} (\mathcal{H})} \lesssim 1.\]
For the third term, we compute as
\begin{align*}
&\int_{\Gamma} \langle x \rangle \langle D \rangle^{s} I_4 V z^{-s/2} (z-H)^{-1} (R^* _n (\bar{z}) +S^* _n (\bar{z})) \langle x \rangle dz \\
&= \int_{\Gamma} z^{-s/2} (\langle x \rangle \langle D \rangle^{s} I_4 V  \langle D \rangle^{-s}) \cdot (\langle D \rangle^{s} I_4 \langle H \rangle^{-s/2}) \cdot (\langle H \rangle^{s/2} (z-H)^{-1} \langle H \rangle^{1/2}) \\
& \quad \quad \quad \quad \quad  \cdot (\langle H \rangle^{-1/2} \langle D \rangle I_4) \cdot (\langle D \rangle^{-1} (R^* _n (\bar{z}) +S^* _n (\bar{z})) \langle x \rangle) dz.
\end{align*}
Here, in the integrand, the first factor is bounded on $\mathcal{H}$ by the Calder\`on-Vaillancourt theorem. The second and fourth factors are bounded on $\mathcal{H}$ since we have $(\mathcal{D} + \beta +V)^2 \approx (\mathcal{D} + \beta)^2$. The third factor is bounded on $\mathcal{H}$ uniformly in $z \in \Gamma$ since $H \gtrsim \gamma$ holds. Concerning the fifth factor, by the Calder\`on-Vaillancourt theorem and
\[S_n (z) \in OPS(\langle \xi \rangle (m+\lambda)^{-1} \langle x \rangle^{-2}, g), \quad R_n (z) \in OPS(m(m+\lambda)^{-1} h^{n+1}, g),\]
we have
\[\|\langle D \rangle^{-1} (R^* _n (\bar{z}) +S^* _n (\bar{z})) \langle x \rangle\|_{\mathcal{B} (\mathcal{H})} \lesssim (1+ \lambda)^{-1} \]
for sufficiently large $n \in \N$.
Thus we obtain
\[\left\|\int_{\Gamma} \langle x \rangle \langle D \rangle^{s} I_4 V z^{-s/2} (z-H)^{-1} (R^* _n (\bar{z}) +S^* _n (\bar{z})) \langle x \rangle dz \right\|_{\mathcal{B} (\mathcal{H})} \lesssim 1\]
and hence (\ref{12212324}) is proved.
\end{proof}
\appendix
\section{Strichartz estimate for wave equation}
In this section, we give an abstract perturbation theorem concerning the Strichartz estimates for wave equations and apply it to some equations. The Strichartz estimates for wave equations with potentials are proved in many papers. For example, in \cite{BPST}, inverse square type potentials are treated. Furthermore, in \cite{DF}, \cite{D} and \cite{D2}, wave equations with magnetic potentials are treated. See references therein for more informations. Our method is essentially the same as that in \cite{D} but it can be also applied to higher order or fractional wave equations. Furthermore we also treat the endpoint case, which is not included in the above references. In the following theorem, settings are as in \cite{BM}.
\begin{thm}[non endpoint case] \label{12231549}
Suppose $\mathcal{H}$ is a Hilbert space and self-adjoint operators $H, H_0$ and densely defined closed operators $v_1, v_2$ satisfy
\[D(H) \cup D(H_0) \subset D(v_1) \cap D(v_2) \quad \text{and} \quad \langle Hu,v \rangle=\langle H_0 u,v\rangle+\langle v_2 u, v_1 v\rangle\]
for all $u, v \in D(H) \cap D(H_0)$. Furthermore we assume $H$ and $H_0$ are injective, $v_1$ is $H_0$-smooth, $v_2 P_{\ac} (H)$ is $H$-smooth, $H \ge0$, $H_0 \ge 0$ and $\|H^{1/4} u\|_{\mathcal{H}} \lesssim \|H^{1/4} _0 u \|_{\mathcal{H}}$. Let $\mathcal{A}$ be a Banach space and assume $(\mathcal{H}, \mathcal{A})$ is a Banach couple (see \cite{BM} for its definition). If
\begin{align}
\|H^{\alpha} _0 e^{-it \sqrt{H_0}} u\|_{L^p _t \mathcal{A}} \lesssim \|H^{1/4} _0 u\|_{\mathcal{H}} \label{12231613}
\end{align}
holds for some $\alpha \in \R$ and $p>2$, we obtain
\begin{align}
\|H^{\alpha} _0 e^{-it \sqrt{H}} P_{\ac} (H)u\|_{L^p _t \mathcal{A}} \lesssim \|H^{1/4} _0 u\|_{\mathcal{H}} + \|H^{1/4} _0 P_{\ac} (H)u\|_{\mathcal{H}} \label{12232318}
\end{align}
for the same $p, \alpha$.
\end{thm}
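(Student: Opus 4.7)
The plan is to carry out a Duhamel-based perturbation argument in the spirit of the proof of Theorem~\ref{1292322}, combined with the Christ--Kiselev lemma to remove the time truncation and Lemma~\ref{1212101} to transfer the assumed $H$- and $H_0$-smoothness onto $\sqrt{H}$ and $\sqrt{H_0}$. The starting point is the abstract wave-equation Duhamel identity: setting $w(t):=e^{-it\sqrt{H}}P_{\ac}(H)u$ and using $(\partial_t^2+H_0)w=-Vw$ with initial data $w(0)=P_{\ac}(H)u$, $w_t(0)=-i\sqrt{H}P_{\ac}(H)u$, one has
\begin{align*}
w(t) = \cos(t\sqrt{H_0})P_{\ac}(H)u - i\frac{\sin(t\sqrt{H_0})}{\sqrt{H_0}}\sqrt{H}P_{\ac}(H)u - \int_0^t\frac{\sin((t-s)\sqrt{H_0})}{\sqrt{H_0}}V w(s)\, ds.
\end{align*}
I would apply $H_0^{\alpha}$, take $L^p_t\mathcal{A}$-norms, and estimate the three pieces separately.

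The two non-integral pieces reduce directly to (\ref{12231613}). For the cosine term, writing $\cos(t\sqrt{H_0})=\tfrac12(e^{it\sqrt{H_0}}+e^{-it\sqrt{H_0}})$ gives the bound $\|H_0^{1/4}P_{\ac}(H)u\|_{\mathcal{H}}$ at once. For the sine-sqrt term, the same decomposition together with (\ref{12231613}) reduces matters to controlling $\|(H_0^{-1/4}H^{1/4})\cdot H^{1/4}P_{\ac}(H)u\|_{\mathcal{H}}$; the first factor is bounded as the adjoint of the assumed bounded $H^{1/4}H_0^{-1/4}$, and the hypothesis applied to the second factor yields the bound $\|H_0^{1/4}P_{\ac}(H)u\|_{\mathcal{H}}$. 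For the Duhamel integral piece, I would first apply the Christ--Kiselev lemma (this is where $p>2$ enters) to replace $\int_0^t$ by $\int_0^\infty$, then expand $\sin((t-s)\sqrt{H_0})$ by the addition formula to decouple $t$ and $s$, reducing to bounding
\[\left\|H_0^{\alpha} e^{\pm it\sqrt{H_0}} H_0^{-1/2}\int_0^\infty c(s\sqrt{H_0})\, V e^{-is\sqrt{H}}P_{\ac}(H)u\, ds\right\|_{L^p_t\mathcal{A}}\]
with $c\in\{\cos,\sin\}$. Applying (\ref{12231613}) to the $t$-factor reduces this to showing
\[\left\|H_0^{-1/4}\int_0^\infty c(s\sqrt{H_0})\, V e^{-is\sqrt{H}}P_{\ac}(H)u\, ds\right\|_{\mathcal{H}}\lesssim\|H_0^{1/4}u\|_{\mathcal{H}}.\]
I would establish this by duality: pair against $w\in\mathcal{H}$, substitute $V=v_1^{*}v_2$ in the form sense, and apply Cauchy--Schwarz in $L^2_s\mathcal{H}$. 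Lemma~\ref{1212101} applied with $\nu=0$ (valid by injectivity of $H_0$ and $H$) upgrades the $H_0$-smoothness of $v_1$ and the $H$-smoothness of $v_2P_{\ac}(H)$ to $\sqrt{H_0}$-smoothness of $v_1 H_0^{-1/4}$ and $\sqrt{H}$-smoothness of $v_2P_{\ac}(H)H^{-1/4}$; the first resulting $L^2_s\mathcal{H}$-factor is then $\lesssim\|H^{1/4}u\|_{\mathcal{H}}\lesssim\|H_0^{1/4}u\|_{\mathcal{H}}$ (after commuting $H^{1/4}$ through $e^{-is\sqrt{H}}$) and the second is $\lesssim\|w\|_{\mathcal{H}}$. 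Summing the three pieces yields (\ref{12232318}).

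The step I expect to be the main obstacle is the Christ--Kiselev reduction in this Banach-space-valued setting, since one must first realize the untruncated operator $F\mapsto H_0^{\alpha}\int_0^\infty\sin((t-s)\sqrt{H_0})/\sqrt{H_0}\cdot F(s)\, ds$ as a bounded map between appropriate $L^2_s\mathcal{H}$- and $L^p_t\mathcal{A}$-spaces (via duality applied to (\ref{12231613})) before Christ--Kiselev can restore the $\int_0^t$ truncation. Justifying the Duhamel identity and the operator-algebraic manipulations involving fractional powers of $H$ and $H_0$ on a common dense core is routine within the form-perturbation framework already used repeatedly in the paper.
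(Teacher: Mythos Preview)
Your proposal is correct and follows essentially the same approach as the paper: the Duhamel formula, direct control of the two non-integral pieces via (\ref{12231613}) and the boundedness of $H^{1/4}H_0^{-1/4}$, and for the Duhamel term the addition-formula expansion, the duality argument combined with Lemma~\ref{1212101} to transfer smoothness to $\sqrt{H_0}$ and $\sqrt{H}$, and finally the Christ--Kiselev lemma (exploiting $p>2$) to recover the truncation. The only cosmetic difference is the order in which you invoke Christ--Kiselev relative to the expansion, which is immaterial.
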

\begin{proof}
By the Duhamel formula we have
\begin{align*}
e^{-it \sqrt{H}} P_{\ac} (H) u = & \cos t \sqrt{H_0} P_{\ac} (H) u -i \frac{\sin t \sqrt{H_0}}{\sqrt{H_0}} \sqrt{H} P_{\ac} (H) u \\
& \quad \quad  - \int_{0}^{t} \frac{\sin (t-s) \sqrt{H_0}}{\sqrt{H_0}} V e^{-is \sqrt{H}} P_{\ac} (H) u ds
\end{align*}
and hence by using (\ref{12231613}), 
\begin{align*}
\|H^{\alpha} _0 e^{-it \sqrt{H}} P_{\ac} (H)u\|_{L^p _t \mathcal{A}} &\lesssim \|H^{1/4} _0 P_{\ac} (H)u\|_{\mathcal{H}} + \|H^{1/4} _0 ({\sqrt{H_0}}^{-1} \sqrt{H}) P_{\ac} (H) u\|_{\mathcal{H}}    \\
& \quad \quad + \left\|H^{\alpha -1/2} _0 \sin t \sqrt{H_0} \int_{0}^{t} \cos s \sqrt{H_0} V e^{-is \sqrt{H}} P_{\ac} (H) u ds \right\|_{L^p _t \mathcal{A}} \\
& \quad \quad +  \left\|H^{\alpha -1/2} _0 \cos t \sqrt{H_0} \int_{0}^{t} \sin s \sqrt{H_0} V e^{-is \sqrt{H}} P_{\ac} (H) u ds \right\|_{L^p _t \mathcal{A}} 
\end{align*}
holds. For the second term, we have
\begin{align*}
\|H^{1/4} _0 ({\sqrt{H_0}}^{-1} \sqrt{H}) P_{\ac} (H) u\|_{\mathcal{H}} \lesssim \|H^{1/4} _0 u\|_{\mathcal{H}}
\end{align*}
since we assume $\|H^{1/4} u\|_{\mathcal{H}} \lesssim \|H^{1/4} _0 u \|_{\mathcal{H}}$. In order to estimate the third term we compute as
\begin{align*}
&\left| \left\langle \int_{0}^{\infty} H^{-1/4} _0 \cos s \sqrt{H_0} V e^{-is \sqrt{H}} P_{\ac} (H) u ds, g \right\rangle \right|  \\
& = \left| \int_{0}^{\infty} \left\langle v_2 e^{-is \sqrt{H}} P_{\ac} (H) u, v_1 H^{-1/4} _0 \cos s \sqrt{H_0} \right\rangle \right| \\
& \lesssim \|v_2 e^{-is \sqrt{H}} P_{\ac} (H) u\|_{L^2 _t \mathcal{H}} \|g\|_{\mathcal{H}}.
\end{align*}
Here, in the last line, we have used the $H_0$-smoothness of $v_1$ and Lemma \ref{1212101}. Then by the duality and (\ref{12231613}), we obtain
\begin{align*}
&\left\|H^{\alpha -1/2} _0 \sin t \sqrt{H_0} \int_{0}^{\infty} \cos s \sqrt{H_0} V e^{-is \sqrt{H}} P_{\ac} (H) u ds \right\|_{L^p _t \mathcal{A}} \\
& \lesssim \left\| \int_{0}^{\infty} H^{-1/4} _0 \cos s \sqrt{H_0} V e^{-is \sqrt{H}} P_{\ac} (H) u ds \right\|_{\mathcal{H}} \\
& \lesssim  \|v_2 e^{-is \sqrt{H}} P_{\ac} (H) u\|_{L^2 _t \mathcal{H}}.
\end{align*}
Since $p>2$, by the Christ-Kiselev lemma (\cite{CK}), we have
\begin{align*}
&\left\|H^{\alpha -1/2} _0 \sin t \sqrt{H_0} \int_{0}^{t} \cos s \sqrt{H_0} V e^{-is \sqrt{H}} P_{\ac} (H) u ds \right\|_{L^p _t \mathcal{A}} \\
& \lesssim \|v_2 e^{-is \sqrt{H}} P_{\ac} (H) u\|_{L^2 _t \mathcal{H}} \lesssim \|H^{1/4} u\|_{\mathcal{H}} \lesssim \|H^{1/4} _0 u \|_{\mathcal{H}}.
\end{align*}
In the last line, we have used the $H$-smoothness of $v_2 P_{\ac} (H)$. Similarly, we obtain
\[\left\|H^{\alpha -1/2} _0 \cos t \sqrt{H_0} \int_{0}^{t} \sin s \sqrt{H_0} V e^{-is \sqrt{H}} P_{\ac} (H) u ds \right\|_{L^p _t \mathcal{A}} \lesssim \|H^{1/4} _0 u \|_{\mathcal{H}} \]
and hence we have (\ref{12232318}).
\end{proof}
In addition to the conditions in Theorem \ref{12231549}, if we have $\|H^{1/4} _0 u\|_{\mathcal{H}} \lesssim \|H^{1/4} u \|_{\mathcal{H}}$,
\begin{align*}
\|H^{1/4} _0 P_{\ac} (H)u\|_{\mathcal{H}} \lesssim \|H^{1/4} P_{\ac} (H)u\|_{\mathcal{H}} \lesssim \|H^{1/4} _0 u\|_{\mathcal{H}} 
\end{align*}
holds. Hence we obtain the following corollary.
\begin{corollary} \label{12232320}
Under the conditions in Theorem \ref{12231549}, we assume $\sigma (H)= \sigma_{\ac} (H) = [0, \infty)$ or $\|H^{1/4} _0 u\|_{\mathcal{H}} \lesssim \|H^{1/4} u \|_{\mathcal{H}}$. Then
\[\|H^{\alpha} _0 e^{-it \sqrt{H}} P_{\ac} (H)u\|_{L^p _t \mathcal{A}} \lesssim \|H^{1/4} _0 u\|_{\mathcal{H}}\]
holds.
\end{corollary}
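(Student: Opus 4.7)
The plan is to invoke Theorem~\ref{12231549} as a black box, so that the entire work of the corollary reduces to absorbing the second term $\|H^{1/4}_0 P_{\ac}(H)u\|_{\mathcal{H}}$ on the right-hand side of (\ref{12232318}) into the first term $\|H^{1/4}_0 u\|_{\mathcal{H}}$. This reduction is done by a case analysis on which of the two alternative hypotheses is assumed.

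First I would apply Theorem~\ref{12231549} directly to obtain
\[
\|H^{\alpha}_0 e^{-it\sqrt{H}} P_{\ac}(H) u\|_{L^p_t \mathcal{A}} \lesssim \|H^{1/4}_0 u\|_{\mathcal{H}} + \|H^{1/4}_0 P_{\ac}(H)u\|_{\mathcal{H}},
\]
so it suffices to show $\|H^{1/4}_0 P_{\ac}(H)u\|_{\mathcal{H}} \lesssim \|H^{1/4}_0 u\|_{\mathcal{H}}$. In the first case, where $\sigma(H)=\sigma_{\ac}(H)=[0,\infty)$, the self-adjointness of $H$ forces the singular continuous and pure point subspaces to be trivial, so $P_{\ac}(H)=I$ on $\mathcal{H}$ and the desired bound is an equality.

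In the second case, assuming $\|H^{1/4}_0 v\|_{\mathcal{H}} \lesssim \|H^{1/4} v\|_{\mathcal{H}}$ for all admissible $v$, I would apply this inequality to $v=P_{\ac}(H)u$ and then use that $P_{\ac}(H)$ is an orthogonal projection commuting with the functional calculus of $H$, hence with $H^{1/4}$, to get
\[
\|H^{1/4}_0 P_{\ac}(H)u\|_{\mathcal{H}} \lesssim \|H^{1/4} P_{\ac}(H)u\|_{\mathcal{H}} = \|P_{\ac}(H) H^{1/4} u\|_{\mathcal{H}} \le \|H^{1/4} u\|_{\mathcal{H}}.
\]
Combining with the hypothesis $\|H^{1/4} u\|_{\mathcal{H}} \lesssim \|H^{1/4}_0 u\|_{\mathcal{H}}$ from Theorem~\ref{12231549} closes the estimate.

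There is no real obstacle here: the corollary is essentially a bookkeeping consequence of the abstract perturbation theorem, and the only mildly delicate point is justifying $P_{\ac}(H)=I$ in the first alternative via the spectral theorem. Both alternative hypotheses are designed precisely to promote the one-sided equivalence $\|H^{1/4} u\| \lesssim \|H^{1/4}_0 u\|$ present in Theorem~\ref{12231549} to a genuine norm equivalence on the absolutely continuous subspace, which is all that is needed for the absorption step.
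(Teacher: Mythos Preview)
Your proposal is correct and follows essentially the same approach as the paper: invoke Theorem~\ref{12231549} and then absorb $\|H^{1/4}_0 P_{\ac}(H)u\|_{\mathcal{H}}$ into $\|H^{1/4}_0 u\|_{\mathcal{H}}$ either trivially via $P_{\ac}(H)=I$ or via the chain $\|H^{1/4}_0 P_{\ac}(H)u\| \lesssim \|H^{1/4} P_{\ac}(H)u\| \le \|H^{1/4} u\| \lesssim \|H^{1/4}_0 u\|$. The paper presents this argument in the paragraph immediately preceding the corollary.
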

As our first application of the above corollary, we consider the magnetic Schr\"odinger operator. The following example was proved by D'Ancona \cite{D} but our proof seems a little simpler.
\begin{example} \label{12232330}
Assume $d \ge3$, $A:\R^d \rightarrow \R^d, V:\R^d \rightarrow \R$ satisfy $|A(x)| + |\langle x \rangle V(x)| \lesssim \langle x \rangle^{-1-\epsilon}$, $\langle x \rangle^{1+\epsilon'}A(x) \in \dot{W}^{1/2, 2d}$, $A \in C^0(\R^d)$ for some $0<\epsilon'<\epsilon$. Let $H = -\Delta +A(x)\cdot D + D\cdot A(x) + V(x)$ or $H = (D +A(x))^2 +V(x)$ and $H \ge 0$. Here $D=-i\nabla$. If zero is neither an eigenvalue nor a resonance of $H$, we have
\begin{align}
\||D|^{\frac{1}{q} - \frac{1}{p}} e^{-it \sqrt{H}} u\|_{L^p _t L^q _x} \lesssim \||D|^{1/2}  u\|_{L^2 _x} \label{1226014}
\end{align}
for all non endpoint $\frac{d-1}{2}$ admissible pair $(p, q)$.
\end{example}
\begin{proof}
By the assumptions, $P_{\ac} (H) =I$ holds. We have $\|H^{1/4} u\|_{\mathcal{H}} \lesssim \|H^{1/4} _0 u \|_{\mathcal{H}}$ by the Hardy inequality as before and $H_0$-smoothness of $v_1$ and $H$-smoothness of $v_2$ follow from the proof of Corollary \ref{12131747}.
\end{proof}
The next example concerns the inverse square type potentials. The proof is similar to that of Corollary \ref{12131724} and we omit the details here.
\begin{example} \label{1226007}
Let $H$ be as in Corollary \ref{12131724} and $(p, q)$ be as in the above example. Then (\ref{1226014}) holds.
\end{example}
The next example concerning the critical inverse square potential is similarly proved as Corollary \ref{12162148} by using Theorem \ref{12231549}.
\begin{example} \label{1226018}
Assume $H= -\Delta - \frac{(d-2)^2}{4} |x|^{-2}$ and $(p, q)$ is as in the above examples. Then
\[\||D|^{\frac{1}{q} - \frac{1}{p}} P^{\perp} _{\rad} e^{-it \sqrt{H}} u\|_{L^p _t L^q _x} \lesssim \||D|^{1/2}  u\|_{L^2 _x} \]
holds.
\end{example}
Next we prove the Strichartz estimates for higher order or fractional wave equations. The following two examples are proved by using Theorem \ref{12231549} and the estimates for the free operators as in Theorem \ref{1251816} or \ref{1251821}.
\begin{example} \label{1226029}
Assume $H$ is as in Theorem \ref{1251816} with $m \ge 2$ and $H \ge 0$. Then we have
\[\||D|^{\frac{m-2}{p} + \frac{m}{2}} e^{-it \sqrt{H}} u\|_{L^p _t L^{q, 2} _x} \lesssim \||D|^{m/2}  u\|_{L^2 _x} \]
for all non endpoint $\frac{d}{2}$ admissible pairs $(p, q)$.
\end{example}
\begin{example} \label{1226035}
Assume $H$ is as in Theorem \ref{1251821} with $\sigma > 1$ and $\langle Hu, u \rangle \lesssim \langle H_0 u, u \rangle$ holds. Then 
\[\||D|^{\frac{\sigma -2}{p} + \frac{\sigma}{2}} e^{-it \sqrt{H}} u\|_{L^p _t L^{q, 2} _x} \lesssim \||D|^{\sigma /2}  u\|_{L^2 _x} \]
holds for all non endpoint $\frac{d}{2}$ admissible pairs $(p, q)$.
\end{example}
The next example concerns the spherically averaged Strichartz estimates, which were proved for the fractional Schr\"odinger equations with potentials in \cite{MY1}. Let $L^2 _{\omega} = L^2 (S^{d-1}, d\omega)$, $\mathcal{L}^p _r = L^p (\R_{>0}; r^{d-1} dr)$, $\|f\|_{\mathcal{L}^p _r L^2 _{\omega}} := \|\|f(r \omega)\|_{L^2 _{\omega}} \|_{\mathcal{L}^p _r}$ and we define $\|f\|_{B(\mathcal{L}^p _r L^2 _{\omega})} := \| \{ \|\phi_j (D) f\|_{\mathcal{L}^p _r L^2 _{\omega}} \} \|_{\ell^2}$ for some homogeneous Littlewood-Paley decomposition $\{\phi_j \}$.
\begin{example} \label{1226110}
Let $H= (-\Delta)^{\sigma} +V$ be as in Theorem \ref{1251821} satisfying $\langle Hu, u \rangle \lesssim \langle H_0 u, u \rangle$, $\sigma \in (1, 2)$ and $d \ge 2$. Suppose $(p, q)$ satisfies $2<p, q \le \infty$, $\frac{1}{p} \le (d- \frac{1}{2})(\frac{1}{2} - \frac{1}{q})$, $\frac{1}{p} \ne (d- \frac{1}{2})(\frac{1}{2} - \frac{1}{q})$ if $d=2$. Set $s=-d(\frac{1}{2} - \frac{1}{q})+ \frac{\sigma}{p}$. Then we have
\[\||D|^{s + \frac{\sigma}{2}} e^{-it \sqrt{H}} u\|_{L^p _t B(\mathcal{L}^q _r L^2 _{\omega})} \lesssim \||D|^{\sigma /2}  u\|_{L^2 _x} .\]
\end{example}
\begin{proof}
The corresponding estimates for the free operator are proved in Appendix A in \cite{MY1}. The $H_0$-smoothness of $v_1$ and the $H$-smoothness of $v_2$ are proved in Theorem 1.3 in \cite{MY1}. Hence by using Theorem \ref{12231549}, we have the desired estimates.
\end{proof}
\begin{remark} \label{1226127}
If $V$ and $u$ are radially symmetric functions, we have $\|\phi _j (D) u\|_{\mathcal{L}^q _r L^2 _{\omega}} \approx \|\phi_j (D) u\|_{q}$ and hence $\|u\|_{B(\mathcal{L}^q _r L^2 _{\omega})} \gtrsim \|u\|_{q}$ holds by the Minkowski inequality and the Littlewood-Paley theorem. Therefore, by Example \ref{1226110}, we obtain
\[\||D|^{s + \frac{\sigma}{2}} e^{-it \sqrt{H}} u\|_{L^p _t L^q _x} \lesssim \||D|^{\sigma /2}  u\|_{L^2 _x}.\]
Then, by the Sobolev inequality $\||D|^{s + \frac{\sigma}{2}} u\|_q \gtrsim \||D|^{\frac{\sigma -2}{p} + \frac{\sigma}{2}} u\|_{\tilde{q}}$ for a $\frac{d}{2}$ admissible pair $(p, \tilde{q})$, we obtain
\[\||D|^{\frac{\sigma -2}{p} + \frac{\sigma}{2}} e^{-it \sqrt{H}} u\|_{L^p _t L^{\tilde{q}} _x} \lesssim \||D|^{\sigma /2}  u\|_{L^2 _x}.\]
Thus Example \ref{1226110} is also a refinement of the $L^p _t L^q _x$ type Strichartz estimates.  
\end{remark}
The next example is the inhomogeneous elliptic operators. Concerning the Strichartz estimates for the Schr\"odinger type equations, they are treated in \cite{MY1}.
\begin{example} \label{1226241}
Assume $H=H_0 +V$, $H_0 = \left( \displaystyle \sum_{j=1}^{\sigma} a_j (-\Delta)^{j} \right)^2$, $\sigma \in \N$, $a_j \ge 0$, $a_{\sigma} >0$, $\sigma < d/4$ and $H$ satisfies the assumptions in Theorem \ref{1251821} in which $(-\Delta)^{\sigma}$ is replaced by $H_0$ and $|x|^{2\sigma} V \in L^{\infty}$ is replaced by $|x|^{4\sigma} V \in L^{\infty}$. Furthermore we assume $\langle Hu, u \rangle \lesssim \langle H_0 u, u \rangle$. Then we have 
\[\||D|^{\frac{2\sigma -2}{p} +\sigma} e^{-it \sqrt{H}} u\|_{L^p _t L^{q, 2} _x} \lesssim \|H^{1/4} _0  u\|_{L^2 _x} \]
for all non endpoint $\frac{d}{2}$ admissible pairs $(p, q)$.
\end{example}
\begin{proof}
The Strichartz estimates for the free operator are proved in Appendix A in \cite{MY1}. We decompose $v_1 = |x|^{2\sigma} V$ and $v_2 = |x|^{-2\sigma}$. Then $H_0$-smoothness of $v_1$ and $H$-smoothness of $v_2$ are proved in Theorem 5.2 in \cite{MY1}. Hence by repeating the proof of Theorem \ref{12231549} with easy modifications, we obtain the desired estimates. 
\end{proof}
Next we focus on the endpoint Strichartz estimates.
\begin{thm} \label{1226344}
Suppose $\mathcal{H}$ is a Hilbert space and self-adjoint operators $H, H_0$ and a densely defined closed operator $V$ satisfy
\begin{align*}
D(H) \cup D(H_0) \subset D(V), \quad H=H_0 +V, \quad H, H_0 \ge 0, \quad \|H^{1/4} u\|_{\mathcal{H}} \lesssim \|H^{1/4} _0 u \|_{\mathcal{H}} .
\end{align*}
Let $\mathcal{A}$ be a Banach space and assume $(\mathcal{H}, \mathcal{A})$ is a Banach couple (see \cite{BM} for its definition). We also assume the following homogeneous and inhomogeneous endpoint Strichartz estimates for $H_0$ and the following smoothing estimate for some $\alpha \in \R$:
\begin{align}
&\|H^{\alpha} _0 e^{-it \sqrt{H_0}} u\|_{L^2 _t \mathcal{A}} \lesssim \|H^{1/4} _0 u\|_{\mathcal{H}} \label{1226409} \\
&\left\| H^{\alpha} _0 \int_{0}^{t} \frac{\sin (t-s) \sqrt{H_0}}{\sqrt{H_0}} F(s) ds \right\|_{L^2 _t \mathcal{A}} \lesssim \|H^{-\alpha} _0 F\|_{L^2 _t \mathcal{A}'} \label{1226411} \\
& \|H^{-\alpha} _0 V e^{-it \sqrt{H}} P_{\ac} (H) u\|_{L^2 _t \mathcal{A}'} \lesssim \|H^{1/4} _0 u\|_{\mathcal{H}} . \label{1226413}
\end{align}
Then we obtain
\begin{align}
\|H^{\alpha} _0 e^{-it \sqrt{H}} P_{\ac} (H)u\|_{L^2 _t \mathcal{A}} \lesssim \|H^{1/4} _0 u\|_{\mathcal{H}} + \|H^{1/4} _0 P_{\ac} (H)u\|_{\mathcal{H}} . \label{1226416}
\end{align}
\end{thm}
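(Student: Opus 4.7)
The plan is to follow the same Duhamel-based strategy used for Theorem~\ref{12231549}, with one key simplification: since the inhomogeneous endpoint estimate~(\ref{1226411}) is assumed outright, we can bypass the Christ--Kiselev lemma (which is precisely what fails when $p=2$) and simply feed the retarded propagator into~(\ref{1226411}) directly.

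First I would write the Duhamel formula
\[
e^{-it\sqrt{H}} P_{ac}(H) u = \cos(t\sqrt{H_0}) P_{ac}(H) u - i\,\frac{\sin(t\sqrt{H_0})}{\sqrt{H_0}} \sqrt{H}\,P_{ac}(H) u - \int_0^t \frac{\sin((t-s)\sqrt{H_0})}{\sqrt{H_0}}\,V\,e^{-is\sqrt{H}} P_{ac}(H) u\, ds,
\]
apply $H_0^{\alpha}$, and take $L^2_t\mathcal{A}$ norms. The first term is controlled by $\|H_0^{1/4} P_{ac}(H) u\|_{\mathcal{H}}$ directly by~(\ref{1226409}) (cosine being a linear combination of $e^{\pm it\sqrt{H_0}}$, and the minus sign absorbed by time-reversal). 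For the second term, I would rewrite it as $H_0^{\alpha}\sin(t\sqrt{H_0})\cdot (H_0^{-1/2}\sqrt{H}\,P_{ac}(H) u)$ and apply~(\ref{1226409}) again, reducing the bound to $\|H_0^{-1/4}\sqrt{H}\,P_{ac}(H) u\|_{\mathcal{H}}$. The hypothesis $\|H^{1/4} u\|\lesssim \|H_0^{1/4} u\|$ is equivalent to the operator inequality $H^{1/2}\le C H_0^{1/2}$, and conjugating by $H_0^{-1/4}$ shows that $H_0^{-1/4}\sqrt{H}\,H_0^{-1/4}$ is bounded; squaring gives $\sqrt{H}\,H_0^{-1/2}\sqrt{H}\le C^2 H_0^{1/2}$, which yields $\|H_0^{-1/4}\sqrt{H}\,v\|\lesssim \|H_0^{1/4} v\|$. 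Applied to $v = P_{ac}(H) u$, this bounds the second term by $\|H_0^{1/4}P_{ac}(H) u\|_{\mathcal{H}}$.

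For the Duhamel term, the hypothesis~(\ref{1226411}) applied with $F(s)=V\,e^{-is\sqrt{H}} P_{ac}(H) u$ gives
\[
\left\|H_0^{\alpha}\int_0^t \frac{\sin((t-s)\sqrt{H_0})}{\sqrt{H_0}}\,V\,e^{-is\sqrt{H}} P_{ac}(H) u\, ds\right\|_{L^2_t\mathcal{A}} \lesssim \|H_0^{-\alpha} V\,e^{-it\sqrt{H}} P_{ac}(H) u\|_{L^2_t\mathcal{A}'},
\]
and the smoothing hypothesis~(\ref{1226413}) then dominates the right-hand side by $\|H_0^{1/4} u\|_{\mathcal{H}}$. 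Combining the three bounds yields~(\ref{1226416}).

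The main obstacle, such as it is, is the operator-algebraic manipulation needed for the second homogeneous term: turning the assumption $\|H^{1/4} u\|\lesssim \|H_0^{1/4} u\|$ into the mixed bound $\|H_0^{-1/4}\sqrt{H}\,v\|\lesssim \|H_0^{1/4} v\|$ via the spectral-theoretic monotonicity of operator square roots. Everything else is bookkeeping, and the substantive content of the theorem is really packed into the three input hypotheses~(\ref{1226409}), (\ref{1226411}), (\ref{1226413}); the proof just records that these suffice to close the Duhamel loop at the endpoint without any further tools.
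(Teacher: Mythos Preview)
Your proposal is correct and follows essentially the same approach as the paper: write the Duhamel formula, control the two homogeneous pieces via~(\ref{1226409}) together with the quadratic-form inequality $H^{1/2}\le C H_0^{1/2}$, and then apply the assumed inhomogeneous endpoint estimate~(\ref{1226411}) directly to the retarded integral followed by~(\ref{1226413}), exactly as the paper does. Your operator-algebra justification for the bound $\|H_0^{-1/4}\sqrt{H}\,v\|\lesssim\|H_0^{1/4}v\|$ is slightly more elaborate than needed (once $H_0^{-1/4}H^{1/2}H_0^{-1/4}\le C$ is established, its operator norm is already $\le C$, so the squaring step is superfluous), but this is a cosmetic point, not a gap.
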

We note that the same statement as Corollary \ref{12232320} holds for the above theorem.
\begin{proof}
By the Duhamel formula in Theorem \ref{12231549}, we have
\begin{align*}
\|H^{\alpha} _0 e^{-it \sqrt{H}} P_{\ac} (H)u\|_{L^2 _t \mathcal{A}} &\lesssim \|H^{1/4} _0 P_{\ac} (H)u\|_{\mathcal{H}} +\|H^{1/4} _0 u\|_{\mathcal{H}} \\
& \quad \quad + \left\| H^{\alpha} _0 \int_{0}^{t} \frac{\sin (t-s) \sqrt{H_0}}{\sqrt{H_0}} V e^{-is \sqrt{H}} P_{\ac} (H) u ds \right\|_{L^2 _t \mathcal{A}} \\
& \lesssim \|H^{1/4} _0 P_{\ac} (H)u\|_{\mathcal{H}} +\|H^{1/4} _0 u\|_{\mathcal{H}} +  \|H^{-\alpha} _0 V e^{-it \sqrt{H}} P_{\ac} (H) u\|_{L^2 _t \mathcal{A}'} \\
& \lesssim \|H^{1/4} _0 P_{\ac} (H)u\|_{\mathcal{H}} +\|H^{1/4} _0 u\|_{\mathcal{H}} .
\end{align*}
Here, in the first inequality, (\ref{1226409}) is used, in the second inequality, (\ref{1226411}) is used and in the last inequality, (\ref{1226413}) is used.
\end{proof}
The first application is concerned with very short range potentials.
\begin{example} \label{12261413}
Assume $H=-\Delta +V$ satisfies $H \ge 0$, $|V(x)| \lesssim \langle x \rangle^{-2-\epsilon}$, $\langle x \rangle^{1/2 + \epsilon'}V \in \dot{W}^{\frac{1}{d-1}, \frac{2d(d-1)}{3d-1}} \cap L^{\frac{2d(d-1)}{3d-2}}$ for some $\epsilon' > \epsilon$ and zero is a regular point of $H$. Then we have
\[\||D|^{-\frac{1}{d-1}} e^{-it \sqrt{H}} u\|_{L^2 _t L^{\frac{2(d-1)}{d-3}}} \lesssim \|u\|_{\dot{H}^{1/2}}\]
if $d \ge 4$.
\end{example}
\begin{proof}
Since $\langle x \rangle^{-1/2 -\tilde{\epsilon}} |D|^{1/2}$ is $H$-smooth and $\langle Hu, u \rangle \lesssim \langle H_0 u, u \rangle$ holds, it suffices to show the boundedness of $|D|^{\frac{1}{d-1}} V |D|^{-1/2} \langle x \rangle^{1/2 +\tilde{\epsilon}} : L^2 \rightarrow L^{(\frac{2(d-1)}{d-3})'}$ for sufficiently small $\tilde{\epsilon}$. This follows from the following inequality.
\begin{align*}
&\||D|^{\frac{1}{d-1}} V |D|^{-1/2} \langle x \rangle^{1/2 +\tilde{\epsilon}}\|_{2 \rightarrow (\frac{2(d-1)}{d-3})'} \\
&= \||D|^{\frac{1}{d-1}} V \langle x \rangle^{1/2 + \epsilon'} |D|^{-1/2} (|D|^{1/2} \langle x \rangle^{-1/2 - \epsilon'} |D|^{-1/2} \langle x \rangle^{1/2 +\tilde{\epsilon}})\|_{2 \rightarrow (\frac{2(d-1)}{d-3})'} \\
& \lesssim \||D|^{\frac{1}{d-1}} V \langle x \rangle^{1/2 + \epsilon'} |D|^{-1/2}\|_{2 \rightarrow (\frac{2(d-1)}{d-3})'} \\
& \lesssim \||D|^{\frac{1}{d-1}} (\langle x \rangle^{1/2 + \epsilon'} V)\|_{\frac{2d(d-1)}{3d-1}} \||D|^{-1/2}\|_{2 \rightarrow \frac{2d}{d-1}} + \|\langle x \rangle^{1/2 + \epsilon'} V\|_{\frac{2d(d-1)}{3d-2}} \||D|^{\frac{1}{d-1} -1/2}\|_{2 \rightarrow \frac{2d(d-1)}{d^2 -2d+2}} \\
& \lesssim 1.
\end{align*}
Here, in the third line, we have used the $L^2$-boundedness of $|D|^{1/2} \langle x \rangle^{-1/2 - \epsilon'} |D|^{-1/2} \langle x \rangle^{1/2 +\tilde{\epsilon}}$ (see \cite{EGS}), in the fourth line, we have used the fractional Leibniz rule and in the last line, we have used the Sobolev embedding and our assumptions.
\end{proof}
The next example is concerned with higher order operators. 
\begin{example} \label{12261703}
Assume $H$ is as in Theorem \ref{1251816} with $m \ge 2$ and $H \ge 0$. Then we have
\[\||D|^{m-1} e^{-it \sqrt{H}} u\|_{L^2 _t L^{\frac{2d}{d-2}, 2}} \lesssim \||D|^{m/2} u\|_2 .\]
\end{example}
\begin{proof}
Since we can decompose $V =v_1 v_2$, $|v_j (x)| \lesssim \langle x \rangle^{-m-\epsilon}$, we obtain
\begin{align*}
\||D|^{-(m-1)} Ve^{-it \sqrt{H}} u\|_{L^2 _t L^{\frac{2d}{d+2}, 2}} &\lesssim \||D|^{-(m-1)} v_1\|_{2 \rightarrow \frac{2d}{d+2}, 2} \cdot \|v_2 e^{-it \sqrt{H}} u\|_{L^2 _t L^2 _x} \\
& \lesssim \|H^{1/4} u\|_2 \lesssim \||D|^{m/2} u\|_2 ,
\end{align*}
where we have used the Sobolev embedding $|D|^{-(m-1)} : L^{\frac{2d}{2m+d}, 2} \rightarrow L^{\frac{2d}{d+2}, 2}$ and the H\"older inequality $v_1 : L^2 \rightarrow L^{\frac{2d}{2m+d}, 2}$. Thus by Theorem \ref{1226344}, we have the desired estimates.
\end{proof}
For the last example, we consider the fractional operators with the Hardy type potentials. The proof is similar to Example \ref{12261703} and we omit the details.
\begin{example} \label{12261733}
Assume $H$ is as in Theorem \ref{1251821} with $\sigma >1$ and $\langle Hu, u \rangle \lesssim \langle H_0 u, u \rangle$. Then we have
\[\||D|^{\sigma-1} e^{-it \sqrt{H}} u\|_{L^2 _t L^{\frac{2d}{d-2}, 2}} \lesssim \||D|^{\sigma /2} u\|_2 .\]
\end{example}
\section{Strichartz estimate for Klein-Gordon equation}
In this section, as Appendix A, we give an abstract perturbation theorem concerning the Strichartz estimates for the Klein-Gordon equation. See \cite{BC}, \cite{DF}, \cite{D2}, \cite{LSS} for references closely related to this section. We also give an example of the Strichartz estimates for the higher order Klein-Gordon equation. The following theorem is proved by the same way as in the case of wave equations.
\begin{thm}[non endpoint case] \label{12261838}
Suppose $\mathcal{H}$ is a Hilbert space and self-adjoint operators $H, H_0$ and densely defined closed operators $v_1, v_2$ satisfy
\[D(H) \cup D(H_0) \subset D(v_1) \cap D(v_2) \quad \text{and} \quad \langle Hu,v \rangle=\langle H_0 u,v\rangle+\langle v_2 u, v_1 v\rangle\]
for all $u, v \in D(H) \cap D(H_0)$. Furthermore we assume $H+1$ and $H_0 +1$ are injective, $v_1$ is $H_0$-smooth, $v_2 P_{\ac} (H)$ is $H$-smooth, $H+1 \ge0$, $H_0 +1 \ge 0$ and $\|(H+1)^{1/4} u\|_{\mathcal{H}} \lesssim \|(H_0 +1)^{1/4} u \|_{\mathcal{H}}$. Let $\mathcal{A}$ be a Banach space and assume $(\mathcal{H}, \mathcal{A})$ is a Banach couple (see \cite{BM} for its definition). If
\begin{align}
\|(H_0 +1)^{\alpha} e^{-it \sqrt{H_0 +1}} u\|_{L^p _t \mathcal{A}} \lesssim \|(H_0 +1)^{1/4} u\|_{\mathcal{H}} \label{12261843}
\end{align}
holds for some $\alpha \in \R$ and $p>2$, we obtain
\begin{align}
\|(H_0 +1)^{\alpha} e^{-it \sqrt{H+1}} P_{\ac} (H)u\|_{L^p _t \mathcal{A}} \lesssim \|(H_0 +1)^{1/4} u\|_{\mathcal{H}} + \|(H_0 +1)^{1/4} P_{\ac} (H)u\|_{\mathcal{H}} \label{12261844}
\end{align}
for the same $p, \alpha$.
\end{thm}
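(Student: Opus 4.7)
The plan is to mirror the proof of Theorem \ref{12231549} almost verbatim, replacing $H$ by $H+1$ and $H_0$ by $H_0+1$ throughout. The starting point is the Duhamel formula
\begin{align*}
e^{-it\sqrt{H+1}} P_{ac}(H) u = &\cos t\sqrt{H_0+1}\, P_{ac}(H) u \\
&-i \frac{\sin t\sqrt{H_0+1}}{\sqrt{H_0+1}} \sqrt{H+1}\, P_{ac}(H) u \\
&- \int_0^t \frac{\sin(t-s)\sqrt{H_0+1}}{\sqrt{H_0+1}} V e^{-is\sqrt{H+1}} P_{ac}(H) u\, ds.
\end{align*}
First I would handle the two non-integral terms: applying the hypothesized free Strichartz estimate (\ref{12261843}) and the comparability $\|(H+1)^{1/4} u\|_{\mathcal{H}} \lesssim \|(H_0+1)^{1/4} u\|_{\mathcal{H}}$ bounds them by $\|(H_0+1)^{1/4} P_{ac}(H) u\|_{\mathcal{H}} + \|(H_0+1)^{1/4} u\|_{\mathcal{H}}$.

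Next, for the Duhamel integral, I would use the trigonometric identity $\sin(t-s)\sqrt{H_0+1} = \sin t\sqrt{H_0+1}\cos s\sqrt{H_0+1} - \cos t\sqrt{H_0+1}\sin s\sqrt{H_0+1}$ to factor the time-$t$ and time-$s$ dependence. One then first replaces $\int_0^t$ by $\int_0^\infty$ via the Christ-Kiselev lemma (\cite{CK}), which is available because $p > 2$. The result is a tensor product $\sin t\sqrt{H_0+1} \cdot T$ (and an analogous $\cos t\sqrt{H_0+1}\cdot T'$), where $T$ is a time-independent operator. Applying the free estimate (\ref{12261843}) again, matters reduce to showing the $\mathcal{H}$-boundedness of
\[
\int_0^\infty (H_0+1)^{-1/4}\cos s\sqrt{H_0+1}\, V e^{-is\sqrt{H+1}} P_{ac}(H) u\, ds,
\]
and the sine analogue. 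Pairing with a test vector $g$ and inserting $V = v_1^* v_2$ (or its form-sense analogue) yields the estimate
\[
\bigl|\langle\cdots, g\rangle\bigr| \lesssim \|v_2\, e^{-is\sqrt{H+1}} P_{ac}(H) u\|_{L^2_s \mathcal{H}}\, \|v_1 (H_0+1)^{-1/4}\cos s\sqrt{H_0+1}\, g\|_{L^2_s \mathcal{H}}.
\]
The second factor is controlled by $\|g\|_{\mathcal{H}}$ using Lemma \ref{1212101} applied to the $H_0$-smoothness of $v_1$, which (thanks to $H_0+1\ge 0$ and injective) promotes $v_1$ to $\sqrt{H_0+1}$-smoothness after division by $(H_0+1)^{1/4}$. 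The first factor is controlled by $\|(H+1)^{1/4} P_{ac}(H) u\|_{\mathcal{H}}$ via Lemma \ref{1212101} applied to the $H$-smoothness of $v_2 P_{ac}(H)$, and then reduced to $\|(H_0+1)^{1/4} u\|_{\mathcal{H}}$ by the assumed comparability.

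The step most likely to require care is the invocation of Lemma \ref{1212101}: we need to verify that the hypotheses $H+1 \geq 0$, $H_0+1 \geq 0$ together with injectivity of $H+1$ and $H_0+1$ indeed let us convert $H_0$- and $H$-smoothness into $\sqrt{H_0+1}$- and $\sqrt{H+1}$-smoothness of the weighted operators $v_1(H_0+1)^{-1/4}$ and $v_2 P_{ac}(H)(H+1)^{-1/4}$; after that, the argument is a routine transcription of the wave case. No new ingredient beyond what was used for Theorem \ref{12231549} is needed, so I would in practice state the theorem as an immediate translation and omit the repeated computation.
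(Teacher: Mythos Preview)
Your proposal is correct and follows exactly the approach the paper intends: the paper itself does not give a separate proof of this theorem but simply states that it ``is proved by the same way as in the case of wave equations,'' i.e., by transcribing the proof of Theorem~\ref{12231549} with $H+1$, $H_0+1$ in place of $H$, $H_0$. Your outline---Duhamel formula, free estimate on the homogeneous terms, trigonometric splitting of $\sin(t-s)\sqrt{H_0+1}$, Christ--Kiselev to pass between $\int_0^t$ and $\int_0^\infty$, and the duality argument invoking Lemma~\ref{1212101}---matches that proof step for step.
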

We give some applications to the Schr\"odinger operator. The proofs are similar to those of corresponding estimates for wave equations in Appendix A so we omit the details.
\begin{example} \label{12261849}
Let $H$ be as in Corollary \ref{12232330} but satisfy $H+1 \ge 0$ instead of $H \ge 0$. Then we have
\begin{align}
\|\langle D \rangle^{\frac{1}{q} - \frac{1}{p}} e^{-it \sqrt{H+1}} u\|_{L^p _t L^q _x} \lesssim \|u\|_{H^{1/2}} \label{12262054}
\end{align}
for all non endpoint $\frac{d-1}{2}$ or $\frac{d}{2}$ admissible pairs $(p, q)$.
\end{example}
\begin{example} \label{12262106}
Let $H$ be as in Corollary \ref{12131724} but satisfy $\langle Hu, u \rangle \lesssim \|u\|^2 _{H^1}$ instead of $\langle Hu, u \rangle \lesssim \|\nabla u\|^2 _2$. Then (\ref{12262054}) holds for the same $p, q$.
\end{example}
\begin{example} \label{12262113}
Let $H=-\Delta - \frac{(d-2)^2}{4} |x|^{-2}$. Then 
\[\|\langle D \rangle^{\frac{1}{q} - \frac{1}{p}} e^{-it \sqrt{H+1}} P^{\perp} _{\rad}u\|_{L^p _t L^q _x} \lesssim \|u\|_{H^{1/2}}\]
holds for all non endpoint $\frac{d-1}{2}$ or $\frac{d}{2}$ admissible pairs $(p, q)$.
\end{example}
Next we consider the higher order Klein-Gordon equations. Set $P_m (\xi) := (|\xi|^{2m} +1)^{1/2}$. Then $P_m (\xi) =1+ \frac{1}{2} |\xi|^{2m} + O(|\xi|^{4m})$ as $\xi \rightarrow 0$ and $P_m (\xi) = |\xi|^m + O(|\xi|^{-m})$ as $\xi \rightarrow \infty$. Hence the Strichartz estimates for $P_m (D)$ differs depending on high or low energy. 
\begin{lemma}[Low energy estimate] \label{12262149}
For all $m \in \N$, $m \ge 2$ and $\frac{d}{2}$ admissible pairs $(p, q)$, we have
\[\left\||D|^{\frac{2(m-1)}{p}} e^{-itP_m (D)} \chi_{\{|D| \lesssim 1\}} u\right\|_{L^p _t L^{q, 2} _x} \lesssim \|u\|_2\]
\end{lemma}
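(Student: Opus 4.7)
The strategy is to exploit the fact that on $\{|\xi|\lesssim 1\}$ the phase $P_m(\xi)=(|\xi|^{2m}+1)^{1/2}$ is a smooth perturbation of $1+\tfrac12|\xi|^{2m}$, so the low-frequency propagator $e^{-itP_m(D)}\chi_{\{|D|\lesssim 1\}}$ should enjoy the same $d/2$-admissible Strichartz estimate as the higher-order Schr\"odinger group $e^{-it(-\Delta)^m/2}$ (cf.\ the Mizutani-Yao estimate recalled as Lemma \ref{1272211}). First, I would factor
\[e^{-itP_m(D)}=e^{-it}\,e^{-it\psi_m(D)},\qquad \psi_m(\xi):=P_m(\xi)-1=\frac{|\xi|^{2m}}{(|\xi|^{2m}+1)^{1/2}+1};\]
since $|e^{-it}|=1$, this reduces the claim to bounding $\||D|^{2(m-1)/p}e^{-it\psi_m(D)}\chi_{\{|D|\lesssim 1\}}u\|_{L^p_tL^{q,2}_x}$.

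Next I would perform a dyadic Littlewood-Paley decomposition $\chi_{\{|D|\lesssim 1\}}=\sum_{j\le 0}\phi_j(D)$ with $\phi_j(\xi)=\phi(2^{-j}\xi)$ supported on $\{|\xi|\sim 2^j\}$, and for each piece establish the dispersive bound
\[\bigl\|\phi_j(D)e^{-it\psi_m(D)}\bigr\|_{L^1\to L^\infty}\lesssim 2^{jd}\bigl(1+2^{2jm}|t|\bigr)^{-d/2}\qquad(j\le 0).\]
Changing variables $\xi=2^j\eta$ in the kernel puts the phase in the form $2^j x\cdot\eta-2^{2jm}t\,\Phi_j(\eta)$ where $\Phi_j(\eta):=2^{-2jm}\psi_m(2^j\eta)\to\tfrac12|\eta|^{2m}$ in $C^\infty_{\mathrm{loc}}$ as $j\to-\infty$; on $\{|\eta|\sim 1\}$ the Hessian $\nabla^2\Phi_j$ is therefore uniformly non-degenerate (the eigenvalues of $\nabla^2|\eta|^{2m}$ at $|\eta|=1$ are $2m$ and $2m(2m-1)$), and standard stationary phase with $2^{2jm}|t|$ as the large parameter yields the stated bound. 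Combining this with $L^2$-unitarity, rescaling time by $\tau=2^{2jm}t$, and invoking the Keel-Tao theorem (in its Lorentz formulation at the endpoint, and by interpolation with the trivial $L^\infty_tL^2$ bound off the endpoint) gives, for every $d/2$-admissible $(p,q)$,
\[\bigl\|\phi_j(D)e^{-it\psi_m(D)}u\bigr\|_{L^p_tL^{q,2}_x}\lesssim 2^{-2j(m-1)/p}\|\phi_j(D)u\|_2.\]
Multiplying by $|D|^{2(m-1)/p}$, which contributes a factor $\sim 2^{j\cdot 2(m-1)/p}$ on frequency $\sim 2^j$, produces the crucial \emph{uniform-in-$j$} bound
\[\bigl\||D|^{2(m-1)/p}\phi_j(D)e^{-it\psi_m(D)}u\bigr\|_{L^p_tL^{q,2}_x}\lesssim \|\phi_j(D)u\|_2.\]

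The main obstacle is the final assembly over $j$: squaring and summing only gives $\sum_j\|\cdot\|_{L^p_tL^{q,2}_x}^2\lesssim\|u\|_2^2$, whereas the reverse Minkowski inequality is needed to conclude $\|\sum_j\cdot\|_{L^p_tL^{q,2}_x}\lesssim\|u\|_2$ when $p,q\ge 2$. I would bypass this exactly as in the $(-\Delta)^m$ case treated in \cite{MY1}: prove the uniform dyadic bound at two non-endpoint $d/2$-admissible pairs $(p_0,q_0)$ and $(p_1,q_1)$ bracketing $(p,q)$, invoke the vector-valued Littlewood-Paley square-function theorem in each $L^{p_i}_tL^{q_i}_x$ together with the $L^2$-orthogonality $\sum_{j\le 0}\|\phi_j(D)u\|_2^2\lesssim\|u\|_2^2$ to upgrade each dyadic Strichartz bound to a full Strichartz estimate in $L^{p_i}_tL^{q_i}_x$, and finally real-interpolate to recover the Lorentz refinement $L^p_tL^{q,2}_x$ at the target pair $(p,q)$.
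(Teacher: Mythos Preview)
Your proposal is correct and follows essentially the same route as the paper: Littlewood--Paley localization, a per-dyadic dispersive estimate (the paper quotes Theorem~3.1 in \cite{HHZ} rather than doing stationary phase by hand, but the content is identical), time rescaling plus Keel--Tao to get the uniform-in-$j$ bound $2^{-2j(m-1)/p}\|\phi_j(D)u\|_2$, and then square-function summation together with real interpolation to land in $L^p_tL^{q,2}_x$. The only cosmetic differences are that the paper works directly with $P_m$ rather than factoring out $e^{-it}$, uses the rescaling $t\mapsto 2^{-2j(m-1)}t$ (which normalizes the dispersive constant to $1$) instead of your $\tau=2^{2jm}t$, and packages the Lorentz square-function bound as a single inequality \eqref{1227309} before summing rather than proving at two Lebesgue exponents first---your concern about ``reverse Minkowski'' is therefore a non-issue once the Littlewood--Paley square function in $L^{q,2}$ is in hand.
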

\begin{proof}
We follow the argument in \cite{MY1}. By the Littlewood-Paley theorem, the Minkowski inequality and the real interpolation, we have
\begin{align}
\|\chi_{\{|D| \lesssim 1\}} u\|_{L^{q, 2}} \lesssim \left(\sum_{j \le 1} \|\phi_j (D) \chi_{\{|D| \lesssim 1\}} u \|^2 _{L^{q, 2}} \right)^{1/2} ,\label{1227309}
\end{align}
where $\phi_j$ denotes a homogeneous Littlewood-Paley decomposition. Since we have
\[|\nabla P_m (\xi)| \approx |\xi|^{2m-1}, \quad | \hess P_m (\xi)| \approx |\xi|^{d(2m-2)}\]
on $\{|\xi| \lesssim 1\}$, by Theorem 3.1 in \cite{HHZ}, we obtain
\[\left| \int_{\R^d} e^{ix\xi -itP_m (\xi)} |\xi|^{d(m-1)} \chi_{\{|\xi| \lesssim 1\}} d\xi \right| \lesssim |t|^{-\frac{d}{2}} .\]
Hence we have the dispersive estimate $\||D|^{d(m-1)} e^{-itP_m (D)} \chi_{\{|D| \lesssim 1\}}\|_{1 \rightarrow \infty} \lesssim |t|^{-\frac{d}{2}}$. Since $\||2^{-j} D|^{d(1-m)} \phi_j (D)\|_{\infty \rightarrow \infty} \lesssim 1$ holds, we obtain $\|e^{-itP_m (D)} \phi_j (D)\|_{1 \rightarrow \infty} \lesssim 2^{-jd(m-1)} |t|^{-\frac{d}{2}}$ for $j \le 1$. Thus
\[\|\phi_j (D) e^{-i2^{-2j(m-1)} (t-s)P_m (D)} \phi_j (D)\|_{1 \rightarrow \infty} \lesssim |t-s|^{-\frac{d}{2}}\]
holds and by the Keel-Tao theorem (\cite{KT}), we obtain
\[\|e^{-i2^{-2j(m-1)} tP_m (D)} \phi_j (D)u\|_{L^p _t L^{q, 2} _x} \lesssim \|u\|_2.\]
Then, by changing variables, (\ref{1227309}) and the Minkowski inequality, we obtain the desired estimates.  
\end{proof}
\begin{lemma}[High energy estimate] \label{1227320}
For all $m \in \N$, $m \ge 2$ and $\frac{d}{2}$ admissible pairs $(p, q)$, we have
\[\left\||D|^{\frac{m-2}{p}} e^{-itP_m (D)} \chi_{\{|D| \gtrsim 1\}} u\right\|_{L^p _t L^{q, 2} _x} \lesssim \|u\|_2\]
\end{lemma}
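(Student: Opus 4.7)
The plan is to mimic the argument of Lemma~\ref{12262149} (low energy), but with a scaling dictated by the asymptotic $P_m(\xi)=(1+|\xi|^{2m})^{1/2}\sim|\xi|^m$ as $|\xi|\to\infty$. A direct computation shows that on $\{|\xi|\gtrsim 1\}$ one has $|\nabla P_m(\xi)|\approx |\xi|^{m-1}$ and $|\det\hess P_m(\xi)|\approx |\xi|^{d(m-2)}$, with all Hessian eigenvalues of size $\approx|\xi|^{m-2}$, so the phase is non-degenerate on every high-frequency dyadic shell (note that for $m=2$ the exponent $d(m-2)/2$ collapses to $0$, which is consistent with the statement $|D|^{0}=1$).

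Fix a homogeneous Littlewood-Paley partition $\{\phi_j\}_{j\in\Z}$. For $j\ge 0$, applying the stationary-phase bound Theorem~3.1 in \cite{HHZ} to $\int_{\R^d} e^{ix\cdot\xi-itP_m(\xi)}\phi_j(\xi)^2\,d\xi$ yields
\[
\|\phi_j(D)\,e^{-itP_m(D)}\,\phi_j(D)\|_{L^1\to L^\infty}\lesssim 2^{-jd(m-2)/2}\,|t|^{-d/2}.
\]
The time substitution $t\mapsto 2^{-j(m-2)}t$ absorbs the $j$-dependent factor, leaving the canonical $d/2$-dispersive bound; the Keel-Tao theorem \cite{KT} then gives, for every $d/2$-admissible pair $(p,q)$,
\[
\|e^{-i2^{-j(m-2)}t\,P_m(D)}\,\phi_j(D)u\|_{L^p_t L^{q,2}_x}\lesssim \|u\|_2.
\]
Undoing the rescaling introduces a factor $2^{-j(m-2)/p}$, which is exactly compensated by $|D|^{(m-2)/p}\phi_j(D)\sim 2^{j(m-2)/p}\phi_j(D)$, so that uniformly in $j\ge 0$,
\[
\||D|^{(m-2)/p}\,e^{-itP_m(D)}\,\phi_j(D)u\|_{L^p_t L^{q,2}_x}\lesssim \|\phi_j(D)u\|_2.
\]

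To assemble the full estimate I would follow the end of the proof of Lemma~\ref{12262149}: use the Littlewood-Paley square-function characterisation of $L^{q,2}_x$, invoke Minkowski's inequality (valid because $p,q\ge 2$) to pull the $\ell^2_j$-norm outside $L^p_tL^{q,2}_x$, apply the frequency-localised estimate above, and close via $\sum_{j\ge 0}\|\phi_j(D)u\|_2^2\lesssim \|u\|_2^2$; the restriction $j\ge 0$ exactly matches the support of $\chi_{\{|D|\gtrsim 1\}}$ up to a harmless overlap term absorbed into the low-energy lemma.

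The main obstacle is the first step, the dispersive estimate. Although $\hess P_m$ is non-degenerate on every dyadic shell with the correct size, one must distinguish the regime $|x/t|\approx|\nabla P_m(\xi_c)|\approx 2^{j(m-1)}$, where a genuine stationary-phase expansion produces the factor $2^{-jd(m-2)/2}=|\det\hess P_m(\xi_c)|^{-1/2}$ at the critical point, from its complement, where non-stationary integration by parts yields rapid decay; one must verify that all the constants are uniform in $j$. It is this dyadic stationary-phase analysis that both justifies the time rescaling $t\mapsto 2^{-j(m-2)}t$ and explains the precise exponent $(m-2)/p$ appearing in the statement.
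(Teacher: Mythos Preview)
Your proposal is correct and follows essentially the same route as the paper: compute $|\nabla P_m|\approx|\xi|^{m-1}$ and $|\hess P_m|\approx|\xi|^{d(m-2)}$ on the high-frequency region, invoke Theorem~3.1 of \cite{HHZ} to obtain the dyadic dispersive bound $\|e^{-itP_m(D)}\phi_j(D)\|_{1\to\infty}\lesssim 2^{-jd(m-2)/2}|t|^{-d/2}$ for $j\ge 1$, then run the rescaling/Keel--Tao/Littlewood--Paley argument exactly as in the low-energy lemma. The paper's proof is in fact just a two-line reference back to Lemma~\ref{12262149}, so your write-up is, if anything, more explicit than what appears there; your closing paragraph about verifying uniformity of constants in the stationary-phase step is already absorbed into the cited theorem from \cite{HHZ} and need not be reproved.
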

\begin{proof}
Since we have
\[|\nabla P_m (\xi)| \approx |\xi|^{m-1}, \quad | \hess P_m (\xi)| \approx |\xi|^{d(m-2)}\]
on $\{|\xi| \gtrsim 1\}$, by the similar argument as above, we obtain $\|e^{-itP_m (D)} \phi_j (D)\|_{1 \rightarrow \infty} \lesssim 2^{-jd(\frac{m}{2} -1)} |t|^{-\frac{d}{2}}$ for $j \ge 1$. Then the rest of the argument is similar so we omit the details here.
\end{proof}
Now we give an application of Theorem \ref{12261838} to Lemma \ref{1227320}. The proof is similar to the case of higher order wave equations so we omit it here.
\begin{example} \label{1227335}
Assume $H=(-\Delta)^m +V$, $m \ge 2$ is as in Theorem \ref{1251816} or \ref{1251821} and satisfies $H+1 \ge 0$ and $\sigma (H) = \sigma_{\ac} (H)$. Then
\[\|\langle D \rangle^{\frac{m-2}{p} + \frac{m}{2}} \chi_{\{|D| \gtrsim 1\}} e^{-it \sqrt{H+1}} u\|_{L^p _t L^{q, 2} _x} \lesssim \|\langle D \rangle^{m/2} u\|_2\]
holds for all non endpoint $\frac{d}{2}$ admissible pairs $(p, q)$.
\end{example}
Finally we consider the endpoint Strichartz estimates. The following theorem is proved similarly as the corresponding one for wave equations (see Appendix A). 
\begin{thm} \label{1227353}
Suppose $\mathcal{H}$ is a Hilbert space and self-adjoint operators $H, H_0$ and a densely defined closed operator $V$ satisfy
\begin{align*}
&D(H) \cup D(H_0) \subset D(V), \quad H=H_0 +V, \\
& H+1, H_0 +1 \ge 0, \quad \|(H+1)^{1/4} u\|_{\mathcal{H}} \lesssim \|(H_0 +1)^{1/4} u \|_{\mathcal{H}} .
\end{align*}
Let $\mathcal{A}$ be a Banach space and assume $(\mathcal{H}, \mathcal{A})$ is a Banach couple (see \cite{BM} for its definition). We also assume the following homogeneous and inhomogeneous endpoint Strichartz estimates for $H_0$ and the following smoothing estimate for some $\alpha \in \R$:
\begin{align*}
&\|(H_0 +1)^{\alpha} e^{-it \sqrt{H_0 +1}} u\|_{L^2 _t \mathcal{A}} \lesssim \|(H_0 +1)^{1/4} u\|_{\mathcal{H}} \\
&\left\| (H_0 +1)^{\alpha} \int_{0}^{t} \frac{\sin (t-s) \sqrt{H_0 +1}}{\sqrt{H_0 +1}} F(s) ds \right\|_{L^2 _t \mathcal{A}} \lesssim \|(H_0 +1)^{-\alpha} F\|_{L^2 _t \mathcal{A}'} \\
& \|(H_0 +1)^{-\alpha} V e^{-it \sqrt{H+1}} P_{\ac} (H) u\|_{L^2 _t \mathcal{A}'} \lesssim \|(H_0 +1)^{1/4} u\|_{\mathcal{H}} . 
\end{align*}
Then we obtain
\begin{align*}
\|(H_0 +1)^{\alpha} e^{-it \sqrt{H+1}} P_{\ac} (H)u\|_{L^2 _t \mathcal{A}} \lesssim \|(H_0 +1)^{1/4} u\|_{\mathcal{H}} + \|(H_0 +1)^{1/4} P_{\ac} (H)u\|_{\mathcal{H}} . 
\end{align*}
\end{thm}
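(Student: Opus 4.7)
The strategy is to mimic the proof of Theorem \ref{1226344} verbatim, substituting $H+1$ for $H$ and $H_0+1$ for $H_0$ throughout. The starting point is the Duhamel formula adapted to the Klein-Gordon propagator:
\begin{align*}
e^{-it\sqrt{H+1}} P_{\ac}(H) u &= \cos t\sqrt{H_0+1}\, P_{\ac}(H) u - i\,\frac{\sin t\sqrt{H_0+1}}{\sqrt{H_0+1}}\sqrt{H+1}\, P_{\ac}(H) u \\
&\quad - \int_0^t \frac{\sin(t-s)\sqrt{H_0+1}}{\sqrt{H_0+1}}\, V e^{-is\sqrt{H+1}} P_{\ac}(H) u\, ds.
\end{align*}
I would apply $(H_0+1)^\alpha$ to both sides and take the $L^2_t\mathcal{A}$ norm. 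The first two terms on the right are controlled by the homogeneous endpoint hypothesis and the third is handled using the inhomogeneous endpoint hypothesis together with the smoothing hypothesis.

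For the $\cos t\sqrt{H_0+1}$ term, writing the cosine as $\tfrac{1}{2}(e^{it\sqrt{H_0+1}}+e^{-it\sqrt{H_0+1}})$ and invoking the first hypothesis yields a bound by $\|(H_0+1)^{1/4} P_{\ac}(H) u\|_{\mathcal{H}}$. For the sine term carrying the extra factor $\sqrt{H+1}$, I rewrite it as $\sin t\sqrt{H_0+1}\cdot (H_0+1)^{-1/2}\sqrt{H+1}\, P_{\ac}(H) u$ and again apply the homogeneous hypothesis, reducing matters to showing
\[
\|(H_0+1)^{-1/4}\sqrt{H+1}\, P_{\ac}(H) u\|_{\mathcal{H}} \lesssim \|(H_0+1)^{1/4} u\|_{\mathcal{H}}.
\]
By duality of the assumption $\|(H+1)^{1/4}v\|_{\mathcal{H}}\lesssim \|(H_0+1)^{1/4}v\|_{\mathcal{H}}$, the operator $(H_0+1)^{-1/4}(H+1)^{1/4}$ is bounded on $\mathcal{H}$; applying it to $v=(H+1)^{1/4}P_{\ac}(H)u$ and then using the assumption a second time gives the required bound.

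For the Duhamel remainder, I would apply the inhomogeneous endpoint hypothesis to the convolution kernel $\frac{\sin(t-s)\sqrt{H_0+1}}{\sqrt{H_0+1}}$ with source $F(s) = V e^{-is\sqrt{H+1}}P_{\ac}(H)u$, which yields
\[
\left\|(H_0+1)^\alpha \int_0^t \frac{\sin(t-s)\sqrt{H_0+1}}{\sqrt{H_0+1}} F(s)\,ds\right\|_{L^2_t\mathcal{A}} \lesssim \|(H_0+1)^{-\alpha} V e^{-is\sqrt{H+1}}P_{\ac}(H)u\|_{L^2_t\mathcal{A}'},
\]
and the right-hand side is then bounded by $\|(H_0+1)^{1/4}u\|_{\mathcal{H}}$ via the smoothing hypothesis. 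Combining the three pieces yields \eqref{1226416} for the Klein-Gordon setting.

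The only real obstacle is the algebraic juggling of the $(H_0+1)^{-1/2}\sqrt{H+1}$ factor in the second term; all other steps are direct transliterations of the wave-equation proof, since the Christ--Kiselev step that was necessary in the non-endpoint Theorem \ref{12261838} is replaced here by the inhomogeneous endpoint hypothesis stated in \eqref{1226411}, so no $p>2$ restriction is needed.
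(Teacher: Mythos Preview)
Your proposal is correct and matches the paper's approach exactly: the paper states that the theorem ``is proved similarly as the corresponding one for wave equations,'' i.e.\ by running the Duhamel formula for $e^{-it\sqrt{H+1}}P_{\ac}(H)$, bounding the $\cos$ and $\sin$ terms via the homogeneous hypothesis together with the boundedness of $(H_0+1)^{-1/4}(H+1)^{1/4}$ (obtained by duality from the assumption), and handling the Duhamel integral directly via the inhomogeneous endpoint hypothesis followed by the smoothing hypothesis. Your treatment of the $(H_0+1)^{-1/2}\sqrt{H+1}$ factor is the same duality-then-assumption argument the paper uses in the wave case.
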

We give some examples concerning very short range potentials.
\begin{example} \label{1227416}
Assume $H=-\Delta +V$ satisfy $|\partial^{\alpha} _x V(x)| \lesssim \langle x \rangle^{-2-\epsilon}$ for all $\alpha \in \N^d _0$, $H+1 \ge 0$ and zero is neither an eigenvalue nor a resonance of $H$. Then we have
\[\|\langle D \rangle^{-\frac{1}{d}} e^{-it \sqrt{H+1}} P_{\ac} (H) u\|_{L^2 _t L^{\frac{2d}{d-2}} _x} \lesssim \|u\|_{H^{1/2}} + \|P_{\ac} (H)u\|_{H^{1/2}}.\]
\end{example}
\begin{proof}
By the H\"older inequality, $\langle x \rangle^{-1-\epsilon'} :L^2 \rightarrow L^{\frac{2d}{d+2}}$ is a bounded operator and $A:= \langle D \rangle^{\frac{1}{d}} V \langle D \rangle^{-1/2} \langle x \rangle^{2 +\epsilon} : L^{\frac{2d}{d+2}} \rightarrow L^{\frac{2d}{d+2}}$ is also bounded since the symbol of $A$ is in $S^0 _{1, 0}$. Then, by Theorem \ref{1227353} and the Kato smoothing estimate, we obtain the desired estimates.  
\end{proof}
\begin{example} \label{1227439}
Assume $H=-\Delta +V$ satisfy $|\partial^{\alpha} _x V(x)| \lesssim \langle x \rangle^{-(1+\frac{d}{d-1})-\epsilon}$ for all $\alpha \in \N^d _0$, $H+1 \ge 0$ and zero is neither an eigenvalue nor a resonance of $H$. Then we have
\[\|\langle D \rangle^{-\frac{1}{d-1}} e^{-it \sqrt{H+1}} P_{\ac} (H) u\|_{L^2 _t L^{\frac{2(d-1)}{d-3}} _x} \lesssim \|u\|_{H^{1/2}} + \|P_{\ac} (H)u\|_{H^{1/2}}.\]
\end{example}
\begin{proof}
The boundedness of $\langle D \rangle^{\frac{1}{d-1}} V \langle D \rangle^{-1/2} \langle x \rangle^{1 +\epsilon'} :L^2 \rightarrow L^{(\frac{2(d-1)}{d-3})'}$ follows from the boundedness of $\langle x \rangle^{-(\frac{d}{d-1})-\epsilon'} :L^2 \rightarrow L^{(\frac{2(d-1)}{d-3})'}$ and $\langle D \rangle^{\frac{1}{d-1}} V \langle D \rangle^{-1/2} \langle x \rangle^{1 +\frac{d}{d-1} + 2\epsilon'} :L^{(\frac{2(d-1)}{d-3})'} \rightarrow L^{(\frac{2(d-1)}{d-3})'}$ by the same reason as above. Then we obtain the desired estimates by repeating the above proof.
\end{proof}

\section*{Acknowledgement}
The author thanks his supervisor Kenichi Ito for valuable comments and discussions. He is partially supported by FoPM, WINGS Program, the University of Tokyo.


\begin{thebibliography}{BHLNS}

\fontsize{9pt}{1pt} \selectfont
\setlength{\itemsep}{1pt}

\bibitem[BC]{BC} Beceanu, M., Chen, G. Strichartz estimates for the Klein-Gordon equation in $\R^{3+1}$.
Pure Appl. Anal. 4 (2022), no. 4, 767–809.

\bibitem[BHLNS]{BHLNS} 
Bez, N., Hong, Y., Lee, S., Nakamura, S., Sawano, Y. On the Strichartz estimates for orthonormal systems of initial data with regularity.
Adv. Math. 354 (2019), 106736, 37 pp.

\bibitem[BKS1]{BKS1} Bez, N., Kinoshita, S., Shiraki, S. A note on Strichartz estimates for the wave equation with orthonormal initial data. arXiv preprint arXiv: 2306.14547

\bibitem[BKS2]{BKS2} Bez, N., Kinoshita, S., Shiraki, S. Boundary Strichartz estimates and pointwise convergence for orthonormal systems. arXiv preprint arXiv: 2306.14536

\bibitem[BLN1]{BLN}
Bez, N., Lee, S., Nakamura, S. Strichartz estimates for orthonormal families of initial data and weighted oscillatory integral estimates.
Forum Math. Sigma 9 (2021), Paper No. e1, 52 pp.

\bibitem[BLN2]{BLN2} Bez, N., Lee, S., Nakamura, S. Maximal estimates for the Schrödinger equation with orthonormal initial data.
Selecta Math. (N.S.) 26 (2020), no. 4, Paper No. 52, 24 pp.

\bibitem[BPST]{BPST}
Burq, N., Planchon, F., Stalker, J.G., Tahvilder-Zadeh, A.S. Strichartz estimates for the wave and Schrödinger equations with potentials of critical decay.
Indiana Univ. Math. J. 53 (2004), no. 6, 1665 –1680.

\bibitem[BM]{BM}
Bouclet, J. M.,  Mizutani, H. Uniform resolvent and Strichartz estimates for Schrödinger equations with critical singularities.
Trans. Amer. Math. Soc. 370 (2018), no. 10, 7293–7333.
\bibitem[BS]{BS} Birman, M. S.,  Solomyak, M. Double operator integrals in a Hilbert space.
Integral Equations Operator Theory 47 (2003), no. 2, 131–168.

\bibitem[CHP1]{CHP1} Chen, T., Hong, Y., Pavlovi\'c. Global well-posedness of the NLS system for infinitely many fermions.
Arch. Ration. Mech. Anal. 224 (2017), no. 1, 91–123.

\bibitem[CHP2]{CHP2} Chen, T., Hong, Y., Pavlovi\'c. On the scattering problem for infinitely many fermions in dimensions $d \ge 3$ at positive temperature.
Ann. Inst. H. Poincaré C Anal. Non Linéaire 35 (2018), no. 2, 393–416.

\bibitem[CK]{CK} Christ, M., Kiselev, A. Maximal functions associated to filtrations.
J. Funct. Anal. 179 (2001), no. 2, 409–425.

\bibitem[D1]{D} D'Ancona, P. Kato smoothing and Strichartz estimates for wave equations with magnetic potentials.
Comm. Math. Phys. 335 (2015), no. 1, 1–16.

\bibitem[D2]{D2} D'Ancona, P. On large potential perturbations of the Schrödinger, wave and Klein-Gordon equations.
Commun. Pure Appl. Anal. 19 (2020), no. 1, 609–640.

\bibitem[DF]{DF}
D'Ancona, P., Fanelli, L. Strichartz and smoothing estimates of dispersive equations with magnetic potentials.
Comm. Partial Differential Equations 33 (2008), no. 4-6, 1082–1112.

\bibitem[EG]{EG}
Erdoğan, M., Green, W, R. The Lp-continuity of wave operators for higher order Schrödinger operators.
Adv. Math. 404 (2022), Paper No. 108450, 41 pp.

\bibitem[EGS]{EGS} Erdoğan, M. B., Goldberg, M., Schlag, W. Strichartz and smoothing estimates for Schrödinger operators with almost critical magnetic potentials in three and higher dimensions.
Forum Math. 21 (2009), no. 4, 687–722.

\bibitem[FLLS]{FLLS} Frank, R. L., Lewin, M., Lieb, E. H., Seiringer, R. Strichartz inequality for orthonormal functions.
J. Eur. Math. Soc. (JEMS) 16 (2014), no. 7, 1507–1526.

\bibitem[FS]{FS} Frank, R. L.,  Sabin, J. Restriction theorems for orthonormal functions, Strichartz inequalities, and uniform Sobolev estimates.
Amer. J. Math. 139 (2017), no. 6, 1649–1691.

\bibitem[FL1]{FL1} Fr\"ohlich, J., Lenzmann, E. Dynamical collapse of white dwarfs in Hartree- and Hartree-Fock theory.
Comm. Math. Phys. 274 (2007), no. 3, 737–750.

\bibitem[FL2]{FL2} Fr\"ohlich, J., Lenzmann, E. Blowup for nonlinear wave equations describing boson stars.
Comm. Pure Appl. Math. 60 (2007), no. 11, 1691–1705.

\bibitem[FSWY]{FSWY} Feng, H., Soffer, A., Wu, Z., Yao, X. Decay estimates for higher-order elliptic operators.
Trans. Amer. Math. Soc. 373 (2020), no. 4, 2805–2859.

\bibitem[GY]{GY} Galtbayar, A., Yajima, K. The $L^p$-boundedness of wave operators for fourth order Schrödinger operators on $\R^4$. arXiv preprint arXiv: 2303.13985.

\bibitem[GK]{GK} Gohberg, I.,  Krein, M. G. Theory and applications of Volterra operators in Hilbert space.
Transl. Math. Monogr., Vol. 24
American Mathematical Society, Providence, RI, 1970. x+430 pp.


\bibitem[Ha]{Ha} Hadama, S.  Asymptotic stability of a wide class of stationary solutions for the Hartree and  Schrödinger equations for infinitely many particles. arXiv preprint arXiv: 2308.15929.

\bibitem[H\"o]{Hor} Hörmander, L. The analysis of linear partial differential operators. III.
Pseudodifferential operators
Grundlehren Math. Wiss., 274 Fundamental Principles of Mathematical Sciences
Springer-Verlag, Berlin, 1985. viii+525 pp.

\bibitem[Ho]{Ho}
Hoshiya, A. Orthonormal Strichartz estimates for Schr\"odinger operator and their applications to infinitely many particle systems. arXiv preprint arXiv: 2312. 08314.

\bibitem[HHZ]{HHZ} Huang, T., Huang, S., Zheng, Q. Inhomogeneous oscillatory integrals and global smoothing effects for dispersive equations.
J. Differential Equations 263 (2017), no. 12, 8606–8629.
\bibitem[K]{KY}
Kato, T. Wave operators and similarity for some non-selfadjoint operators.
Math. Ann. 162 (1965/1966), 258–279.

\bibitem[KY]{KatoYajima}
Kato, T., Yajima, K. Some examples of smooth operators and the associated smoothing effect.
Rev. Math. Phys. 1 (1989), no. 4, 481–496.

\bibitem[KT]{KT} Keel, M.,  Tao, T. Endpoint Strichartz estimates.
Amer. J. Math. 120 (1998), no. 5, 955–980.

\bibitem[KS]{KS} K\"uhn, F., Schlling, R L. Convolution inequalities for Besov and Triebel-Lizorkin spaces, and applications to convolution semigroups.
Studia Math. 262 (2022), no. 1, 93–119.

\bibitem[L]{L} Lenzmann, E. Well-posedness for semi-relativistic Hartree equations of critical type.
Math. Phys. Anal. Geom. 10 (2007), no. 1, 43–64.

\bibitem[LS1]{LS} Lewin, M.,  Sabin, J. The Hartree equation for infinitely many particles I. Well-posedness theory.
Comm. Math. Phys. 334 (2015), no. 1, 117–170.

\bibitem[LS2]{LS2} Lewin, M.,  Sabin, J. The Hartree equation for infinitely many particles, II: Dispersion and scattering in 2D.
Anal. PDE 7 (2014), no. 6, 1339–1363.

\bibitem[LSS]{LSS} Lee, H., Seo, I., Seok, J. Local smoothing and Strichartz estimates for the Klein-Gordon equation with the inverse-square potential.
Discrete Contin. Dyn. Syst. 40 (2020), no. 1, 597–608.

\bibitem[MNO]{MNO}
Machihara, S., Nakanishi, K., Ozawa, T. Small global solutions and the nonrelativistic limit for the nonlinear Dirac equation.
Rev. Mat. Iberoamericana 19 (2003), no. 1, 179–194.

\bibitem[Ma]{Ma} Martinez, A. An introduction to semiclassical and microlocal analysis. 
Universitext
Springer-Verlag, New York, 2002. viii+190 pp.

\bibitem[M1]{M1} Mizutani, H. Strichartz estimates for Schrödinger equations with slowly decaying potentials.
J. Funct. Anal. 279 (2020), no. 12, 108789, 57 pp.

\bibitem[M2]{M2} Mizutani, H. Remarks on endpoint Strichartz estimates for Schrödinger equations with the critical inverse-square potential. J. Differential Equations 263 (2017), no. 7, 3832–3853.

\bibitem[MWY]{MWY}
Mizutani, H., Wan, Z., Yao, X. $L^p$-boundedness of wave operators for bi-Schr\"odinger operators on the line. arXiv preprint arXiv: 2201.04758.

\bibitem[MY1]{MY1} Mizutani, H., Yao, X. Kato smoothing, Strichartz and uniform Sobolev estimates for fractional operators with sharp Hardy potentials.
Comm. Math. Phys. 388 (2021), no. 1, 581–623.

\bibitem[MY2]{MY2} Mizutani, H., Yao, X. Global Kato smoothing and Strichartz estimates for higher-order Schr\" odinger operators with rough decay potentials. arXiv preprint arXiv: 2004.10115.

\bibitem[MS1]{Ms} Mondal, S. S., Swain, J. Strichartz inequality for orthonormal functions associated with special Hermite operator. arXiv preprint arXiv: 2103.12586.

\bibitem[MS2]{MS} Mondal, S. S., Song, M. Orthonormal Strichartz inequalities for the $(k, a) $-generalized Laguerre operator and Dunkl operator. arXiv preprint arXiv: 2208.12015.

\bibitem[O]{O} Ozawa, T. Remarks on proofs of conservation laws for nonlinear Schrödinger equations.
Calc. Var. Partial Differential Equations 25 (2006), no. 3, 403--408.

\bibitem[RS]{RS} Rodnianski, I., Schlag, W. 
Time decay for solutions of Schr\"odinger equations with rough and time-dependent potentials.
Invent.\ Math.\ 155 (2004), no.\ 3, 451--513.

\bibitem[Si]{S} Simon, B. Trace ideals and their applications.
Second edition
Math. Surveys Monogr., 120
American Mathematical Society, Providence, RI, (2005). viii+150 pp

\bibitem[T]{T} Triebel, H. Theory of function spaces. II.
Monogr. Math., 84
Birkhäuser Verlag, Basel, 1992. viii+370 pp.

\bibitem[Y1]{Y} Yajima, K. Existence of solutions for Schrödinger evolution equations.
Comm. Math. Phys.110 (1987), no. 3, 415–426.

\bibitem[Y2]{Ybook} Yajima, K. Schr\"odinger equations II (in Japanese), Asakura Publishing Co., Ltd., Japan (2014)
\end{thebibliography}
\end{document}